\def\theequation{\thesection.\arabic{equation}}
\newcommand{\eqnum}{\refstepcounter{equation}\textup{\tagform@{\theequation}}}
\newcounter{copy}
\renewcommand{\thecopy}{\ifnum0=\c@section\arabic{copy}\else\thesection.\arabic{copy}'\fi}
\theoremstyle{definition}
\newtheorem{defn}[equation]{Definition}
\newtheorem{notn}[equation]{Notation}
\newtheorem{assmp}[equation]{Assumption}
\newtheorem{para}[equation]{}
\theoremstyle{plain}
\newtheorem{thm}[equation]{Theorem}
\newtheorem{prp}[equation]{Proposition}
\newtheorem{lem}[equation]{Lemma}
\newtheorem{cor}[equation]{Corollary}
\theoremstyle{remark}
\newtheorem{rmk}[equation]{Remark}
\newtheorem{exmp}[equation]{Example}
\crefname{defn}{Definition}{Definitions}
\crefname{notn}{Notation}{Notations}
\crefname{assmp}{Assumption}{Assumptions}
\crefname{thm}{Theorem}{Theorems}
\crefname{prp}{Proposition}{Propositions}
\crefname{lem}{Lemma}{Lemmas}
\crefname{cor}{Corollary}{Corollaries}
\crefname{conj}{Conjecture}{Conjectures}
\crefname{rmk}{Remark}{Remarks}
\crefname{exmp}{Example}{Examples}
\crefname{section}{Section}{Sections}
\crefname{subsection}{Subsection}{Subsections}
\crefname{para}{}{}
\crefname{appendix}{Appendix}{Appendices}
\crefname{subappendix}{Appendix}{Appendices}
\newcommand{\bB}{\mathbb{B}}
\newcommand{\bC}{\mathbb{C}}
\newcommand{\bD}{\mathbb{D}}
\newcommand{\bK}{\mathbb{K}}
\newcommand{\bM}{\mathbb{M}}
\newcommand{\bN}{\mathbb{N}}
\newcommand{\bR}{\mathbb{R}}
\newcommand{\bS}{\mathbb{S}}
\newcommand{\bT}{\mathbb{T}}
\newcommand{\bZ}{\mathbb{Z}}
\newcommand{\cA}{\mathcal{A}}
\newcommand{\cB}{\mathcal{B}}
\newcommand{\cE}{\mathcal{E}}
\newcommand{\cH}{\mathcal{H}}
\newcommand{\cK}{\mathcal{K}}
\newcommand{\cM}{\mathcal{M}}
\newcommand{\cP}{\mathcal{P}}
\newcommand{\cQ}{\mathcal{Q}}
\newcommand{\cT}{\mathcal{T}}
\newcommand{\cU}{\mathcal{U}}
\newcommand{\cV}{\mathcal{V}}
\newcommand{\cW}{\mathcal{W}}
\newcommand{\cX}{\mathcal{X}}
\newcommand{\fm}{\mathfrak{m}}
\newcommand{\fs}{\mathfrak{s}}
\newcommand{\sH}{\mathscr{H}}
\newcommand{\sM}{\mathscr{M}}
\newcommand{\sN}{\mathscr{N}}
\newcommand{\sS}{\mathscr{S}}
\newcommand{\sW}{\mathscr{W}}
\newcommand{\sX}{\mathscr{X}}
\renewcommand{\Im}{\mathrm{Im} \hspace{0.1em}}
\newcommand{\pt}{\mathrm{pt}}
\newcommand{\del}{\mathrm{del}}
\newcommand{\sgn}{\mathrm{sgn}}
\newcommand{\id}{\mathrm{id}}
\newcommand{\Cl}{\text{\it C$\ell$}}
\DeclareMathOperator{\hotimes}{\hat{\otimes }}
\DeclareMathOperator{\Ch}{\mathrm{Ch}} 
\DeclareMathOperator{\K}{\mathrm{K}}
\DeclareMathOperator{\KR}{\mathrm{KR}}
\DeclareMathOperator{\KO}{\mathrm{KO}}
\DeclareMathOperator{\HC}{\mathrm{HC}}
\DeclareMathOperator{\HP}{\mathrm{HP}}
\DeclareMathOperator{\CC}{\mathrm{CC}}
\DeclareMathOperator{\End}{\mathrm{End}}
\DeclareMathOperator{\Hom}{\mathrm{Hom}}
\DeclareMathOperator{\ind}{\mathrm{ind}}
\DeclareMathOperator{\Ad}{\mathrm{Ad}}
\newcommand{\myhat}{\text{\hspace{-0.1em}\raisebox{-0.3ex}[0.8ex][0.0ex]{\textasciicircum}\hspace{-0.1em}}}
\newcommand{\PD}{\mathrm{PD}}
\newcommand{\lwedge}{{\textstyle\bigwedge}}
\newcommand{\Sgn}{\mathop{\mathrm{Sgn}}\nolimits}
\newcommand{\Prop}{\mathop{\mathrm{Prop}}}
\newcommand{\alg}{\mathrm{alg}}
\title{Codimension 2 transfer of higher index invariants}
\author{Yosuke kubota}
\address{Department of Mathematical Sciences, Shinshu University\\ 3-1-1 Asahi, Matsumoto, Nagano, 390-8621, Japan\\ and \\ RIKEN iTHEMS \\ 2-1 Hirosawa, Wako, Saitama, 351-0198, Japan}
\email{ykubota@shinshu-u.ac.jp}
\date{}
\begin{document}
\maketitle
\begin{abstract}
This paper is devoted to the study of the higher index theory of codimension $2$ submanifolds originated by Gromov--Lawson and Hanke--Pape--Schick. 
The first main result is to construct the `codimension $2$ transfer' map from the Higson--Roe analytic surgery exact sequence of a manifold $M$ to that of its codimension $2$ submanifold $N$ under some assumptions on homotopy groups. 
This map sends the primary and secondary higher index invariants of $M$ to those of $N$. 
The second is to establish that the codimension 2 transfer map is adjoint to the co-transfer map in cyclic cohomology, defined by the cup product with a group cocycle. 
This relates the Connes--Moscovici higher index pairing and Lott's higher $\rho$-number of $M$ with those of $N$. 
\end{abstract}
\tableofcontents 
\section{Introduction}
Higher index theory provides two differential topological invariants of manifolds, the (C*-algebraic) \emph{higher signature} and the \emph{Rosenberg index}, both taking value in the (Real) K-group of the group C*-algebra of the fundamental group. 
The higher signature of an oriented manifold is related to the rational Pontrjagin class and plays a key role for proving the Novikov conjecture for a large class of groups \cites{kasparovEquivariantKKTheory1988}. 
The Rosenberg index of a spin manifold is an obstruction to positive scalar curvature (psc) metric \cite{rosenbergAstAlgebrasPositive1983}. It is known to be a nearly complete obstruction \cite{rosenbergStableVersionGromovLawson1995}, but is not really complete due to a counterexample \cite{schickCounterexampleUnstableGromovLawsonRosenberg1998}. 
Moreover, these higher indices have the corresponding secondary invariants, the \emph{higher $\rho$-invariants} \cites{higsonMappingSurgeryAnalysis2005b,higsonMappingSurgeryAnalysis2005a,higsonMappingSurgeryAnalysis2005,piazzaRhoclassesIndexTheory2014}, both taking value in the (Real) K-group of the pseudo-local coarse C*-algebra. 
These invariants distinguish different $h$-cobordism classes of oriented homotopy equivalences of manifolds and psc cobordism classes of psc metrics respectively. 

The focus of this paper is in the codimension 2 index obstruction to positive scalar curvature, which is originated from the early work by Gromov-Lawson \cite{gromovPositiveScalarCurvature1983}*{Theorem 7.5} and is refined later by Hanke-Pape-Schick \cite{hankeCodimensionTwoIndex2015}.
Let $M$ be a closed spin manifold, let $N$ be its codimension $2$ submanifold, and let  $\Gamma := \pi_1(M)$ and $\pi := \pi_1(N)$.
According to \cite{hankeCodimensionTwoIndex2015}*{Theorem 1.1}, if $M$ and $N$ satisfies  some assumptions on homotopy groups (listed in \cref{prp:NSZ}), and if the Rosenberg index $\alpha_{\pi} (N)$ vanishes, then $M$ does not admit any psc metric. 
The corresponding result in higher signature is established by Higson--Schick--Xie \cite{higsonCalgebraicHigherSignatures2018}.

In recent researches it has turned out that this new obstruction does not go beyond the standard one, the Rosenberg index of $M$ itself. 
Namely, the non-vanishing of $\alpha_\pi(N)$ implies the non-vanishing of $\alpha_\Gamma (M)$. 
In \cites{nitscheNewTopologicalIndextheoretical2018,nitscheTransferMapsGeneralized2019}, Nitsche, Schick and Zeidler suppose a framework for proving this; the \emph{codimension $2$ transfer map}.  
They have constructed a homomorphism between K-homology groups of the classifying spaces
\begin{align}
    \tau_\sigma \colon \K_*(B\Gamma )\to \K_{*-2}(B\pi ), \label{eq:NSZ}
\end{align}  
which relates the K-homology classes of the Dirac operator of $M$ with that of $N$. 
Following this, in \cite{kubotaGromovLawsonCodimensionObstruction2020}, Schick and the author construct a homomorphism between group C*-algebra K-theory  
\begin{align}
    \tau_\sigma \colon \K_*(C^*\Gamma  )\to \K_{*-2}(C^*\pi  ), \label{eq:KS}
\end{align}  
based on the index pairing with a flat bundle of Calkin algebras. 
It is proved in \cite{kubotaGromovLawsonCodimensionObstruction2020}*{Theorems 1.2, 1.3} that $\tau_\sigma$ sends the Rosenberg index of $M$ to that of $N$  (which recovers the Hanke--Pape--Schick theorem) and the higher signature of $M$ to the twice of that of $N$.
On the other hand, a relation of maps \eqref{eq:NSZ} and \eqref{eq:KS} was not clarified.

The purpose of this paper is to generalize and provide a unified view of the codimension $2$ transfer maps of higher index invariants.
The first main theorem is the construction of the codimension 2 transfer map from the \emph{Higson--Roe analytic surgery exact sequence} of $M$, i.e., the long exact sequence in $\K$-theory associated to the short exact sequence of coarse C*-algebras $0 \to C^*(\widetilde{M})^\Gamma \to D^*(\widetilde{M})^\Gamma \to Q^*(\widetilde{M})^\Gamma \to 0$ (where $\widetilde{M}$ denotes the universal covering of $M$), to that of $N$.  
Moreover, it relates the primary and the secondary higher index invariants of $M$ with those of $N$ in the following sense.
\begin{thm}\label{thm:main1}
There is a homomorphism of long exact sequences
\[\mathclap{
\xymatrix@C=0.5cm{
\cdots  \ar[r] & \K_*(C^*(\widetilde{M})^\Gamma) \ar[r] \ar[d]^{\tau_\sigma} & \K_*(D^*(\widetilde{M})^\Gamma) \ar[r] \ar[d]^{\tau_\sigma} & \K_*(Q^*(\widetilde{M})^\Gamma) \ar[r] \ar[d]^{\tau_\sigma} & \K_{*-1}(C^*(\widetilde{M})^\Gamma ) \ar[d]^{\tau_\sigma} \ar[r] & \cdots \\
\cdots \ar[r] & \K_{*-2}(C^*(\widetilde{N})^\pi ) \ar[r] & \K_{*-2}(D^*(\widetilde{N})^\pi ) \ar[r] &  \K_{*-2}(Q^*(\widetilde{N})^\pi ) \ar[r] & \K_{*-3}(C^*(\widetilde{N})^\pi) \ar[r] & \cdots .
}}
\]
Moreover, the following hold:
\begin{enumerate}
    \item The map $\tau_\sigma \colon \K_*(C^*(\widetilde{M})^\Gamma )\to \K_{*-2}(C^*(\widetilde{N})^\pi)$ coincides with \eqref{eq:KS}.
    \item The map $\tau_\sigma\colon \K_*(Q^*(\widetilde{M})^\Gamma )\to \K_{*-2}(Q^*(\widetilde{N})^\pi)$ coincides with \eqref{eq:NSZ}.
    \item The equalities $\tau_\sigma(\rho(g_M)) = \rho(g_N)$ and $\tau_\sigma(\rho_{\mathrm{sgn}}(f_M)) = 2\rho_{\mathrm{sgn}}(f_N)$ hold.
\end{enumerate}
\end{thm}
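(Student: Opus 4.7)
The plan is to realise all four vertical maps in the claimed ladder as induced by a single morphism at the level of the short exact sequence of coarse C*-algebras $0 \to C^*(\widetilde{M})^\Gamma \to D^*(\widetilde{M})^\Gamma \to Q^*(\widetilde{M})^\Gamma \to 0$, so that naturality of the six-term sequence makes commutativity with the boundary maps automatic. The construction I have in mind extends the flat Calkin-bundle argument of \cite{kubotaGromovLawsonCodimensionObstruction2020} so that it applies uniformly to pseudolocal, as well as to locally compact, $\Gamma$-equivariant operators on $\widetilde{M}$. From the codimension-$2$ embedding $N \hookrightarrow M$ and the cocycle $\sigma$ I would extract (i) a $\Gamma$-equivariant tubular neighbourhood of the preimage of $N$ in $\widetilde{M}$ whose retraction has bounded propagation, and (ii) a transverse Dirac operator along the $\bR^2$-fibres whose index class assembles into the flat Calkin bundle used to define \eqref{eq:KS}. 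Because this cutting-and-indexing procedure sends locally compact operators to locally compact operators and preserves finite propagation equivariantly, it yields compatible $*$-homomorphisms (or Kasparov bimodules, depending on taste) from $C^*(\widetilde{M})^\Gamma$, $D^*(\widetilde{M})^\Gamma$, $Q^*(\widetilde{M})^\Gamma$ into the degree-shifted coarse algebras for $\widetilde{N}$, hence the morphism of short exact sequences.

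With the ladder in hand, the three identifications are direct comparisons with the explicit definitions of the existing maps. For (1) there is nothing to check, since by construction the restriction to the ideal is \eqref{eq:KS}. For (2), I would invoke Paschke duality $\K_*(Q^*(\widetilde{M})^\Gamma) \cong \K_{*-1}^\Gamma(\widetilde{M}) \cong \K_{*-1}(B\Gamma)$ and its analogue for $N$, and recognise the induced map on K-homology as cap product with the $2$-class defined by $\sigma$; this agrees with \eqref{eq:NSZ}, whose definition in \cite{nitscheTransferMapsGeneralized2019} also factors through cap product with the Thom class of the normal bundle. For (3), I would take the standard $D^*$-level representatives of $\rho(g_M)$ and $\rho_{\mathrm{sgn}}(f_M)$, namely the Dirac phase of the psc metric and a Hilsum--Skandalis-style signature cycle of $f_M$, and push them through the construction. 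The transverse Dirac operator contributes multiplicative index $1$ in the spin case, so the image is $\rho(g_N)$ on the nose; in the signature case, the Clifford action on the two normal directions produces the extra factor of $2$, exactly as for the primary invariant in \cite{kubotaGromovLawsonCodimensionObstruction2020}*{Theorem 1.3}.

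The main technical obstacle I foresee lies on the pseudolocal level. On $C^*$ and $Q^*$, the construction enjoys considerable K-theoretic flexibility; but on $D^*$ I must simultaneously preserve pseudolocality, finite propagation, and $\Gamma$-equivariance when passing to the transverse index, and none of these properties is automatic once one starts cutting and twisting by a nontrivial flat bundle. My intention is to absorb this by smoothing the tubular cut-off and by phrasing the whole construction inside the multiplier algebra of the flat Calkin bundle, so that pseudolocal operators lift to multipliers and locally compact operators lift to elements of the ideal. Once this bookkeeping is in place, naturality of the six-term sequence does the rest, and the identifications in (1)--(3) reduce to the comparisons sketched above.
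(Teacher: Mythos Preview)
Your overall strategy---build a single morphism of short exact sequences and let naturality of the six-term sequence do the work---is exactly the paper's. The gap is in the construction of that morphism at the $D^*$-level. The flat Calkin-bundle representation $\phi\colon \Gamma \to \cU(\cQ_{C^*\Pi})$ is a purely group-algebraic object: it sees $C^*(\widetilde{M})^\Gamma$ only through the identification $C^*(\widetilde{M})^\Gamma \cong C^*\Gamma \otimes \bK$. The algebra $D^*(\widetilde{M})^\Gamma$ is not of this form; it is characterised by propagation and pseudolocality, not by a module structure over $\bC[\Gamma]$. Passing to multipliers does not help, since $D^*(\widetilde{M})^\Gamma$ is not the multiplier algebra of $C^*(\widetilde{M})^\Gamma$, and $\cQ_{C^*\Pi}$ is already unital. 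So ``phrasing the whole construction inside the multiplier algebra of the flat Calkin bundle'' does not produce a receptacle for pseudolocal operators.

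What the paper does instead is give a \emph{coarse-geometric} reinterpretation of $\phi$. It shows (\cref{lem:group_Roe}, \cref{prp:lift}) that, on operators of small propagation, the Calkin-bundle map is literally the lift $T \mapsto \overline{PTP}$ to the Hanke--Pape--Schick $\Pi$-covering $\overline{\sM}$ of $\sM = (\widetilde{M}/\pi)\setminus U$, read modulo the boundary ideal $C^*(\overline{\sN}\subset\overline{\sM})^\Pi$. This lift is not a $\ast$-homomorphism on the nose---lifting to a non-normal cover never is---but the failure of multiplicativity is supported near $\overline{\sN}$, so it becomes one after quotienting. This description is visibly about propagation, and \cref{prp:splitD} shows it extends verbatim to $D^*$: decompose $T = T_0 + T_1$ with $\Prop T_0$ small and $T_1 \in C^*$, lift $T_0$ geometrically and $T_1$ via the already-constructed map. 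The transfer $\tau_\sigma$ is then $s_*$ followed by two K-theory boundary maps (the extension boundary for $\overline{\sN}\subset\overline{\sM}$, then coarse Mayer--Vietoris for $\overline{\sN}\cong\widetilde{N}\times\bR$), not by a single $\ast$-homomorphism or KK-class into the $N$-algebras. Item (3) then follows from the `boundary of Dirac is Dirac' and `boundary of signature is $2^\epsilon\cdot$signature' principles applied to the manifold-with-boundary $\overline{\sM}$, rather than from a direct transverse-index argument. Your ``transverse Dirac along the $\bR^2$-fibres'' is morally the composite of these two boundary maps, but making that precise at the $D^*$-level requires exactly the intermediate covering construction you are missing.
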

Here, in (3), $g_M$ is a psc metric on $M$ which is of the form $g_{\bD^2} + g_N$ on a tubular neighborhood $U \cong N \times \bD^2$ of $N $, and $f_M \colon M' \to M$ is an oriented homotopy equivalence whose restriction to $f^{-1}(N)$, denoted by $f_N$, is also an oriented homotopy equivalence. 
They associate the higher $\rho$-invariants $\rho(g_M)$, $\rho(g_N)$, $\rho_\sgn (f_M) $ and $\rho_\sgn (f_N)$ (a detailed definition is reviewed in \cref{para:psc,para:homotopy}).  
We remark that  (1), (2) and the commutativity of the diagram reproves \cite{kubotaGromovLawsonCodimensionObstruction2020}*{Theorems 1.2, 1.3}.

The proof, given in \cref{thm:HigsonRoe,thm:second_tr}, consists of two steps. 
First, we construct a $\ast$-homomorphism lifting $\Gamma$-invariant operators on $\widetilde{M}$ onto the $\Pi$-covering $\overline{\sM}$ of $\sM :=(\widetilde{M}/\pi) \setminus (N \times \bD^2)$. This covering is constructed in \cite{hankeCodimensionTwoIndex2015}*{Theorem 4.3} and plays a key role for proving their main result. 
The lifting of operators to a covering space does not form a $\ast$-homomorphism of coarse C*-algebras in general, but in our setting it makes sense modulo the boundary, i.e., as a $\ast$-homomorphism to the quotient of coarse C*-algebras $C^*(\overline{\sM})^\Pi / C^*(\overline{\sN} \subset \overline{\sM})^\Pi$. 
Second, we apply the `boundary of Dirac is Dirac' and `boundary of signature is $2^\epsilon$ times signature' principles to the manifold with boundary $\overline{\sM}$. Since $\overline{\sN}$ is diffeomorphic to $\widetilde{N} \times \bR$, the boundary map in $\K$-theory sends the Dirac operator on $\overline{\sM}$ to that of $\widetilde{N} \times \bR$, which is identified with the Dirac operator on $\widetilde{N}$ by the partitioned manifold index theorem \cite{roePartitioningNoncompactManifolds1988}.

The second main theorem relates the codimension 2 transfer map and the pairing of higher index invariants with cyclic cocycles of the group algebra. 
The cyclic cohomology group of $\bC[\Gamma]$ is described in terms of the group cohomology by Burghelea~\cite{burgheleaCyclicHomologyGroup1985}. 
More specifically, $\HC^*(\bC[\Gamma])$ decomposes into the direct product of the `localized' and the `delocalized' parts, where the former is isomorphic to the cohomology of $\Gamma$ itself and the latter is isomorphic to the product of cohomology of normalizer subgroups of $\Gamma $. 
If the group $\Gamma$ is hyperbolic, then a cyclic cocycle on $\bC[\Gamma]$ induces a homomorphism $\K_*(C^*\Gamma ) \to \bC$ \cites{jolissaintKtheoryReducedAlgebras1989,puschniggNewHolomorphicallyClosed2010}. 
It plays a key role in the proof of the Novikov conjecture for hyperbolic groups by Connes-Moscovici \cite{connesCyclicCohomologyNovikov1990}. 
There is also a secondary analog of this pairing, Lott's \emph{higher $\rho$-number} or the \emph{delocalized $\eta$-invariant}~\cite{lottHigherEtainvariants1992}, in which there has been a growing interest in recent researches such as Chen--Wang--Xie--Yu~\cite{chenDelocalizedEtaInvariants2019} and Piazza--Schick--Zenobi~\cite{piazzaMappingAnalyticSurgery2019}. 

The codimension 2 co-transfer map of cohomology groups is defined in a dual way as \eqref{eq:NSZ}.
With the language of group cohomology, this is realized by the cup product with a second cohomology class $\sigma \in H^2(\Gamma ; \bZ[\Gamma /\pi])$ (by this reason it is written as $\sigma \cdot \phi$).
This $\sigma$ is an essential ingredient of the construction of codimension $2$ transfer maps in homology, studied in \cref{section:cocycle}. 
We show that this co-transfer map is adjoint to $\tau_\sigma$ up to the constant $2\pi i$ with respect to the pairing of K-theory and cyclic cohomology. For the definition of numerical higher indices $\alpha_\phi (M)$, $\Sgn_\phi (M)$ and higher $\rho$-numbers $\varrho_\phi (g_M)$, $\varrho_\sgn (f_M)$, see \cref{defn:CM_Lott}. 
\begin{thm}\label{thm:main2}
Let $\Gamma$ be a hyperbolic group and let $\pi$ be its hyperbolic subgroup. 
Let $\phi \in \HC^{q-2}_{e}(\bC[\pi] )$ and $\psi \in \HC^{q-2}_{\del}(\bC[\pi] )$. 
Let $M$ be a closed manifold and let $N$ be a codimension $2$ submanifold of $M$ satisfying (1), (2), (3) of \cref{para:cocycle}. The following equalities hold:
\begin{enumerate}
\item $2\pi i \alpha_{\phi \cdot \sigma} (M) =  \alpha_\phi (N)$ and $\pi i   \Sgn\nolimits_{\phi \cdot  \sigma}(M) = \Sgn\nolimits_\phi(N)$.
\item $2\pi i  \varrho_{\phi \cdot \sigma} (g_M) =  \varrho_\phi (g_N)$ and $\pi i  \varrho_{\phi \cdot \sigma}^\sgn (f_M) =  \varrho_\phi^\sgn (f_N)$.
\end{enumerate}
\end{thm}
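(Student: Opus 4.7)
The plan is to reduce \cref{thm:main2} to a single adjointness formula between the K-theoretic transfer $\tau_\sigma$ and the cyclic-cohomological co-transfer $\sigma \cdot -$, and then combine it with \cref{thm:main1} to obtain the numerical identities. Concretely, I would first establish the following adjointness: for $x \in \K_*(C^*(\widetilde M)^\Gamma)$ (resp.\ $x \in \K_*(D^*(\widetilde M)^\Gamma)$) and $\phi \in \HC^{*-2}(\bC[\pi])$ of the appropriate Burghelea type,
\[
\langle \tau_\sigma (x), \phi \rangle \;=\; 2\pi i \cdot \langle x, \sigma \cdot \phi \rangle.
\]
Given this, the four identities in the theorem follow immediately by applying \cref{thm:main1}(1)--(3) to the primary classes $\alpha_\Gamma(M)$, $\Sgn(M)$ and to the secondary classes $\rho(g_M)$, $\rho_\sgn(f_M)$; the factor $2$ appearing in the signature identities arises from the factor of $2$ in $\tau_\sigma(\Sgn(M))=2\Sgn(N)$ and $\tau_\sigma(\rho_\sgn(f_M))=2\rho_\sgn(f_N)$, combining with $2\pi i$ to produce $\pi i$.

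To prove the adjointness, I would unpack the construction of $\tau_\sigma$ from the proof of \cref{thm:main1}: it factors as the lift of $\Gamma$-invariant operators on $\widetilde M$ to $\Pi$-invariant operators on the covering $\overline{\sM}$, followed by the K-theoretic boundary map associated to the short exact sequence
\[
0 \to C^*(\overline{\sN} \subset \overline{\sM})^\Pi \to C^*(\overline{\sM})^\Pi \to C^*(\overline{\sM})^\Pi / C^*(\overline{\sN} \subset \overline{\sM})^\Pi \to 0,
\]
and the partitioned-manifold identification $C^*(\overline{\sN})^\Pi \sim C^*(\widetilde N)^\pi$ up to a suspension by $\bR$. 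The cup product $\sigma \cdot \phi$, on the other hand, is built at the cochain level using the class $\sigma \in H^2(\Gamma;\bZ[\Gamma/\pi])$ from \cref{section:cocycle}, which is precisely the group-cohomological incarnation of the covering $\overline{\sM} \to \sM$. The key computation is therefore to match, at the cochain level, the suspension/boundary operation on K-theory with insertion of $\sigma$ in the two extra slots of a cyclic cocycle; the $2\pi i$ arises from the standard Chern-character normalization in the K-theory boundary map paired with an HC-cocycle.

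Because all pairings between K-theory of C*-algebras and cyclic cocycles must be carried out on a smooth subalgebra, the technical heart is to produce compatible smooth refinements. Under the hyperbolicity assumption on $\Gamma$ and $\pi$, I would invoke Puschnigg's holomorphically closed subalgebras \cite{puschniggNewHolomorphicallyClosed2010} on which every class in $\HC^*(\bC[\Gamma])$ and $\HC^*(\bC[\pi])$ extends continuously, and check that the lifting $C^*(\widetilde M)^\Gamma \to C^*(\overline\sM)^\Pi/C^*(\overline\sN\subset\overline\sM)^\Pi$ (resp.\ its $D^*$-analog) restricts to these smooth subalgebras; for the secondary case this requires the smooth $D^*$-subalgebras of Chen--Wang--Xie--Yu \cite{chenDelocalizedEtaInvariants2019} and Piazza--Schick--Zenobi \cite{piazzaMappingAnalyticSurgery2019} to define $\varrho_\phi(g_M)$ and $\varrho^\sgn_\phi(f_M)$. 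Then I would verify, via an excision argument along $\overline\sN \subset \overline\sM$, that the boundary map applied to a paired-cocycle computation reproduces the cup with $\sigma$.

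The main obstacle will be the cochain-level matching in the delocalized secondary case: the pairing of $\HC^*_{\del}(\bC[\pi])$ with $\K_*(D^*(\widetilde N)^\pi)$ is defined only after passing to a delicate smooth subalgebra, and one must ensure that the transfer map $\tau_\sigma$ and the excision used to identify the boundary descend to the same smooth level without disturbing the delocalized conjugacy-class decomposition. Once this compatibility is in place, the adjointness equation reduces to a finite algebraic manipulation of group cocycles that parallels the classical Connes--Moscovici computation, and \cref{thm:main1} closes the argument.
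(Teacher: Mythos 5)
Your plan reproduces the paper's overall architecture in outline --- co-transfer in cyclic cohomology identified with $2\pi i\,\sigma\cup(\cdot)$, Puschnigg smooth subalgebras to make the pairing honest, compatibility of $\tau_\sigma$ with the Chern characters, then \cref{thm:main1} to close --- but at the one place you flag as ``the main obstacle'' the proposed resolution is not a proof, and it is exactly the step that carries the real content.

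The difficulty is not, as you suggest, that the transfer and the excision must ``descend to the same smooth level without disturbing the conjugacy-class decomposition.'' The difficulty is that the delocalized Chern character $\Ch^\del_\Gamma$ is not a map out of $\K_*(D^*(\widetilde M)^\Gamma)$ at all: in the Piazza--Schick--Zenobi formalism it is defined only on the K-theory of a mapping cone of pseudodifferential operator algebras $\Psi^0_{A\Gamma}(\widetilde{\cV})$, and the comparison map from that K-group to $\K_*(D^*(\widetilde M)^\Gamma)$ is not injective. So ``excision along $\overline{\sN}\subset\overline{\sM}$ followed by a cochain-level matching'' does not type-check for the $D^*$-row: there is no cocycle on $D^*(\overline{\sM})^\Pi/D^*(\overline\sN\subset\overline\sM)^\Pi$ to pair against. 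Before any algebraic manipulation can begin, one must (i) choose a lift of the relevant $D^*$-class into the pseudodifferential world, (ii) show the resulting Chern character value is independent of that choice, and (iii) show the chosen lifts on $M$ and on $N$ correspond under $\tau_\sigma$.

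The paper does this in \cref{lem:Chern} by a genuinely geometric argument that your sketch omits. After splitting $\K_*(D^*(\widetilde M)^\Gamma)_\bC$ (using exactness / strong Novikov for hyperbolic $\Gamma$) into the image of $\K_*(C^*(\widetilde M)^\Gamma)_\bC$ and a finite set of geometric relative K-homology cycles $[W_i,f_i,E_i]$, it builds in \cref{lem:Khomology2} explicit lifts $x_{W_i,f_i,E_i}\in\K_*(\Psi^0_{A\Gamma}(\widetilde{\cV}))$, for a carefully chosen auxiliary bundle $V$, and proves $\Ch^\del_{\Gamma,V}(x_{W_i,f_i,E_i})=0$ using the delocalized APS index theorem; the same construction is then carried out on the $N$-side so that $\tau_\sigma$ intertwines the two. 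Nothing resembling this appears in your plan, and ``a finite algebraic manipulation of group cocycles that parallels Connes--Moscovici'' will not substitute for it: the Connes--Moscovici computation is a statement about the primary $C^*$-pairing, where the Chern character is defined on a dense holomorphically closed subalgebra of the whole Roe algebra, and that structure simply fails for $D^*$. Until you supply the analogue of \cref{lem:Khomology2} (or some other well-definedness mechanism for $\Ch^\del$ compatible with $\tau_\sigma$), the argument for items (2) and (4) of the theorem has a real gap.
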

The proof is given in \cref{thm:pair_tranfer}. There are two ingredients of the proof. 
The first is to identify the codimension 2 co-transfer map of cyclic cohomology groups with the cup product with $\sigma$. This is performed on the basis of the identification of $\sigma $ with the Dixmier--Douady class of a twist (central extension) of the action groupoid $\cB \rtimes \Gamma$, where $\cB$ is a bouquet of circles studied in \cref{section:Cstar}. 
The second is to define the codimension $2$ transfer map in terms of unconditional Banach algebras. 
This enables us to make the codimension $2$ transfer map compatible with the `mapping analytic surgery to homology' formalism of the higher index pairing developed by Piazza--Schick--Zenobi~\cite{piazzaMappingAnalyticSurgery2019}.
We remark that this theorem produces a class of non-trivial computations of the higher $\rho$-numbers.

This paper is organized as follows. 
In \cref{section:cocycle}, we review the construction of the codimension $2$ transfer map in general homology theory and discuss its relation with an extension of groups. 
In \cref{section:Cstar}, we revisit the construction of the C*-algebraic transfer map in \cite{kubotaGromovLawsonCodimensionObstruction2020} from the viewpoint of groupoid cocycles. 
Moreover, we also give a coarse geometric view of this construction.
In \cref{section:HigsonRoe}, we prove our first main theorem, \cref{thm:main1}. We first construct a map between Higson--Roe analytic surgery sequences, and next show that these maps relate the higher index invariants of $M$ to those of $N$.
In \cref{section:cyclic}, we prove our second main theorem, \cref{thm:main2}. We first study the codimension $2$ co-transfer of cyclic cohomology groups of $\bC[\Gamma]$, and then extend it to the unconditional Banach algebra $\cA \Gamma$.

\section{Codimension 2 transfer map via a second cohomology class}\label{section:cocycle}
In this section, we review the (codimension 2) submanifold transfer map of general homology theory introduced by Nitsche, Schick, and Zeidler~\cites{nitscheNewTopologicalIndextheoretical2018,nitscheTransferMapsGeneralized2019}. 
We slightly rearrange the exposition on the basis of a second cohomology class.
This provides us a systematic construction of codimension 2 inclusions of manifolds to which the theory is applied. 
We also discuss a realization of this second cohomology class by an extension of fundamental groups. 

\subsection{Setting}\label{subsection:setting}
Let $\Gamma$ be a finitely presented discrete group and let $\pi$ be its finitely presented subgroup. 
\begin{defn}
We define the `compactly supported' cohomology group of the topological space $E \Gamma /\pi $ as the reduced cohomology group of the pointed space
\[ (E\Gamma /\pi)^\dagger := (E\Gamma \times_\Gamma (\Gamma /\pi)^+) / (E\Gamma \times_\Gamma \{ \ast \} ),  \]
where $(\Gamma /\pi )^+$ denotes the $1$-point compactification $E\Gamma /\pi \sqcup \{ \ast \} $. 
In the same way, we also define the group $h^n_c(E\Gamma /\pi):=h^n((E\Gamma /\pi)^\dagger )$ for a general cohomology theory $h^*$.
\end{defn}
This coincides with the compactly supported cohomology in the usual sense if and only if $B\Gamma$ is compact. 
To be more precise, this group should be called the cohomology with fiberwisely compact support over $B\Gamma$, but in this paper we just call it, in short, the compactly supported cohomology.

We represent a second cohomology class $\sigma \in  H^2_c(E\Gamma/\pi; \bZ )$ in several ways. First, it is represented by a continuous map
\[ F_\sigma \colon (E\Gamma /\pi )^\dagger  \to K(\bZ, 2). \]
Second, it is represented by a compactly supported complex line bundle $L$ over $E\Gamma /\pi$, i.e., a line bundle over $(E\Gamma /\pi)^\dagger$. 
This bundle is related with $F_\sigma$ as $L:=F_\sigma^*(\widetilde{L})$, where $\widetilde{L} \to BU(1) =K(\bZ,2)$ is the universal line bundle. 
If $B\Gamma$ is modeled by a closed aspherical manifold, then the zero locus of a generic section $s \colon E\Gamma/\pi \to L$ is a closed codimension 2 submanifold $N \subset E\Gamma /\pi$, which represents $\sigma $ through the Poincar\'e duality.
Third, this cohomology class is represented by an extension of groups, which is discussed later in \cref{subsection:cocycle_ext}. 

We impose several assumptions on this cohomology class $\sigma$. 
Let $q \colon E\Gamma /\pi \to B\Gamma$ denote the projection and let $j \colon E\Gamma /\pi  \to (E\Gamma /\pi)^\dagger $ denote the inclusion, which induces the homomorphism
\[ j^* \colon H^2_c(E\Gamma /\pi ; \bZ) \to H^2(E\Gamma /\pi ; \bZ) \cong H^2(B\pi ; \bZ).\]
\begin{assmp}\label{assump:cocycle}
We consider the following assumptions:
\begin{enumerate}
\item[(A1)] There is an open subset $U_\sigma \subset E\Gamma /\pi$ such that $q |_{U_\sigma}$ is injective and $\sigma $ is contained in the image of the map $ H^2_c(U_\sigma;\bZ) \to H^2_c(E\Gamma /\pi; \bZ)$ induced from the inclusion. 
\item[(A2)] The equality $j^*(\sigma) =0$ holds. 
\end{enumerate}
By abuse of notation, when $\sigma$ satisfies (A1), we use the same letter $\sigma$ for a choice of its preimage in $H_c^2(U_\sigma;\bZ)$.
\end{assmp}

There is an immediate implication of (A2). 
Let $\mu \colon BU(1) \to BSU(2) $ denote the continuous map induced from the group homomorphism $U(1) \to SU(2)$ given by $z \mapsto \mathrm{diag}(z,\bar{z})$.

\begin{lem}\label{lem:SU2}
Let $\sigma$ satisfy (A2) of \cref{assump:cocycle}. Then the following holds;
\begin{enumerate}
    \item[(A2')] the composition $\mu \circ F_\sigma \colon (E\Gamma /\pi)^\dagger  \to BSU(2)$ is null-homotopic. 
\end{enumerate}
Moreover, if $\sigma$ also satisfy (A1) of \cref{assump:cocycle}, then $\mu \circ F_\sigma$ is null-homotopic as a map from the $1$-point compactification $U_\sigma^+$. 
\end{lem}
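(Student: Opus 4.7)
The plan is to identify the $SU(2)$-bundle classified by $\mu\circ F_\sigma$, show its second Chern class vanishes under (A2), and then conclude null-homotopy via factorization through $S^2$, which becomes explicit once (A1) is also assumed.

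The map $\mu\colon BU(1) \to BSU(2)$ is induced by the maximal-torus inclusion $U(1) \hookrightarrow SU(2)$, and classifies the operation $L \mapsto L\oplus L^{-1}$ on complex line bundles (with the canonical trivialization of $L\otimes L^{-1}\cong\underline{\bC}$). Hence $\mu\circ F_\sigma$ classifies the $SU(2)$-bundle $L\oplus L^{-1}$ on $Y:=(E\Gamma/\pi)^\dagger$, where $L:=F_\sigma^*\widetilde L$ has $c_1(L)=\sigma$. Its second Chern class equals
\[
c_2(L\oplus L^{-1}) = c_1(L)\cup c_1(L^{-1}) = -\sigma\cup\sigma \in \widetilde H^4(Y;\bZ) = H_c^4(E\Gamma/\pi;\bZ),
\]
and the projection formula $\sigma\cup\sigma = \sigma\cdot j^*\sigma$ (where $\cdot$ denotes the natural $H^*(E\Gamma/\pi)$-module action on $H_c^*(E\Gamma/\pi)$), combined with (A2), forces this primary obstruction to vanish.

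Next, I would invoke (A1): it provides a factorization $F_\sigma\colon (E\Gamma/\pi)^\dagger\to U_\sigma^+\to BU(1)$, and under (A1)+(A2) the restricted class $j_U^*\sigma\in H^2(U_\sigma)$ also vanishes, being the restriction along $U_\sigma\hookrightarrow E\Gamma/\pi$ of $j^*\sigma$. In the geometric case where $U_\sigma$ is a tubular neighborhood of a codimension-$2$ submanifold $N$ whose normal line bundle is trivial (as is forced by the vanishing of $c_1$), we have $U_\sigma^+\simeq\Sigma^2 N_+$, and the map $U_\sigma^+\to BU(1)$ factors through the inclusion $S^2=\bC P^1\hookrightarrow\bC P^\infty$. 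Since $\pi_2(BSU(2))=0$, the restriction $\mu|_{S^2}$ is null-homotopic, which yields the null-homotopy on $U_\sigma^+$ (the "moreover") and hence on $(E\Gamma/\pi)^\dagger$ by precomposition with the collapsing map.

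The hard part will be rigorously establishing (A2') under (A2) alone, without (A1): in general $c_2=0$ does not imply null-homotopy into $BSU(2)$, and one must also handle secondary and higher obstructions. I expect the crucial input is that $Y\setminus\{\infty\}=E\Gamma/\pi$, over which $L\oplus L^{-1}$ is already trivial (by (A2)), so that the extension of a null-homotopy across the single point $\infty$ is controlled entirely by the primary obstruction $c_2$ whose vanishing we already established; together with the $2$-connectivity of $SU(2)$, this should eliminate the higher obstructions.
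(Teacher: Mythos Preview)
Your obstruction-theoretic route has a genuine gap that cannot be closed as stated. The claim ``trivial over $Y\setminus\{\infty\}$ together with $c_2=0$ implies trivial over $Y$'' is false in general: on $Y=S^5$ there is a nontrivial $SU(2)$-bundle (classified by $\pi_4(SU(2))\cong\bZ/2$) which has $c_2=0$ since $H^4(S^5)=0$ and is trivial over $S^5\setminus\{\pt\}\cong\bR^5$. The $2$-connectivity of $SU(2)$ buys you nothing here; the next obstruction lives in $H^5(Y;\pi_4(S^3))$ and nothing in your argument forces it (or the subsequent ones) to vanish. Your treatment of the (A1) case is likewise incomplete, since it only handles the special situation where $U_\sigma$ happens to be a tubular neighbourhood of a submanifold with trivial normal bundle, not an arbitrary open set as in \cref{assump:cocycle}.

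The paper avoids obstruction theory entirely by constructing a \emph{nowhere-vanishing section} of the rank-$2$ bundle $L\oplus L^*$; for an $SU(2)$-bundle this immediately gives triviality. One picks a section $s$ of $L$ whose sublevel set $V=\{|s|<1\}$ is relatively compact (possible because $L$ is supported over a compact), and on $V$ one uses (A2), which makes $L|_V$ trivial, to choose a nowhere-vanishing section $t$ of $L^*|_V$. A bump-function interpolation between $(0,t)$ near the zeros of $s$ and $(s,0)$ near infinity then yields a global nonvanishing section of $L\oplus L^*$. Under (A1) one arranges $|s|\ge1$ outside $U_\sigma$, so the constructed section extends the standard one on $U_\sigma^c$ and the null-homotopy lives on $U_\sigma^+$. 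Your correct observation that (A2) trivializes $L$ over $E\Gamma/\pi$ is precisely the input for producing $t$; the missing idea is to use it to build a section directly rather than to set up an extension problem you cannot solve.
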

\begin{proof}
We may assume that $B\Gamma$ is modeled by a locally compact Hausdorff space. 
Let $L$ be the line bundle over $(E\Gamma/\pi)^\dagger$ representing $\sigma$ as above. 
Let $s \colon E\Gamma /\pi \to L$ be a continuous section such that $V:=\{ x \in (E\Gamma /\pi)^\dagger \mid  |s(x)|<1 \}$ is relatively compact. 
If we additionally assume (A1), we choose $s$ in the way that $|s(x)| \geq  1$ for any $x\in U_\sigma^c$.  

Since $\sigma|_V = j^*(\sigma)|_V$ is trivial, there is a non-vanishing section $t \colon V \to L^*|_V$. Set $\widetilde{s}:= \chi \cdot (s \oplus 0) + (1-\chi ) \cdot (0 \oplus t)$, where $\chi \colon E\Gamma /\pi \to [0,1]$ is a bump function satisfying  $\mathop{\mathrm{supp}}( \chi ) \subset V$ and $\chi \equiv 1$ on $\{ x \in B\Gamma \mid  |s(x)| < 1/2 \}$. This is a non-vanishing section of the $SU(2)$-bundle $L \oplus L^*$.  Therefore, the map $\mu \circ F_\sigma $ is null-homotopic. 
Moreover, if we additionally assume (A1), the section $(s,t)$ is an extension of $(s,0)$ on $U^c_\sigma $, and hence a null-homotopy of $\mu \circ F_\sigma$ is taken in $\mathop{\mathrm{Map}}(U_\sigma^+, BSU(2))$.
\end{proof}

\begin{exmp}\label{exmp:bundle}
Let $\Sigma_{g,1}$ denote the closed oriented surface of genus $g \geq 1$ and a single boundary. 
Let $\mathop{\mathrm{Diff}}(\Sigma_{g,1}, \partial)$ denote the group of diffeomorphisms on $\Sigma_{g,1}$ fixing the boundary, which is regarded as a subgroup of $\mathop{\mathrm{Diff}} (\Sigma_g)$.  
Let $\pi$ be a finitely presented group with a homomorphism $\pi \to \pi_0(\mathop{\mathrm{Diff}} (\Sigma_{g,1}, \partial))$.  
This induces an action $\pi \curvearrowright \pi_1(\Sigma_{g})$. Set $\Gamma := \pi_1(\Sigma_{g}) \rtimes \pi$. 
The homomorphism $\pi \to \pi_0(\mathop{\mathrm{Diff}} (\Sigma_{g,1}, \partial))$ also induces a fiber bundle $\cE \to B\pi $ with the fiber $\Sigma_g$ since the identity component of $\mathop{\mathrm{Diff}} (\Sigma_{g,1}, \partial)$ is contractible. 
Note that the total space $\cE$ is a model of $B\Gamma$. 
Moreover, by the construction, this bundle has a section $B\pi \to \cE$ such that the neighborhood of its image is isomorphic to $B\pi \times \bD^2$. 
Now, the space $E\Gamma /\pi$ is a $\widetilde{\Sigma}_g$-bundle over $B\pi$ including a copy of $B\pi \times \bD^2$. The Poincar\'{e} dual of $B\pi \subset B\pi \times \bD^2 \subset E\Gamma /\pi$ determines a second cohomology class $\sigma \in H^2_c(E\Gamma /\pi;\bZ)$. This $\sigma$ satisfies \cref{assump:cocycle}. 
\end{exmp}

\subsection{Codimension 2 transfer map}
The codimension 2 transfer map of general homology theories, constructed by Nitsche, Schick and Zeidler \cites{nitscheNewTopologicalIndextheoretical2018,nitscheTransferMapsGeneralized2019}, is a homomorphism 
\[ \tau _\sigma \colon h_*(B\Gamma) \to h_{*-2}(B\pi). \]
Here $h_*$ is a general homology theory satisfying the following nice property. 
We say that a generalized homology theory $h_*$ has \emph{lf-lifting} if, for any covering $\overline{Y} \to Y$ and any open subspace $U \subset \overline{Y}$ such that the restriction of the projection $U \to Y$ is a proper map, there is a homomorphism
\[ \nu_{\overline{Y},U} \colon  h_*(Y) \to h_*(\overline{Y} , \overline{Y} \setminus U) \]
such that, if there is another covering $\widetilde{Y} \to Y$, open subspace $V \subset \widetilde{Y}$ and a covering map $F \colon \widetilde{Y} \to \bar{Y}$ such that $F|_U$ is injective and $F(U) \supset V$, then $F_* \circ \nu_{\overline{Y},U} = \nu_{\widetilde{Y},V}$ holds.
This is a slight modification of the notion of locally-finite restriction introduced by Nitsche (see e.g.~\cite{nitscheTransferMapsGeneralized2019}*{Definition 4.2}), which works even if $h_*$ does not have equivariant theory but the target is restricted to Galois covering spaces (i.e., free and proper actions).
For example, ordinary homology theory, cobordism theories, real and complex K-theories have lf-lifting.

Let $h^* $ be a multiplicative generalized cohomology theory which is complex oriented.
Let $c_1^h \in h^2(K(\bZ,2))$ denote the complex orientation. 
By abuse of notation, we use the same letter $\sigma $ for the second $h$-cohomology class $F_\sigma^*(\xi) \in h^2_c(E\Gamma /\pi)$. 
We also define $\sigma \in h^2_c(E\Gamma /\pi)$ for a generalized homology theory $h^*$ which is not necessarily complex oriented, like spin cobordism theory or $\KO$-theory. 
For this sake, additionally we assume that $\sigma$ satisfies (A2') in \cref{lem:SU2}. 
\begin{lem}
Assume $\sigma$ satisfies (A2') in \cref{lem:SU2}. Then there is a lifting of $F_\sigma$ as
\[
\xymatrix{
& S^2 \ar[d] \\
(E\Gamma /\pi)^\dagger \ar[r]^{F_\sigma} \ar@{.>}[ru]^{\widetilde{F}_\sigma} & BU(1).
}
\]
\end{lem}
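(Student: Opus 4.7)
The plan is to recognize the arrow $S^2 \to BU(1)$ in the diagram as the homotopy fiber of $\mu$, after which the lift becomes automatic from the null-homotopy hypothesis (A2'). First, I would note that the homomorphism $U(1) \hookrightarrow SU(2)$ sending $z \mapsto \diag(z,\bar z)$ gives a principal $U(1)$-bundle $SU(2) \to SU(2)/U(1)$ whose base is diffeomorphic to $S^2 = \bC P^1$. Taking classifying spaces, this yields the standard fibration sequence
\[ S^2 \cong SU(2)/U(1) \longrightarrow BU(1) \xrightarrow{\ \mu\ } BSU(2), \]
and the map $S^2 \to BU(1)$ appearing here is, up to homotopy, the map $S^2 \to BU(1)$ in the diagram to be lifted.

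Next, I would invoke the universal property of the homotopy fiber. By (A2') established in \cref{lem:SU2}, the composition $\mu \circ F_\sigma$ is null-homotopic. Choosing a null-homotopy $H \colon (E\Gamma/\pi)^\dagger \times [0,1] \to BSU(2)$ with $H_0$ constant and $H_1 = \mu \circ F_\sigma$, the pair $(F_\sigma, H)$ defines a map from $(E\Gamma/\pi)^\dagger$ into the standard model of the homotopy fiber of $\mu$, which is weakly equivalent to $S^2$. Composing with this equivalence produces the desired $\widetilde{F}_\sigma \colon (E\Gamma/\pi)^\dagger \to S^2$ that lifts $F_\sigma$ up to homotopy (and hence, after rectification, on the nose, since $S^2 \to BU(1)$ may be replaced by a fibration).

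A more geometric reformulation of the same argument, which may be preferable for later use, is to recall that the proof of \cref{lem:SU2} already produced an explicit non-vanishing section $\widetilde{s}$ of the $SU(2)$-bundle $L \oplus L^*$ classified by $\mu \circ F_\sigma$. Such a non-vanishing section is tautologically a reduction of the structure group from $SU(2)$ to the stabilizer $U(1)$ together with a trivialization of the normal complement, and classifying this data is precisely a map to $SU(2)/U(1) = S^2$ lifting the classifying map of $L \oplus L^*$ through the fibration above. This lift is the required $\widetilde{F}_\sigma$. There is no substantive obstacle here: the nontrivial content, namely the existence of the non-vanishing section, was already discharged in \cref{lem:SU2}; the present statement is merely its homotopy-theoretic reformulation.
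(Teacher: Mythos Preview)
Your proof is correct and takes essentially the same approach as the paper: the paper's proof is the single sentence ``This is obvious since $S^2 \cong SU(2)/U(1) \to BU(1) \to BSU(2)$ is a fibration,'' and your argument simply unpacks this by invoking the universal property of the homotopy fiber (together with an optional geometric rephrasing via the section $\widetilde{s}$ already built in \cref{lem:SU2}).
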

\begin{proof}
This is obvious since $S^2 \cong SU(2)/U(1) \to BU(1) \to BSU(2)$ is a fibration. 
\end{proof}
Let $1_h \in h^2(S^2 , \ast) \cong h^0$ denote the ring unit. 
In the same way as above, we use the same letter $\sigma$ for the second $h$-cohomology class $\widetilde{F}_\sigma^*(1_h) \in h^2_c(E\Gamma /\pi)$. 

Now we are ready to state the definition of the codimension 2 transfer map. 
We mention that, by replacing $F_{\sigma} $ if necessary, we may assume that the restriction of the projection $E\Gamma /\pi \to B\Gamma$ onto the support of $F_\sigma$, i.e., the open subset $V:= F_\sigma^{-1}(BU(1) \setminus \{ \ast \} ) \subset E\Gamma /\pi$, is a relatively proper map. 
Hence $\sigma \in h_c^2(E\Gamma /\pi)$ is the image of an $h$-cohomology class $\sigma_V \in h^2(E\Gamma / \pi , (E\Gamma /\pi) \setminus V )$.  
\begin{defn}\label{defn:tau}
Let $h_*$ be a multiplicative general homology theory having lf-lifting and let $\sigma \in H^2_c(E\Gamma /\pi ; \bZ)$. Assume that either $h_*$ is complex oriented, or $\sigma$ satisfies (A2') in \cref{lem:SU2}. 
Then the homomorphism $\tau_\sigma \colon h_*(B\Gamma) \to h_*(B\pi)$ is defined as the composition
\[\tau_\sigma \colon h_*(B\Gamma) \xrightarrow{\nu_{E\Gamma/\pi, V}} h_*(E\Gamma /\pi , (E\Gamma /\pi) \setminus V)  \xrightarrow{\sigma_V \cap {\cdot} } h_{*-2}( E\Gamma /\pi ) \cong h_{*-2}(B\pi), \]
where $V \subset E\Gamma /\pi$ is the open subset as above. 
\end{defn}
This definition is independent of the choice of $V$ and $\sigma_V$ because, if there is another choice $V'$ and $\sigma_{V'}$, there is $V'' \subset E\Gamma /\pi $ which includes $V \cup V'$ and the image of $\sigma_V$ and $\sigma_{V'}$ coincides in $H^2_c(V'';\bZ)$. 
\begin{para}\label{rmk:intersection}
The map $\tau_\sigma$ is understood in terms of intersection theory in the following way. Let $M$ be a closed manifold with $\pi_1(M) \cong \Gamma$ and let $\xi_M \colon M \to B\Gamma $ denote the classifying map of the universal covering $\widetilde{M}$. Then $\xi_M^*(\sigma) \in H^2_c(\widetilde{M}/\pi; \bZ)$ is represented by a closed submanifold $N \subset \widetilde{M}/\pi$ of codimension $2$.   
Let $U \subset \widetilde{M}$ denote the tubular neighborhood of $N$. In the same way as \cref{defn:tau}, we also define 
\[\tau_\sigma^{M,N} \colon h_*(M) \to h_*(\widetilde{M}/\pi , \widetilde{M}/\pi \setminus U) \xrightarrow{\xi_M^*(\sigma) \cap {\cdot} }  h_{*-2}(U) \cong  h_{*-2}(N). \]
By definition the diagram
\[
\xymatrix{
h_*(M) \ar[r]^{\tau_\sigma ^{M,N}} \ar[d]^{(\xi_{M})_*} & h_{*-2}(N) \ar[d]^{(\xi_{N})_*} \\
h_*(B\Gamma) \ar[r]^{\tau_\sigma } & h_{*-2}(B\pi )
}
\]
commutes, where $\xi_N$ denotes the classifying map of the universal covering of $N$. 

The map $\tau_\sigma^{M,N}$ is the intersection with $N$, taken in the covering space $\widetilde{M}/\pi$ instead of $M$. In particular, if $M$ is $h_*$-oriented, the fundamental class $[M]$ is sent to the fundamental class $[N]$. 
If we assume (A1) of \cref{assump:cocycle}, then we may choose $N$ as a submanifold of $\xi_M^{-1}(U_\sigma)$, and hence there is a submanifold $N \subset M$ which lifts to its copy in $\widetilde{M}/\pi$. 
In this case $\tau_\sigma^{M,N}$ is the intersection with $[N] \in H^2(M;\bZ)$. 

Let $\iota \colon N \to M$ denote the inclusion. Then $\iota^* (\sigma) = c_1(L|_N) = c_1(\nu N)$. 
Now, the additional assumption (A2) corresponds to the triviality of the normal bundle $\nu N$, since we have $\iota^*(\sigma) = \iota^*(j^* (\sigma )) =0$. 
\end{para}

\subsection{Transfer map via codimension 2 submanifold}
Here we relate the transfer map $\tau_\sigma$ with the topology of codimension 2 submanifolds  studied in \cites{gromovPositiveScalarCurvature1983,hankeCodimensionTwoIndex2015}. 
Let $M$ be a closed manifold and let $N$ be a codimension $2$ submanifold of $M$. 
Set $\Gamma=\pi_1(M)$ and $\pi :=\pi_1(N)$. 
Assume that the homomorphism $\pi \to \Gamma $ induced from the inclusion $N \subset M$ is injective. 
Let $p \colon \widetilde{M}/\pi \to M$ denote the covering map, where $\widetilde{M}$ denotes the universal covering space. 
The the connected component of the inverse image $p^{-1}(N)$ corresponds in one-to-one to the double coset space $\pi \setminus \Gamma /\pi$, and the unit $\pi e \pi$ corresnponds to a copy of $N$ in $\widetilde{M}/\pi$. 
Therefore, the Poincar\'e dual $\mathrm{PD}[N]$ of $N$ determines second cohomology classes in both $H^2(M;\bZ) $ and $H^2_c(\widetilde{M} ; \bZ)$.

.

The following proposition is a rephrasing of a part of \cite{nitscheNewTopologicalIndextheoretical2018}*{Theorem 5.3.6} and \cite{nitscheTransferMapsGeneralized2019}*{Theorem 5.1}. 
\begin{prp}\label{prp:NSZ}
Let $M$ be a closed manifold and let $N \subset M$ be a codimension 2 closed submanifold. Let $\Gamma :=\pi_1(M)$, $\pi:=\pi_1(N)$. Assume that 
\begin{enumerate}
    \item $\pi \to \Gamma$ is injective,
    \item $\pi_2(N) \to \pi_2(M)$ is surjective, and
    \item the normal bundle $\nu N$ is trivial. 
\end{enumerate}
Then there is a cohomology class $\sigma = \sigma_{M,N} \in H^2_c(E\Gamma/\pi ;\bZ )$ which satisfies \cref{assump:cocycle} and the pull--back $\xi_M^*(\sigma)$ by the classifying map $\xi_M \colon M \to B\Gamma $ coincides with the Poincar\'{e} dual $\PD [N] \in H^2_c(\widetilde{M}/\pi ;\bZ)$.
\end{prp}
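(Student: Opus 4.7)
The plan is to construct $\sigma$ as the extension--by--zero of a Thom class on a tubular neighbourhood of a copy of $N$ embedded inside a CW model of $E\Gamma/\pi$, following \cite{nitscheNewTopologicalIndextheoretical2018}. First I would use (1): a basepoint lift identifies $\pi_1(N)$ with $\pi \leq \Gamma$, so $N \hookrightarrow M$ lifts to an embedding $N \hookrightarrow \widetilde{M}/\pi$; by (3), a tubular neighbourhood $U \cong N \times \bD^2$ of this lift projects diffeomorphically onto a tubular neighbourhood of $N \subset M$, so the $\pi$--covering $\widetilde{M}/\pi \to M$ is injective on $U$.

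Next I would realise $E\Gamma/\pi$ as a $K(\pi,1)$ by attaching cells of dimension $\geq 3$ to $\widetilde{M}/\pi$ in such a way that $U$ survives as an open subspace, and build $B\Gamma$ from $M$ by the parallel construction. This is the step where (2) enters: every generator of $\pi_2(\widetilde{M}/\pi) = \pi_2(M)$ is represented by a sphere $f \colon S^2 \to N$ by (2), and the radial coordinate on the $\bD^2$--factor supplied by (3) produces a homotopy from $f$ to a sphere in $\widetilde{M}/\pi \setminus U$. Hence $\pi_2(\widetilde{M}/\pi \setminus U) \to \pi_2(\widetilde{M}/\pi)$ is surjective and the $3$--cells killing $\pi_2$ may be attached with boundaries disjoint from $U$; successive higher cells are attached by the same radial recipe or by general position. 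The induced covering $q \colon E\Gamma/\pi \to B\Gamma$ extends $\widetilde{M}/\pi \to M$ and restricts to an injection on $U_\sigma := U$.

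With the model at hand I would define $\sigma \in H^2_c(E\Gamma/\pi;\bZ)$ as the image under extension--by--zero of the Thom class
\[
\sigma_0 \in H^2(N \times \bD^2,\, N \times \partial \bD^2; \bZ) \cong \widetilde{H}^2(N_+ \wedge S^2;\bZ) \cong H^0(N;\bZ).
\]
Axiom (A1) is immediate, and the pullback $\xi_M^*(\sigma)$ is, tautologically on the model, the Thom class of the trivial normal bundle of $N$ in $\widetilde{M}/\pi$, namely $\PD[N]$. For axiom (A2), $\sigma$ can be exhibited as the image under the boundary map of a class in $H^1(E\Gamma/\pi \setminus U_\sigma;\bZ)$: concretely, the Thom form on $U_\sigma$ is exact on the contractible disc factor by (3), and its primitive restricts on $\partial U_\sigma = N \times S^1$ to a generator of the angular cohomology, which extends across $E\Gamma/\pi \setminus U_\sigma$ as the meridian class of $N$; hence $\sigma$ lies in the kernel of the forget--compact--support map $j^*$.

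The main obstacle is the CW construction in the second paragraph: in low ambient dimensions, a generic $2$--sphere representing a class in $\pi_2(\widetilde{M}/\pi)$ hits the codimension--$2$ submanifold $N$ in finitely many points that cannot be cancelled by small perturbation, and this is exactly where (2) combined with the radial push--off from (3) is essential rather than general position alone. Once the model is secured, the remaining verifications in (A1), (A2), and the identification $\xi_M^*(\sigma) = \PD[N]$ are routine naturality statements for Thom classes.
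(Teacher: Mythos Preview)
Your construction of $\sigma$ as the extension-by-zero of a Thom class, together with your CW model keeping the attached cells away from $U$, is close in spirit to the paper's approach (both follow Nitsche--Schick--Zeidler), and your verification of (A1) via $U_\sigma = U$ is in fact simpler than the paper's. The paper instead starts from the Hanke--Pape--Schick splitting (\cref{lem:HPS}) to produce a map $F \colon (\widetilde{M}/\pi,\, (\widetilde{M}/\pi) \setminus \overline{U}) \to (\bD^2, \partial\bD^2)$, extends $F$ inductively over the attached cells, and defines $\sigma$ as the pullback of the generator of $H^2(\bD^2,\partial\bD^2)$; its $U_\sigma$ is $F^{-1}(B_\varepsilon(0))$, and showing that $q|_{U_\sigma}$ stays injective after each cell attachment requires a separate argument.

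There is, however, a genuine gap in your (A2) argument. You assert that the angular generator of $H^1(\partial U_\sigma) = H^1(N \times S^1)$ ``extends across $E\Gamma/\pi \setminus U_\sigma$ as the meridian class of $N$'', but this is exactly the nontrivial step. Since your cells are attached only in dimension $\geq 3$, you have $H^1(E\Gamma/\pi \setminus U_\sigma) \cong H^1((\widetilde{M}/\pi) \setminus U)$, so the question reduces to whether the angular class extends over $(\widetilde{M}/\pi) \setminus U$. This is equivalent to the existence of a splitting $\pi_1((\widetilde{M}/\pi) \setminus U) \to \bZ$ of the meridian inclusion, which is precisely \cref{lem:HPS} and uses all three hypotheses in an essential way; your CW model does not supply it.

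The paper sidesteps this entirely with a much shorter argument for (A2): the classifying map $\xi_N \colon N \to B\pi$ is $2$-connected, so $\xi_N^* \colon H^2(B\pi;\bZ) \to H^2(N;\bZ)$ is injective, and $\xi_N^*(j^*(\sigma))$ is the restriction of the Thom class to the zero section, i.e.\ $c_1(\nu N) = 0$ by hypothesis~(3); hence $j^*(\sigma) = 0$. You can simply replace your (A2) paragraph with this observation and the rest of your outline goes through.
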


The proof of this proposition relies on the following key observation by Hanke--Pape--Schick, which is proved in the proof of {\cite{hankeCodimensionTwoIndex2015}*{Theorem 4.3}}. This claim is an essential `geometric' ingredient of the codimension 2 obstruction theory. 

\begin{lem}[{\cite{hankeCodimensionTwoIndex2015}}]\label{lem:HPS}
Let $M$ and $N$ satisfy (1), (2), (3) of \cref{prp:NSZ}. Let $U$ denote the tubular neighborhood of $N$, which is homeomorphic to $N \times \bD^2$. Then the homomorphism 
$\pi_1(\partial U) \to \pi_1((\widetilde{M}/\pi) \setminus U)$
is split injective. 
\end{lem}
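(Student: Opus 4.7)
The plan is to establish split injectivity by constructing an explicit retraction $r = (r_1, r_2) \colon \pi_1(\sM) \to \pi \times \bZ = \pi_1(\partial U)$, where I write $\sM := (\widetilde{M}/\pi) \setminus \mathrm{int}(U)$. The first coordinate is immediate: the inclusion $\sM \hookrightarrow \widetilde{M}/\pi$ induces a homomorphism $r_1 \colon \pi_1(\sM) \to \pi_1(\widetilde{M}/\pi) = \pi$, and by considering the alternative factorization $\partial U \hookrightarrow U = N \times \bD^2 \hookrightarrow \widetilde{M}/\pi$ one sees that $r_1 \circ \iota$ equals the first-factor projection $(\gamma, n) \mapsto \gamma$. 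All the real content of the lemma lies in producing the second coordinate $r_2 \colon \pi_1(\sM) \to \bZ$.

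Since $\Hom(\pi_1(\sM), \bZ) = H^1(\sM; \bZ)$, building $r_2$ reduces to finding a class $\alpha \in H^1(\sM; \bZ)$ whose restriction to $\partial U$ has angle-coefficient $1$ under the K\"unneth splitting $H^1(\partial U) \cong H^1(N) \oplus \bZ \cdot [\mathrm{d}\theta]$; any residual $N$-coefficient can be corrected at the end. Applying Mayer--Vietoris to the cover $\widetilde{M}/\pi = \sM \cup U$, the existence of such $\alpha$ is equivalent by exactness to the vanishing of the connecting map $\delta \colon H^1(\partial U) \to H^2(\widetilde{M}/\pi; \bZ)$ on $[\mathrm{d}\theta]$. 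A standard Thom-class calculation identifies $\delta[\mathrm{d}\theta]$ with the image in ordinary cohomology of the Poincar\'e dual $\PD[N] \in H^2_c(\widetilde{M}/\pi; \bZ)$.

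The main obstacle, and the place where all three hypotheses enter cleanly, is the vanishing $\PD[N] = 0$ in $H^2(\widetilde{M}/\pi; \bZ)$. Hypotheses (1) and (2) together say exactly that the inclusion $\iota_N \colon N \hookrightarrow \widetilde{M}/\pi$ is $2$-connected: it induces a $\pi_1$-isomorphism (both groups are $\pi$) and a $\pi_2$-surjection. Standard cellular arguments (equivalently, relative Hurewicz) then give $H_k(\widetilde{M}/\pi, N; \bZ) = 0$ for $k \leq 2$, which via the universal coefficient theorem makes $\iota_N^* \colon H^2(\widetilde{M}/\pi; \bZ) \to H^2(N; \bZ)$ injective. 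On the other hand, the self-intersection formula identifies $\iota_N^*(\PD[N])$ with the Euler class $e(\nu N) \in H^2(N; \bZ)$, which vanishes by hypothesis (3). Injectivity therefore forces $\PD[N] = 0$, and the desired $\alpha$ exists.

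To finish, I would convert $\alpha$ into a homomorphism $\widetilde{r}_2 \colon \pi_1(\sM) \to \bZ$, whose precomposition with $\iota$ has the form $(\gamma, n) \mapsto \beta(\gamma) + n$ for some $\beta \in \Hom(\pi^{\mathrm{ab}}, \bZ)$ recording the $N$-coefficient of $\alpha|_{\partial U}$. Regarding $\beta$ as a homomorphism $\pi \to \bZ$, I set $r_2 := \widetilde{r}_2 - \beta \circ r_1$; then $r_2 \circ \iota(\gamma, n) = n$, and the pair $r = (r_1, r_2)$ is the sought retraction, proving split injectivity. Everything beyond the vanishing of $\PD[N]$ is bookkeeping, so the heart of the argument lies in the interplay between $2$-connectedness (from (1), (2)) and the triviality of the normal bundle (from (3)).
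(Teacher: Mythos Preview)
Your proof is correct. The paper itself does not prove this lemma but simply cites Hanke--Pape--Schick \cite{hankeCodimensionTwoIndex2015}*{Theorem 4.3}; your argument --- showing that $N \hookrightarrow \widetilde{M}/\pi$ is $2$-connected from (1) and (2), deducing injectivity of $\iota_N^*$ on $H^2$, killing the image of $\PD[N]$ via $e(\nu N)=0$ from (3), and then reading off the $\bZ$-splitting from the resulting $H^1(\sM;\bZ)$-class via Mayer--Vietoris --- is essentially the argument given in the Hanke--Pape--Schick paper, recast in clean cohomological language.

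It is worth noting that the present paper does later give an \emph{independent} route to the same splitting statement, under the cohomological hypotheses (A1) and (A2): \cref{lem:group_split} derives it from the triviality of the pushed-forward extension $1 \to \bZ[\Gamma/\pi]^{\myhat} \to \hat G \to \Gamma \to 1$ and a quotient argument on $\hat H/\hat Z^\perp$. That argument and yours are two faces of the same coin --- yours works directly with Poincar\'e duality on $\widetilde{M}/\pi$, while \cref{lem:group_split} packages the same vanishing $(j^*\sigma = 0 \Leftrightarrow e(\nu N)=0)$ into the language of group extensions --- but since \cref{lem:group_split} logically comes \emph{after} \cref{lem:HPS} in the paper's development (it needs the class $\sigma$, whose construction in \cref{prp:NSZ} already invokes \cref{lem:HPS}), your direct proof is the appropriate one here.
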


\begin{proof}[{Proof of \cref{prp:NSZ}}]
The cohomology class $\sigma_{M,N}$ is constructed in  \cites{nitscheNewTopologicalIndextheoretical2018,nitscheTransferMapsGeneralized2019}. 
In order to check that $\sigma_{M,N}$ satisfies \cref{assump:cocycle}, we recall their construction. 
By \cref{lem:HPS}, there is a splitting $\pi_1(\widetilde{M}/\pi \setminus U) \to \bZ$, which induces $F \colon (\widetilde{M}/\pi) \setminus U \to  S^1$. 
This $F $ extends to 
\[
F \colon (\widetilde{M}/\pi , (\widetilde{M}/\pi) \setminus \overline{U} ) \to (\bD^2 , \partial \bD^2).
\]
A model of $B\Gamma$ is obtained by attaching $k$-cells $\{ D_i\}_{i \in \bN}$, for $k\geq 3$, to $M$. 
Corresponding to this, we obtain a model of $E\Gamma /\pi$ by attaching $\Gamma /\pi$ copies of cells $\{ D_{i,g\pi}\}_{i \in \bN, g\pi \in \Gamma /\pi}$ to $\widetilde{M}/\pi$. 
Let $X_0:=\widetilde{M}/\pi$ and we inductively define the CW-complex $X_i$ to be the one obtained by attaching $\{D_{i,g\pi}\}_{g\pi \in \Gamma /\pi}$ to $X_{i-1}$. Let $K_0:=\overline{U}$ and let $K_i$ be the subcomplex obtained by attaching to $K_{i-1}$ the (finite number of) $D_{i,g\pi}$'s whose boundary intersects with $K_{i-1}$. 
We extends the maps $F_{i} \colon (X_{i}, X_{i} \setminus K_{i}) \to (\bD^2, \partial \bD^2)$ inductively, and define $F \colon (E\Gamma/\pi , E\Gamma/\pi \setminus K) \to (\bD^2, \partial \bD^2)$ as their limit. Now the pull-back of the generator of $H^2(\bD^2, \partial \bD^2;\bZ)$ is the desired cohomology class.

When extending $F_{i-1}$ to $F_i$, it is chosen such that $q|_{V_i}$ is injective, where  $V_i$ denotes the inverse image of the closure of $B_{\varepsilon}(0) \subset \bD^2$. 
By setting $U_\sigma:= F^{-1}(B_\varepsilon (0))$, this shows (A1).
Indeed, since $p(V_{i-1}) \cap \partial D_i$ is identified with the disjoint union of $\partial D_{i,g\pi} \cap p(V_{i-1})$, there is an open neighborhood $U_{i,g\pi}$ of each $\partial D_{i,g\pi} \cap p(V_{i-1})$ in $D_i$ which are mutually disjoint. 
We can choose an extension $F_i$ on $D_{i,g\pi}$ in the way that $F_i (U_{i,g\pi}^c) \subset B_\varepsilon(0)^c$, which implies the injectivity of  $p|_{V_i}$ as desired.

Let $\xi_N \colon N \to B\pi$ denote the classifying map. 
The assumption (3) implies $c_1(\nu N) =\xi_N^{*}(\sigma ) = 0$. 
Now (A2) follows from the injectivity of $\xi_N^{*}$ in the second cohomology groups, which follows from $2$-connectedness of $\xi_N$. 
\end{proof}

\begin{notn}\label{notn:mfds}
Let $M$ be a closed manifold and let $N \subset M$ be a codimension 2 submanifold of $M$ satisfying (1), (2), (3) of \cref{prp:NSZ}. Let $U$ be a tubular neighborhood of $N$. 
Let $\widetilde{M}$ denote the universal covering of $M$. By the assumption of fundamental groups, there is a copy of $U$ in the $\Gamma/\pi$-covering $\widetilde{M}/\pi$ over $M$. Let $\sM$ denote the complement of $U$. Let $\widetilde{\sM}$ denote its $\pi$-covering, which is a manifold with the boundary and is a closed subset of $\widetilde{M}$. We write $\overline{\sM}$ for the $\bZ$-covering of $\widetilde{\sM}$ associated to the splitting given in \cref{lem:HPS}: 
\[\xymatrix@R=1.5em{
& \widetilde{M} \ar[r]^\pi  \ar@{}[d]|{\cup} &\widetilde{M}/\pi \ar[r]^{\Gamma/\pi} \ar@{}[d]|{\cup} & M \\
\overline{\sM} \ar[r]^\bZ & \widetilde{\sM} \ar[r]^\pi & \sM. & 
}
\]
We write $\sN$, $\widetilde{\sN}$, $\overline{\sN}$ for the boundary of $\sM$, $\widetilde{\sM}$, $\overline{\sM}$ respectively. Note that $\sN \cong N \times S^1$, $\widetilde{\sN} \cong \widetilde{N} \times S^1$ and $\overline{\sN} \cong \widetilde{\sN} \times \bR$. 
We also write $M^\circ$ for the complement of $U$. 
\end{notn}

\subsection{Extension of fundamental groups}\label{subsection:cocycle_ext}
The cohomology group $H^*_c(E\Gamma /\pi ; \bZ)$ is identified with the group cohomology $H^*(\Gamma ; \bZ[\Gamma /\pi])$. 
Here, $\bZ[\Gamma /\pi]$ denote the free abelian group of finitely supported $\bZ$-valued functions on $\Gamma /\pi$, in other words, the direct sum of $\Gamma /\pi$ copies of $\bZ$, whose $\Gamma$-module structure is induced from the regular $\Gamma$-action on $\Gamma /\pi$. 
To be precise, this identification is given in the following way. 
\begin{lem}
The groups $H^*_c(E\Gamma /\pi ; \bZ )$ and $H^*(\Gamma ; \bZ[\Gamma /\pi])$ are isomorphic.  
\end{lem}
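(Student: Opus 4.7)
The plan is to exhibit both cohomology groups as the cohomology of one explicit cochain complex built from a $\Gamma$-CW structure on $E\Gamma$. Fix such a structure with equivariant $n$-cells indexed by a set $\Sigma_n$, so $C_\bullet(E\Gamma)$ is a free $\bZ[\Gamma]$-resolution of $\bZ$. By the definition of group cohomology,
\[
H^n(\Gamma;\bZ[\Gamma/\pi]) \;=\; H^n\bigl(\mathrm{Hom}_{\bZ[\Gamma]}(C_\bullet(E\Gamma),\,\bZ[\Gamma/\pi])\bigr),
\]
and in degree $n$ this is canonically $\prod_{\sigma\in\Sigma_n}\bZ[\Gamma/\pi]$.

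I would then interpret this complex geometrically. The induced CW structure on $E\Gamma/\pi$ has $n$-cells $e_{\sigma,g\pi}$ indexed by $\Sigma_n \times (\Gamma/\pi)$, one over each equivariant $n$-cell of $B\Gamma$. Thus an element of $\prod_{\sigma\in\Sigma_n}\bZ[\Gamma/\pi]$ is canonically a cellular $n$-cochain on $E\Gamma/\pi$ whose support meets each fiber of $q\colon E\Gamma/\pi \to B\Gamma$ in a finite set; denote this subcomplex $C^\bullet_{\mathrm{ff}}(E\Gamma/\pi)$, the fiberwise-finite cochains.

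Finally, I would identify $H^*(C^\bullet_{\mathrm{ff}}(E\Gamma/\pi))$ with $\tilde{H}^*((E\Gamma/\pi)^\dagger)$. Since the basepoint section $E\Gamma\times_\Gamma\{*\}\cong B\Gamma$ into the fiber bundle $E\Gamma\times_\Gamma(\Gamma/\pi)^+ \to B\Gamma$ is a cofibration, $\tilde{H}^*((E\Gamma/\pi)^\dagger)$ coincides with the relative cohomology of this pair, and the relative Leray spectral sequence gives
\[
E_2^{p,q} \;=\; H^p\bigl(B\Gamma;\,\tilde{H}^q((\Gamma/\pi)^+)\bigr) \;\Longrightarrow\; \tilde{H}^{p+q}((E\Gamma/\pi)^\dagger),
\]
with local system induced by the natural $\Gamma$-action. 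Because $(\Gamma/\pi)^+$ is totally disconnected, $\tilde{H}^0((\Gamma/\pi)^+) = \bZ[\Gamma/\pi]$ (the locally constant functions on $(\Gamma/\pi)^+$ vanishing at infinity are exactly the finitely supported functions on $\Gamma/\pi$) and $\tilde{H}^q$ vanishes for $q > 0$, so the spectral sequence collapses to $H^p(\Gamma;\bZ[\Gamma/\pi])$. The main subtlety is that $(E\Gamma/\pi)^\dagger$ is not a CW complex in the strict sense when $\Gamma/\pi$ is infinite, since the basepoint at infinity is an accumulation point in each fiber; the cleanest way to verify that the spectral-sequence isomorphism is realized precisely by the chain inclusion $C^\bullet_{\mathrm{ff}}(E\Gamma/\pi) \hookrightarrow \tilde{C}^\bullet_{\mathrm{sing}}((E\Gamma/\pi)^\dagger)$ is to compute the $E_2$ term using the cellular model of $B\Gamma$ coming from $\Sigma_\bullet$, which reproduces the fiberwise-finite cochain complex up to canonical chain homotopy. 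This makes the identification natural in $\Gamma$ and $\pi$, which will be needed in the later sections.
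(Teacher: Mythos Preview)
Your proposal is correct, and the decisive step is the same as the paper's: the relative Serre (Leray) spectral sequence for the pair of fibrations $(E\Gamma \times_\Gamma (\Gamma/\pi)^+,\, E\Gamma \times_\Gamma \{*\}) \to B\Gamma$, whose $E_2$-page is concentrated on the row $q=0$ with $E_2^{p,0} = H^p(B\Gamma;\bZ[\Gamma/\pi])$ and therefore collapses. The paper invokes exactly this spectral sequence and stops there.

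Your cellular discussion—identifying $\mathrm{Hom}_{\bZ[\Gamma]}(C_\bullet(E\Gamma),\bZ[\Gamma/\pi])$ with the fiberwise-finite cochain complex $C^\bullet_{\mathrm{ff}}(E\Gamma/\pi)$—is a correct and concrete unpacking of what the spectral sequence abstractly says, but it is not needed as a separate step: once you run the spectral sequence, it already lands you at $H^p(B\Gamma;\bZ[\Gamma/\pi]) \cong H^p(\Gamma;\bZ[\Gamma/\pi])$ without any intermediate chain-level identification. You noticed this yourself when you wrote that the ``cleanest way'' to finish is the spectral sequence; in fact it is the whole argument. The extra cellular model does buy you something if you later want explicit cocycle representatives (as the paper does in \S\ref{subsection:cocycle_ext}), but for the bare isomorphism it is redundant.
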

\begin{proof}
We consider the relative version of the Serre spectral sequence for the pair of Serre fibrations $(E\Gamma \times _\Gamma (\Gamma /\pi)^+ ,  E\Gamma \times_\Gamma \{ \ast \})$ over $B\Gamma $ (cf.~\cite{mcclearyUserGuideSpectral1985}*{Exercise 5.6}). 
It is a spectral sequence converging to $H^2_c(E\Gamma /\pi ; \bZ)$ whose $E_2$-page is 
\[E_2^{p,q} \cong \begin{cases} H^p(B\Gamma ; \bZ[\Gamma /\pi ] ) & \text{if $q=0$}, \\ 0 & \text{otherwise}. \end{cases}\]
This shows the lemma. 
\end{proof}
Through this isomorphism, the cohomology class $\sigma \in H^2(\Gamma ; \bZ[\Gamma /\pi])$ corresponds to a group extension 
\begin{align}
    1 \to \bZ[\Gamma /\pi] \to G \to \Gamma \to 1. \label{eq:G}
\end{align}
We relate the middle group $G$ with a fundamental group of manifolds. Let $M$ be a closed manifold with $\pi_1(M) \cong \Gamma$ and let $N \subset M$ be a codimension $2$ submanifold representing $f^*\sigma$ as in \cref{rmk:intersection}. 
In the same way as \cref{notn:mfds}, we write as $M^\circ := M \setminus U$ and $\sM:=(\widetilde{M} / \pi ) \setminus U$, where $U$ is a tubular neighborhood of $N$. 
\begin{lem}\label{lem:G}
We assume that $\sigma$ satisfies (A1) of \cref{assump:cocycle}.  
Then the homomorphism $\iota_* \colon \pi_1(M^\circ) \to \pi_1(M) \cong \Gamma$ factors through $G$, that is, there is a group homomorphism $\varphi \colon \pi_1(M^\circ) \to G$ such that the diagram
\[
\xymatrix{
\pi_1(M^\circ) \ar[r]^{ \ \ \ \ \ \ \ \varphi}  \ar[rd]_{\iota_*} & G \ar[d] \\ & \Gamma 
}
\]
commutes. 
\end{lem}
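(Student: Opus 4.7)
A homomorphism $\varphi \colon \pi_1(M^\circ) \to G$ lifting $\iota_*$ is the same as a splitting of the pullback extension
\[ 1 \to \bZ[\Gamma/\pi] \to \iota_*^*G \to \pi_1(M^\circ) \to 1, \]
which exists if and only if the class $\iota_*^*(\sigma) \in H^2(\pi_1(M^\circ); \bZ[\Gamma/\pi])$ vanishes (here $\bZ[\Gamma/\pi]$ is regarded as a $\pi_1(M^\circ)$-module through $\iota_*$). So the plan is to prove this vanishing.

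First, I would pass from group cohomology to the cohomology of the space $M^\circ$. The classifying map $M^\circ \to B\pi_1(M^\circ)$ is an isomorphism on $\pi_1$, and by attaching cells of dimension $\geq 3$ to $M^\circ$ one can model $B\pi_1(M^\circ)$ so that the relative CW-pair has no $k$-cells for $k \leq 2$. Hence $H^2(B\pi_1(M^\circ),M^\circ;A)=0$ for every local system $A$, and the restriction
\[ H^2(B\pi_1(M^\circ);A) \hookrightarrow H^2(M^\circ;A) \]
is injective. Under this map, $\iota_*^*(\sigma)$ goes to $\iota^*\xi_M^*(\sigma) \in H^2(M^\circ; \bZ[\Gamma/\pi])$, so it suffices to show the latter vanishes.

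Next I would invoke \cref{prp:NSZ}. Under the identification $H^2(M; \bZ[\Gamma/\pi]) \cong H^2_c(\widetilde{M}/\pi; \bZ)$, the class $\xi_M^*(\sigma)$ is the Poincar\'e dual $\PD[N]$ of the lifted copy of $N$. Assumption (A1), together with the construction of $\sigma_{M,N}$ in the proof of \cref{prp:NSZ}, says that one may take the support of a representative of $\sigma$ to lie in a neighborhood of $N$ contained in the tubular neighborhood $U$. Equivalently, $\xi_M^*(\sigma)$ lifts to a class in the relative cohomology $H^2(M, M^\circ; \bZ[\Gamma/\pi])$; on the covering side, $\PD[N]$ admits a compactly supported representative in $U \subset \widetilde{M}/\pi$, which restricts trivially to the open subset $\widetilde{M}/\pi \setminus p^{-1}(U) = p^{-1}(M^\circ)$. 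By the long exact sequence of the pair $(M,M^\circ)$, this yields $\iota^*\xi_M^*(\sigma) = 0$, hence $\iota_*^*(\sigma) = 0$, and $\varphi$ exists.

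The main obstacle is the careful bookkeeping across the various incarnations of the coefficient group $H^2(\Gamma; \bZ[\Gamma/\pi])$ (as group cohomology, as cohomology of $B\Gamma$ with local system $\bZ[\Gamma/\pi]$, as compactly supported cohomology of $E\Gamma/\pi$, and, after pullback, as the compactly supported cohomology of the $\pi$-covering $\widetilde{M}/\pi$). Once these identifications are in place and (A1) is used to localize a representative of $\xi_M^*(\sigma)$ near the lifted copy of $N$, the final vanishing is a direct application of the long exact sequence of a pair.
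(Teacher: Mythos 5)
Your argument is correct and follows essentially the same route as the paper's proof: reduce the existence of $\varphi$ to the vanishing of $\iota_*^*(\sigma) \in H^2(\pi_1(M^\circ);\bZ[\Gamma/\pi])$, use the $2$-connectedness of the classifying map $M^\circ \to B\pi_1(M^\circ)$ to inject that group into $H^2(M^\circ;\bZ[\Gamma/\pi])$, and then observe that $\iota^*\xi_M^*(\sigma)$ vanishes because under (A1) the Poincar\'{e} dual of $N$ is supported in the tubular neighborhood $U$ (hence restricts to zero on $M^\circ = M\setminus U$). The only stylistic difference is that you spell out the last vanishing via the Poincar\'e dual and the long exact sequence of the pair $(M,M^\circ)$, while the paper summarizes it as following from ``the definition of $U$.''
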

\begin{proof}
Set $G':=\pi_1(M^\circ )$. Let $\xi_M \colon M \to B\Gamma$ and $\xi_{M^\circ} \colon M^\circ \to BG'$ denote the classifying maps and let $\iota \colon M^\circ \to M$ denote the inclusion. 
Then $\iota_* \colon G' \to \Gamma$ induces a map from $BG'$ to $B\Gamma$, denoted by $B\iota_*$. 
The composition $B\iota_* \circ \xi_{M^\circ} $ is the classifying map induced from the homomorphism $G' \to \Gamma$ and hence is homotopic to $\xi_M \circ \iota$. 
Also, by definition of $U$, we have 
\[ (\xi_M \circ \iota )^* (\sigma)  = 0 \in H^2(M^\circ; \bZ[\Gamma / \pi]),\]
and hence $(B\iota_* \circ \xi_{M^\circ})^*(\sigma) =0$. 
Moreover, since $\xi_{M^\circ}$ is a $2$-connected map, it induces an injection of second cohomology groups. 
This show that $(B\iota_*)^* (\sigma) =0 \in H^2(BG';\bZ[\Gamma / \pi ])$. 
In terms of group extension, the pull-back extension 
\[1 \to \bZ[\Gamma /\pi] \to G' \times_\Gamma G \to G' \to 1\] 
splits, i.e., there is a homomorphism
\[
\xymatrix{
&&&G' \ar[d]^{\iota_*} \ar@{.>}[dl] & \\ 
1 \ar[r] & \bZ[\Gamma /\pi] \ar[r] & G  \ar[r] & \Gamma \ar[r] & 1
}
\]
as desired.
\end{proof}

We rephrase the condition (A2) of \cref{assump:cocycle} in terms of group extension. 
Let $\bZ[\Gamma/\pi]^{{\myhat}}$ denote the completion of $\bZ[\Gamma /\pi]$, i.e., the direct product $\prod_{\Gamma /\pi} \bZ$. 
Then $H^*(\Gamma; \bZ[\Gamma /\pi]^{\myhat})$ is isomorphic to the cohomology group $H^*(E\Gamma /\pi;\bZ)$. Moreover, through this isomorphism, 
 the map $ H^*(\Gamma ; \bZ[\Gamma /\pi]) \to H^*(\Gamma; \bZ[\Gamma /\pi]^{\myhat})$ induced from the inclusion of coefficients $\bZ[\Gamma /\pi] \to \bZ[\Gamma /\pi]^{{\myhat}}$ is identified with $j^*$. 
  Let $S^1 \subset \sM$ be a link of $N \subset \widetilde{M}/\pi$. Its fundamental group $\pi_1(S^1) \cong \bZ$ is sent to $\bZ \cdot \delta_{e\pi } \subset \bZ[\Gamma /\pi] \subset G$ by the map $\varphi$ in \cref{lem:G}.
\begin{lem}\label{lem:group_split}
Assume that $\sigma \in H^2_c(E\Gamma /\pi ; \bZ)$ satisfies \cref{assump:cocycle}. 
Then the homomorphism $\pi_1(S^1 ) \cong \bZ \to \pi_1(\widetilde{M}^\circ /\pi )$ is split injective.
\end{lem}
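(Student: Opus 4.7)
The plan is to construct a group homomorphism $\psi \colon \pi_1(\sM) \to \bZ$ that sends the class of the meridian $S^1$ to a generator. The key input is condition (A2), which will force an appropriate second cohomology class on $\widetilde{M}/\pi$ to vanish in ordinary cohomology, so that a Thom class can be lifted through a long exact sequence.

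First, using (A1), I would represent the pullback $\xi_M^*(\sigma) \in H^2_c(\widetilde{M}/\pi;\bZ)$ by a Thom class $\tau \in H^2(\widetilde{M}/\pi, \sM;\bZ)$ supported in the single lift of the tubular neighborhood $U \cong N \times \bD^2$ inside $\widetilde{M}/\pi$, via excision together with the Thom isomorphism for the trivial rank-$2$ disk bundle $U$. I would then apply the long exact sequence of the pair $(\widetilde{M}/\pi, \sM)$:
\[
H^1(\widetilde{M}/\pi;\bZ) \to H^1(\sM;\bZ) \xrightarrow{\delta} H^2(\widetilde{M}/\pi, \sM;\bZ) \xrightarrow{i^*} H^2(\widetilde{M}/\pi;\bZ).
\]
The image $i^*(\tau)$ is the forgetful image of $\xi_M^*(\sigma)$ in ordinary cohomology, which by naturality equals $\xi_M^*(j^*\sigma)$; by (A2) this vanishes. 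Hence $\tau$ lifts through $\delta$ to a class $\alpha \in H^1(\sM;\bZ)$, which is classified by a continuous map $f \colon \sM \to K(\bZ,1) = S^1$, yielding the candidate $\psi := f_\ast$.

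Second, I would check that $\psi([S^1]) = \pm 1$. The meridian $S^1$ cobounds in $\widetilde{M}/\pi$ a disk $D$ transverse to $N$ with $D \cdot N = \pm 1$ (namely, the fiber of the normal bundle at one point). The defining relation between $\delta$ and the boundary operator then gives
\[
\langle \alpha, [S^1]\rangle \;=\; \langle \alpha, \partial [D, \partial D]\rangle \;=\; \langle \delta \alpha, [D, \partial D]\rangle \;=\; \langle \tau, [D, \partial D]\rangle \;=\; \pm 1,
\]
using that $\tau$ is a Thom class of the trivial rank-$2$ disk bundle $U$. Therefore $\psi$ is a retraction (up to sign) of the inclusion $\pi_1(S^1) \hookrightarrow \pi_1(\sM)$, as required.

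The main delicate point is the initial identification of $\xi_M^*(\sigma)$ with the Thom class $\tau$ in $H^2(\widetilde{M}/\pi, \sM;\bZ)$: since $\widetilde{M}/\pi$ is non-compact and has in general infinitely many lifts of $U$, one must verify that a representative of the compactly-supported class $\xi_M^*(\sigma)$ can be chosen with support inside the single designated lift $U \subset \widetilde{M}/\pi$. This is precisely the purpose of hypothesis (A1), applied along the lines of \cref{rmk:intersection}. Granted this geometric identification, the remainder of the argument is a formal manipulation of the long exact sequence together with the standard Kronecker-dual interpretation of the connecting map via intersection pairings. In the particular situation of \cref{prp:NSZ}, this recovers \cref{lem:HPS} of Hanke--Pape--Schick by a more cohomological route.
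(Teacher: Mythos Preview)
Your argument is correct and takes a genuinely different route from the paper. The paper argues purely in terms of group extensions: it observes that (A2) makes the pushout extension $1 \to \bZ[\Gamma/\pi]^{\myhat} \to \hat G \to \Gamma \to 1$ split, pulls back over $\pi$ and quotients by $\hat Z^\perp = \prod_{(\Gamma/\pi)\setminus\{e\pi\}}\bZ$ to get $\hat H/\hat Z^\perp \cong \pi \times \bZ$, and then takes the composite $\pi_1(\sM) \to H \hookrightarrow \hat H \to \hat H/\hat Z^\perp \to \bZ$; this sends the meridian (which hits $\delta_{e\pi}\in\bZ[\Gamma/\pi]\subset G$ under the map $\varphi$ of \cref{lem:G}) to a generator. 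You instead stay on the space $\widetilde M/\pi$: using (A1) to realize $\xi_M^*\sigma$ as the Thom class in $H^2(\widetilde M/\pi,\sM;\bZ)$, then using (A2) as the vanishing of its image in $H^2(\widetilde M/\pi;\bZ)$ to lift it to $\alpha\in H^1(\sM;\bZ)=[\sM,S^1]$, and finally checking $\langle\alpha,[S^1]\rangle=\pm1$ via the Kronecker-dual relation between the connecting maps in homology and cohomology. Your approach is more direct and avoids the auxiliary group $G$; the paper's approach makes the connection with the extension-of-groups framework of \cref{subsection:cocycle_ext} explicit, which is what is exploited in \cref{section:Cstar}.
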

\begin{proof}
Let $H$ and $\hat{H}$ denote the preimage of $\pi$ by the quotient $G \to \Gamma $ and $\hat{G} \to \Gamma$ respectively. 
Since (A2) is equivalent to $j^*(\sigma) =0$, we have that the extension 
\[1 \to \bZ[\Gamma /\pi]^{\myhat} \to \hat{G} \to \Gamma \to 1\]
is trivial. Therefore the pull-back extension $1 \to \bZ[\Gamma /\pi]^{\myhat} \to \hat{H} \to \pi  \to 1$, and hence its quotient 
\[ 1 \to \bZ \to \hat{H}/ \hat{Z}^\perp \to \pi \to 1 \]
by the normal subgroup $ \hat{Z}^\perp := \prod_{(\Gamma / \pi) \setminus \{ e\pi \}} \bZ $ of $\hat{H}$, are also trivial. 
That is, $\hat{H}/\hat{Z}^\perp  \cong \pi \times \bZ$. 
Now the composition 
\[ \pi_1(\widetilde{M}{}^\circ /\pi ) \cong H \to \hat{H} \to \hat{H}/\hat{Z}^\perp \cong \pi \times \bZ \to \bZ \]
is the desired splitting of $\bZ \to H$. 
\end{proof}

\begin{rmk}\label{rmk:cocycle_transfer}
Let $s \colon \Gamma \to G$ be a set-theoretic section. Then the function $(g,h) \mapsto s(g)s(h)s(gh)^{-1} \in \bZ[\Gamma /\pi]$ satisfies the $2$-cocycle relation and is a representative of $\sigma \in H^2(\Gamma; \bZ[\Gamma /\pi])$. 
Hereafter we also use the letter $\sigma(g,h)$ for this $2$-cocycle. 
The codimension $2$ transfer map in group homology is given by the cap product as
\[H_*(\Gamma ; \bZ) \to H_*(\Gamma ; \bZ[\Gamma /\pi]^{\myhat}) \xrightarrow{\sigma \cap \cdot } H_{*-2}(\Gamma ; \bZ[\Gamma /\pi]) \to H_{*-2}(\pi; \bZ). \]
Conversely, the codimension $2$ co-transfer map in group cohomologies is defined by the cup product as
\[H^{*-2}(\pi ; \bZ) \cong H^{*-2}(\Gamma ; \bZ[\Gamma /\pi]^{\myhat}) \xrightarrow{\sigma \cup \cdot } H^{*}(\Gamma ; \bZ[\Gamma /\pi] ) \to H^*(\Gamma ; \bZ), \]
where the last morphism is induced from the map of coefficients $\bZ[\Gamma /\pi] \to \bZ$ sending $\sum n_{g\pi} \delta_{g\pi} \in \bZ[\Gamma /\pi]$ to $ \sum n_{g\pi} \in \bZ $.
\end{rmk}

We discuss a converse of \cref{prp:NSZ}, getting a codimension 2 submanifold $N \subset M$ from the cohomology class $\sigma$.
\begin{para}\label{para:cocycle}
Let $\Gamma$ be a finitely presented group, let $\pi$ be a subgroup and let $M$ be a closed manifold with $\pi_1(M) \cong \Gamma$. Let the cohomology class $\sigma \in H^2_c(E\Gamma / \pi ;\bZ)$ satisfy \cref{assump:cocycle}, and let $N \subset \xi_M^{-1}(U) \subset M$ be a codimension $2$ submanifold of $M$ as in  \cref{rmk:intersection}. 
Then the pair $(M,N)$ satisfies the following.
\begin{enumerate}
    \item There is a homomorphism $\pi_1(N) \to \pi$.
    \item The normal bundle $\nu N$ is trivial.
    \item The homomorphism $\pi_1(S^1) \to \pi_1(\sM) $ splits. 
\end{enumerate}
Indeed, (1) follows from (A1) since $N$ is identified with a submanifold of $\widetilde{M}/\pi$, (2) is checked in the same way as the last paragraph of \cref{rmk:intersection}, and (3) follows from \cref{lem:group_split}.
These three conditions are enough for the construction of codimension $2$ transfer discussed in the coming sections. 
\end{para}

\section{C*-algebraic codimension 2 transfer revisited}\label{section:Cstar}
A C*-algebraic codimension 2 transfer map \eqref{eq:KS} is constructed in \cite{kubotaGromovLawsonCodimensionObstruction2020} by using a representation of $\Gamma$ onto the Calkin algebra of a Hilbert C*-module. 
In this section, we revisit this construction from the viewpoint of the second cohomology class $\sigma \in H^2_c(E\Gamma /\pi ; \bZ)$ introduced in the previous section. 
We associate to $\sigma$ a twist, i.e.~a $\bT$-valued $2$-cocycle, of the action groupoid $\cB \rtimes \Gamma$, where $\cB$ is a bouquet of circles. 
The associated twisted crossed product C*-algebra $C(\cB) \rtimes_\sigma \Gamma $ is related to the Calkin algebra used in \cite{kubotaGromovLawsonCodimensionObstruction2020}. 

Throughout the paper, we consider the maximal completions for group C*-algebras or crossed products unless otherwise noted. 

\begin{rmk}
This and the next sections are based on complex $\K$-theory, but written in such a way that all statements and discussions are immediately extended to Real $\K$-theory (for example, we treat the degree 2 shift of K-theory faithfully). 
For this sake, we mix some notations specific to Real $\K$-theory into our complex-based discussion. 
Let $\bR^{0,1}$ denote the Real space $\bR$ equipped with the involution $t \mapsto -t$ and let $S^{0,1}:=C_0(\bR^{0,1})$, the Real C*-algebra of continuous functions on $\bR^{0,1}$ vanishing at infinity. Let $\mathbb{T}^{0,1}$ denote the $1$-point compactification of $\bR^{0,1}$, which is a circle with $2$ fixed points of the involution.  
\end{rmk}

\subsection{C*-algebraic codimension 2 transfer and twisted crossed product}\label{section:groupoid}
Let $\Pi$ denote the direct product group $\pi \times \bZ$ and 
let $\bB_{C^*\Pi }$ and $\bK_{C^*\Pi}$ denote the C*-algebra of bounded adjointable operators and compact operators on the Hilbert $C^*\Pi$-module $\ell^2(\Gamma /\pi ) \otimes C^*\Pi$ respectively. 
In \cite{kubotaGromovLawsonCodimensionObstruction2020}, a $\ast$-homomorphism
\[\phi \colon C^*\Gamma \to \cQ_{C^*\Pi}\]
is constructed, where $\cQ_{C^*\Pi}$ is the Calkin algebra $\bB_{C^*\Pi} / \bK_{C^*\Pi }$.
The C*-algebraic codimension 2 transfer map \eqref{eq:KS} is defined by the composition 
\begin{align}
    \tau_\sigma \colon \K_*(C^*\Gamma) \xrightarrow{\phi_*} \K_*(\cQ_{C^*\Pi}) \xrightarrow{\partial} \K_{*-1}(\bK_{C^*\Pi})  \xrightarrow{\beta} \K_{*-2}(C^*\pi ), \label{eq:transfer}
\end{align} 
where $\partial$ denotes the boundary map of associated to the extension $0 \to \bK_{C^*\Pi} \to \bB_{C^*\Pi} \to \cQ_{C^*\Pi} \to 0$ and $\beta$ denotes the projection onto the second direct summand of 
\begin{align*}
    \K_{*-1}(\bK_{C^*\Pi}) \cong \K_{*-1}(C^*\Pi) &\cong \K_{*-1}(C^*\pi \otimes C(\mathbb{T}^{0,1})) \cong  \K_{*-1}(C^*\pi) \oplus \K_{*-2}(C^*\pi),
\end{align*}
which is actually given by the Kasparov product with an element $\beta \in \K_{-1}(C(\bR^{0,1}))$.

We shortly recall the construction of $\phi$. 
Let $\cV$ denote the Mishchenko bundle, i.e., the $C^*\Pi$-module bundle 
\[\cV :=\overline{\sM}{}^\circ \times_\Pi C^*\Pi \to \sM^\circ \] 
over $\sM^\circ := \widetilde{M}{}^\circ /\pi \subset \sM$.
Let $\bar{p}_!\cV$ denote its push-forward onto $M^\circ$ with respect to the projection $\bar{p}$. 
This is a Hilbert $C^*\Pi$-module bundle whose fiber is 
\[ (\bar{p}_! \cV) _x \cong \ell^2(\Gamma /\pi) \otimes C^*\Pi.\] 
By \cref{lem:G}, the group $G$ defined as in \eqref{eq:G} acts on $(\overline{p}_!\cV)_x$ by the monodromy representation, which gives rise to a $\ast$-homomorphism  $ \tilde{\phi } \colon G \to \cU(\bB_{C^*\Pi })$. 
Moreover, the generator $t := \delta_{e\pi } \in \bZ[\Gamma /\pi] \subset G$ acts on the fiber $(\pi_!\cV)_x$ by a compact operator. 
Therefore, the $G$-action reduces to a homomorphism from $\Gamma = G/\langle t \rangle $ to the unitary group of $\cQ_{C^*\Pi}$, which induces the desired $\ast$-homomorphism $\phi$.

This $\phi$ associates a C*-algebra extension 
\[0 \to \bK_{C^*\Pi} \to \bB_{C^*\Pi} \oplus _{\cQ_{C^*\Pi}} C^*\Gamma  \to C^*\Gamma \to 0, \]
where the middle C*-algebra is the fiber product, i.e., the subalgebra of the direct sum $\bB_{C^*\Pi} \oplus  C^*\Gamma $ consisting of pairs $(x,y)$ satisfying $q (x) = \phi (y) \in \cQ_{C^*\Pi}$ (here $q \colon \bB_{C^*\Pi} \to \cQ_{C^*\Pi} $ denotes the quotient). 
By the maximality of the norm on the group C*-algebra $C^*G$, the representation
\[ (\tilde{\phi}, \psi) \colon G \to \cU(\bB_{C^*\Pi} \oplus _{\cQ_{C^*\Pi}} C^*\Gamma ),\]
where $\psi \colon G \to \Gamma$ denote the quotient, gives rise to a $\ast$-homomorphism $C^*G \to \bB_{C^*\Pi} \oplus _{\cQ_{C^*\Pi}} C^*\Gamma$. 

Now we determine the image of this $\ast$-homomorphism. 
Recall that the group $G$ is obtained as a twisted semi-direct product $ \bZ[\Gamma /\pi] \rtimes_\sigma \Gamma $. Hence we have
\[ C^*G \cong C^*(\bZ[\Gamma /\pi]) \rtimes_\sigma \Gamma = C(T) \rtimes_\sigma \Gamma, \]
where $T:=\prod _{\Gamma /\pi } \bT^{0,1}$ denotes the Pontrjagin dual of $\bZ[\Gamma /\pi]$. 
Here, by the abuse of notation, we use the same letter $\sigma $ for the associated $\cU(C^*(\bZ[\Gamma /\pi]))$-valued $2$-cocycle of $\Gamma$. Through the Pontrjagin duality $C^*(\bZ[\Gamma /\pi]) \cong C(T)$, this $\sigma$ is identified with the $2$-cocycle
\begin{align*}
 &&   \hat{\sigma}_{T} \colon \Gamma \times \Gamma \times T \to \bT, && \hat{\sigma}_{T}(g,h,\chi) := \chi(\sigma(g,h)), &&
\end{align*} 
of the action groupoid $T \rtimes \Gamma$. 
\begin{rmk}
The $2$-cocycle $\hat{\sigma}_T  \in Z_\Gamma ^2(T, \bT)$ is complex-conjugation invariant in the following sense. The group $\bZ_2$ acts on the Real space $T$ by the involution and on the sheaf $\underline{\bT}$ by complex conjugation. Then $\hat{\sigma}_T$ is $\bZ_2$-invariant in $Z_\Gamma^2(T, \underline{\bT})$. This enables us to impose the canonical Real structure on the twisted crossed product $C(T) \rtimes _\sigma \Gamma \cong C^*G$ determined by 
\[\overline{f \cdot u_g} = \bar{f} \cdot u_g, \]
where $\bar{f}(\chi)=\overline{f(\bar{\chi})}$ for $\chi \in T$. Note that this Real structure is the same with the standard Real structure imposed to the group C*-algebra $C^*G$. 
\end{rmk}

For $g \pi \in \Gamma /\pi$, let $\cX_{g\pi}$ denote the submodule $\ell^2(\{ g\pi \}) \otimes C^*\Pi$ of $\ell^2(\Gamma /\pi) \otimes C^*\Pi$.  
Since the $\bZ$-covering $\overline{\sM}{}^\circ \to \sM^\circ $ extends to $\overline{\sM } \to \sM$, the monodromy of the element $\delta_{g\pi } \in \bZ[\Gamma /\pi] \subset G$ is the diagonal unitary given by
\begin{align}
    \begin{split}
    u_{\delta_{g\pi}} |_{\cX_{h\pi}} = \begin{cases} u_t & \text{ if $g=h$, }\\ 1 & \text{otherwise,} \end{cases} 
    \end{split} \label{eq:Bzero}
\end{align} 
 where $u_t \in C^*\bZ \subset C^*\Pi$ denotes the generator. In particular, each $u_{\delta_{g\pi}}$ is a compact operator on $\ell^2(\Gamma /\pi) \otimes C^*\Pi$. 
This shows that the $\ast$-homomorphism
\[\phi \colon C^*(\bZ[\Gamma /\pi]) \cong C( T ) \to \bB_{C^*\Pi}\]
factors through $C(\cB)$, where 
\[\cB:=(\bR^{0,1} \times \Gamma /\pi)^+ \subset T\]
denotes the bouquet of $\Gamma/ \pi$ copies of circles.

Let $\hat{\sigma}_\cB $ denote the restriction of $\hat{\sigma}_T$ to the subgroupoid $\cB \rtimes \Gamma$. By abuse of notation, we simply write $\sigma$ for this cocycle. 
Then the above discussion is summarized to be the existence of a factorization indicated as the dotted arrow;
\begin{align*}
    \xymatrix{
    C^*G \ar[r] \ar[d] & C(\cB) \rtimes _{\sigma} \Gamma  \ar[r] \ar@{.>}[ld] & C^*\Gamma \ar[d]  \\
    \bB_{C^*\Gamma } \ar[rr] && \cQ_{C^*\Gamma }.
    }
\end{align*}

\begin{lem}\label{lem:untwist}
Assume that $\sigma$ satisfies (A2) of \cref{assump:cocycle}. Then the twisted crossed product $C_0(\cB_0) \rtimes_\sigma \Gamma $ is isomorphic to $S^{0,1}C^*\pi \otimes \bK(\ell^2(\Gamma /\pi))$.
\end{lem}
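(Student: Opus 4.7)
The plan is to exhibit $C_0(\cB_0)\rtimes_\sigma\Gamma$ as an untwisted crossed product by trivializing the cocycle $\hat\sigma_\cB$ on the restricted groupoid $\cB_0\rtimes\Gamma$, and then invoke Green's imprimitivity theorem. Condition (A2), rewritten via the identifications $H^2_c(E\Gamma/\pi;\bZ)\cong H^2(\Gamma;\bZ[\Gamma/\pi])$ and $H^2(E\Gamma/\pi;\bZ)\cong H^2(\Gamma;\bZ[\Gamma/\pi]^{\myhat})$, states that $\sigma$ vanishes in the latter group. Hence there is an integer-valued $1$-cochain $\mu\colon\Gamma\to\bZ[\Gamma/\pi]^{\myhat}$ satisfying $d\mu=\sigma$. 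Define $\hat\mu(g)\colon\cB_0\to\bT$ by $\hat\mu(g)(\chi):=\chi(\mu(g))$; on the arm $g'\pi\cong\bR^{0,1}$ this reads $t\mapsto e^{2\pi i t\,\mu(g)_{g'\pi}}$, a continuous unitary function, so $\hat\mu(g)\in\cU(C_b(\cB_0))=\cU(M(C_0(\cB_0)))$. A direct computation from $\sigma(g,h)=\mu(g)+g\cdot\mu(h)-\mu(gh)$ yields $d\hat\mu=\hat\sigma_\cB$; since $C_0(\cB_0)$ is commutative, the perturbed action $\mathrm{Ad}(\hat\mu(g))\circ\alpha_g$ coincides with $\alpha_g$, so the assignment $fu_g\mapsto f\hat\mu(g)u_g$ produces a $\ast$-isomorphism $C_0(\cB_0)\rtimes_\sigma\Gamma\cong C_0(\cB_0)\rtimes\Gamma$.

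For the resulting untwisted crossed product, decompose $C_0(\cB_0)=S^{0,1}\otimes c_0(\Gamma/\pi)$ as a $\Gamma$-$C^*$-algebra, with trivial action on $S^{0,1}$ and left translation on $c_0(\Gamma/\pi)$, yielding
$$
C_0(\cB_0)\rtimes\Gamma\cong S^{0,1}\otimes\bigl(c_0(\Gamma/\pi)\rtimes\Gamma\bigr).
$$
Green's imprimitivity theorem, or equivalently the explicit Packer--Raeburn-type identification using a set-theoretic section $s\colon\Gamma/\pi\to\Gamma$, provides a $\ast$-isomorphism $c_0(\Gamma/\pi)\rtimes\Gamma\cong C^*\pi\otimes\bK(\ell^2(\Gamma/\pi))$, consistent with the maximal-completion convention of the paper. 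Combining these two steps gives the desired identification, and compatibility with the Real structure is automatic because both $\sigma$ and $\mu$ take integer values.

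The delicate point is the continuity of $\hat\mu$ in the first step. The $1$-cochain $\mu$ only exists after enlarging coefficients to the completion $\bZ[\Gamma/\pi]^{\myhat}$, so $\mu(g)$ may be infinitely supported, and its Pontrjagin-dual function on the full torus $T=\prod\bT^{0,1}$ is generally discontinuous at the identity character. What makes the argument go through is that on the bouquet $\cB_0\subset T$ each character is concentrated on a single coordinate, so only one of the infinitely many components of $\mu(g)$ is ever sampled. Removing the basepoint of $\cB$, i.e.\ working with $\cB_0$ rather than $\cB$, is precisely what allows the infinitely-supported trivialization to be realized as a unitary multiplier of $C_0(\cB_0)$.
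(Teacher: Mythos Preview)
Your proof is correct and follows essentially the same approach as the paper: untwist the cocycle on $\cB_0$ using a $1$-cochain $\mu\colon\Gamma\to\bZ[\Gamma/\pi]^{\myhat}$ (which the paper calls $b$) whose Pontrjagin dual lands in $C_b(\cB_0)=\cM(C_0(\cB_0))$, then identify the untwisted crossed product via Green imprimitivity. Your write-up is in fact more complete than the paper's, which leaves the imprimitivity step implicit and does not discuss the basepoint continuity issue or the Real structure.
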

\begin{proof}
The lemma follows from the fact that the restriction of $\sigma$ onto $C_0(\cB_0)$ is a coboundary in the multiplier C*-algebra $C_b(\cB_0)$, which follows from (A2). 
In more detail, by (A2) we have a $1$-cocycle $b \colon \Gamma \to \bZ[\Gamma / \pi]^{\myhat }$ such that 
\[\sigma(g,h) = b(g)\gamma_g(b(h))b(gh)^{-1} \]
holds. Since $b(g) \in \bZ[\Gamma /\pi]^{\myhat}$ determines a bounded continuous function $b(g)|_\cB \in C_b(\cB_0)$, we get a $\ast$-homomorphism
\[\varphi \colon C_0(\cB_0) \rtimes_{\sigma , \alg} \Gamma \to C_0(\cB_0) \rtimes_{\alg} \Gamma \]
determined by $\varphi (f \cdot  u_g) = (f \cdot b(g)) \cdot u_g$ for any $f \in C_0(\cB_0)$ and $g \in \Gamma$. This extends to a $\ast$-isomorphism of maximal C*-completions.  
\end{proof}

Consequently, we obtain the following commutative diagram of exact sequences; 
\begin{align}
\begin{split}
\xymatrix{
0 \ar[r] & S^{0,1}C^*\pi \otimes \bK \ar[r] \ar[d] & C(\cB) \rtimes_\sigma \Gamma \ar[r] \ar[d] & C^*\Gamma \ar[r] \ar@{=}[d] & 0\\
0 \ar[r] & \bK_{C^*\Pi} \ar[r] & \bB_{C^*\Pi} \oplus _{\cQ_{C^*\Pi}} C^*\Gamma \ar[r] & C^*\Gamma \ar[r] & 0.
}
\end{split}\label{eq:exact_groupoid}
\end{align}
Therefore, the C*-algebraic codimension 2 transfer map \eqref{eq:transfer} is rewritten as 
\begin{align} \tau_\sigma \colon \K_*(C^*\Gamma)  \xrightarrow{\partial} \K_{*-1}(S^{0,1}C^*\pi \otimes \bK ) \cong \K_{*-2}(C^*\pi), \label{eq:transfer2}
\end{align}
where $\partial$ denotes the boundary map of the upper exact sequence of \eqref{eq:exact_groupoid}.

\begin{rmk}\label{rmk:intermediate}
This construction does not produce the reduced version of codimension $2$ transfer map, i.e., the map $\K_*(C^*_{\mathrm{red}} \Gamma ) \to \K_{*-2}(C^*_{\mathrm{red}} \pi)$.
This is due to the fact that the reduced crossed product functor is not exact at the middle if $\Gamma$ is not an exact group (for this topic, we refer the reader to \cite{brownAlgebrasFinitedimensionalApproximations2008}*{Subsection 5.1}). 
\if0
Instead of that, we define an intermediate completion of $\bC[\Gamma]$ as
\begin{align}
    C^*_\fs \Gamma := C(\cB) \rtimes_{\sigma, \mathrm{red}} \Gamma / C_0(\cB_0) \rtimes_{\mathrm{red}} \Gamma.   \label{eq:intermed}
\end{align}
Then the boundary map in K-theory defines $\tau_\sigma \colon \K_*(C^*_\fs  \Gamma ) \to \K_*(C^*_{\mathrm{red}}\pi)$. This C*-algebra will be used in \cref{section:cyclic}. 
\fi
\end{rmk}

\subsection{Dixmier--Douady description of the second cohomology class $\sigma$}
The $2$-cocycle $\sigma $ lies in the second cohomology group $H^2_c(E\Gamma /\pi ; \bZ)$, which is the same thing as the compactly supported equivariant cohomology group $H^2_{\Gamma, c} (\Gamma /\pi ; \bZ)$. 
On the other hand, the groupoid twist $\sigma $ determines an element of the second reduced equivariant cohomology group $H^2_\Gamma(\cB , \ast ; \underline{\bT})$. 
Since $\cB$ is the suspension of the based space $(\Gamma /\pi)^+$, there is an isomorphism 
\begin{align}
H^2(\Gamma ; \bZ[\Gamma /\pi]) \cong H^2_\Gamma(\cB , \ast ; \underline{\bT}) \cong H^3_\Gamma (\cB , \ast ; \bZ). \label{eq:H-isom}
\end{align}
Indeed, the cohomology classes determined by $\sigma$ on both sides of the above isomorphism are identified in the following way. 
\begin{prp}\label{prp:Dixmier}
The group $2$-cocycle $\sigma \in Z^2(\Gamma ; \bZ[\Gamma /\pi])$ and the groupoid $2$-cocycle $\hat{\sigma}_\cB \in Z^2_\Gamma (\cB, \ast; \bT )$ are in the same cohomology class through the isomorphism \eqref{eq:H-isom}. 
\end{prp}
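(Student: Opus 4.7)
The plan is to trace explicit cocycle representatives through both isomorphisms in \eqref{eq:H-isom} and verify that the resulting classes agree.

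First, I would make each isomorphism concrete at the chain level. The Bockstein homomorphism associated with the short exact sequence $0 \to \underline{\bZ} \to \underline{\bR} \to \underline{\bT} \to 0$ of $\Gamma$-equivariant sheaves on $\cB$ provides the second isomorphism; it is an isomorphism because the $\Gamma$-equivariant cohomology of $\cB$ with $\bR$-coefficients vanishes in positive degrees (the underlying space being a wedge of contractible cells). Concretely, one lifts a $\bT$-valued $2$-cocycle to an $\bR$-valued $2$-cochain and takes its coboundary, which automatically takes values in $\bZ$. For the first isomorphism I would use that, as based $\Gamma$-spaces, $\cB \cong \Sigma((\Gamma/\pi)^+)$; the reduced equivariant suspension isomorphism then yields
\[ H^3_\Gamma(\cB, \ast; \bZ) \cong H^2_\Gamma((\Gamma/\pi)^+, \ast; \bZ) \cong H^2(\Gamma; \bZ[\Gamma/\pi]), \]
where the last step identifies reduced $\bZ$-valued continuous functions on the pointed discrete $\Gamma$-space $(\Gamma/\pi)^+$ with the $\Gamma$-module $\bZ[\Gamma/\pi]$.

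Second, I would perform the cocycle chase. Writing $\sigma(g,h) = \sum_{x \in \Gamma/\pi} n_{g,h}(x)\,\delta_x$ with finitely supported integer coefficients, parametrize the $x$-th stem of $\cB$ as the unit interval $[0,1]$ with both endpoints glued to the common basepoint $\ast$. Under the inclusion $\cB \hookrightarrow T = \prod_{\Gamma/\pi} \bT^{0,1}$ and the defining formula $\hat{\sigma}_\cB(g,h,\chi) = \chi(\sigma(g,h))$, the restriction to this stem becomes the loop $t \mapsto e^{2\pi i\,n_{g,h}(x)\,t}$ of winding number $n_{g,h}(x)$. A local $\bR$-valued lift is $\tilde{\sigma}(g,h,t) = n_{g,h}(x)\,t$, whose failure to descend across the endpoint identification is precisely $n_{g,h}(x)$. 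This integer is the value of the Bockstein $3$-cocycle evaluated against the $x$-th generator of $H_1(\cB,\ast;\bZ) = \bigoplus_{\Gamma/\pi}\bZ$. Assembling over all $x$, the induced class in $H^2(\Gamma; \bZ[\Gamma/\pi])$ is represented by $(g,h) \mapsto \sum_x n_{g,h}(x)\,\delta_x = \sigma(g,h)$, as required.

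The step I expect to demand the most care is the chain-level bookkeeping that renders the Bockstein-then-desuspension correspondence simultaneously $\Gamma$-equivariant and compatible with the basepoint gluing. Namely, the stems of $\cB$ are permuted by $\Gamma$ via its action on $\Gamma/\pi$, so the locally defined lifts $\tilde{\sigma}(g,h,\cdot)$ must be assembled, via a suitable open cover near $\ast$, into a genuine $\Gamma$-equivariant $\bR$-valued $2$-cochain whose Bockstein coboundary realizes $\sigma$ coherently across all stems simultaneously. This compatibility should however follow from the $\Gamma$-equivariance of the Pontrjagin pairing $\bZ[\Gamma/\pi] \times T \to \bT$ built into the formula for $\hat{\sigma}_\cB$, reducing the verification to the stem-wise winding-number computation above.
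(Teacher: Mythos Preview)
Your proposal is correct and morally the same computation as the paper's, but you traverse the isomorphism \eqref{eq:H-isom} in the opposite direction and at a finer level of detail. The paper observes that the composite $H^2_\Gamma((\Gamma/\pi)^+,\ast;\bZ)\to H^2_\Gamma(\cB,\ast;\underline{\bT})$ is simply cup product with the tautological \v{C}ech $0$-cocycle $z\colon S^1\to\bT$ (the identity map), so the verification collapses to the one-line identity $\sigma\cup z=\hat{\sigma}_\cB$, which holds by inspection of the defining formula $\hat{\sigma}_\cB(g,h,\chi)=\chi(\sigma(g,h))$. You instead go the other way, applying Bockstein and then desuspension to $\hat{\sigma}_\cB$ and reading off the winding numbers $n_{g,h}(x)$ on each stem; this is the inverse of cupping with $z$ (since the Bockstein of $z$ is the generator of $H^1(S^1;\bZ)$), so the two arguments are dual to one another. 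The paper's packaging avoids your last paragraph's bookkeeping about equivariant Bockstein compatibility near the wedge point, at the cost of asking the reader to accept the cup-product description of the isomorphism; your version is more explicit but correspondingly longer.
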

This justifies our simplified notation using $\sigma $ instead of $\hat{\sigma}_\cB$.
\begin{proof}
The identity map $z \colon S^1 \to \bT$ is a \v{C}ech $0$-cocycle of the sheaf $\underline{\bT}$ of $\bT$-valued functions on $S^1$.
The isomorphism 
\[ H_\Gamma ^2 ((\Gamma /\pi)^+, \ast ; \bZ ) \cong H^2_\Gamma (\cB , \ast ; \underline{\bT})\]
is induced from the cup product
\[\check{C}^2_\Gamma  ((\Gamma /\pi)^+,\ast  ; \bZ ) \otimes \check{C}^0(S^1, \ast ; \underline{\bT}) \to \check{C} ^2_\Gamma ((\Gamma /\pi)^+ \wedge S^1 , \ast ; \underline{\bT}) \]
 with $z \in \check{C}^0(S^1, \ast ; \underline{\bT})$. By definition we have $\sigma \cup z = \hat{\sigma} |_\cB$. 
\end{proof}

\subsection{From groupoid C*-algebra to Roe algebra}
In this subsection, we relate the twisted crossed product C*-algebra $C(\cB) \rtimes _{\sigma}\Gamma$ with invariant Roe algebras.  
Let $\Gamma$, $\pi$, $\sigma$, $M$ and $N$ be as in \cref{para:cocycle} and let $\widetilde{M}$, $\sM$ be the covering spaces as in \cref{rmk:intersection}.

\begin{para}
We fix a Riemannian metric on $M$. Let $C^*(\widetilde{M})^\Gamma$ denote the $\Gamma$-invariant (maximal) Roe algebra of $\widetilde{M}$, i.e., the completion of the $\ast$-algebra $\bC[\widetilde{M}]^\Gamma$ of bounded operators on $L^2(\widetilde{M})$ such that
\begin{enumerate}
    \item the propagation of $T$ is finite, 
    \item $T$ is locally compact, i.e., $fT, Tf \in \bK(L^2(\widetilde{M}))$, and
    \item $T$ is $\Gamma$-invariant.
\end{enumerate}
Here, the propagation of $T$ is defined to be the supremum of the distance $d(x,y)$ of $x,y \in \widetilde{M}$ such that $f_2Tf_1 \neq 0$ for any $f_1,f_2 \in C_c(\widetilde{M})$ such that $f_1(x) \neq 0$, $f_2(y) \neq 0$. The norm on $\bC[\widetilde{M}]^\Gamma$ is chosen as the maximal one among all possible C*-norms on it (the well-definedness is proved in \cite{gongGeometrizationStrongNovikov2008}*{3.5}).  
We also define the $\Pi$-invariant Roe algebra $C^*(\overline{\sM})^\Pi $ in the same way.
\end{para}

As is constructed in \cite{changPositiveScalarCurvature2020}*{Lemma 2.12}, there is a $\ast$-homomorphism
\begin{align}
     \epsilon_\bZ \colon C^*(\overline{\sM})^\Pi \to C^*(\widetilde{\sM} )^\pi, \label{eq:epsilon}
\end{align} 
which sends the operator $T \in \bC[\overline{\sM}]^\Pi$ represented by a kernel function $t(x,y) \in \widetilde{M} \times \widetilde{M} \to \bC $ to the operator on $\widetilde{\sM} $ which is represented by the kernel function 
\[ \epsilon_\bZ (t)(x,y) = \sum_{\pi (\bar{y}) = y} t(\tilde{x}, \tilde{y}), \]
which is well-defined independent of the choice of $\bar{x} \in \overline{\sM}$ such that $\pi(\bar{x})=x$. The $\ast$-homomorphism of this kind does exist for any Galois covering space and a surjection of the group. 
More generally, for a free proper $G$-space $X$ and a normal subgroup $N \triangleleft G$, a $\ast$-homomorphism $\epsilon_N \colon C^*(X)^G \to C^*(X/N)^{G/N}$ is defined in the same way. We simply write this $\ast$-homomorphism as $\epsilon$ if $N$ is clear from the context. 
Let $C^*(\widetilde{\sM})^\Pi_0$ denote the kernel of $\epsilon_\bZ $.

Set $M^\bullet :=M \setminus N$ and let $\widetilde{M}{}^\bullet$, $\overline{M}{}^\bullet$ and $\widehat{M}{}^\bullet$ denote its covering space whose fiber is $\Gamma$, $G / Z^\perp$ and $G$ respectively, where $Z^\perp:=\bZ[(\Gamma /\pi) \setminus \{ e\pi\} ]$. 
Since $\widetilde{M} {}^\bullet$ is a dense open subspace of $\widetilde{M}$, the identification $L^2(\widetilde{M}) \cong L^2(\widetilde{M}{}^\bullet)$ induces  $\ast$-isomorphisms $C^*(\widetilde{M})^\Gamma \cong C^*(\widetilde{M}{}^\bullet )^\Gamma$ and $C^*(\widetilde{M})^\pi \cong C^*(\widetilde{M}{}^\bullet )^\pi$. 
We choose a tubular neighborhood $V$ of $N$ including the closure of $U$ and a diffeomorphism between $V \setminus N$ and $V \setminus U$ which is the identity at the boundary. 
This gives rise to a $\ast$-isomorphism  of Roe algebras $\chi \colon C^*(\widetilde{M}{}^\bullet )^\pi \to C^*(\widetilde{\sM})^\pi $. 

We write the restriction map as the inclusion of invariant Roe algebras
\begin{align*}
    \mathrm{res}_{\Gamma}^\pi \colon C^*(\widetilde{M})^\Gamma \to  C^*(\widetilde{M} )^\pi .  \label{eq:group_Roe}
\end{align*}
Also, we identify the group C*-algebra $C^*\Gamma \otimes \bK$ with the invariant Roe algebra. Let $U$ be a fundamental domain of the $\Gamma$-space $\widetilde{M} $, i.e., a $1$-connected dense open subset of $M$. 
We choose $U$ in the way that $U \cap N = \emptyset$. 
Then the isomorphism $L^2(\widetilde{M}) \cong \ell^2(\Gamma ) \otimes L^2(U)$ gives rise to $C^*(\widetilde{M})^\Gamma \cong C^*\Gamma \otimes \bK(L^2(U))$. 
In summary, we get a $\ast$-homomorphism
\begin{align}
    \phi \colon C^*\Gamma \otimes \bK \cong C^*(\widetilde{M})^\Gamma \xrightarrow{\mathrm{res}_\Gamma ^\pi} C^*(\widetilde{M}{}^\bullet)^\pi \xrightarrow{\chi} C^*(\widetilde{\sM})^\pi.
\end{align}

\begin{lem}\label{lem:group_Roe}
There is a homomorphism of C*-algebra extensions
\[
\xymatrix{
0 \ar[r] & S^{0,1}C^*\pi \otimes \bK \ar[r] \ar[d]^\phi   & C(\cB) \rtimes _\sigma \Gamma \otimes \bK \ar[r] \ar[d]^\phi & C^*\Gamma \otimes \bK \ar[r] \ar[d]^{\phi } &  0 \\
0 \ar[r] & C^*(\overline{\sM})^{\Pi}_0 \ar[r] & C^*(\overline{\sM})^{\Pi} \ar[r] &  C^*(\widetilde{\sM})^\pi \ar[r] & 0 .
}
\]
such that the right vertical map $\phi$ is the same as \eqref{eq:group_Roe}. Moreover, the image of the left vertical map $\phi$ coincides with $C^*(\overline{\sN} \subset \overline{\sM})^\Pi \cap C^*(\overline{\sM})^\Pi $.
\end{lem}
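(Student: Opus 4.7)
The plan is to build the middle vertical $\ast$-homomorphism $\phi$ by promoting the representation $\tilde{\phi} \colon C^*G \to \bB_{C^*\Pi}$ recalled in \cref{section:groupoid} to an action on $L^2(\overline{\sM})$, and then to verify commutativity and identify the ideal by a direct boundary analysis. First I would set up the Hilbert space: choose a fundamental domain $D \subset M^\circ$ for the $\Gamma$-action on $\widetilde{M}^\circ$ together with a set-theoretic section $\Gamma/\pi \to \Gamma$; these produce a $\Pi$-equivariant isometric inclusion $L^2(\overline{\sM}) \hookrightarrow L^2(\overline{M}^\circ) \cong \ell^2(\Gamma/\pi) \otimes \ell^2(\Pi) \otimes L^2(D)$. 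The GNS-completion of the Hilbert $C^*\Pi$-module $\ell^2(\Gamma/\pi) \otimes C^*\Pi$ along the regular representation of $\Pi$ then identifies $\bB_{C^*\Pi} \otimes \bK(L^2(D))$ with an algebra of $\Pi$-invariant operators on $L^2(\overline{M}^\circ)$, and composing with the factorization $C(\cB) \rtimes_\sigma \Gamma \otimes \bK \to \bB_{C^*\Pi} \otimes \bK$ established in \cref{section:groupoid} defines the candidate for the middle $\phi$.

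Next I would verify that $\phi$ lands in the Roe algebra $C^*(\overline{\sM})^\Pi$ and that the diagram commutes. For the former, inspect the generators: elements $u_g \in \Gamma$ act as partial isometries coming from the $\Gamma$-translation on $\widetilde{M}$ lifted through the $\bZ$-cover $\overline{\sM} \to \widetilde{\sM}$, whose propagation on $\overline{\sM}$ is bounded by the diameter of $g \cdot D \cup D$ plus the maximal $\bZ$-displacement recorded by $\sigma(g, \cdot)$; elements $f \in C(\cB)$ act by multiplication pulled back along a $\Pi$-equivariant continuous collapse map $\overline{\sM} \to \cB$ which sends a tubular neighborhood of $\overline{\sN}$ to the circle $\bR^{0,1} \times \{e\pi\}$ via the $\bZ$-covering coordinate and the rest to the basepoint. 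Both have finite propagation and are locally compact. Commutativity of the right square then follows from the observation that $\epsilon_\bZ$, which sums over $\bZ$-fibers, recovers $\chi \circ \mathrm{res}_\Gamma^\pi$ when applied to a lifted $\Gamma$-operator, directly by construction of the lift.

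Finally, I would identify the image of the ideal. By \cref{lem:untwist}, the top ideal is $S^{0,1}C^*\pi \otimes \bK \cong C_0(\cB_0) \rtimes_\sigma \Gamma$; under the action above, elements of $C_0(\cB_0)$ act as multiplication operators supported in a neighborhood of $\overline{\sN}$ in $\overline{\sM}$ vanishing at infinity in the $\bZ$-covering direction, so the whole image lies in $C^*(\overline{\sN} \subset \overline{\sM})^\Pi \cap C^*(\overline{\sM})^\Pi_0$. For the reverse inclusion, I would use the identification $\overline{\sN} \cong \widetilde{N} \times \bR^{0,1}$ to compute $C^*(\overline{\sN})^\Pi \cong C^*\pi \otimes C(\bT^{0,1}) \otimes \bK$; under this identification $\epsilon_\bZ$ becomes evaluation at the trivial character of $\bZ$, whose kernel is precisely $S^{0,1}C^*\pi \otimes \bK$, matching the image.

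The principal technical hurdle will be showing that the abstract Mishchenko--bundle representation $\tilde{\phi}$ in fact yields operators of finite propagation in the Roe sense for the metric of $\overline{\sM}$, not merely bounded operators on the Hilbert $C^*\Pi$-module. This reduces to checking that for each fixed $g \in \Gamma$ the cocycle value $\sigma(g, \cdot) \in \bZ[\Gamma/\pi]$ has finite support with uniformly bounded entries, so that the induced lift displaces each $\bZ$-fiber of $\overline{\sM} \to \sM$ by a uniformly bounded amount and the ensuing kernel has genuinely finite propagation.
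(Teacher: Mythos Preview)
Your definition of the middle $\phi$ via the factorization $C(\cB)\rtimes_\sigma\Gamma \otimes \bK \to \bB_{C^*\Pi}\otimes \bK$ from \cref{section:groupoid} is correct, but your subsequent description of what this $\phi$ does to elements of $C(\cB)$ is not: you claim they act by multiplication pulled back along a collapse map $\overline{\sM}\to\cB$. In fact $C(\cB)$ arises as a quotient of $C(T)=C^*(\bZ[\Gamma/\pi])$, and by \eqref{eq:Bzero} the generator $u_{\delta_{e\pi}}$ (corresponding to the coordinate function on the $e\pi$-circle of $\cB$) acts on $\cX_{e\pi}$ as $u_t\in C^*\bZ\subset C^*\Pi$; after GNS along the regular representation of $\Pi$ this becomes the \emph{deck transformation} in the $\bZ$-direction on $L^2(\overline{\sM})$, not a multiplication operator. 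A multiplication operator of the kind you describe would be $\bZ$-invariant and hence have nonzero image under $\epsilon_\bZ$, so such a map could not send $C_0(\cB_0)$ into $C^*(\overline{\sM})^\Pi_0=\ker\epsilon_\bZ$ as the left square requires. This misdescription therefore undermines both your verification that $\phi$ lands in the Roe algebra and your identification of the ideal image.

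The paper takes a different route that avoids hand-checking Roe properties altogether. It passes through the full $G$-cover $\widehat{M}^\bullet$ of $M^\bullet=M\setminus N$ via the chain
\[
C^*G \otimes \bK \;\cong\; C^*(\widehat{M}^\bullet)^G \;\xrightarrow{\mathrm{res}_G^H}\; C^*(\widehat{M}^\bullet)^H \;\xrightarrow{\epsilon_{Z^\perp}}\; C^*(\overline{M}^\bullet)^\Pi \;\xrightarrow{\chi}\; C^*(\overline{\sM})^\Pi,
\]
where $H\leq G$ is the preimage of $\pi$ and $Z^\perp=\bZ[(\Gamma/\pi)\setminus\{e\pi\}]$. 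Each arrow is already a $\ast$-homomorphism of invariant Roe algebras (identification via a fundamental domain, restriction of equivariance, the $\epsilon$-pushdown of \eqref{eq:epsilon}, and a diffeomorphism), so finite propagation and local compactness come for free and your ``technical hurdle'' dissolves. One then checks via the trivialization \eqref{eq:L2_trivialization} that this composite agrees with the monodromy representation $\tilde\phi$, hence factors through $C(\cB)\rtimes_\sigma\Gamma\otimes\bK$. For the ideal, the paper simply invokes \eqref{eq:Bzero}: the image of $C_0(\cB_0)\rtimes_\sigma\Gamma$ in $\bB_{C^*\Pi}$ lies in $\bK_{C^*\Pi}$, which under the identification corresponds to $C^*(\overline{\sN}\subset\overline{\sM})^\Pi\cap C^*(\overline{\sM})^\Pi_0$. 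Your product-structure computation of $C^*(\overline{\sN})^\Pi$ and the observation that $\epsilon_\bZ$ is evaluation at the trivial character are correct and would supply the reverse inclusion, a detail the paper leaves implicit.
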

\begin{proof}
Let $Z:=\bZ[\Gamma /\pi]$ and $Z^\perp:=\bZ[(\Gamma /\pi) \setminus \{ e\pi\} ]$ as above. By definition of the $\ast$-homomorphism $\epsilon_Z$ as in \eqref{eq:epsilon}, the diagram
\[
\xymatrix{
C^*G \otimes \bK \ar[r]^\cong \ar[d]^\psi  & C^*(\widehat{M}{}^\bullet )^G \ar[d]^{\epsilon_Z} \ar[r]^{\mathrm{res}_{G}^H} & C^*(\widehat{M}{}^\bullet)^{H} \ar[r]^{\epsilon_{Z^\perp}} \ar[d]^{\epsilon_Z}  & C^*(\overline{M}{}^\bullet)^\Pi \ar[d]^{\epsilon_\bZ}   \ar[r]^\chi  & C^*(\overline{\sM})^\Pi \ar[d]^{\epsilon_\bZ} \\ 
C^*\Gamma \otimes \bK \ar[r]^\cong & C^*(\widetilde{M}{}^\bullet)^\Gamma \ar[r]^{\mathrm{res}_{\Gamma}^\pi} & C^*(\widetilde{M}{}^\bullet)^\pi \ar[r]^\cong & C^*(\widetilde{M}{}^\bullet )^\pi \ar[r]^\chi  & C^*(\widetilde{\sM})^\pi 
}
\]
commutes. 

Through the unitary isomorphism
\begin{align}
L^2(\overline{M}{}^\bullet ) \cong L^2(U) \otimes \ell^2(G/Z^\perp ) \cong L^2(U) \otimes \ell^2 \Gamma  \otimes \ell^2\bZ,  \label{eq:L2_trivialization}
\end{align}
the representation $\phi$ in the above paragraph is identified with the monodromy representation $\tilde{\phi}$ introduced in \cref{section:groupoid}, and hence factors through $C(\cB ) \rtimes_\sigma \Gamma $. 
Moreover, by \eqref{eq:Bzero}, the image of $C_0(\cB_0 ) \rtimes_\sigma \Gamma$ acts on the Hilbert $C^*\Pi$-module $\ell^2(\Gamma /\pi) \otimes C^*\Pi $ by a compact operator. 
This shows that $\phi$ sends $(C_0(\cB_0) \rtimes \Gamma \otimes \bK)$ to $C^*(\overline{\sN} \subset \overline{\sM})^\Pi \cap C^*(\overline{\sM})_0^\Pi$.
\end{proof}

\begin{rmk}\label{rmk:algebraic_coarse}
\cref{lem:group_Roe} is also proved more algebraically with the language of twisted crossed products. Through the unitary \eqref{eq:L2_trivialization}, the Roe algebras are identified with crossed products as 
\begin{align*}
    C^*(\overline{\sM})^\Pi & \cong (c_b(\Gamma /\pi) \otimes C^*\bZ) \rtimes \Gamma, \\
    C^*(\overline{\sM})^\Pi_0 & \cong (c_b(\Gamma /\pi) \otimes S^{0,1}) \rtimes \Gamma.  
\end{align*} 
Hence the map of exact sequences in the statement of \cref{lem:group_Roe} is identified with 
\begin{align*}
\mathclap{
    \xymatrix@C=1.5em{
    0 \ar[r] & S^{0,1}c_0(\Gamma /\pi) \rtimes_\sigma \Gamma \ar[r] \ar[d]   & C(\cB) \rtimes _\sigma \Gamma  \ar[r] \ar[d]^\phi & C^*\Gamma  \ar[r] \ar[d] &  0 \\
0 \ar[r] & S^{0,1}c_b(\Gamma /\pi) \rtimes_\sigma \Gamma  \ar[r] & (c_b(\Gamma /\pi) \otimes C^* \bZ ) \rtimes _\sigma \Gamma  \ar[r] &  c_b(\Gamma /\pi) \rtimes \Gamma  \ar[r] & 0 ,
}}
\end{align*}
after taking tensor product with $\bK(L^2(U))$. 
\end{rmk}

Let $s_0 \colon C^*\Gamma \to C(\cB) \rtimes_\sigma \Gamma $ be a set-theoretical section. 
Then \cref{lem:group_Roe} shows that the composition 
\begin{align}
    s:=q \circ \phi \circ s_0 \colon C^*(\widetilde{M} )^\Gamma \to C^*(\overline{\sM})^\Pi / C^*( \overline{\sN}{} \subset \overline{\sM})^\Pi \label{eq:split}
\end{align} 
is a $\ast$-homomorphism such that the diagram 
\[ 
\xymatrix
{
C^*(\widetilde{M})^\Gamma \ar[r]^{s \hspace{3em}} \ar[d]^{\mathrm{res}_\Gamma ^\pi }  & C^*(\overline{\sM})^\Pi / C^*( \overline{\sN}{} \subset \overline{\sM})^\Pi \ar[d]^\epsilon  \\ 
C^*(\widetilde{\sM})^\pi \ar[r]^{q\hspace{3em}} & C^*(\widetilde{\sM})^\pi / C^*( \widetilde{\sN} \subset \widetilde{\sM})^\pi
}
\]
commutes.

For the latter use, we give an explicit and intuitive description of this $\ast$-homomorphism $s$. Let $R$ denote the injectivity radius of $M$ with respect to a fixed Riemannian metric. We use the neighborhood $B_{R/4}(N):=\{ x \in M \mid d(x,N) < R/4 \}$ of $N$ as $U$. 
Then, as is constructed in  \cite{changPositiveScalarCurvature2020}*{Proposition 2.8}, an operator $T \in \bB(L^2(\widetilde{\sM}))$ with $\Prop T < R/4$ lifts to $\overline{T} \in \bB(L^2(\overline{\sM}))$ which is uniquely characterized by the property $\Prop \overline{T} = \Prop T <R/4$ (we remark that any two points $x,y \in \widetilde{\sM}$ with $d(x,y) < R/4$ is connected by a unique geodesic in $\widetilde{M}{}^\bullet$ with the length less that $R/4$).  
Let 
$P$ denote the projection onto the subspace $L^2(\widetilde{\sM}) \subset L^2(\widetilde{M})$.
\begin{prp}\label{prp:lift}
Assume that $T \in C^*(\widetilde{M})^\Gamma$ satisfies $\Prop T < R/4$. Then the equality $s(T) =  q(\overline{PTP}) $ holds, where $q$ denotes the quotient map. 
\end{prp}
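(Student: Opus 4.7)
The plan is to choose a convenient set-theoretic section $s_0$ and directly verify that $\phi(s_0(T))$ coincides with the geodesic lift $\overline{PTP}$ modulo operators supported near the boundary $\overline{\sN}$.

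First, I would take the set-theoretic section $s_0 \colon C^*\Gamma \to C(\cB) \rtimes_\sigma \Gamma$ sending a generator $u_g$ to $1_\cB \cdot u_g$, where $1_\cB$ denotes the constant function $1$ on $\cB$. Since $1_\cB$ evaluates to $1$ at the basepoint, this is a section on the dense subalgebra $\bC[\Gamma]$; by \cref{lem:group_Roe} different choices of section differ by elements whose image under $\phi$ lies in $C^*(\overline{\sN} \subset \overline{\sM})^\Pi$, so the composition $s = q \circ \phi \circ s_0$ is independent of the section. Choose a fundamental domain $U$ of the $\Gamma$-action on $\widetilde{M}$ that is disjoint from $\widetilde{N}$; then under the identification $C^*(\widetilde{M})^\Gamma \cong C^*\Gamma \otimes \bK(L^2(U))$, the operator $T$ decomposes as a finite sum $T = \sum_{g \in F} T_g \otimes u_g$ with $F := \{ g \in \Gamma : d(U, gU) < R/4 \}$, so that $s_0(T) = \sum_{g \in F} T_g \otimes 1_\cB \cdot u_g$.

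Second, I would analyze $\phi(s_0(T))$ as an operator on $L^2(\overline{\sM})$ via the crossed-product description of \cref{rmk:algebraic_coarse}: the embedding $C(\cB) \hookrightarrow c_b(\Gamma /\pi) \otimes C^*\bZ$ sends $1_\cB$ to the unit, so each piece $1_\cB \cdot u_g$ acts as the canonical lift of the $\Gamma$-translation by $g$ in $\overline{\sM}$. Since $d(U, gU) < R/4$ for $g \in F$, this lifted action moves points locally by at most $R/4$, and hence $\phi(s_0(T))$ has propagation less than $R/4$.

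Third, I would compare the two operators. For any point $\bar{x} \in \overline{\sM}$ lying outside the $R/4$-neighborhood of $\overline{\sN}$, the projection $P$ acts as the identity on a ball of radius $R/4$ around the image of $\bar{x}$ in $\widetilde{\sM}$, so $PTP$ agrees with $T$ there, and $\overline{PTP}$ near $\bar{x}$ is then the unique geodesic lift of $T$ (by the uniqueness established in \cite{changPositiveScalarCurvature2020}*{Proposition 2.8}). By step two, $\phi(s_0(T))$ near $\bar{x}$ equals the same geodesic lift. Hence $\phi(s_0(T)) - \overline{PTP}$ is supported in the $R/4$-neighborhood of $\overline{\sN}$, lies in $C^*(\overline{\sN} \subset \overline{\sM})^\Pi$, and gives $s(T) = q(\overline{PTP})$.

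The main obstacle is verifying the claim in the second step, namely that the twist $\sigma$ in $C(\cB) \rtimes_\sigma \Gamma$ does not affect the action of $1_\cB \cdot u_g$ away from the boundary. By \eqref{eq:Bzero}, the twist contributes nontrivially only through the generators $\delta_{g\pi} \in \bZ[\Gamma /\pi]$ which act in the $\ell^2 \bZ$-direction localized near the $g\pi$-coset copy of the boundary. Since $1_\cB$ sits at the basepoint of $\cB$ where $\sigma$ is trivial, its contribution on $\overline{\sM}$ away from $\overline{\sN}$ reduces to the standard $\Gamma$-translation; making this reduction rigorous is the core of the proof.
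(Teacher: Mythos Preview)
There is a real gap in step two. The claim that $\phi(s_0(T))$ has propagation less than $R/4$ on $\overline{\sM}$ is not justified and in fact depends on the choice of cocycle representative. The element $1_\cB \cdot u_g$ in $C(\cB) \rtimes_\sigma \Gamma$ corresponds, via the identification $C^*G \cong C(T) \rtimes_\sigma \Gamma$, to $u_{s(g)}$ for the particular set-theoretic section $s \colon \Gamma \to G$ used to define $\sigma$; under $\phi$ this becomes one specific lift of the $g$-translation to $\overline{\sM}$. There is no reason this lift should be the geodesic one---that would require $s$ to be adapted to the Riemannian geometry, which you never arrange. Two lifts differ by an element of $Z = \bZ[\Gamma/\pi]$, and on $\overline{\sM}$ the generator $\delta_{e\pi}$ acts as the global $\bZ$-deck transformation, not as something supported near the boundary. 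Your final paragraph conflates two pictures: equation \eqref{eq:Bzero} says $u_{\delta_{e\pi}} - 1$ is compact on the Hilbert $C^*\Pi$-module $\ell^2(\Gamma/\pi) \otimes C^*\Pi$, but as an operator on $L^2(\overline{\sM})$ the $\bZ$-shift is a global isometry with nonzero displacement everywhere. So the section lift can differ from the geodesic lift by a global shift, and your propagation bound need not hold.

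The paper's proof takes a different route that sidesteps this issue. Rather than a fundamental-domain decomposition, it uses a bump function $\chi$ to split $T = T_0 + T_1$ with $T_0 = T\chi$ supported in $\widetilde{M}^\circ$ (away from a neighborhood of $N$) and $T_1 = T(1-\chi)$ supported in $B_R(N)$. For $T_0$ one lifts to the full $G$-cover $\widehat{M}^\circ$ of $M^\circ$, which exists by \cref{lem:G}; on $M^\circ$ the geodesics of length $<R/4$ stay away from $N$, so the small-propagation lift of \cite{changPositiveScalarCurvature2020}*{Proposition 2.8} exists, is unique, and agrees with both $s(T_0)$ and $\overline{T}_0$ after applying $\epsilon$. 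For $T_1$ the paper exploits the explicit product structure of the coverings over the annular region $W = B_R(N) \setminus N$: the preimages $\widehat{W}$ and $\overline{W}$ decompose as disjoint unions indexed by $\Gamma/\pi$, and a direct computation shows that $s(T_1) - \overline{PT_1P}$ lives on the single component $\overline{W}_{e\pi}$ near the boundary. It is precisely this geometric near/far decomposition---rather than the algebraic crossed-product one---that makes the propagation estimates tractable.
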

\begin{proof}
We use the neighborhood $B_{R/2}(N)$ for $V$ in the proof of \cref{lem:group_Roe}.  Let $\widetilde{U}$, $\overline{U}$ and $\widehat{U}$ denote its inverse image in the covering spaces $\widetilde{M}{}^\bullet$, $\overline{M}{}^\bullet$ and $\widehat{M}{}^\bullet$ respectively. Let $\chi $ be a smooth bump function on $M$ such that $\chi \equiv 0$ on $V$ and $\chi  \equiv 1$ on the complement of $B_{3R/4}(N)$. We use the same letter $\chi$ for its lift to $\widetilde{M}$. 

For $T \in C^*(\widetilde{M})^\Gamma$ with $\Prop T < R/4$, set $T_0:= T \chi \in C^*(\widetilde{M})^\Gamma $ and $T_1:=T(1-\chi)$. 
Then the decomposition $T=T_0 + T_1$ satisfies that $\mathop{\mathrm{supp}}T_0 \subset \widetilde{M}{}^\circ $ and $\mathop{\mathrm{supp}}T_1 \subset B_{R}(N)$. 
We apply \cite{changPositiveScalarCurvature2020}*{Proposition 2.8} to get a lift $\widehat{T}_0 \in C^*(\widehat{M}{}^\circ )^G$ with propagation less than $R/4$. By the proof of \cref{lem:group_Roe}, we obtain that 
\[ s(T_0) = \text{$\epsilon (\widehat{T}_0) = \overline{T}_0$  modulo $C^*(\overline{\sN} \subset \overline{\sM})^\Pi $. }\]
Here the equality $\epsilon (\widehat{T}_0) = \overline{T}_0$ follows from the fact $\Prop(\epsilon (\widehat{T}_0)) < R/4$, which follows from the definition of the map $\epsilon$. 

The remaining task is to show that $s(T_1) = \overline{PT_1P}$ modulo the boundary. Set $W:=B_R(N) \setminus N$ and we write its inverse images in $\widetilde{M}{}^\bullet$, $\overline{M}{}^\bullet$ and $\widehat{M}{}^\bullet$ as $\widetilde{W}$, $\overline{W}$ and $\widehat{W}$ respectively.  Then $\widetilde{W}$ is a disjoint union of connected spaces $\widetilde{W}_{g\pi}$ parametrized by $\Gamma /\pi$. Each $\widetilde{W}_{g\pi}$ has a $\bZ$-covering $\overline{W}_{g\pi}$ such that
\begin{align*}
    \widehat{W} =& \bigsqcup _{g\pi \in \Gamma /\pi} \overline{W}_{g\pi} \times \bZ[X_g ], \\
    \overline{W} =& \overline{W}_{e\pi} \sqcup \bigsqcup _{\substack{g\pi \in \Gamma /\pi \setminus \{e\pi\} }} \widetilde{W}_{g\pi} \times \bZ,
\end{align*}
where $X_g:=(\Gamma/\pi) \setminus \{ g\pi\}$.
Since $T_1$ is supported in $\widehat{W}$, it is decomposed into an infinite sum $\sum _{g \in \Gamma /\pi} T_{1,g\pi}$. Let $S_{g\pi}$ be an arbitrary lift of $T_{1,g\pi}$ to $C^*(\overline{W}_{g\pi})^\bZ$, i.e., $\epsilon_\bZ (S_{g\pi} ) = T_{1,g\pi} \in C^*(\widetilde{W}_{g\pi})$. Then
\[\sum_{g\pi \in \Gamma /\pi} S_{g\pi} \otimes 1 \in \prod _{g\pi \in \Gamma /\pi} C^*(\overline{W}_{g\pi})^\bZ \otimes C^*(\bZ[X_g])^{\bZ[X_g]} \]
is a lift of $T_1$ to $\widetilde{W}$, and hence $s (T_1)$ is 
\[\epsilon_{Z^\perp } \big( \sum S_{g\pi} \otimes 1 \big) = S_{e\pi} \oplus \sum_{g\pi \neq e\pi} T_{g\pi} \otimes 1 \in C^*(\overline{W}_{e\pi})^\bZ \oplus \prod_{g\pi \neq e\pi } C^*(\widetilde{W}_{g\pi} \times \bZ)^\bZ, \]
which coincides with 
\[ 0 \oplus \sum_{g\pi \neq e\pi} T_{g\pi} \otimes 1 = \overline{PT_1P}\]
modulo $C^*(\overline{\sN} \subset \overline{\sM})^\Pi $. This finishes the proof. 
\end{proof}
This proposition shows that the map $\tau_\sigma$ constructed in \eqref{eq:transfer2} is rephrased as the composition
\[\tau _\sigma = \partial \circ s_* \colon \K_*(C^*(\widetilde{M})^\Gamma) \to \K_* \Big( \frac{C^*(\overline{\sM})^\Pi }{C^*(\overline{\sN} \subset \overline{\sM})^\Pi} \Big) \to \K_{*-1}(C^*(\overline{\sN} \subset \overline{\sM})^\Pi).  \]
Indeed, the range of this map is isomorphic to $\K_{*-2}(C^*(\widetilde{N})^\pi) \cong \K_{*-2}(C^*\pi)$, as is discussed in next section (\cref{lem:pi_zero}).

\section{Codimension 2 transfer of the secondary index invariants}\label{section:HigsonRoe}
In this section we give a proof of \cref{thm:main1}. We extend the codimension $2$ transfer map to the Higson--Roe analytic surgery sequence, i.e., the long exact sequence associated to the short exact sequence of coarse C*-algebras
\[0 \to C^*(\widetilde{M})^\Gamma \to D^*(\widetilde{M})^\Gamma \to Q^*(\widetilde{M})^\Gamma \to 0. \]
This is obtained by extending the $\ast$-homomorphism \eqref{eq:split} to $D^*(\widetilde{M})^\Gamma$. 
Combined with the `boundary of Dirac is Dirac' and `boundary of signature is $2^\epsilon$ times signature' principles, we also show that the resulting homomorphism in K-theory relates the primary and the secondary higher index invariants of $M$ and those of $N$. 

\subsection{Codimension 2 transfer of the Higson--Roe analytic surgery sequence}
We start with a short review of the pseudo-local coarse C*-algebra, in particular the maximal C*-completion. For a more detail on the definitions, see for example \cite{roeIndexTheoryCoarse1996} and \cites{gongGeometrizationStrongNovikov2008,oyono-oyonoTheoryMaximalRoe2009}.
\begin{para}\label{para:coarse}
Let $D^*_\alg (\widetilde{M})^\Gamma $ denote the set of operators on $T \in \bB(L^2(\widetilde{M}))$ such that 
\begin{enumerate}
    \item $T$ is $\Gamma$-invariant, 
    \item $T$ is pseudo-local, i.e., $[T, f] \in \bK(L^2(\widetilde{M}))$ for any $f \in C_c(\widetilde{M})$, 
    \item the propagation of $T$ is finite. 
\end{enumerate}
An operator $T \in D^*_\alg(\widetilde{M})^\Gamma$ satisfies $TS, ST \in \bC[\widetilde{M}]^\Gamma$ for any $S \in \bC[\widetilde{M}]^\Gamma$, and moreover, the multiplier norm 
\[\| T\|_{\mathrm{max}}:=\sup_{S \in \bC[\widetilde{M}]^\Gamma \setminus \{0\}} \| TS\|_{\mathrm{max}} / \| S \|_{\mathrm{max}} \]
is finite  (\cite{oyono-oyonoTheoryMaximalRoe2009}*{Lemma 2.16}). 
Hence $D^*_\alg(\widetilde{M})^\Gamma$ is regarded as a $\ast$-subalgebra of the multiplier C*-algebra $\cM(C^*(\widetilde{M})^\Gamma)$. Let $D^*(\widetilde{M})^\Gamma$ denote its closure.

In the same way, we also define the C*-subalgebra $D^*(\overline{\sM})^\Pi$ of $\cM(C^*(\overline{\sM})^\Pi)$. 
The set $D^*_\alg (\overline{\sN} \subset \overline{\sM})^\Pi $ 
of bounded operators on $L^2(\overline{\sM})$ satisfying the conditions (1), (2), (3) above and 
\begin{enumerate}
    \item[(4)] $T$ is supported on a $R$-neighborhood of $\overline{\sN}$, and
    \item[(5)] $Tf, fT \in \bK(L^2(\overline{\sM}))$ for any $f \in C_c(\overline{\sM} \setminus \overline{\sN})$,
\end{enumerate}
forms a $\ast$-ideal of $D^*_\alg(\overline{\sM})^\Pi$. Hence its closure, denoted by $D^*(\overline{\sN} \subset \overline{\sM})^\Pi $, is a $\ast$-ideal of $D^*(\overline{\sM})^\Pi$. 
\end{para}
The quotient C*-algebras are written as 
\begin{align*}
    Q(\overline{\sM})^\Pi &:=D^*(\overline{\sM})^\Pi / C^*(\overline{\sM})^\Pi ,\\
    Q^*(\overline{\sN} \subset \overline{\sM})^\Pi  &:= D^*(\overline{\sN} \subset \overline{\sM})^\Pi / C^*(\overline{\sN} \subset \overline{\sM})^\Pi . 
\end{align*}
Then $Q^*(\overline{\sN} \subset \overline{\sM})^\Pi $ is a $\ast$-ideal of $Q^*(\overline{\sM})^\Pi$. Moreover, there are isomorphisms
\begin{align*}
    \K_{1-*}^\Pi (\overline{\sM}) &\cong \K_*(Q^*(\overline{\sM})^\Pi ) , \\
    \K_{1-*}^\Pi (\overline{\sM}, \overline{\sN}) &\cong  \K_*(Q^*(\overline{\sM})^\Pi / Q^*(\overline{\sN} \subset \overline{\sM})^\Pi ) , 
\end{align*}
where the groups in the left hand side are equivariant K-homology groups.

\begin{rmk}\label{lem:pi_zero}
The $\K$-group $\K_*(C^*(\overline{\sN} \subset \overline{\sM})^\Pi)$ is isomorphic to $\K_*(C^*(\overline{\sN})^\Pi)$, and the same is true for $D^*$ and $Q^*$ cases  \cite{siegelHomologicalCalculationsAnalytic2012}*{Proposition 4.3.34}. These $\K$-groups are isomorphic to the coarse $\K$-groups of $\widetilde{N}$ in the following way.  

First, in $C^*$-case, the map 
\[\epsilon_* \oplus (\partial_{\mathrm{MV}} \circ \mathrm{res}_\Pi^\pi) \colon \K_*(C^*(\overline{\sN})^\Pi)) \to \K_{*}(C^*(\widetilde{N})^\pi) \oplus \K_{*-1}(C^*(\widetilde{N})^\pi) \]
is an isomorphism. Here, $\epsilon$ is the map \eqref{eq:epsilon}, $\mathrm{res}_\Pi ^\pi$ is the inclusion $C^*(\overline{\sN})^\Pi \to C^*(\overline{\sN})^\pi$, and $\partial_{\mathrm{MV}}$ denotes the coarse Mayer--Vietoris boundary map \cite{higsonCoarseMayerVietorisPrinciple1993}. 
This is due to the K\"unneth theorem \cite{rosenbergKunnethTheoremUniversal1986} since $C^*(\overline{\sN})^\Pi)\cong C^*(\widetilde{N})^\pi \otimes C^*\bZ$. 

The homomorphisms $\epsilon_* \oplus (\partial _{\mathrm{MV}} \circ \mathrm{res}_\Pi^\pi)$ are also defined for $D^*$ and $Q^*$ coarse C*-algebras, and are isomorphic. 
Indeed, the $\ast$-homomorphism \eqref{eq:epsilon} extends to $D^*$ and $Q^*$ algebras by the same construction. 
Also, the coarse Mayer--Vietoris boundary map is defined for \cite{siegelHomologicalCalculationsAnalytic2012}. 
In $Q^*$-case, this follows from the K\"unneth theorem of K-homology group $\K_*^\Pi(\overline{\sN})$. 
The $D^*$-case is proved by the five lemma. 
\end{rmk}

\begin{prp}\label{prp:splitD}
The $\ast$-homomorphism 
\[s \colon C^*(\widetilde{M})^\Gamma \to C^*(\overline{\sM})^\Pi / C^*(\overline{\sN} \subset \overline{\sM})^\Pi \]
constructed in \eqref{eq:split} lifts to 
\[s \colon D^*(\widetilde{M} )^\Gamma \to D^*(\overline{\sM})^\Pi / D^*( \overline{\sN} \subset \overline{\sM})^\Pi. \]
\end{prp}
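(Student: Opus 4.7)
The strategy is to extend the explicit formula $s(T) = q(\overline{PTP})$ of \cref{prp:lift} from $C^*_\alg(\widetilde{M})^\Gamma$ to the pseudo-local setting. For $T \in D^*_\alg(\widetilde{M})^\Gamma$ with $\Prop T < R/4$, I define $\tilde s(T) := \overline{PTP}$ via the same Chang--Weinberger--Yu lifting used in \cref{prp:lift}; this construction depends only on finite propagation and the injectivity radius, not on local compactness, so it applies verbatim to pseudo-local operators. I set $s(T) := q(\tilde s(T))$ where $q$ is the quotient map onto $D^*(\overline{\sM})^\Pi / D^*(\overline{\sN} \subset \overline{\sM})^\Pi$.

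The principal technical verification is that $\overline{PTP} \in D^*_\alg(\overline{\sM})^\Pi$. The $\Pi$-invariance and propagation bound are automatic from the construction. For pseudo-locality, first note that $P$ is multiplication by $\chi_{\widetilde{\sM}}$, so it commutes with any $f \in C_c(\widetilde{M})$, giving $[PTP, f] = P[T, f]P \in \bK(L^2(\widetilde{\sM}))$; hence $PTP$ is pseudo-local on $L^2(\widetilde{\sM})$. For the lift, given $f \in C_c(\overline{\sM})$, decompose $f$ via a finite partition of unity subordinate to a cover by balls of radius less than $R/8$. On each such ball, the covering projection $\overline{\sM} \to \widetilde{\sM}$ is an isometry onto its image on a neighborhood containing both $\supp f_i$ and the propagation reach of $\overline{PTP}$, so $[\overline{PTP}, f_i]$ is unitarily equivalent via this local isometry to a commutator of $PTP$ with a compactly supported function on $\widetilde{\sM}$, which is compact by pseudo-locality of $PTP$. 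Summing the finite family gives compactness of $[\overline{PTP}, f]$. Linearity, involutivity, and multiplicativity modulo the boundary ideal follow verbatim from the argument of \cref{prp:lift}, which only uses the lifting structure.

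The remaining step is continuity with respect to the max norms. The max norm on $D^*(\widetilde{M})^\Gamma$ equals the multiplier norm over $C^*(\widetilde{M})^\Gamma$. For $T \in D^*_\alg(\widetilde{M})^\Gamma$ and $S \in \bC[\widetilde{M}]^\Gamma$ both of small propagation, the identity $(q \circ \tilde s)(T) \cdot s(S) = s(TS)$ together with contractivity of $s$ on $C^*$ yields $\|(q \circ \tilde s)(T) \cdot s(S)\| \leq \|T\|_{D^*} \, \|S\|_{C^*}$. Provided that $s(C^*(\widetilde{M})^\Gamma)$ is essential inside $D^*(\overline{\sM})^\Pi / D^*(\overline{\sN} \subset \overline{\sM})^\Pi$, this controls the quotient norm of $(q \circ \tilde s)(T)$ by $\|T\|_{D^*}$, and extension to all of $D^*(\widetilde{M})^\Gamma$ follows by density of small-propagation operators in a dense subalgebra.

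I expect the continuity step to be the main obstacle: rigorously justifying that $s(C^*(\widetilde{M})^\Gamma)$ is essential enough inside the target quotient for the multiplier-norm comparison to yield the correct bound, together with the density of small-propagation elements in $D^*(\widetilde{M})^\Gamma$. An alternative route is to apply a general multiplier-extension principle to the (suitably nondegenerate) $\ast$-homomorphism $s$ on $C^*$, extending it canonically to $\cM(C^*(\widetilde{M})^\Gamma) \supset D^*(\widetilde{M})^\Gamma$, and then verifying via the explicit formula on small-propagation operators that the extension lands in the pseudo-local subalgebra $D^*(\overline{\sM})^\Pi / D^*(\overline{\sN} \subset \overline{\sM})^\Pi$.
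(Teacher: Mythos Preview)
Your extension step has a genuine gap. Small-propagation pseudo-local operators are \emph{not} dense in $D^*(\widetilde{M})^\Gamma$ (nor in $D^*_\alg(\widetilde{M})^\Gamma$): there is no reason a finite-propagation pseudo-local operator can be norm-approximated by operators of propagation $< R/4$, so your proposed density-plus-continuity argument does not get off the ground. What is true instead---and this is the device the paper actually uses---is the decomposition $T = T_0 + T_1$ with $\Prop(T_0) < R/4$ and $T_1 \in C^*(\widetilde{M})^\Gamma$, valid for every $T \in D^*(\widetilde{M})^\Gamma$ (Roe, \emph{Index Theory, Coarse Geometry}, Lemma~5.8; concretely $T_0 = \sum_i \rho_i^{1/2} T \rho_i^{1/2}$ for a fine partition of unity). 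One then \emph{defines} $s(T) := q(\overline{PT_0P}) + s(T_1)$ using the already-constructed map on $C^*$, checks independence of the decomposition via \cref{prp:lift}, and verifies multiplicativity directly. Since the resulting $s$ is a $\ast$-homomorphism between C*-algebras defined on all of $D^*(\widetilde{M})^\Gamma$, continuity is automatic and the multiplier-extension issues you flag never arise.

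Your claim that multiplicativity ``follows verbatim from the argument of \cref{prp:lift}'' is also too quick: that proposition only identifies $s(T)$ with $q(\overline{PTP})$ for small-propagation $T \in C^*$; it says nothing about $T \mapsto q(\overline{PTP})$ being multiplicative on $D^*$. The paper verifies this by expanding $s(T)s(S) - s(TS)$ into four terms and checking each lies in the boundary ideal; in particular one needs $\overline{PT_0P}\cdot\overline{PS_0P} - \overline{PT_0S_0P} = \overline{P[T_0,P]S_0P} \in D^*(\overline{\sN}\subset\overline{\sM})^\Pi$, which uses the support of $1-P$ near $\widetilde{\sN}$ together with pseudo-locality. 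Your explicit verification that $\overline{PTP}$ is pseudo-local is correct and a point the paper glosses over, so that part of your argument is worth retaining.
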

\begin{proof}
Let $\varepsilon >0$ be the positive number in \cref{prp:lift}.
For $T \in D^*(\widetilde{M})^\Gamma$, there is a decomposition $T=T_0 + T_1$, where $T_0 \in D^*(\widetilde{M})^\Gamma $ satisfies $\Prop (T_0) < R/4 $ and $T_1 \in C^*(\widetilde{M})^\Gamma$ (cf.\ \cite{roeIndexTheoryCoarse1996}*{Lemma 5.8}).
We define the $\ast$-homomorphism $s$ as
\[s(T):= \overline{PT_0P} + s(T_1).\]
Here $\overline{T}_0$ and $s(T_1)$ are as in \cref{prp:lift}. 
This is well-defined independent of the choice of the decomposition $T = T_0 + T_1$. 
Indeed, for another decomposition $T=T_0' + T_1'$
we have
\[(\overline{T}_0 + s(T_1)) - (\overline{T}{}_0' + s(T_1')) = \overline{S} - s(S),  \]
where $S:=T_0-T_0' = T_1'-T_1$ is an operator with $\Prop (S) < R/4 $ and $S \in C^*(\widetilde{M})^\Gamma$. By \cref{prp:lift}, the difference $\overline{S} - s(S)$ lies in $C^*(\overline{\sN} \subset \overline{\sM})^\Pi \subset D^*(\overline{\sN} \subset \overline{\sM})^\Pi$.  

We show that this $s$ is multiplicative. For $T,S \in D^*(\widetilde{M})^\Gamma$, we choose decompositions $T=T_0+T_1$ and $S = S_0+S_1$ such that the propagation of $T_0$ and $S_0$ are less than $R/8$. Then, since $\mathop{\mathrm{Prop}} (T_0S_0 ) < R/4 $ and $ST - T_0S_0 \in C^*(\widetilde{M})^\Gamma$, we have $s(TS) = \overline{PT_0S_0P} + s(TS -T_0S_0)$, and hence
\begin{align*}
    s(T)s(S) - s(TS) =& (\overline{P T_0 P } \cdot \overline{PS_0P} - \overline{PT_0S_0P} )  + (\overline{PT_0P} \cdot s(S_1) - s(T_0S_1)) \\
    & + (s(T_1) \cdot \overline{PS_0P}  - s(T_0S_1))  + (s(T_1)s(S_1) - s(T_1S_1)).
\end{align*}   
It is straightforward to see that the first term $\overline{P T_0 P } \cdot \overline{PS_0P} - \overline{PT_0S_0P} = \overline{P[T_0,P]S_0P}$ is contained in $D^*(\overline{\sN} \subset \overline{\sM})^\Pi $ and the second, third and forth terms are in $C^*(\overline{\sN} \subset \overline{\sM})^\Pi $. 
\end{proof}

We write $\partial _D$ and $\partial_Q$ for the boundary homomorphism in K-theory associated to the exact sequences
\begin{align*}
    0 \to D^*( \overline{\sN} \subset \overline{\sM})^\Pi \to & D^*(\overline{\sM})^\Pi \to D^*(\overline{\sM})^\Pi / D^*( \overline{\sN} \subset \overline{\sM})^\Pi \to 0, \\
    0 \to Q^*( \overline{\sN} \subset \overline{\sM})^\Pi \to & Q^*(\overline{\sM})^\Pi \to Q^*(\overline{\sM})^\Pi / Q^*( \overline{\sN} \subset \overline{\sM})^\Pi \to 0,
\end{align*}
respectively. 
\begin{defn}\label{defn:transfer}
The codimension 2 transfer maps are defined as
\begin{align*}
    \tau_\sigma:= \partial_{\mathrm{MV}} \circ \mathrm{res}_\Pi^\pi \circ \partial_D \circ s_* &\colon \K_*(D^*(\widetilde{M})^\Gamma ) \to \K_{*-2}(D^*(\widetilde{N})^\pi ), \\
    \tau_\sigma:= \partial_{\mathrm{MV}} \circ \mathrm{res}_\Pi^\pi \circ \partial_Q \circ s_* &\colon \K_*(Q^*(\widetilde{M})^\Gamma ) \to \K_{*-2}(Q^*(\widetilde{N})^\pi ).
\end{align*}
\end{defn}

\begin{thm}\label{thm:HigsonRoe}
The following diagram of long exact sequences commutes;
\[\mathclap{
\xymatrix@C=0.5cm{
\cdots  \ar[r] & \K_*(C^*(\widetilde{M})^\Gamma) \ar[r] \ar[d]^{\tau_\sigma} & \K_*(D^*(\widetilde{M})^\Gamma) \ar[r] \ar[d]^{\tau_\sigma} & \K_*(Q^*(\widetilde{M})^\Gamma) \ar[r] \ar[d]^{\tau_\sigma} & \K_{*-1}(C^*(\widetilde{M})^\Gamma ) \ar[d]^{\tau_\sigma} \ar[r] & \cdots \\
\cdots \ar[r] & \K_{*-2}(C^*(\widetilde{N})^\pi ) \ar[r] & \K_{*-2}(D^*(\widetilde{N})^\pi ) \ar[r] &  \K_{*-2}(Q^*(\widetilde{N})^\pi ) \ar[r] & \K_{*-3}(C^*(\widetilde{N})^\pi) \ar[r] & \cdots .
}}
\]
\end{thm}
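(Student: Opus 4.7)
The plan is to factor each vertical map $\tau_\sigma$ in the diagram as the composition
\[
\K_*(X^*(\widetilde{M})^\Gamma) \xrightarrow{s_*} \K_*(\widetilde{A}_X) \xrightarrow{\partial_X} \K_{*-1}(X^*(\overline{\sN} \subset \overline{\sM})^\Pi) \xrightarrow{\partial_{\mathrm{MV}} \circ \mathrm{res}_\Pi^\pi} \K_{*-2}(X^*(\widetilde{N})^\pi)
\]
for each $X \in \{C, D, Q\}$, where $\widetilde{A}_X := X^*(\overline{\sM})^\Pi / X^*(\overline{\sN} \subset \overline{\sM})^\Pi$, and then to verify commutativity of each factor separately with the Higson--Roe boundary maps using naturality of the K-theoretic connecting homomorphism.

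For the first stage, the $\ast$-homomorphism $s$ from \cref{prp:splitD} restricts on the ideal $C^*(\widetilde{M})^\Gamma$ to the $\ast$-homomorphism of \eqref{eq:split} and descends to a $\ast$-homomorphism $Q^*(\widetilde{M})^\Gamma \to \widetilde{A}_Q$. These three assemble into a morphism of Higson--Roe short exact sequences
\[
\xymatrix@C=1em{
0 \ar[r] & C^*(\widetilde{M})^\Gamma \ar[r] \ar[d]^{s} & D^*(\widetilde{M})^\Gamma \ar[r] \ar[d]^{s} & Q^*(\widetilde{M})^\Gamma \ar[r] \ar[d]^{s} & 0 \\
0 \ar[r] & \widetilde{A}_C \ar[r] & \widetilde{A}_D \ar[r] & \widetilde{A}_Q \ar[r] & 0,
}
\]
so that $s_*$ intertwines the associated boundary maps. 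For the second stage, the vertical short exact sequences
\[
0 \to X^*(\overline{\sN} \subset \overline{\sM})^\Pi \to X^*(\overline{\sM})^\Pi \to \widetilde{A}_X \to 0, \qquad X \in \{C, D, Q\},
\]
fit together with the Higson--Roe horizontal sequences of $\overline{\sN} \subset \overline{\sM}$, of $\overline{\sM}$, and of the quotient into a commutative $3 \times 3$ diagram of short exact sequences; taking K-theory, naturality of the connecting homomorphism forces $\partial_C, \partial_D, \partial_Q$ to commute with the Higson--Roe boundary maps of $(\overline{\sM}, \overline{\sN})$.

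For the third stage, \cref{lem:pi_zero} identifies $\K_*(X^*(\overline{\sN} \subset \overline{\sM})^\Pi)$ with $\K_*(X^*(\overline{\sN})^\Pi)$ uniformly in $X \in \{C, D, Q\}$, and the projection onto the $\K_{*-1}(X^*(\widetilde{N})^\pi)$ direct summand is realized by $\partial_{\mathrm{MV}} \circ \mathrm{res}_\Pi^\pi$. Both operations are natural at the coarse C*-algebra level: $\mathrm{res}_\Pi^\pi$ is a $\ast$-homomorphism defined uniformly on $C^*$, $D^*$ and $Q^*$, while $\partial_{\mathrm{MV}}$ is the connecting map of an auxiliary Mayer--Vietoris short exact sequence existing simultaneously at the three levels as in \cite{siegelHomologicalCalculationsAnalytic2012}. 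Naturality of the K-theoretic connecting map thus forces both to commute with the Higson--Roe boundaries of $\overline{\sN}$ and $\widetilde{N}$. Composing the three stages produces the claimed commutativity of the ladder.

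The main obstacle is not any single technical step but rather the bookkeeping required to ensure that all the relevant $3 \times 3$ diagrams commute on the nose at the C*-algebra level (so that naturality of the K-theoretic boundary map can be applied), and in particular that the Mayer--Vietoris short exact sequences at the $D^*$ and $Q^*$ levels are set up compatibly with that at the $C^*$ level. Once these compatibilities are established---using the framework of \cite{siegelHomologicalCalculationsAnalytic2012} for $\partial_{\mathrm{MV}}$ and the explicit formula $s(T)= \overline{PT_0P} + s(T_1)$ from the proof of \cref{prp:splitD}---the theorem follows by repeated application of the naturality of the connecting homomorphism.
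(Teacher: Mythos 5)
Your proposal is correct and follows essentially the same route as the paper: factoring $\tau_\sigma$ through $s_*$, $\partial_X$, and $\partial_{\mathrm{MV}} \circ \mathrm{res}_\Pi^\pi$, and verifying commutativity stage by stage via naturality of the connecting homomorphism applied to the morphism of Higson--Roe sequences induced by $s$ and the $3\times 3$ diagram of ideals for $(\overline{\sN} \subset \overline{\sM})$. The paper's proof writes down exactly your stage-one and stage-two diagrams and declares them obvious from the definitions, leaving the compatibility of $\partial_{\mathrm{MV}} \circ \mathrm{res}_\Pi^\pi$ with the Higson--Roe boundary (your stage three) implicit in \cref{lem:pi_zero}; your treatment is slightly more explicit on that last point but amounts to the same argument.
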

\begin{proof}
This follows from the commutativity of the diagrams
\[
\xymatrix{
0 \ar[r] & C^*(\widetilde{M})^\Gamma \ar[d]^{s_*} \ar[r] & D^*(\widetilde{M}) ^\Gamma \ar[r] \ar[d]^{s_*} & Q^*(\widetilde{M})^\Gamma \ar[r] \ar[d]^{s_*}  & 0\\    
0 \ar[r] & \frac{C^*(\overline{\sM})^\Pi}{C^*( \overline{\sN} \subset \overline{\sM})^\Pi} \ar[r] & \frac{D^*(\overline{\sM})^\Pi}{D^*( \overline{\sN} \subset \overline{\sM})^\Pi} \ar[r] & \frac{Q^*(\overline{\sM})^\Pi}{Q^*( \overline{\sN} \subset \overline{\sM})^\Pi} \ar[r]  & 0
}
\]
and
\[
\xymatrix{
 0 \ar[r] & C^*( \overline{\sN} \subset \overline{\sM})^\Pi \ar[d] \ar[r] &  C^*(\overline{\sM})^\Pi \ar[r] \ar[d] & C^*(\overline{\sM})^\Pi / C^*( \overline{\sN} \subset \overline{\sM})^\Pi \ar[r] \ar[d] & 0 \\
 0 \ar[r] & D^*( \overline{\sN} \subset \overline{\sM})^\Pi \ar[d] \ar[r] &  D^*(\overline{\sM})^\Pi \ar[r] \ar[d] & D^*(\overline{\sM})^\Pi / D^*( \overline{\sN} \subset \overline{\sM})^\Pi \ar[r] \ar[d] & 0 \\
 0 \ar[r] & Q^*( \overline{\sN} \subset \overline{\sM})^\Pi  \ar[r] &  Q^*(\overline{\sM})^\Pi \ar[r]  & Q^*(\overline{\sM})^\Pi / Q^*( \overline{\sN} \subset \overline{\sM})^\Pi \ar[r]  & 0, 
}
\]
both of which are obvious from the definitions. 
\end{proof}

\subsection{Secondary higher index invariants}
Next, we study the behavior of K-theory classes with the geometric origin, the primary and secondary higher index invariants of manifolds, under the codimension $2$ transfer map.

\begin{para}\label{lem:Roe}
Let $R>0$ be injectivity radius of $M$. Let $D$ be a $\Gamma$-invariant formally self-adjoint elliptic differential operator of first order on a $\Gamma$-vector bundle on $\widetilde{M}$. Assume that the norm of the principal symbol $\sigma(D)$ restricted to the unit cosphere bundle $\bS (T^*\widetilde{M})$ is bounded by $1$. 
Let $\overline{D}_\infty$ be an elliptic operator on $\overline{\sM}_\infty := \overline{\sM} \sqcup_{\overline{\sN}} \overline{\sN} \times \bR_{\geq 0}$, equipped with a complete metric extending that of $\overline{\sM}$, whose restriction to $\overline{\sM}$ coincides with the lift of $D$. 

Let $\chi $ be a real-valued odd function on $\bR$ whose Fourier transform $\hat{\chi}$ is smooth on $\bR \setminus \{ 0\}$ and satisfies $\mathop{\mathrm{supp}} (\hat{\chi}) \subset [-R/4,R/4]$ and $\hat{\chi}|_{[-R/8,R/8]} \equiv x^{-1}$.  
The both functional calculi $\chi(D)$ and $\chi (\overline{D}_\infty)$ are pseudo-local bounded operators with finite propagation and moreover, $\chi(D)^2-1 $ and $\chi(\overline{D}_\infty)^2 -1 $ lie in the Roe algebra. 
By \cite{roePartitioningNoncompactManifolds1988}*{Proposition 2.3}, the propagation of $\chi(D)$ is less than $R/4$ and $\chi(D)\xi$ depends only on geometric data of the $R/2$-neighborhood of the support of $\xi \in L^2(\widetilde{M})$ (the same is also true for $\chi(\overline{D}_\infty)$). This and \cref{prp:lift} show that
\[s ( \chi(D))= P\chi (\overline{D}_\infty )P \text{ modulo $D^*(\overline{\sN} \subset \overline{\sM})^\Pi$}.\]
\end{para}

We review the construction of K-theory classes associated with elliptic operators with the geometric origin.

\begin{para}\label{para:Dirac}
Let $M$ be an $n$-dimensional closed spin manifold. 
Then the Dirac operator $\slashed{D}_M \colon L^2(\widetilde{M}, S) \to L^2(\widetilde{M}, S)$ is odd and self-adjoint.
The functional calculus $\chi (\slashed{D}_M)$ is bounded, odd, self-adjoint, pseudo-local and finite propagation, and its image in $ Q^*(\widetilde{M})^\Gamma $ is a unitary. 
Hence it determines the Dirac fundamental class
\[[M]:= [\chi (\slashed{D}_M)] \in \K_{n+1}(Q(\widetilde{M})^\Gamma). \]
More precisely, we consider the $\Cl_{0,q}$-Dirac operator when $n \equiv -q$ modulo $8$, which determines an element of the $\KR_{n+1}$-group, as is discussed in \cref{rmk:odd_RealK}. 

In the same way the Dirac operator $\slashed{D}_{\sM}$ on $\overline{\sM}_\infty$ determines 
\[ [\overline{\sM} , \overline{\sN}]:=[ P\chi (\slashed{D}_\sM )P ] \in \K_{n+1}(Q^*(\overline{\sM})^\Pi /Q^*(\overline{\sN} \subset \overline{\sM})^\Pi),\]
where $P$ is the projection onto $L^2(\overline{\sM})$. By \cref{lem:Roe}, we have 
\begin{align*}
    s(\chi (\slashed{D}_M)) =  P\chi (\slashed{D}_{\sM})P \in Q^*(\overline{\sM}) ^\Pi  / Q^*(\overline{\sN} \subset \overline{\sM})^\Pi, 
\end{align*}
and hence 
\[s_*[\widetilde{M}] = [\overline{\sM}, \overline{\sN}] \in \K_{n+1}(Q^*(\overline{\sM}) ^\Pi  / Q^*(\overline{\sN} \subset \overline{\sM})^\Pi) \cong \K_n^\Pi (\overline{\sM},\overline{\sN}). \]
\end{para}

\begin{para}\label{para:psc}
Let us assume that an $n$-dimensional closed spin manifold $M$ is imposed a positive scalar curvature metric $g_M$. Then, as is proved in \cite{guoLichnerowiczVanishingTheorem2020}*{Theorem 1.1}, the operator $ \chi(\slashed{D}_M)$ is invertible in $D^*(\widetilde{M})^\Gamma$. 
Hence it determines the $\K$-theory class
\[\rho (g_M) := [\chi(\slashed{D}_M)] \in \K_{n+1}(D^*(\widetilde{M})^\Gamma) \]
called the higher $\rho$-invariant. 

We additionally assume that the psc metric $g$ is of the form $g_{\bD^2} + g_N$ on a tubular neighborhood $U = N \times \bD^2$ of $N$, where $g_{\bD^2}$ denotes the standard flat metric on $\bD^2$. 
We extend the metric on $\sM$ to a Riemannian metric $g_\sM$ on $\overline{\sM}_\infty$ which has uniformly positive scalar curvature and of the form $ds^2 + g_{\sN}$ on $\sN \times \bR_{>0}$, where $ds^2$ is the metric on $\bR_{>0}$. 
The Dirac operator $\slashed{D}_\sM $ on $\overline{\sM}_\infty$ with respect to the pull-back Riemannian metric on $\overline{\sM}_\infty$ determines an invertible element 
$\chi (\slashed{D}_\sM) \in D^*(\overline{\sM}_\infty )^\Pi$. 
Through the isomorphism 
\[D^*(\overline{\sM}_\infty)^\Pi / D^*(\overline{U}_\infty \subset \overline{\sM}_\infty) \cong D^*(\overline{\sM})^\Pi/ D^*(\overline{\sN} \subset \overline{\sM})^\Pi, \]
it determines the higher $\rho$-invariant 
\[\rho(g_\sM ) := [P\chi (\slashed{D}_\sM )P] \in \K_{n+1}(D^*(\overline{\sM})^\Pi/ D^*(\overline{\sN} \subset \overline{\sM})^\Pi). \]
By \cref{lem:Roe} we have $s(\chi (\slashed{D}_M)) = P\chi (\slashed{D}_{\sM})P$, which implies
\[ s_* (\rho (g_M)) = \rho (g_\sM). \]
\end{para}

\begin{para}\label{para:sign}
Let $M$ be an $n$-dimensional closed oriented manifold. If $n$ is even, the signature operator $D_M^\sgn := d + d^*$ acting on $L^2(\widetilde{M}, \lwedge ^*_\bC TM)$ is an odd self-adjoint elliptic operator, and hence determines a K-theory class
\[[M]_\sgn := [ \chi(D^\sgn_M ) ] \in \K_{n+1}(Q^*(\widetilde{M})^\Gamma). \]
A similar construction also works when $n$ is odd (see \cref{rmk:odd_RealK} for a more detail).
In the same way, the signature operator $D^\sgn_\sM$ on $\overline{\sM}_\infty  $ also determines
\[ [\bar{\sM} , \bar{\sN}]_\sgn :=[P\chi (D^\sgn_\sM  )P] \in \K_{n+1}(Q(\overline{\sM})^\Pi / Q(\overline{\sN} \subset \overline{\sM})^\Pi ). \]
By \cref{lem:Roe}, we have
\begin{align*}
    s(\chi (D_M^\sgn )) = P\chi (D_\sM^\sgn )P \in Q^*(\overline{\sM}) ^\Pi  / Q^*(\overline{\sN} \subset \overline{\sM})^\Pi,
\end{align*}
and hence
\[ s_* [M]_\sgn = [\sM, \sN ]_\sgn \in \K_{n+1}(Q^*(\overline{\sM}) ^\Pi  / Q^*(\overline{\sN} \subset \overline{\sM})^\Pi ) \cong \K_n^\Pi (\overline{\sM}, \overline{\sN}) .\]
\end{para}

\begin{para}\label{para:homotopy}
Let $f \colon M' \to M$ be an oriented homotopy equivalence of $n$-dimensional manifolds. We construct an invertible perturbation of the signature operator 
following the construction of \cite{hilsumInvarianceParHomotopie1992}*{Section 3.2} (we also refer the reader to \cite{piazzaBordismRhoinvariantsBaumConnes2007}).  
We choose a submersion $F \colon \bD^k \times M' \to M$ such that $F|_{\{ 0\} \times M'} =f$ and a differential form $v \in \Omega_0^k(\bD^k)$ with $\int_{\bD^k} v =1$. We write $p \colon M' \times \bD^k \to M'$ for the projection. 
We define the linear map $T:= p_! \circ e(v) \circ F^*$, where $F^*$ denotes the pull-back, $e(v)$ denotes the exterior product with $v$, and $p_!$ denotes the fiberwise integral along $p$. Namely
\[T (\omega ):= \int_{\bD^k} v \wedge F^*(\omega). \]
As is shown in \cite{hilsumInvarianceParHomotopie1992}*{Lemma 3.2}, there are operators $y \colon \Omega^*(\widetilde{M}) \to \Omega^*(\widetilde{M})$ and $z \in \Omega^*(\widetilde{M}') \to \Omega^*(\widetilde{M}')$ such that $1-T^\dagger T = yd_{M}+ d_M y$ and $1-TT^\dagger = zd_{M'} + d_{M'}z$.

Let $\gamma$ denote the involution acting on $\Omega^p$ by $(-1)^p$, and let $\tau$ denote the Hodge $\ast$-operator (we use the same letter for the operators on both $M$ and $M'$). Let $a^\dagger$ stand for $\tau a^* \tau $. 
Following to \cite{hilsumInvarianceParHomotopie1992}*{Lemma 2.1}, we define the operators
\[d_{f_M } = \begin{pmatrix} d_M & \alpha T^\dagger  \\ 0 & -d_{M'} \end{pmatrix} _{\textstyle ,} \ \ \ L := \begin{pmatrix} 1 - T^\dagger T & (i\gamma + \alpha y) T^\dagger \\ T(i\gamma + \alpha y) & 1 \end{pmatrix}_{\textstyle ,}\]
where $\alpha >0$ is chosen in the way that $L$ is invertible (the existence of such $\alpha >0$ is shown in the proof of \cite{hilsumInvarianceParHomotopie1992}*{Lemma 2.1}). 
Set $S:= \tau L \cdot | \tau L|^{-1}$. 

Following the idea of \cite{guoLichnerowiczVanishingTheorem2020}*{Section 3}, we regard the operators as regular unbounded multipliers on 
the maximal uniform Roe algebra $C^*_{u,\mathrm{max}}(\widetilde{M}, \sH_f )^\Gamma$ with respect to the $\Gamma$-equivariant $C_0(\widetilde{M})$-module
\[ \sH_f:= L^2(\tilde{M}, \lwedge ^*_\bC TM) \oplus L^2(\widetilde{M}', \lwedge ^*_\bC TM').\] 
Then the relations $1-T^\dagger T = yd_{M}+ d_M y$ and $1-TT^\dagger = zd_{M'} + d_{M'}z$ show that the differential $d_{f_M}$ is acyclic as operators on this Hilbert module, and hence, the operator
\[ 
D_{f_M}^\sgn := 
\begin{cases}
(d_{f_M} - S d_{f_M} S ) \circ |\tau L|^{-1} & \text{ if $n$ is even, }\\
-i(d_{f_M} S  +S d_{f_M} ) \circ |\tau L|^{-1} & \text{ if $n$ is odd,}
\end{cases}
\]
is self-adjoint and invertible. Moreover it anticommutes with the $\bZ_2$-grading $S$ if $n$ is even. 
We also remark that this operator satisfies the assumption of \cref{lem:Roe} since both $T$ and $y$ has zero propagation. We define the signature higher $\rho$-invariant as
\[\rho_\sgn (f_M):= [\chi (D_{f_M}^\sgn)  ] \in \K_{n+1}(D^*(\widetilde{M})^\Gamma ). \]

We additionally assume that $f_M$ restricts to a homotopy equivalence $f_N \colon N' \to N$ of codimension 2 submanifolds. We choose the submersion $F$ as $F|_{N'}$ is also a submersion. Then $f_M$ and $F$ extends to a homotopy equivalence $f_\sM \colon \sM'_\infty \to \sM_\infty$ and a submersion $F_{\sM} \colon \sM'_\infty  \times \bD^k \to \sM_\infty $ respectively. By the same construction for these maps, we obtain the operator $D_{f_\sM}^\sgn $, which is locally a lift of $D_{f_M}^\sgn$. It determines the higher $\rho$-invariant 
\[\rho_\sgn (f_{\sM}):= [P\chi (D_{f_\sM}^\sgn)P ] \in \K_{n+1}(D^*(\overline{\sM})^\Pi  / D^*(\overline{\sN} \subset \overline{\sM})^\Pi  ). \]
By \cref{lem:Roe}, we have
\begin{align*}
    s(\chi (D_{f_M}^\sgn )) = P\chi (D_{f_\sM}^\sgn )P \in D^*(\overline{\sM}) ^\Pi  / D^*(\overline{\sN} \subset \overline{\sM})^\Pi,
\end{align*}
and hence
\[s_* \rho_\sgn (f_M) = \rho_\sgn (f_\sM ) \in \K_{n+1}(D^*(\overline{\sM}) ^\Pi  / D^*(\overline{\sN} \subset \overline{\sM})^\Pi). \]
\end{para}

The `boundary of Dirac is Dirac' and `boundary of signature is $2^\epsilon$ times signature' principles claims the following equalities.  
\begin{lem}\label{lem:boundary}
The following equalities hold;
\begin{enumerate}
    \item $\partial_Q ([\overline{\sM} , \overline{\sN}]) = [\overline{\sN}] = [N]$,
    \item $\partial_Q ([\overline{\sM} , \overline{\sN}]_\sgn  ) =2^\epsilon [\overline{\sN}]_\sgn  = 2[N]_\sgn $,
    \item $\partial_D ( \rho (g_{\sM , \sN})) = \rho (g_{\sN}) = \rho (g_N)$,
    \item $\partial_D ( \rho_\sgn  (f_{\sM,\sN})) = 2^\epsilon \rho_\sgn (f_\sN) = 2 \rho_\sgn (f_N)$,
\end{enumerate}
where $\epsilon \in \{ 0,1\}$ is determined as $\epsilon = 0$ if $\dim M$ is odd and $\epsilon =1 $ if $\dim M$ is even. Here, each of the second equalities are considered under the isomorphisms of K-groups given in \cref{lem:pi_zero}.
\end{lem}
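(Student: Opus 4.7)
The plan is to treat all four items uniformly by splitting each equality into two independent steps: the first equality in each of (1)--(4) is a coarse boundary-of-Dirac / boundary-of-signature computation applied to the cylindrical-end extension $\overline{\sM}_\infty$, while the second equality is the identification, provided by \cref{lem:pi_zero}, of the $\Pi$-equivariant coarse $\K$-class on $\overline{\sN}\cong\widetilde{N}\times\bR$ with the $\pi$-equivariant class on $\widetilde{N}$.

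For the first equalities in (1) and (3), the representative $P\chi(\slashed{D}_{\sM})P$ lifts to $\chi(\slashed{D}_{\sM})$ in the multiplier algebra of $C^*(\overline{\sM}_\infty)^\Pi$. Computing $\partial_Q$ or $\partial_D$ reduces to controlling the commutator $[P,\chi(\slashed{D}_\sM)]$, which by finite propagation and the product structure of the cylindrical end is supported in a bounded neighborhood of $\overline{\sN}$. A standard coarse Mayer--Vietoris / partitioning argument (cf.\ \cite{siegelHomologicalCalculationsAnalytic2012}) identifies the resulting class in $\K_*(Q^*(\overline{\sN}\subset\overline{\sM})^\Pi)$ or $\K_*(D^*(\overline{\sN}\subset\overline{\sM})^\Pi)$ with the fundamental class $[\overline{\sN}]$ or the $\rho$-class $\rho(g_\sN)$, and analogously $\rho_\sgn(f_\sN)$ in (4). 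Items (2) and (4) proceed by the same scheme applied to the signature operator, the factor $2^\epsilon$ originating from the splitting $\lwedge^* T^*(\overline{\sN}\times\bR) \cong \lwedge^*T^*\overline{\sN}\oplus \lwedge^*T^*\overline{\sN}\wedge dt$: when $\dim M$ is even, these two summands yield genuinely two copies of the boundary signature operator, while when $\dim M$ is odd the Hodge/Clifford grading identifies them into a single copy.

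For the second equalities, the product structure $\overline{\sN} = \widetilde{N}\times\bR$ with commuting $\pi$- and $\bZ$-actions gives $C^*(\overline{\sN})^\Pi \cong C^*(\widetilde{N})^\pi \otimes C^*(\bR)^\bZ$ (and similarly for $D^*$, $Q^*$). Under the K\"unneth decomposition of \cref{lem:pi_zero}, the projection $\partial_{\mathrm{MV}}\circ\mathrm{res}_\Pi^\pi$ is Roe's partitioned-manifold index map \cite{roePartitioningNoncompactManifolds1988} for the partition $\overline{\sN} = \widetilde{N}\times\bR_{\geq 0}\cup\widetilde{N}\times\bR_{\leq 0}$ and extracts the transverse $\widetilde{N}$-component of the external product $[\overline{\sN}]=[\widetilde{N}]\times[\bR]$. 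This gives $[\widetilde{N}]=[N]$ on the nose in the Dirac and psc cases, and $\rho_\sgn(f_\sN)=\rho_\sgn(f_N)$ in the homotopy-equivalence case, after the same partitioning argument. For the signature classes this identification produces the complementary factor $2^{1-\epsilon}$, so that $2^\epsilon\cdot 2^{1-\epsilon}=2$ matches the right-hand side of (2) and (4).

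The main technical obstacle is the careful accounting of the factors of $2$ and the dimension-parity splitting, in particular ensuring compatibility with the $\mathrm{Cl}_{0,q}$-Dirac formalism used throughout the paper. A secondary check, essentially settled by \cref{prp:lift} and the construction of $s$ in \cref{prp:splitD}, is that the $P$-cutoff lift is compatible with the boundary maps $\partial_D$ and $\partial_Q$: this holds because $P\chi(\slashed{D}_\sM)P - \chi(\slashed{D}_\sM)$ is an operator of finite propagation localized near $\overline{\sN}$ and therefore lies in $D^*(\overline{\sN}\subset\overline{\sM})^\Pi$.
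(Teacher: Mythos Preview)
Your approach is essentially the one the paper takes: the first equalities in each item are instances of the ``boundary of Dirac is Dirac'' and ``boundary of signature is $2^\epsilon$ times signature'' principles (the paper simply cites \cite{higsonAnalyticHomology2000}, \cite{higsonCalgebraicHigherSignatures2018}, \cite{piazzaRhoclassesIndexTheory2014}, \cite{weinbergerAdditivityHigherRho2020} for these), and the second equalities come from applying the same principles again to the product $\overline{\sN}\cong\widetilde{N}\times\bR$, with your factor accounting $2^\epsilon\cdot2^{1-\epsilon}=2$ being exactly right.

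The one place where your sketch is thinner than the paper is item (4). The paper explicitly flags that its Hilsum--Skandalis construction of $\rho_\sgn(f_M)$ in \cref{para:homotopy} is \emph{not} the same as the one treated in \cite{weinbergerAdditivityHigherRho2020}, so the first equality in (4) cannot be read off directly from that reference. Your appeal to the exterior-algebra splitting $\lwedge^*T^*(\overline{\sN}\times\bR)\cong\lwedge^*T^*\overline{\sN}\oplus\lwedge^*T^*\overline{\sN}\wedge dt$ is the right intuition for the factor $2^\epsilon$, but it does not immediately apply to the perturbed operator $D_{f_\sM}^\sgn$, which involves the chain-level homotopy inverse $T$ and the auxiliary operators $L$, $S$. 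The paper closes this gap in \cref{section:product} by building a secondary external product $\boxtimes$ on $\K_*(D^*)$ and proving (via \cref{lem:external_operator}) that $\rho_\sgn(f_{N\times\bR})=2^{1-\epsilon}\rho_\sgn(f_N)\boxtimes[\bR]_\sgn$, after which $\partial_{\mathrm{MV}}$ extracts the $\widetilde{N}$-factor as you describe.
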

\begin{proof}
Each of the first equalities are the `boundary of Dirac is Dirac' or the `boundary of signature is $2^\epsilon$ times signature' principles, which is proved in \cite{higsonAnalyticHomology2000}*{Proposition 11.2.15}, \cite{higsonCalgebraicHigherSignatures2018}*{2.13}, \cite{piazzaRhoclassesIndexTheory2014}*{Theorem 1.22} (see also \cite{zeidlerPositiveScalarCurvature2016}*{Theorem 5.15}) and  \cite{weinbergerAdditivityHigherRho2020}*{Appendix D} respectively. 
Each of second equalities again follows from the `boundary of Dirac is Dirac' or the `boundary of signature is $2^\epsilon$ times signature' principles, applied for the manifold $\sN \cong N \times \bR$.
We remark that the proof of (4) requires a further discussion because our construction of the signature higher $\rho$-invariant is not the same as the one dealt with in \cite{weinbergerAdditivityHigherRho2020}. 
This is detailed in \cref{section:product}.
\end{proof}

\begin{thm}\label{thm:second_tr}
Let $M$ be an $n$-dimensional closed manifold and let $N$ be a codimension $2$ submanifold of $M$ satisfying (1), (2) and (3) of \cref{prp:NSZ}.
\begin{enumerate}
\item Let $M$ be a spin manifold. Then we have
\[\tau _\sigma ([M]) = [N] \in \K_{n-1}(Q^*(\widetilde{N})^\pi) \cong \K_{n-2}(N).\]
\item Let $M$ be oriented. Then we have
\[\tau _\sigma ([M]_\sgn ) = [N]_\sgn \in \K_{*-1}(Q^*(\widetilde{N})^\pi) \cong \K_{n-2}(N).\]
\item Let $M$ be spin and equipped with a psc metric $g_M$ which is of product-like on $U \cong N \times \bD^2$. Then we have
\[\tau_\sigma (\rho(g_M)) = \rho(g_N) \in \K_{n-1}(D^*(\widetilde{N})^\pi). \]
\item Let $f \colon M' \to M$ be a oriented homotopy equivalence such that the restriction $f|_N \colon f^{-1}(N) \to N$ is also a homotopy equivalence. Then we have
\[\tau_\sigma (\rho_\sgn (f_M)) = \rho_\sgn (f_N) \in \K_{n-1}(D^*(\widetilde{N})^\pi). \]
\end{enumerate}
\end{thm}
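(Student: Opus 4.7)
The plan is to recognize that Theorem~\ref{thm:second_tr} is an immediate assembly of the preparatory facts established above. By Definition~\ref{defn:transfer},
\[
\tau_\sigma = \partial_{\mathrm{MV}} \circ \mathrm{res}_\Pi^\pi \circ \partial_{Q/D} \circ s_*,
\]
so each of (1)--(4) follows by composing a computation of $s_*$ on the given class with an application of $\partial_{Q/D}$, followed by the Mayer--Vietoris identification $\mathrm{res}_\Pi^\pi \circ \partial_{\mathrm{MV}}$ of $\K_*$ of the Roe algebra of $\overline{\sN}\cong\widetilde{N}\times\bR$ with $\K_{*-1}$ of the Roe algebra of $\widetilde{N}$ (Remark~\ref{lem:pi_zero}).

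The first step is already carried out in Paragraphs~\ref{para:Dirac}--\ref{para:homotopy}: with the normalizing function $\chi$ chosen so that $\hat\chi$ is supported in $[-R/4,R/4]$, the operator $\chi(D)$ has propagation less than $R/4$, so Proposition~\ref{prp:lift} gives $s(\chi(D))=P\chi(\overline{D}_\infty)P$ modulo the boundary ideal, yielding
\[
s_*[M]=[\overline{\sM},\overline{\sN}],\quad s_*[M]_\sgn=[\overline{\sM},\overline{\sN}]_\sgn,\quad s_*\rho(g_M)=\rho(g_\sM),\quad s_*\rho_\sgn(f_M)=\rho_\sgn(f_\sM).
\]
For (4) this additionally uses that the Hilsum building blocks $T$ and $y$ have zero propagation, so $D^\sgn_{f_M}$ falls under the same lifting principle of Paragraph~\ref{lem:Roe}. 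The second step is exactly Lemma~\ref{lem:boundary}, which applies the ``boundary of Dirac is Dirac'' and ``boundary of signature is $2^\epsilon$ times signature'' principles to the pair $(\overline{\sM},\overline{\sN})$ and then to the partition $\overline{\sN}=\widetilde{N}\times\bR$, producing the classes on $\widetilde{N}$ directly.

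The only nontrivial bookkeeping occurs in the signature cases (2) and (4), where the factor $2^\epsilon$ from the boundary principle on $(\overline{\sM},\overline{\sN})$ is complemented by the factor $2^{1-\epsilon}$ coming from Roe's partitioned manifold index theorem for the signature operator on $\overline{\sN}\cong\widetilde{N}\times\bR$; the two together always yield the factor $2$ visible in Theorem~\ref{thm:main1}(3). I do not expect a substantive obstacle beyond this parity tracking, since the genuine analytic content already resides in Paragraph~\ref{lem:Roe} (propagation lifting of functional calculi via $s$) and Lemma~\ref{lem:boundary} (the two boundary principles together with the partitioned manifold identification).
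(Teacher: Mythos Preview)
Your proposal is correct and matches the paper's approach exactly: the paper's own proof is the one-line ``They are already proved in \cref{para:Dirac}, \cref{para:psc}, \cref{para:sign}, \cref{para:homotopy} and \cref{lem:boundary}'', and you have simply unpacked how those pieces compose via \cref{defn:transfer}. Your bookkeeping of the signature factor $2^\epsilon \cdot 2^{1-\epsilon}=2$ is also correct (and in fact the statement of \cref{thm:second_tr}(2),(4) as printed omits this factor $2$; the intended conclusion, consistent with \cref{lem:boundary}, \cref{thm:main1}(3), and the ensuing corollary, carries the factor $2$).
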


\begin{proof}
They are already proved in \cref{para:Dirac}, \cref{para:psc}, \cref{para:sign}, \cref{para:homotopy} and \cref{lem:boundary}.
\end{proof}

This, in combination with \cref{thm:HigsonRoe}, reproves the main theorem of \cite{kubotaGromovLawsonCodimensionObstruction2020}.
\begin{cor}[{\cite{kubotaGromovLawsonCodimensionObstruction2020}*{Theorem 1.2, Theorem 1.3}}]
The codimension 2 transfer map $\tau_\sigma $ sends the Rosenberg index $\alpha_\Gamma (M)$ to $\alpha_\pi (N)$ and the higher signature $\Sgn _\Gamma (M)$ to $2\Sgn_\pi (N)$.
\end{cor}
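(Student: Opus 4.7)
My plan is to derive this corollary as a formal consequence of \cref{thm:HigsonRoe} and \cref{thm:second_tr}, with essentially no further geometric input. The point is that both the Rosenberg index and the higher signature are defined as the images under the boundary map $\partial \colon \K_{n+1}(Q^*(\widetilde{M})^\Gamma) \to \K_{n}(C^*(\widetilde{M})^\Gamma) \cong \K_n(C^*\Gamma)$ of the Dirac fundamental class $[M]$ and the signature fundamental class $[M]_\sgn$ (as recalled in \cref{para:Dirac,para:sign}); likewise $\alpha_\pi(N) = \partial[N]$ and $\Sgn_\pi(N) = \partial[N]_\sgn$.

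Concretely, I would first write
\[
\alpha_\Gamma(M) = \partial[M], \qquad \Sgn\nolimits_\Gamma(M) = \partial[M]_\sgn,
\]
and the analogous identities on $N$. By \cref{thm:HigsonRoe}, the codimension 2 transfer map intertwines the boundary maps in the Higson--Roe exact sequences for $M$ and $N$, i.e.\ $\tau_\sigma \circ \partial = \partial \circ \tau_\sigma$. Combining this with \cref{thm:second_tr}(1)--(2), which supply $\tau_\sigma([M]) = [N]$ and $\tau_\sigma([M]_\sgn) = 2^\epsilon [N]_\sgn$ (the factor coming from the `boundary of signature is $2^\epsilon$ times signature' principle, as already accounted for in \cref{lem:boundary}(2) and propagated through the composition in \cref{defn:transfer}), I obtain
\[
\tau_\sigma(\alpha_\Gamma(M)) = \tau_\sigma(\partial[M]) = \partial\,\tau_\sigma([M]) = \partial[N] = \alpha_\pi(N),
\]
and, identically in the oriented case,
\[
\tau_\sigma(\Sgn\nolimits_\Gamma(M)) = \partial\,\tau_\sigma([M]_\sgn) = 2\,\partial[N]_\sgn = 2\,\Sgn\nolimits_\pi(N).
\]

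There is no real obstacle; every needed statement has been established. The only points worth double-checking are bookkeeping in nature: first, that the boundary map $\partial$ appearing in \cref{defn:transfer} (from the ideal sequence for $\overline{\sN} \subset \overline{\sM}$) is compatible with the Higson--Roe boundary on $\widetilde{M}$ under the $\ast$-homomorphism $s$ of \cref{prp:splitD}, which is precisely the content of the commuting diagrams in the proof of \cref{thm:HigsonRoe}; second, that the factor $2$ in the signature case is the $\epsilon = 1$ instance of the general $2^\epsilon$ factor in \cref{lem:boundary}(2), absorbed consistently in the passage $[\overline{\sN}]_\sgn \mapsto [N]_\sgn$ via the partitioned manifold index theorem. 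Once these are noted, the corollary is a one-line chase through the commutative square provided by \cref{thm:main1}, and no further argument is required.
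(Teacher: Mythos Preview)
Your approach is correct and is exactly the argument the paper intends: the corollary is stated immediately after \cref{thm:second_tr} with the remark that it follows from that theorem in combination with \cref{thm:HigsonRoe}, and your proof just unpacks this by writing $\alpha_\Gamma(M)=\partial[M]$, $\Sgn_\Gamma(M)=\partial[M]_\sgn$ and chasing the commuting square. Your handling of the factor $2$ via \cref{lem:boundary}(2) is also the right bookkeeping (note that the statement of \cref{thm:second_tr}(2) as printed omits this factor, but the proof through \cref{lem:boundary} makes clear the total factor is $2$, consistent with the corollary).
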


\begin{exmp}\label{exmp:bundle2}
Let $\pi$ be a finite cyclic group equipped with $\pi \to \pi_0(\mathop{\mathrm{Diff}}(\Sigma_{g,1}, \partial ))$ as in \cref{exmp:bundle}. 
Let $N$ be a closed manifold with $\pi_1(N) \cong \pi$ and let $M \to N$ be the surface bundle as in \cref{exmp:bundle}. 
\begin{itemize}
\item Assume that $N$ is spin and is equipped with a psc metric $g_N$ such that $\rho(g_N) \neq 0$. 
This assumption is satisfied if there is a unitary representation $v \in R(\pi)$ such that the $\rho$-invariant $\rho_v(g_N) $ is non-zero, or alternatively, there is a conjugacy class $\langle h \rangle \in \langle \pi \rangle $ such that the delocalized $\eta$-invariant \cite{lottDelocalizedInvariants1999}*{Definition 7} is non-zero (this assumption implies $\rho(g_N) \neq 0$ \cite{xieDelocalizedEtaInvariants2019}*{Theorem 1.1}).
Moreover, we assume that the psc metric $ds^2 + g_N $ on $N \times S^1$ extends to a psc metric $g_M$ on $M$. 
Then we have $\tau_\sigma (\rho(g_M)) = \rho (g_N) \neq 0$, and hence $\rho(g_M) \neq 0$. This shows that $(M,g_M)$ is not psc null-cobordant. 
\item Let $f_N \colon N' \to N$ be an oriented homotopy equivalence with non-trivial higher $\rho$-invariant $\rho_\sgn (f_N)$. In the same way as above, the non-vanishing of the $\rho$-invariants or delocalized $\eta$-invariants is sufficient for this. 
Then $M':=f_N^*N'$ is also a surface bundle and $f_N$ extends to an oriented homotopy equivalence $f_M \colon M' \to M$. Then $\tau _\sigma (\rho_\sgn (f_M)) = \rho_\sgn (f_N) \neq 0$, and hence $\rho _\sgn (f_M) \neq 0$. This shows that $f_M \colon M' \to M$ is not $h$-cobordant to the identity.  
\end{itemize}
\end{exmp}

\subsection{Geometric model of the codimension 2 transfer}
We have given two different approaches for defining the codimension $2$ transfer map of K-homology groups $\tau _\sigma \colon \K_*(M) \to \K_{*-2}(N)$; the cohomological construction in \cref{defn:tau}, \cref{rmk:intersection} and the analytic (coarse geometric) one in  \cref{defn:transfer}. 
Here we show that they are actually the same.
We deal with the Baum--Douglas geometric model of the K-homology group  \cites{baumHomologyIndexTheory1982,baumEquivalenceGeometricAnalytic2007} as the domain and the range of the transfer map. 
We recall that a K-homology cycle is represented by a triple $(W,f,E)$, where $W$ is a $\mathrm{spin}^c$ manifold, $f \colon W \to M$ and $E$ is a complex vector bundle over $W$. 

\begin{lem}\label{lem:transfer_geom}
Let $(W,f,E)$ be a geometric K-homology cycle of $M$. Assume that $f$ is transverse to $N$ and set $X:=f^{-1}(N)$. 
Then the map $\tau_\sigma \colon \K_*(M) \to \K_*(N)$ sends $[W,f,E]$ to $[X,f|_X,E|_X]$. 
\end{lem}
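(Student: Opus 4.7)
The plan is to reduce to the intersection-theoretic description of the codimension 2 transfer in \cref{rmk:intersection}. By \cref{thm:main1}(2), the analytic map $\tau_\sigma \colon \K_*(Q^*(\widetilde{M})^\Gamma) \to \K_{*-2}(Q^*(\widetilde{N})^\pi)$ agrees with the Nitsche--Schick--Zeidler transfer \eqref{eq:NSZ}, which under the coarse index isomorphisms $\K_*(Q^*(\widetilde{M})^\Gamma) \cong \K_{1-*}(M)$ (and similarly for $N$) is identified with the map $\tau_\sigma^{M,N}$ of \cref{rmk:intersection}. It therefore suffices to show that the composition
\[
\tau_\sigma^{M,N} \colon \K_*(M) \xrightarrow{\nu_{\widetilde{M}/\pi,U}} \K_*(\widetilde{M}/\pi , \widetilde{M}/\pi \setminus U) \xrightarrow{\xi_M^*(\sigma) \cap \cdot} \K_{*-2}(U) \cong \K_{*-2}(N)
\]
sends $[W,f,E]$ to $[X, f|_X, E|_X]$.

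First I would realize the locally-finite lift $\nu_{\widetilde{M}/\pi,U}$ geometrically. Form the pullback $W' := W \times_M \widetilde{M}/\pi$, with projections $f' \colon W' \to \widetilde{M}/\pi$ and $p_W \colon W' \to W$, and set $E' := p_W^* E$. Then $W'$ is a (noncompact) manifold, and $(W', f', E')$ represents a class in locally-finite K-homology $\K_*^{\mathrm{lf}}(\widetilde{M}/\pi)$. The defining compatibility of the lf-restriction with covering pullbacks (as formulated in the paragraph preceding \cref{defn:tau}) then shows that the image of this class in $\K_*(\widetilde{M}/\pi, \widetilde{M}/\pi \setminus U)$ equals $\nu_{\widetilde{M}/\pi,U}([W,f,E])$.

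Next I would compute the cap product geometrically. By (A2) together with \cref{lem:SU2}, the restriction $\xi_M^*(\sigma)|_U$ is the Thom class of the trivial normal bundle $\nu(N \subset \widetilde{M}/\pi) \cong N \times \bR^2$. Cap product with a Thom class in geometric K-homology, applied to a cycle transverse to the zero section, is realized by the transverse preimage construction; the transversality $f \pitchfork N$ in $M$ lifts to $f' \pitchfork N$ in $\widetilde{M}/\pi$ along the distinguished copy, and the $\mathrm{spin}^c$ structure on $f'^{-1}(N)$ is inherited from $W'$ via the given trivialization of the normal bundle. Hence the cap product sends the class of $(W',f',E')$ to $[f'^{-1}(N),\, f'|_{f'^{-1}(N)},\, E'|_{f'^{-1}(N)}]$.

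Finally, (A1) ensures that $p \colon \widetilde{M}/\pi \to M$ restricts to a diffeomorphism from a neighborhood of the distinguished $N$ onto a neighborhood of $N \subset M$. Over this neighborhood the fiber product defining $W'$ is canonically $W$, so $f'^{-1}(N) = X$, $f'|_{f'^{-1}(N)} = f|_X$, and $E'|_{f'^{-1}(N)} = E|_X$, which yields the claimed formula. The main obstacle is the Thom-class cap product step: this is folklore in geometric K-homology, but requires the triviality of $\nu(N)$ (guaranteed by (A2)) for the $\mathrm{spin}^c$ structure on $X$ to be inherited from $W$ without an additional twist. If a more direct route is preferred, this step can be replaced by representing K-homology classes via pushforwards of twisted Dirac operators and tracing them through the $\ast$-homomorphism $s$ and the boundary map, parallel to the argument in \cref{para:Dirac} and the proof of \cref{thm:second_tr}.
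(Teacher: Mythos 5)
Your proposal is workable in spirit, but the key step is left as an unproven ``folklore'' fact, which is exactly where the content of the lemma lies; and the opening reduction is unnecessary and risks circularity.

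On the first point: the map $\tau_\sigma\colon \K_*(M)\to \K_{*-2}(N)$ in the statement is already the topological transfer $\tau_\sigma^{M,N}$ of \cref{rmk:intersection} (the lemma belongs to a subsection whose purpose is to compare the cohomological and analytic constructions), so there is nothing to reduce. Worse, appealing to \cref{thm:main1}(2) here would be backwards: in the paper the comparison between the analytic $Q^*$-transfer and the NSZ map is a consequence of the Proposition immediately following this lemma, which uses the lemma as an input.

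On the substantive point: that cap product with a Thom class in geometric K-homology is realized by transverse preimage is precisely what has to be justified, and you flag it yourself as folklore and as the ``main obstacle.'' The paper does not import this as a black box. It represents $\xi_M^*(\sigma)\in\K^2(M,M\setminus U)$ by the explicit relative class $[L,\underline{\bC},s]$, where $L$ is the line bundle on $M$ with $c_1(L)=\xi_M^*(\sigma)$ and $s$ a nonvanishing section over $M\setminus U$; computes the cap product directly as a difference of twisted cycles,
\[
[L,\underline{\bC},s]\cap[W,f,E] \;=\; [W,f,E\otimes L]-[W,f,E] \;=\; [W\sqcup W^{\mathrm{op}},\, f\sqcup f,\, E\otimes L\sqcup E];
\]
and then exhibits an explicit bordism of geometric cycles to $[\bS((L\oplus\underline{\bR})|_X),\,f|_X\circ q,\,q^*((E\otimes L)|_X)]$, which is recognized as the vector bundle modification of $[X,f|_X,E|_X]$ along $L|_X$. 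This is, in effect, a self-contained verification of exactly the folklore you invoke, in the only case needed (rank one, trivial normal bundle). Your suggested fallback via Dirac pushforwards and the $\ast$-homomorphism $s$ would work too, but the geometric-cycle bordism is shorter. Finally, the detour through the fiber product $W'=W\times_M \widetilde{M}/\pi$ is unnecessary: under (A1), $\sigma$ pulls back to an honest class in $\K^2(M,M\setminus U)$, so the whole computation can be carried out on $M$, as the paper does, with your locality observation then becoming the definition of $L$ rather than a final cleanup step.
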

\begin{proof}
The definition of $\tau_\sigma$ given in \cref{defn:tau} and \cref{rmk:intersection} is given by the cap product with $\sigma  \in \K^2(\widetilde{M}/\pi , \widetilde{M}/\pi \setminus U) \cong \K^0(U) \cong \K^2(M,M \setminus U)$. This $\sigma$ is by definition represented by $[L, \underline{\bC}, s]$, where $L$ is the line bundle on $M$ representing $\sigma \in H^2(U;\bZ)$ and $s$ is a non-vanishing section of $L$ on $M \setminus U$. We have
\begin{align*}
    [L,\underline{\bC}, s] \cdot [W,f,E] =& [W, f, E \otimes L] - [W,f,E]
    = [W \sqcup W^{\mathrm{op}}, f \sqcup f, E\otimes L \sqcup E] \\
    =& [\bS ((L \oplus \underline{\bR})|_X) , f|_X \circ q , q^*(( E \otimes L)|_X) ], 
\end{align*} 
where $W^{\mathrm{op}}$ is $W$ with the opposite orientation and $q \colon \bS((L \oplus \underline{\bR})_X) \to X$ denotes the projection. Here the last equality comes from a cobordism of K-homology cycles.  
Now the right hand side is the vector bundle modification of $[X,f|_X, E|_X]$ by the trivial bundle $L \to X$.
\end{proof}
The identification of geometric and analytic K-homology groups is given by the map $\Phi_M \colon \K_*(M) \to \K_{*+1}(Q^*(\widetilde{M})^\Gamma)$ defined as 
\[\Phi_M ([W,f,E]) := (\Ad V_f)_* ([\chi (D_{W}^E)]) \in \K_{*+1}(Q^*(\widetilde{M})^\Gamma).  \]
Here, $\chi$ is a function as in \cref{para:Dirac}, $D_W^E$ is the $\mathrm{spin}^c$ Dirac operator on $\widetilde{W}$ twisted by the vector bundle $E$, and $V_f \colon L^2(\widetilde{W}) \to L^2(\widetilde{M})$ is a $\Gamma$-equivariant covering isometry of $f$. That is, $V_f$ is a $\Gamma$-equivariant isometry such that $V_f^*\varphi  V_f - f^*\varphi \in \bK(L^2(\widetilde{W}))$ for any $\varphi \in C_c(\widetilde{M})$ and there is $R>0$ such that $\mathrm{supp}(V_f)$ is included to the $R$-neighborhood of the graph of $f$ in $M \times W$. 
Note that such an isometry induces $\ast$-homomorphisms between $C^*$, $D^*$ and $Q^*$ coarse C*-algebras.  
\begin{lem}\label{lem:wrongway}
Let $M$ and $W$ be manifolds with fundamental group $\Gamma $ and let $f \colon W \to M$ be a smooth map inducing an isomorphism of fundamental groups. 
Let $N$ be a codimension $2$ submanifold of $M$. 
Assume that $f$ is transverse to $N$ and set $X:=f^{-1}(N)$. 
Let $V_f \colon L^2(\widetilde{W}) \to L^2(\widetilde{M})$ and $V_f' \colon L^2(\widetilde{X}) \to L^2(\widetilde{N})$ be equivariant covering isometries. Then the diagram
\[
\xymatrix@C=8ex{
\K_*(C^*(\widetilde{W})^\Gamma ) \ar[r]^{(\Ad V_f)_*} \ar[d]^{\tau_\sigma} & \K_*(C^*(\widetilde{M})^\Gamma ) \ar[d]^{\tau_\sigma} \\
\K_*(C^*(\widetilde{X})^\pi) \ar[r]^{(\Ad V_f')_*} & \K_*(C^*(\widetilde{N})^\pi)
}
\]
commutes. The same commutativity also holds for both $D^*$ and $Q^*$ coarse C*-algebras. 
\end{lem}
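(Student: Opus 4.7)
The plan is to lift $f$ to each stage of the tower of covering spaces entering \cref{defn:transfer}, upgrade $V_f$ to compatible covering isometries at each level, and verify that the resulting $\Ad V$ maps intertwine the $\ast$-homomorphism $s$, the boundary homomorphisms $\partial_C,\partial_D,\partial_Q$, the restriction $\mathrm{res}_\Pi^\pi$, and the coarse Mayer--Vietoris boundary $\partial_{\mathrm{MV}}$. All but $s$ are already natural at the level of (extensions of) coarse C*-algebras, so the bulk of the work is to check the compatibility of $s$ with $\Ad V_f$.

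First I would check that the entire tower of \cref{notn:mfds} pulls back along $f$. Since $f_*\colon\pi_1(W)\to\pi_1(M)$ is an isomorphism, $f$ lifts $\Gamma$-equivariantly to $\tilde{f}\colon\widetilde{W}\to\widetilde{M}$ and descends to $f_\pi\colon\widetilde{W}/\pi\to\widetilde{M}/\pi$. Transversality of $f$ to $N$ identifies $f_\pi^{-1}(U)$ with a tubular neighbourhood of the natural copy of $X$ in $\widetilde{W}/\pi$, and the restriction of $f_\pi$ to the boundaries is a bundle map over $f|_X\colon X\to N$ preserving the circle factor. The splitting $\pi_1(\sM)\to\bZ$ from \cref{lem:HPS} therefore pulls back to a splitting on the $W$-side, giving a $\Pi$-covering $\overline{\sW}$ of $\sW:=(\widetilde{W}/\pi)\setminus f_\pi^{-1}(U)$ and a $\Pi$-equivariant smooth lift $\bar{f}\colon\overline{\sW}\to\overline{\sM}$ whose restriction to boundaries is a map of product form $\widetilde{X}\times S^1\times\bR\to\widetilde{N}\times S^1\times\bR$. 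A standard construction then produces a $\Pi$-equivariant covering isometry $V_\infty\colon L^2(\overline{\sW})\to L^2(\overline{\sM})$ of $\bar{f}$ supported in a bounded neighbourhood of its graph; by construction $\Ad V_\infty$ sends the boundary ideal of $\overline{\sW}$ into that of $\overline{\sM}$ at the $C^*$, $D^*$, and $Q^*$ levels, and similarly for $V_f'$ on the boundary side.

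Next I would verify $(\Ad V_\infty)_*\circ s_{W,*}=s_{M,*}\circ(\Ad V_f)_*$ using the explicit description in \cref{prp:lift}. For $T\in\bC[\widetilde{W}]^\Gamma$ of sufficiently small propagation, that proposition gives $s(T)=q(\overline{P_W T P_W})$ and $s(V_fTV_f^*)=q(\overline{P_M V_fTV_f^* P_M})$. On the other hand $\Ad V_\infty(\overline{P_W T P_W})$ is an operator on $L^2(\overline{\sM})$ whose image under the averaging map $\epsilon_\bZ$ agrees with $V_fTV_f^*$ away from the boundary and whose propagation is bounded by that of $T$ plus twice that of $V_f$, and thus can be made smaller than $R/4$ by taking $V_f$ of sufficiently small propagation. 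The uniqueness of small-propagation lifts \cite{changPositiveScalarCurvature2020}*{Proposition 2.8}, already invoked in \cref{prp:lift}, then forces
\[
\Ad V_\infty\bigl(\overline{P_W T P_W}\bigr)\equiv \overline{P_M V_f T V_f^* P_M}\pmod{C^*(\overline{\sN}\subset\overline{\sM})^\Pi}.
\]
The extension of this identity to the pseudo-local algebras proceeds through the decomposition $T=T_0+T_1$ exactly as in \cref{prp:splitD}, since $\Ad V_\infty$ preserves both ideals and $\Ad V_f$ respects the splitting into small-propagation and Roe-algebra parts.

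Finally, the boundary maps $\partial_C,\partial_D,\partial_Q$ are natural with respect to morphisms of C*-algebra extensions, $\mathrm{res}_\Pi^\pi$ is manifestly natural, and $\partial_{\mathrm{MV}}$ is natural with respect to any coarse map preserving the Mayer--Vietoris decomposition $\overline{\sN}\cong\widetilde{N}\times S^1\times(\bR_{\geq 0}\cup\bR_{\leq 0})$, which $\bar{f}$ restricted to the boundary of $\overline{\sW}$ does by construction. Assembling these naturalities with the compatibility of $s$ yields the commutativity of the displayed diagrams in the $C^*$, $D^*$ and $Q^*$ cases simultaneously. I expect the main technical obstacle to be the careful calibration of propagation bounds for $V_f$ that allows the uniqueness-of-lifts argument to apply, particularly in the $D^*$ case where one must handle the non-locally-compact part of $T$ separately via the decomposition of \cref{prp:splitD}.
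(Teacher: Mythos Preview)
Your proposal is correct and follows essentially the same route as the paper: lift $f$ to $\bar f\colon\overline{\sW}\to\overline{\sM}$, choose a compatible covering isometry on the $\Pi$-level, check that $\Ad$ of it intertwines $s$ modulo the boundary ideal, and then appeal to naturality of the boundary map (and of $\mathrm{res}_\Pi^\pi$, $\partial_{\mathrm{MV}}$). The paper's argument is terser in one respect worth noting: rather than taking an arbitrary covering isometry $V_\infty$ for $\bar f$ and then invoking uniqueness of small-propagation lifts to force agreement, it constructs $V_{\bar f}$ directly as the local lift of $V_f$ (on each open set where the covering projection is injective), which makes the identity $\Ad V_{\bar f}\circ s_W = s_M\circ \Ad V_f$ immediate from \cref{prp:lift} without any calibration of propagation bounds; also, note that $\overline{\sN}\cong\widetilde{N}\times\bR$ (the $\bZ$-cover unwinds the $S^1$), so your boundary description carries an extra $S^1$ factor that should be dropped.
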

\begin{proof}
Let $\sW := (\widetilde{W}/\pi) \setminus X$, let $\overline{\sW} $ be its $\Pi$-Galois covering, and let $\overline{\sX}:= \partial \overline{\sW} \cong \widetilde{X} \times \bR$.  
Let $p \colon \overline{\sM} \to \widetilde{\sM}$ denote the projection and let $\bar{f} \colon \overline{\sW} \to \overline{\sM}$ be the lift of $f$. 
Then there is a covering isometry $V_{\bar{f}}$ such that, for any open subset $U \subset \overline{\sM}$ such that $p|_U$ is injective, 
$V_{\bar{f}}|_{L^2(\bar{f}^{-1}(U))}$ is identified with $V_f|_{L^2(f^{-1}(p(U)))}$ by $p$.  
Then the diagram
\[
\xymatrix{
C^*(\widetilde{W})^\Gamma \ar[r]^{s \hspace{6ex}} \ar[d]^{\Ad V_f} & C^*(\overline{\sW})^\Pi / C^*(\overline{\sX} \subset \overline{\sW})^\Pi \ar[d]^{\Ad V_{\bar{f}}} \\
C^*(\widetilde{M})^\Gamma \ar[r]^{s \hspace{6ex}}  & C^*(\overline{\sM})^\Pi / C^*(\overline{\sN} \subset \overline{\sM})^\Pi 
}
\]
commutes. 
Since $(\Ad V_{\bar{f}})_*$ is also compatible with the K-theory boundary map, this finishes the proof. The same proof also works for $D^*$ and $Q^*$ coarse C*-algebras. 
\end{proof}

\begin{prp}
The following diagram commutes; 
\[
\xymatrix{
\K_*(M) \ar[r]^{\Phi_{M} \hspace{3ex}} \ar[d]^{\tau_\sigma} & \K_{*+1}(Q^*(\widetilde{M})^\Gamma ) \ar[d] ^{\tau_\sigma} \\
\K_{*-2}(N) \ar[r] ^{\Phi_{N}\hspace{3ex}} & \K_{*-1}(Q(\widetilde{N})^\pi).
}
\]
\end{prp}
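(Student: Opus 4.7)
The plan is to evaluate both compositions on a generic geometric K-homology cycle $[W,f,E]$ with $f$ transversal to $N$, and match them step by step. Set $X := f^{-1}(N)$, a codimension $2$ submanifold of $W$. The counterclockwise route is essentially immediate from the previous lemma: by \cref{lem:transfer_geom}, $\tau_\sigma[W,f,E] = [X, f|_X, E|_X]$, so
\[
\Phi_N \circ \tau_\sigma([W,f,E]) = (\Ad V_{f|_X})_* \bigl[\chi(D_X^{E|_X})\bigr].
\]

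For the clockwise route, the first step is to extend \cref{lem:wrongway} to the covering isometry $V_f$, where $\widetilde{W}$ is interpreted as the pullback $\Gamma$-cover $f^{*}\widetilde{M}$ (rather than the universal cover of $W$). Pulling back the $\Pi$-cover $\overline{\sM}\to\sM$ along $f$ yields $\overline{\sW}\to\sW$ with boundary $\overline{\sX}\cong\widetilde{X}\times\bR$, and a canonical lift $\bar f\colon \overline{\sW}\to\overline{\sM}$. A covering isometry $V_{\bar f}$ for $\bar f$ exists and, over any open set on which $\overline{\sM}\to\widetilde{\sM}$ is injective, coincides locally with $V_f$. The naturality square for $s$ (and then for the boundary maps of the two short exact sequences defining $\tau_\sigma$) commutes by exactly the argument used in the proof of \cref{lem:wrongway}. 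This yields
\[
\tau_\sigma \circ \Phi_M([W,f,E]) = (\Ad V_{f|_X})_* \circ \tau_\sigma\bigl([\chi(D_W^E)]\bigr),
\]
where the inner $\tau_\sigma$ is the analytic codimension $2$ transfer for $(W,X)$.

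The remaining identification $\tau_\sigma[\chi(D_W^E)] = [\chi(D_X^{E|_X})]$ is essentially \cref{thm:second_tr}(1) applied to $(W,X)$ with a coefficient twist by $E$. Explicitly, combining \cref{lem:Roe} and the computation in \cref{para:Dirac} with \cref{lem:boundary}(1) gives the desired equality, the bundle twist causing no extra trouble since \cref{prp:lift} and the lift of $\chi(D)$ go through verbatim for twisted Dirac operators. Combining this with the previous paragraph matches the clockwise and counterclockwise routes.

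The main obstacle is that $(W,X)$ need not satisfy \cref{prp:NSZ}: $f$ is an arbitrary continuous map, so there is no reason for $\pi_1(X)\to\pi_1(W)$ to be injective or $2$-connected, etc. However, the only structures used in the construction of $s_W$ are the pullback cover $\overline{\sW}\to\sW$ and the product tubular neighborhood $X\times\bD^2\subset\widetilde{W}/\pi$, both of which are inherited from the corresponding objects on $M$ via transversality (note $\nu X \cong f^{*}\nu N$ is trivial). Hence the constructions of \cref{section:Cstar,section:HigsonRoe} apply mutatis mutandis to the pair $(W,X)$, which is what allows the last step to close.
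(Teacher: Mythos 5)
Your proof is correct and takes essentially the same route as the paper's: transversality together with \cref{lem:transfer_geom} for the counterclockwise composition, and a \cref{lem:wrongway}-type naturality followed by the Dirac-operator computation of \cref{para:Dirac} for the clockwise one. Your extra care in interpreting $\widetilde{W}$ as the pullback $\Gamma$-cover $f^*\widetilde{M}$ and in explaining why the $(W,X)$-transfer is well-defined without the \cref{prp:NSZ} hypotheses supplies details the paper's brief proof leaves implicit.
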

\begin{proof}
An element of $\K_*(M)$ is represented by a triple $(W,f,E)$. Without loss of generality we may assume that $f$ is transverse of $N$. Then we have
\begin{align*} 
(\tau_\sigma \circ \Phi_M)( [W,f,E]) =& (\tau_\sigma \circ (\Ad V_f)_*)([\chi(D_W^E)]) = ((\Ad V_f')_* \circ \tau_\sigma) ([\chi (D_W^E)]) \\
=& (\Ad V_f')_* ([\chi (D_X^{E|_X})]) = \Phi_N ([X,f|_X, E|_X])\\
=& (\Phi_N \circ \tau_\sigma )([W,f,E]).  
\end{align*}
Here, the second equality is due to \cref{lem:wrongway}. 
For the third equality, we use the fact that a twisted $\mathrm{spin}^c$ Dirac operator is sent by $\tau_\sigma$  to its restriction to the codimension $2$ submanifold, which is shown by the same argument as \cref{para:Dirac}.
\end{proof}

We also study a geometric description of the codimension $2$ transfer map between analytic structure groups. 
Inspired from the work of Deeley and Goffeng  \cites{deeleyRealizingAnalyticSurgery2017}, in which a geometric model of the analytic structure group $\K_*(D^*(\widetilde{M})^\Gamma )$ is established, we study the map
\begin{align*}
    \Phi_{B\Gamma , M} \colon \K_*(B\Gamma , M) \to \K_*(D^*(\widetilde{M})^\Gamma ) 
\end{align*}
defined by 
\begin{align}
    \Phi_{B\Gamma, M} ([W,f,E]) :=   (\Ad V_f)_*([\chi (D_{\partial W}^E + A)])- j_* \ind _{\mathrm{APS}} (D_W^E, A), \label{eq:geom_rel}
\end{align} 
where $A$ is an arbitrary choice of smoothing invertible perturbation of $D_{\partial W}^E$ and $\ind _{\mathrm{APS}} (D_W^E, A)$ is the higher Atiyah--Patodi--Singer index with respect to the boundary condition $A$, in the sense of \cite{piazzaSurgeryExactSequence2016}*{Definition 2.27}. 
Alternatively, this element is also defined by using coarse Mayer--Vietoris boundary map as
\begin{align}
\Phi_{B\Gamma, M}([W,f,E]) := ((\Ad V_f )_* \circ  \partial _{MV}) ([\chi (D_{\bD W}^E)]), \label{eq:geom_rel_MV}
\end{align}
where $\bD W$ denotes the invertible double of $W$. Note that the operator $D_{\bD W}^E$ is invertible by \cite{xieRelativeHigherIndex2014}*{Theorem 5.1}. 
\begin{prp}\label{lem:double}
The maps \eqref{eq:geom_rel} and \eqref{eq:geom_rel_MV} coincides. 
\end{prp}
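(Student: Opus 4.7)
The plan is to reduce both expressions to a common coarse-geometric computation on the invertible double, and then to use independence of the invertible perturbation together with the higher Atiyah--Patodi--Singer index theorem to conclude. First, since $(\Ad V_f)_*$ appears as the final step on both sides, it suffices to prove the identity
\begin{equation*}
\partial_{MV}([\chi(D_{\bD W}^E)]) = [\chi(D_{\partial W}^E + A)] - j_*\ind_{\mathrm{APS}}(D_W^E, A)
\end{equation*}
in $\K_{*+1}(D^*(\widetilde{W})^\Gamma)$ for some (and hence any) smoothing invertible perturbation $A$ of $D_{\partial W}^E$, where $\partial_{MV}$ refers to the coarse Mayer--Vietoris boundary map for the decomposition $\bD W = W \cup_{\partial W} W^{\mathrm{op}}$.

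Next I would make a convenient choice of $A$. The construction of the invertible double in \cite{xieRelativeHigherIndex2014}*{Theorem 5.1} proceeds by gluing two copies of $W$ along a collar of $\partial W$ using a specific smoothing invertible perturbation $A_{\mathrm{dbl}}$ on $\partial W$ (essentially induced from doubling via a symmetric Clifford endomorphism). For this specific choice, the coarse Mayer--Vietoris boundary map admits an explicit description: splitting $\bD W$ along a collar neighborhood of $\partial W$ and tracking the effect of the bounded transform $\chi$ on the localized pieces, one recognises the right-hand side as the boundary Dirac class $[\chi(D_{\partial W}^E + A_{\mathrm{dbl}})]$ twisted by the APS correction term, yielding exactly the desired identity for $A = A_{\mathrm{dbl}}$. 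This is the coarse K-theoretic incarnation of the relative higher index theorem of Xie--Yu, together with the description of the APS class via the coarse boundary map as in \cite{piazzaSurgeryExactSequence2016}*{Section 2}.

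Finally, to finish for a general invertible perturbation $A$, I would interpolate between $A$ and $A_{\mathrm{dbl}}$ by a path $\{A_t\}_{t \in [0,1]}$ of smoothing invertible perturbations; such a path exists because the space of smoothing invertible perturbations of $D_{\partial W}^E$ is path connected within a fixed K-theory class. The variation formula for the higher APS index under change of boundary condition, which is a standard consequence of the delocalized APS theorem (see \cite{piazzaSurgeryExactSequence2016}*{Proposition 2.31 and surrounding discussion}), gives
\begin{equation*}
j_*\bigl(\ind_{\mathrm{APS}}(D_W^E, A) - \ind_{\mathrm{APS}}(D_W^E, A_{\mathrm{dbl}})\bigr) = [\chi(D_{\partial W}^E + A)] - [\chi(D_{\partial W}^E + A_{\mathrm{dbl}})],
\end{equation*}
so the right-hand side of the target identity is independent of $A$. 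Combining this with the case $A = A_{\mathrm{dbl}}$ settled above completes the proof.

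The main obstacle is the second step: explicitly identifying $\partial_{MV}([\chi(D_{\bD W}^E)])$ with the APS boundary-plus-correction expression. This requires a careful propagation estimate to control how the bounded transform $\chi(D_{\bD W}^E)$ is cut and reassembled by the coarse Mayer--Vietoris partition of unity across the collar, and to match the result with the APS parametrix construction used in \cite{piazzaSurgeryExactSequence2016}. Once this is in hand, the independence argument and the homotopy invariance of the relative index class make the rest essentially automatic.
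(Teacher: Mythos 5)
The outer strategy is sound --- reduce to a convenient choice of the smoothing invertible perturbation $A$, establish the identity for that choice, and then invoke the variation formula for the higher APS index to cover general $A$. The paper does something similar in spirit, but your second step, the one you yourself flag as ``the main obstacle,'' contains a genuine misconception and does not actually prove anything.

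Concretely: in the Xie--Yu construction \cite{xieRelativeHigherIndex2014}*{Theorem 5.1} there is no smoothing invertible perturbation $A_{\mathrm{dbl}}$ of $D^E_{\partial W}$ underlying the invertible double. The invertibility of $D^E_{\bD W}$ comes from gluing two copies of $W$ along a collar using an odd unitary bundle endomorphism that flips the grading; it is a geometric/spectral fact about the closed manifold $\bD W$, not a perturbative one. So ``the specific smoothing invertible perturbation $A_{\mathrm{dbl}}$'' that you then feed into (\ref{eq:geom_rel}) simply does not exist, and the claimed recognition of $\partial_{MV}([\chi(D^E_{\bD W})])$ as ``the boundary Dirac class twisted by the APS correction term'' is asserted, not derived. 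As written, this step would not close.

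The paper avoids the issue by choosing $A_W$ with $\ind_{\mathrm{APS}}(D_W^E, A_W)=0$ (such $A_W$ exists by \cite{kubotaRelativeMishchenkoFomenkoHigher2020}*{Lemma 4.6}), so that (\ref{eq:geom_rel}) collapses to $(\Ad V_f)_*[\chi(D^E_{\partial W}+A_W)]$ with no correction term at all. The vanishing APS index is then used a second time: it guarantees a further smoothing perturbation $B$ supported on $\widetilde W$ making the half-cylinder operator $D^E_{W,\infty}+A_{W,\infty}+B$ invertible, and one checks that its compression to $\widetilde W$ agrees with $P\chi(D^E_{\bD W})P$ modulo $D^*(\partial\widetilde W\subset\widetilde W)^\Gamma$, whence $\partial_{MV}$ of either class is $[\chi(D^E_{\partial W}+A_W)]$. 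This is both simpler and actually correct: the single clever choice of $A_W$ kills the APS correction and produces the invertible operator you need in one stroke, whereas your proposal tries to produce both from a gluing construction that does not supply them. Your step 3 (the variation formula) is fine and is indeed what guarantees independence of (\ref{eq:geom_rel}) from $A$, but it cannot rescue a base case that is unproven.
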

\begin{proof}
Let $A_W$ be a smoothing perturbation such that $\ind _{\mathrm{APS}} (D_W^E, A_W) =0$ (the existence of such $A_W$ is proved in e.g.~\cite{kubotaRelativeMishchenkoFomenkoHigher2020}*{Lemma 4.6}). Let $D_{W, \infty}^E + A_{W, \infty}$ be the perturbed Dirac operator on $\widetilde{W}_\infty:= \widetilde{W} \sqcup_{\partial \widetilde{W}} \widetilde{W} \times [0,\infty)$ as in \cite{piazzaSurgeryExactSequence2016}*{Definition 2.20}. By the triviality of the APS index, there is a smoothing operator $B$ supported on $\widetilde{W}$ such that $D_{W, \infty}^E + A_{W, \infty} + B$ is invertible. Then we get the conclusion since
\[ P \chi(D_{W, \infty}^E + A_{W, \infty} + B) P =  P \chi (D_{\bD W}^E) P \in D^*(\widetilde{W})^\Gamma / D^*(\partial \widetilde{W} \subset \widetilde{W})^\Gamma, \]
where $P$ denotes the projection onto the $L^2$-section space on $\widetilde{W}$, and
\[ \partial_{MV}([P\chi(D_{W, \infty}^E + A_{W, \infty} + B)P]) = [\chi (D_{\partial W}^E + A_W)] . \qedhere \]
\end{proof}

The definition \eqref{eq:geom_rel} of $\Phi_{B\Gamma, M}$ shows that the diagram 
\[
\mathclap{
\xymatrix{
\ar[r] & \K_*(M) \ar[r] \ar[d]^{\Phi_M} & \K_* (B\Gamma ) \ar[r] \ar[d]^{\ind } & \K_*(B\Gamma , M) \ar[d]^{\Phi_{B\Gamma, M}} \ar[r] & \K_{*-1}(M) \ar[r] \ar[d]^{\Phi_M} & \cdots \\
\ar[r] & \K_{*+1}(Q^*(\widetilde{M})^\Gamma ) \ar[r] & \K_*(C^*(\widetilde{M})^\Gamma ) \ar[r] & \K_*(D^*(\widetilde{M})^\Gamma) \ar[r] & \K_*(Q^*(\widetilde{M})^\Gamma ) \ar[r] & \cdots 
}}
\]
commutes. 

We compare the analytic codimension $2$ transfer map given in \cref{defn:transfer} with the relative cohomological transfer defined in the same way as \cref{defn:tau} by the composition
\[ \tau _\sigma \colon \K_*(B\Gamma, M) \to \K_*(E\Gamma /\pi , \widetilde{M}/\pi \cup U) \xrightarrow{\sigma \cap \cdot } \K_*(U, U \cap \widetilde{M}/\pi) \to \K_*(B\pi, N). \]
Recall that a relative K-homology cycle of the pair $(B\Gamma , M)$ is represented by a triple $(W,f,E)$, where $W$ is a compact $\mathrm{spin}^c$-manifold with boundary, $f \colon (W, \partial W) \to (B\Gamma , M)$ and $E$ is a complex vector bundle on $W$.
\begin{lem}\label{lem:transfer_geom_rel}
Let $(W,f,E)$ be a relative geometric K-homology cycle of the pair $(B\Gamma, M)$. Assume that $f|_{\partial W} $ is transverse to $N$ and set $\partial X:= f^{-1}(N)$. 
Then there is a submanifold-with-boundary $(X, \partial X) \subset (W, \partial W)$ such that the map $\tau_\sigma \colon \K_*(M) \to \K_*(N)$ sends $[W,f,E]$ to $[X,f|_{X},E|_X]$. 
\end{lem}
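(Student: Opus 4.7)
The plan is to mimic the proof of \cref{lem:transfer_geom}, taking extra care of the boundary. By \cref{assump:cocycle} (A1), the open subset $U_\sigma \subset E\Gamma/\pi$ projects injectively to $B\Gamma$, so the preferred copy of $N$ inside $\widetilde M/\pi \cap U_\sigma$ simultaneously lifts to $E\Gamma/\pi$ and descends to a submanifold of $B\Gamma$ which we again denote $N$. First, since $f|_{\partial W}$ is transverse to $N$ by hypothesis, a standard rel-boundary transversality argument allows us to perturb $f$ slightly on the interior of $W$ so that $f \colon W \to B\Gamma$ is transverse to $N$. This produces a codimension $2$ submanifold-with-boundary $X := f^{-1}(N) \subset W$ whose boundary is exactly the prescribed $\partial X = (f|_{\partial W})^{-1}(N) \subset \partial W$.

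Next, we compute $\tau_\sigma[W,f,E]$ using the geometric model of the cap product. By definition, $\tau_\sigma$ is the composition of the lf-lifting $\K_*(B\Gamma, M) \to \K_*(E\Gamma/\pi, \widetilde M/\pi \cup U_\sigma)$, the cap product with $\sigma \in \K^2(E\Gamma/\pi, E\Gamma/\pi \setminus U_\sigma)$, and the excision identification $\K_*(U_\sigma, U_\sigma \cap \widetilde M/\pi) \cong \K_*(B\pi, N)$. Since $U_\sigma$ embeds in $B\Gamma$, the cap product can be computed intrinsically on the cycle $(W,f,E)$: the class $\sigma$ is represented by $[L, \underline{\bC}, s]$ where $L$ is the line bundle (trivial outside $U_\sigma$) whose first Chern class is $\sigma$ and $s$ is a non-vanishing section outside a tubular neighborhood of $N$. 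Pulling back along $f$, the class $f^*\sigma$ is the Euler class of the normal disc bundle of $X$ in $W$, and $s$ pulls back to a section non-vanishing off $X$.

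The algebraic identity is then exactly the one used in \cref{lem:transfer_geom}:
\[
[L, \underline{\bC}, s] \cdot [W, f, E] = [W, f, E \otimes f^*L] - [W, f, E],
\]
and the same cobordism-of-K-cycles argument used there realises this difference as the vector bundle modification of $[X, f|_X, E|_X]$ by the trivial line bundle $L|_X$ (trivial because (A2) forces the normal bundle of $X$ to be stably trivial). Since vector bundle modification by a trivial line bundle gives the same class in geometric K-homology, we obtain $\tau_\sigma[W,f,E] = [X, f|_X, E|_X]$ in $\K_*(B\pi, N)$.

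The delicate point that needs attention is that the cobordism used in \cref{lem:transfer_geom} must now be upgraded to a \emph{relative} cobordism of cycles for the pair $(B\pi, N)$, i.e.\ its boundary stratum must realise the boundary cycle $[\partial X, f|_{\partial X}, E|_{\partial X}]$ correctly. This is arranged by performing the entire sphere-bundle construction inside a collar neighborhood of $\partial W$ compatibly with the boundary, which is possible because $f|_{\partial W}$ was already transverse to $N$; the collar version of the argument then produces a cylinder cobordism near the boundary and the usual modification in the interior, glued along a common collar. Verifying this compatibility, rather than the (formal) cap-product manipulation, will be the main technical point of the proof.
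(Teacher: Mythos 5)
Your proof follows the same overall strategy as the paper's, which after constructing $X$ simply declares "the lemma is proved completely in the same way as \cref{lem:transfer_geom}." The one place where you diverge, and where a small imprecision creeps in, is the construction of $X$: you perturb $f$ on the interior of $W$ so that $f \colon W \to B\Gamma$ is ``transverse to $N$'' and set $X := f^{-1}(N)$. But the interior of $W$ maps into $B\Gamma$, which is a CW complex built by attaching cells of dimension at least $3$ to $M$ and is not in general a smooth manifold; transversality of $f$ to $N$ therefore does not literally make sense unless one argues that the cells can be attached away from a neighbourhood of $N$. The paper sidesteps this entirely: it observes that $(f|_{\partial W})^*\sigma$ is Poincar\'e dual to $\partial X$ in $H^2(\partial W;\bZ)$, picks a line bundle $L \to W$ with $c_1(L) = f^*\sigma$ together with a section $s_\partial$ of $L|_{\partial W}$ vanishing transversally precisely on $\partial X$, extends $s_\partial$ generically over $W$, and takes $X$ to be the zero locus. (If you prefer the transversality picture, the clean formulation is to compose with the lift $\widetilde{F}_\sigma$ of \cref{lem:SU2} and take the preimage of a regular value in $S^2$, a genuine manifold.) The rest of your argument --- the cap product identity, the vector bundle modification, and the relative cobordism of K-cycles you flag as the technical heart --- is exactly what the paper invokes, and your explicit attention to boundary compatibility is in fact more detail than the paper provides. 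One small correction: for the vector bundle modification step you need $L|_X$ to be honestly trivial rather than merely stably trivial; this does hold, since $L|_X = (f|_X)^*(\nu N)$ and (A2) gives that $\nu N$ is trivial.
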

\begin{proof}
The existence of such $X$ follows from $[\partial X] = f^* \sigma \in H^2(\partial W ; \bZ)$, which shows that there is a line bundle $L \to W$ such that $\partial X $ is the zero locus of a section $s \colon \partial W \to L|_{\partial W}$. 
The lemma is proved completely in the same way as \cref{lem:transfer_geom}. 
\end{proof}

\begin{thm}\label{thm:geom_transfer_rel}
The following diagram commutes; 
\[
\xymatrix{
\K_*(B\Gamma, M) \ar[r]^{\Phi_{B\Gamma, M}} \ar[d]^{\tau_\sigma} & \K_*(D^*(\widetilde{M})^\Gamma ) \ar[d] ^{\tau_\sigma} \\
\K_*(B\pi , N) \ar[r] ^{\Phi_{B\pi, N}} & \K_*(D(\widetilde{N})^\pi).
}
\]
\end{thm}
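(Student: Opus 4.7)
The plan is to reduce this relative statement to the absolute Proposition preceding it, using the invertible double construction and the Mayer--Vietoris formulation \eqref{eq:geom_rel_MV} of $\Phi_{B\Gamma,M}$. Given a relative cycle $(W,f,E)$, by transversality I may assume $f|_{\partial W}$ is transverse to $N$, and \cref{lem:transfer_geom_rel} furnishes the codimension $2$ submanifold-with-boundary $(X,\partial X)\subset(W,\partial W)$ representing $\tau_\sigma[W,f,E]$. Forming the invertible doubles, $\bD X$ sits as a codimension $2$ submanifold of the closed manifold $\bD W$, and one checks that the $\mathrm{spin}^c$-Dirac operator $D_{\bD X}^{E|_X}$ is again invertible.

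The core ingredient is a pair of compatibility statements for $\tau_\sigma$ at the $D^*$-level: first, compatibility with the covering isometry adjunction $(\Ad V_f)_*$ (a straightforward extension of \cref{lem:wrongway} from $C^*$ to $D^*$ algebras by the argument given there); second, compatibility with the coarse Mayer--Vietoris boundary map $\partial_{MV}$ associated to the partition $\widetilde{\bD W}=\widetilde{W}\cup_{\widetilde{\partial W}}\widetilde{W}$. The latter is the main technical step. I would verify it by constructing the $\Pi$-covering $\overline{\bD W}$ of \cref{notn:mfds} symmetrically with respect to the doubling involution, so that the $\ast$-homomorphism $s$ of \eqref{eq:split}, when applied to operators of propagation smaller than $R/4$ (which suffices by \cref{prp:splitD}), intertwines the Mayer--Vietoris decompositions upstairs on $\overline{\bD W}$ and on its boundary submanifold $\overline{\bD X}$. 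Concretely, one fits $s$ into a morphism of the relevant six-term exact sequences and invokes the naturality of $\partial_{MV}$.

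Granted these compatibilities, the computation unfolds as
\begin{align*}
\tau_\sigma\,\Phi_{B\Gamma,M}([W,f,E])
 &= \tau_\sigma\,(\Ad V_f)_*\,\partial_{MV}[\chi(D_{\bD W}^E)]\\
 &= (\Ad V_{f|_X})_*\,\tau_\sigma\,\partial_{MV}[\chi(D_{\bD W}^E)]\\
 &= (\Ad V_{f|_X})_*\,\partial_{MV}\,\tau_\sigma[\chi(D_{\bD W}^E)]\\
 &= (\Ad V_{f|_X})_*\,\partial_{MV}[\chi(D_{\bD X}^{E|_X})]\\
 &= \Phi_{B\pi,N}([X,f|_X,E|_X]) \;=\; \Phi_{B\pi,N}\,\tau_\sigma[W,f,E].
\end{align*}
The fourth equality is the invertible-operator analogue, on the closed manifold $\bD W$, of the argument in \cref{para:Dirac}: the lift $s[\chi(D_{\bD W}^E)]$ equals $P\chi(D_{\overline{\bD W}}^E)P$ modulo the boundary ideal by \cref{lem:Roe}, and the composite $\partial_{MV}\circ\mathrm{res}\circ\partial_D$ that makes up $\tau_\sigma$ extracts the Dirac class on $\bD X$ via the `boundary of Dirac is Dirac' principle used in \cref{lem:boundary}(1).

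The main obstacle is arranging the $\Pi$-covering of $\bD W$ to be compatible with the partitioning hypersurface $\widetilde{\partial W}$, so that the lifting map $s$ restricts to its counterpart on each half. Once this is in place, the rest is a formal diagram chase, combining Theorem~\ref{thm:HigsonRoe} with the absolute Proposition just proved.
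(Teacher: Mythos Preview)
Your proposal is correct and follows essentially the same route as the paper: use the invertible-double description \eqref{eq:geom_rel_MV} of $\Phi_{B\Gamma,M}$, verify that $\tau_\sigma$ commutes with both $(\Ad V_f)_*$ (via \cref{lem:wrongway} for $D^*$) and the coarse Mayer--Vietoris boundary $\partial_{MV}$ (by exhibiting a morphism of the relevant exact sequences on the symmetric $\Pi$-covering $\overline{\bD\sW}$), and then run exactly the chain of equalities you wrote. The paper's proof displays the commutative diagram establishing $\tau_\sigma\circ\partial_{MV}=\partial_{MV}\circ\tau_\sigma$ explicitly but otherwise proceeds line by line as you do.
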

\begin{proof}
Let $\bD W$ and $\bD X$ denote the invertible double of $W$ and $X$ respectively. Let $\widetilde{\bD W}$ denote the $\Gamma$-covering of $\bD W$, let $\overline{\bD \sW}$ denote the $\Pi$-covering of $(\widetilde{\bD W}/\pi) \setminus \bD X$, and let $\overline{\bD \sX}$ denote the boundary of $\overline{\bD \sW}$, which is $\Pi$-equivariantly diffeomorphic to $\widetilde{\bD X} \times \bR$.  
Then the diagram
\[
\mathclap{
\xymatrix{
\K_*(D^*(\widetilde{\bD W})^\Gamma ) \ar[r] \ar[d]^{\partial _{MV}} & \K_*\Big( \frac{D^*( \overline{\bD \sW})^\Pi }{D^*(\overline{\bD \sX} \subset \overline{\bD \sW})^\Pi} \Big) \ar[r] \ar[d]^{\partial _{MV}} & \K_*(D^*(\overline{\bD\sX})^\Pi ) \ar[d]^{\partial _{MV}} \ar[r] & \K_*(D^*(\widetilde{\bD X})^\pi ) \ar[d]^{\partial_{MV}}  \\
\K_*(D^*(\widetilde{\partial W})^\Gamma ) \ar[r] & \K_*\Big(\frac{D^*(\overline{\partial \sW})^\Pi }{D^*(\overline{\partial \sX} \subset \overline{\partial \sW})^\Pi }  \Big) \ar[r] & \K_*(D^*(\overline{\partial \sX})^\Pi ) \ar[r] & \K_*(D^*(\widetilde{\partial X})^\pi )
}
}
\]
commutes. This shows the commutativity of $\tau_\sigma$ and $\partial_{MV}$, and hence we get
\begin{align*}
    (\tau_\sigma \circ \Phi_{B\Gamma, M})([W,f,E]) & = (\tau_\sigma \circ  (\Ad V_f)_* \circ \partial_{MV})(\chi (D_{\bD W}^E))) \\
    &= ((\Ad V_f')_* \circ \tau _\sigma \circ \partial_{MV}) ([\chi (D_{\bD W}^E)] ) \\
    &= ((\Ad V_f')_* \circ \partial _{MV} \circ \tau _\sigma ) ([\chi (D_{\bD W}^E)] ) \\
    &=  ((\Ad V_f')_* \circ \partial _{MV}) ([\chi (D_{\bD X}^E)]) \\
    &= \Phi_{B\pi , N  }([X, f|_X, E|_X])\\
    &=  (\Phi_{B\pi, N} \circ \tau_\sigma) ([W,f,E]).\qedhere 
\end{align*}   
\end{proof}

We remark that the same discussion also work in Real geometric K-homology only by replacing a triple $(W,f,E)$ with a representative of Real K-homology cycles, i.e., a spin manifold $W$, $f \colon W \to M$, and a real vector bundle $E$ on $W$ (cf.~\cite{reisKOhomologyTypeString2009}*{Section 2}). 

\section{Codimension 2 transfer and higher index pairing}\label{section:cyclic}
In this section, we prove the compatibility of the codimension 2 transfer maps 
studied in previous sections and the co-transfer map in cyclic cohomology groups. 
This reduces computations of the Connes--Moscovici higher index pairing and Lott's higher $\rho$-numbers of $M$ to those of $N$.

\subsection{Codimension 2 co-transfer in cyclic cohomology}
For a general theory of cyclic homology and cohomology groups, we refer the readers to standard references \cites{connesNoncommutativeGeometry1994,cuntzCyclicHomologyNoncommutative2004,lodayCyclicHomology1998}. 
We define the $\ast$-subalgebra of smooth functions $C^\infty(\cB)$ on $\cB$, i.e., 
\begin{align*} 
C^\infty(\cB)  := \big ( \bigoplus \nolimits_{\Gamma /\pi} C^\infty_0(0,1) \big)^+  \big( c_c(\Gamma /\pi) \otimes_{\alg} C^\infty_0(0,1) \big)^+.
\end{align*}
Here $C_0^\infty(0,1)$ denotes the space of smooth functions on $\bT$ which vanishes at $1 \in \bT$. 
Then $C^\infty(\cB)$ contains the functions $\sigma(g,h)$ for any $g,h \in \Gamma$, and hence the algebraic twisted crossed product $C^\infty(\cB) \rtimes _{\sigma, \alg} \Gamma$ makes sense. 
In the same way as \ref{lem:untwist}, the cocycle $\sigma$ is untwisted when it is restricted to the ideal $C^\infty_0(\cB_0):= \bigoplus \nolimits_{\Gamma /\pi} C^\infty_0(0,1) $, we obtain a $\ast$-isomorphism
\[ \mathclap{C^\infty_0(\cB_0) \rtimes_{\alg, \sigma} \Gamma \cong C^\infty_0(\cB_0) \rtimes_{\alg} \Gamma  \cong \sS\bC[\pi] \otimes   \bK_\alg,} \]
where $\sS\bC[\pi]:=\bC[\pi] \otimes C^\infty_0(0,1)$, and hence an exact sequence
\begin{align} 0 \to \sS\bC[\pi] \otimes \bK_\alg \to C^\infty(\cB) \rtimes_{\sigma, \alg} \Gamma \to \bC[\Gamma ] \to 0. \label{eq:alg_ext} 
\end{align}

The periodic cyclic homology $\HP_\ast (A)$ of a unital $\bC$-algebra is $A$ the homology group of the periodic cyclic complex $(\CC_{\bullet }(A)[\![v^{\pm 1}]\!], b+vB )$, where $\CC_{\bullet }(A):=A^+ \otimes  A ^{\otimes p}$, $v$ is a degree $-2$ formal variable and 
\begin{align*}
    b(\widetilde{a}_0 \otimes \cdots \otimes a_n) =& \sum_{j=0}^{n-1} (\widetilde{a}_0 \otimes \cdots \otimes a_ja_{j+1} \otimes \cdots a_n) + a_n\widetilde{a}_0 \otimes a_1 \otimes \cdots \otimes a_{n-1}, \\
    B(\widetilde{a}_0 \otimes \cdots \otimes a_p) =& \sum_{j=0}^p (-1)^{pj} 1 \otimes a_j \otimes \cdots \otimes a_p \otimes a_0 \otimes \cdots \otimes a_{j-1} .
\end{align*}
The cyclic homology $\HC_*(A)$ is the homology group of the truncated complex $\CC_\bullet(A)[\![ v^{\pm 1} ]\!]/CC_\bullet (A)[\![v]\!]$. Note that $\HP_*(A)$ is obtained as the projective limit of $\HC_*(A)$ by Connes' $S$-map. 

Dually, the periodic cyclic cohomology $\HP^*(A)$ is the cohomology of the complex $(\CC^\bullet (A)[u^{\pm 1}], b + u B)$, where $\CC^n (A):=\Hom(\CC_n(A), \bC)$, $b$ and $B$ denotes the adjoint of the corresponding maps defined above, and $u$ is a degree $2$ formal variable. 

The periodic cyclic homology have the excision property \cite{lodayCyclicHomology1998}*{Theorem 2.2.17} implying the long exact sequence
\begin{align*}
\cdots 
\to \HP_{n}(\sS \bC[\pi] \otimes \bK_\alg ) 
&\to \HP_n(C^\infty(\cB) \rtimes_{\sigma, \alg} \Gamma) 
\to \HP_n(\bC[\Gamma]) 
\to \cdots . 
\end{align*}
Since $\sS\bC[\pi] \otimes \bK_\alg$ is Morita equivalent to $\sS\bC[\pi]$, there is an isomorphism $\HP_*(\sS\bC[\pi] \otimes \bK_\alg) \cong  \HP_{*-1}(\bC[\pi])$ (\cite{lodayCyclicHomology1998}*{Theorem 2.2.9}). 
\begin{defn}\label{def:codim2_cyclic}
We write $\tau_\sigma$ for the boundary map 
\[\mathclap{\tau_\sigma := \partial  \colon \HP_*(\bC [\Gamma] ) \to \HP_{*-1}(\sS \bC [\pi] \otimes \bK_\alg ) \cong \HP_{*-2}(\bC [\pi] ) } \]
of the cyclic homology associated to the exact sequence \eqref{eq:alg_ext}. Similarly, we define the co-transfer map $\tau_\sigma^*$ as the boundary map
\[\tau _\sigma^* :=\delta \colon \HP^{*-2}(\bC [\pi]) \cong  \HP^{*-1}(\sS\bC [\pi] \otimes \bK_\alg ) \to \HP^*(\bC [\Gamma]).  \]
\end{defn}
Due to the compatibility of the boundary map and the pairing of cyclic homology and cohomology groups, the equality
\[ \langle \tau_\sigma (\xi) , \phi \rangle_{\bC [\pi]} = \langle \xi, \tau_\sigma^* ( \phi) \rangle_{\bC [\Gamma] }  \]
holds for any $\xi \in \HP_*(\bC [\Gamma] )$ and $\phi \in \HP^{*-2}(\bC [\pi] )$.

Next, we observe that this codimension 2 transfer map respects the decomposition of the cyclic (co)homology group of the group algebra given by Burghelea \cite{burgheleaCyclicHomologyGroup1985}, which is generalized to the general crossed product algebra by Nistor \cite{nistorGroupCohomologyCyclic1990}.  
For $g \in \Gamma$, let $\langle g\rangle $ denote the conjugacy class containing $g$. Let $\langle \Gamma \rangle$ denote the set of conjugacy classes of $\Gamma$. 
For a complex $\Gamma$-algebra $A$, the cyclic complex is decomposed as the direct sum of subcomplexes
\begin{align*}
(\CC_{\bullet }(A \rtimes _\alg \Gamma )[\![v^{\pm 1}]\!] , b+vB) \cong \bigoplus _{\langle g \rangle \in \langle \Gamma \rangle }    (\CC_{\bullet }(A \rtimes _\alg \Gamma ; \langle g \rangle )[\![v^{\pm 1}]\!] , b+vB),
\end{align*}
where
\begin{align*}
     &\CC_{p}(A \rtimes _\alg \Gamma  ; \langle g\rangle):= \overline{\mathrm{span}}\{ ( (a_0 u_{g_0} \otimes a_1 u_{g_1} \otimes \cdots \otimes a_ pu_{g_p} \mid g_0g_1 \cdots g_q \in  \langle g\rangle \} .
\end{align*}
The homology group of each subcomplex $\CC_{\bullet }( A \rtimes_\alg \Gamma  ; \langle g\rangle )[\![v^{\pm 1}]\!]$ is denoted by $\HP_\bullet (A \rtimes_\alg  \Gamma ; \langle g \rangle  )$. Then we have a decomposition
\begin{align}
    \HP_* (A \rtimes _\alg \Gamma ) \cong \bigoplus _{\langle g \rangle \in \langle \Gamma \rangle }    \HP_{*}(A \rtimes _\alg \Gamma ; \langle g \rangle ). \label{eq:decomposition}
\end{align}
Let $N_\Gamma (g)$ denote the normalizer subgroup of $g \in \Gamma$. 
As is shown in \cite{nistorGroupCohomologyCyclic1990}*{Lemma 2.7}, the inclusion $ A \rtimes_\alg N_\Gamma (g) \subset A \rtimes_\alg \Gamma$ induces an isomorphism
\[   \HP_*(A \rtimes_\alg N_\Gamma (g); \langle g \rangle )  \xrightarrow{\cong} \HP_*(A \rtimes_\alg \Gamma ; \langle g \rangle).\]

We discuss the decomposition \eqref{eq:decomposition} in the case that $A = c_c(\Gamma /\pi)$. 
The $N_\Gamma(g)$-action on $\Gamma/\pi$ decomposes into $X \sqcup Y$, where $X$ denotes the fixed point set of the left multiplication of $g$ and $Y$ denotes its complement. 
If $x\pi \in X$, then $g x\pi = x\pi$ implies that $x^{-1}gx \in \pi$. 
Moreover, if $x\pi , y \pi \in \Gamma /\pi$ are in the same orbit of the $N_\Gamma(g)$-action, then $\langle x^{-1} gx \rangle = \langle y^{-1}gy \rangle \in \langle \pi \rangle$ holds. Hence $X$ decomposes into the disjoint union $\bigsqcup_{\langle h \rangle \in \langle \pi ; g\rangle } X_h$, where $\langle \pi; g \rangle$ is the image of $N_\Gamma(g) \cap \pi$ to $\langle \pi \rangle$. 
The stabilizer subgroup of $N_\Gamma (g)$ at $x\pi \subset X_h$ is $N_\Gamma (g) \cap x \pi x^{-1}$. Therefore, $\Ad(x^{-1})$ gives an isomorphism of group actions
\[ N_\Gamma(g) \curvearrowright X_h \cong  N_{\Gamma } (h ) \curvearrowright N_\Gamma (h) / N_\pi (h).\] 
By abuse of notation, we use the same letter $X_h$ for $N_\Gamma(h)/N_\pi(h)$. 

For $h = x^{-1}gx \in \langle \pi ; g\rangle $, the following diagram
\[
\xymatrix@C=12ex{
\bC[ N_\pi(h)]  \ar[r]^{\Ad (x^{-1}) \circ j_1 \ \ \ } \ar[d]^{j_2 } &  
c_c(X_h) \rtimes_\alg N_\Gamma (g) \ar[d]^{j_3 } \\
\bC[ \pi]   \ar[r]^{\Ad (x^{-1}) \circ j_4 \ \ \ } &  c_c(\Gamma /\pi) \rtimes _\alg \Gamma 
}
\]
commutes, where $j_1$, $j_2$, $j_3$, $j_4$ are inclusions, considered by identifying $\bC[N_\pi(h)] $ with $c(\{ e \pi \}) \rtimes_\alg N_\pi(h)$ and $\bC[\pi]$ with $c(\{ e \pi \}) \rtimes_\alg N_\Gamma(h)$. 
By taking the sum over all $\langle h \rangle \in \langle \pi ; g \rangle$, we get
\begin{align}
\begin{split}
\xymatrix@C=7ex{
\bigoplus_{\langle h \rangle} 
\HP_*(\bC[N_\pi(h)]; \langle h \rangle)  \ar[r]^{\bigoplus_{\langle h \rangle} (j_1)_* \ \ \ \ \ \ \ } \ar[d]^{\bigoplus_{\langle h \rangle}(j_2)_* } &  
\bigoplus_{\langle h \rangle} 
\HP_*(c_c(X_h) \rtimes_\alg N_\Gamma (g); \langle h \rangle) \ar[d]^{ \sum_{\langle h \rangle} (j_3)_* } \\
\bigoplus_{\langle h \rangle} \HP_*(\bC[\pi]; \langle h \rangle)  \ar[r]^{ \sum_{\langle h \rangle} (j_4)_* \ \ \ } &  \HP_*(c_c(\Gamma /\pi) \rtimes _\alg \Gamma ; \langle g \rangle).
}
\end{split}\label{eq:cyclic_diagram}
\end{align}
Here we remove the homomorphism $\Ad (x^{-1})$ since it induces the identity on the cyclic homology groups.

By \cite{nistorGroupCohomologyCyclic1990}*{Lemma 2.7}, each $(j_2)_*$ is an isomorphism for any $\langle h \rangle \in \langle \pi;g \rangle$. 
Moreover, each $(j_1)_*$ is also an isomorphism since the inclusion $j_1$ implements the Morita equivalence. By the same reason, $\sum_{\langle h \rangle } (j_4)_*$ is also isomorphic. Indeed, the direct sum 
\[\bigoplus_{\langle g \rangle \in \langle \Gamma \rangle} \sum_{\langle h \rangle \in \langle \pi ; g \rangle } (j_4)_* \colon \bigoplus _{\langle h \rangle \in \langle \pi \rangle } \HP_*(\bC[\pi]; \langle h \rangle) \to \bigoplus_{\langle g \rangle \in  \langle \Gamma \rangle }\HP_*(c_c(\Gamma /\pi) \rtimes_\alg \Gamma ; \langle g \rangle ) \]
coincides with the map $\HP_*(\bC [\pi]) \to \HP_*(c_c(\Gamma /\pi) \rtimes_\alg \Gamma)$, and hence is isomorphic. 
This shows that $\sum_{\langle h \rangle }(j_3)_*$ is also isomorphic, and in particular, $\HP_*(c_c(Y) \rtimes_\alg \Gamma ; \langle g \rangle )=0$.

\begin{lem}
The map $\tau_\sigma$ respects the decomposition \eqref{eq:decomposition}, i.e., $\tau_\sigma$ is the direct sum of homomorphisms
\[ \mathclap{\tau_\sigma^{\langle g \rangle}  \colon \HP_*(\bC[\Gamma] ; \langle g \rangle ) \to \HP_{*-2}(c_c(\Gamma /\pi) \rtimes_{\alg}\Gamma ; \langle g \rangle ) \cong \bigoplus_{\langle h \rangle \in \langle \pi ;g \rangle } \HP_{*-2}(\bC [\pi] ; \langle h \rangle ).} \]
\end{lem}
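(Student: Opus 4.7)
The plan is to exhibit the defining extension \eqref{eq:alg_ext} as an extension of (twisted) crossed products by $\Gamma$ which is compatible, term by term and map by map, with a Burghelea--Nistor decomposition of the cyclic complex indexed by $\langle g \rangle \in \langle \Gamma \rangle$, and then to identify the resulting $\langle g \rangle$-component of the target using the diagram \eqref{eq:cyclic_diagram}.

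First, I would observe that all three terms of \eqref{eq:alg_ext} are (twisted) crossed products of $\Gamma$-algebras by $\Gamma$: the quotient is $\bC[\Gamma] = \bC \rtimes \Gamma$, the middle term is $C^\infty(\cB) \rtimes_{\sigma, \alg} \Gamma$, and (working at the algebraic level of \cref{lem:untwist}) the ideal $\sS\bC[\pi]\otimes \bK_{\alg}$ is canonically isomorphic to $C^\infty_0(\cB_0)\rtimes_{\sigma,\alg}\Gamma \cong C^\infty_0(\cB_0)\rtimes_{\alg}\Gamma$. For any twisted crossed product $A \rtimes_{\sigma,\alg} \Gamma$ with $\sigma$ taking values in the coefficient algebra $A$, the subspace of the cyclic complex spanned by elementary tensors $a_0 u_{g_0} \otimes \cdots \otimes a_p u_{g_p}$ with $g_0 g_1 \cdots g_p \in \langle g \rangle$ is preserved by the differentials $b$ and $B$: the Hochschild differential $b$ multiplies adjacent factors via $(a_i u_{g_i})(a_{i+1}u_{g_{i+1}}) = a_i \gamma_{g_i}(a_{i+1}) \sigma(g_i,g_{i+1}) u_{g_i g_{i+1}}$, so the product of the group letters of the resulting chain stays in the same conjugacy class, and $B$ only reindexes the tensor factors. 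This gives the Burghelea--Nistor decomposition for each term, extending the discussion preceding \eqref{eq:cyclic_diagram} to the twisted case.

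Next, both the inclusion and the projection in \eqref{eq:alg_ext} are induced by $\Gamma$-equivariant homomorphisms on coefficient algebras and hence preserve the $\langle g \rangle$-subcomplex. By naturality of the long exact sequence in cyclic homology, the boundary $\tau_\sigma = \partial$ also splits along $\langle g \rangle$, giving homomorphisms $\tau_\sigma^{\langle g\rangle} \colon \HP_*(\bC[\Gamma];\langle g\rangle) \to \HP_{*-1}(C^\infty_0(\cB_0)\rtimes_\alg \Gamma;\langle g\rangle)$. Finally, to identify the target, I would note that the Morita equivalence used in \cref{def:codim2_cyclic} is implemented by the $\Gamma$-equivariant inclusion $\bC \otimes C^\infty_0(0,1) \hookrightarrow c_c(\Gamma/\pi)\otimes C^\infty_0(0,1)$ at the corner $e\pi$, which is compatible with the diagram \eqref{eq:cyclic_diagram} tensored with $C^\infty_0(0,1)$. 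Combining the isomorphism $\sum_{\langle h \rangle}(j_3)_*$ from that diagram with this identification realises $\HP_{*-1}(C^\infty_0(\cB_0)\rtimes_\alg \Gamma;\langle g\rangle)$ as $\bigoplus_{\langle h \rangle \in \langle \pi; g \rangle} \HP_{*-2}(\bC[\pi];\langle h\rangle)$, which gives the claim.

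The main obstacle is to confirm that the Burghelea--Nistor decomposition survives the passage from the untwisted crossed products treated in \cite{nistorGroupCohomologyCyclic1990} to the twisted case and that the untwisting from \cref{lem:untwist} respects the $\Gamma$-conjugacy-class grading; both reduce to the fact that the $2$-cocycle $\sigma$ and the $1$-cochain $b$ in the proof of \cref{lem:untwist} take values in the coefficient algebra, so none of the manipulations alter the group-letter part of an elementary tensor.
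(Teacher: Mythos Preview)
Your argument is correct and arrives at the same conclusion, but it takes a different route from the paper. The paper does not verify a Burghelea--Nistor decomposition for twisted crossed products directly; instead it invokes the Packer--Raeburn stabilisation trick (\cref{rmk:PR}) to replace the twisted $\Gamma$-action on $C^\infty(\cB)$ by a genuine $\Gamma$-action on $C^\infty(\cB,\cK_\sigma)$, and then simply observes that for an extension $0 \to I \to A \to A/I \to 0$ of untwisted $\Gamma$-algebras the decomposition \eqref{eq:decomposition} is implemented at the chain level and hence is respected by the excision boundary map. Your approach, by contrast, stays with the twisted crossed product and checks by hand that the cocycle $\sigma$ and the untwisting $1$-cochain from \cref{lem:untwist} take values in the coefficient algebra and therefore leave the group-letter part of each elementary tensor untouched. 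This is a perfectly valid and arguably more elementary argument, since it avoids the detour through Morita equivalence and the compact-operator bundle $\cK_\sigma$; the paper's route has the advantage of quoting Nistor's result verbatim without any adaptation to the twisted setting.
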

\begin{proof}
In general, if one has an extension $0 \to I \to A \to A/I \to 0$ of $\Gamma$-algebras, then the boundary map $\partial \colon \HP_n(A/I \rtimes \Gamma) \to \HP_{n-1}(I \rtimes \Gamma)$ respects the decomposition \eqref{eq:decomposition} since the decomposition is given in chain level. 
This proves the lemma since the twisted action $C^\infty(\cB) \curvearrowleft  \Gamma $ is Morita equivalent to a genuine action onto $C^\infty (\cB , \cK_\sigma)$ where $\cK_\sigma$ is a $\Gamma$-equivariant bundle of compact operator algebras (see \cref{rmk:PR}). 
\end{proof}

In \cite{burgheleaCyclicHomologyGroup1985}, each group $\HP_*(\bC[\Gamma ] ; \langle g \rangle)$ is related to the group homology. Indeed, there is an isomorphism 
\[ \Phi _\Gamma \colon  H_{[*]}(\Gamma ; \bC ) \to \HP_*(\bC[\Gamma] ; e), \]
where $H_{[*]}({\cdot })$ denotes the product of homology group $\prod_{k \in \bZ} H_{* + 2k} ({\cdot}) $. 
We compare the codimension 2 transfer maps in group homology, given in \cref{rmk:cocycle_transfer}, and cyclic homology groups. 
\begin{lem}\label{lem:cyc_unit}
The diagram
\[
\xymatrix{
H_{[*]}(\Gamma  ; \bC) \ar[r]^{2 \pi i \sigma \cdot } \ar[d]^{\Phi_\Gamma } &  H_{[*]}(\pi  ; \bC) \ar[d]^{\Phi_\pi } \\ 
\HP_*(\bC [\Gamma ] ; e ) \ar[r]^{\tau_\sigma } &  \HP_{*}(\bC[\pi] ;  e ).
}
\]
commutes.
\end{lem}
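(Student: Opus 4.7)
My plan is a direct chain-level verification. Recall Burghelea's formula: a normalized group cycle $[g_1|\cdots|g_k]$ with $g_1\cdots g_k=e$ is sent by $\Phi_\Gamma$ to the cyclic chain $1\otimes u_{g_1}\otimes\cdots\otimes u_{g_k}\in\CC_k(\bC[\Gamma];e)$. On the group side, the cap product $\sigma\cap[g_1|\cdots|g_k]$ followed by Shapiro's isomorphism $H_*(\Gamma;\bZ[\Gamma/\pi])\cong H_*(\pi;\bZ)$ is given on bar chains by a standard insertion formula in the values $\sigma(g_i,g_{i+1})\in\bZ[\Gamma/\pi]$, projected to $\bZ$ via $\sum n_{g\pi}\delta_{g\pi}\mapsto n_{e\pi}$ of \cref{rmk:cocycle_transfer}.

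First, I would compute $\tau_\sigma=\partial$ on a Burghelea cycle via the $\bC$-linear section $s\colon\bC[\Gamma]\to C^\infty(\cB)\rtimes_{\sigma,\alg}\Gamma$ defined by $u_g\mapsto\tilde u_g$, whose defect
\[s(u_g)s(u_h)-s(u_gu_h)=\bigl(\sigma(g,h)-1\bigr)\tilde u_{gh}\]
lands in the ideal $\sS\bC[\pi]\otimes\bK_\alg$: indeed, under Pontrjagin duality $\bC[\bZ[\Gamma/\pi]]\cong C(T)$, the element $\sigma(g,h)\in\bZ[\Gamma/\pi]\subset C(T)$ takes the value $1$ at the basepoint of $\cB$, and by assumption (A2) the difference $\sigma(g,h)-1$ extends to a smooth function on $\cB$ vanishing at the basepoint. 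Applying the standard Loday--Quillen formula for the boundary map of a linearly split algebra extension to the Burghelea lift of $[g_1|\cdots|g_k]$, and collapsing via $g_1\cdots g_k=e$, produces a signed sum of chains in $\CC_{k-1}(\sS\bC[\pi]\otimes\bK_\alg;e)$ obtained by replacing each adjacent pair $\tilde u_{g_i}\otimes\tilde u_{g_{i+1}}$ with the ideal factor $\bigl(\sigma(g_i,g_{i+1})-1\bigr)\tilde u_{g_ig_{i+1}}$.

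Next I would unwind the isomorphisms $\HP_*(\sS\bC[\pi]\otimes\bK_\alg)\cong\HP_{*-1}(\sS\bC[\pi])\cong\HP_{*-2}(\bC[\pi])$. Morita invariance of $\bK_\alg$ is algebraic and introduces no numerical constant. The cyclic suspension is implemented by pairing the $C^\infty_0(0,1)$-leg with the canonical generator of $\HP^1(C^\infty_0(0,1))$, realized by the cyclic $1$-cocycle $\tau_S(f_0,f_1)=(2\pi i)^{-1}\int_0^1 f_0\,df_1$; this is the unique normalization compatible via the Chern character with the $K$-theoretic Bott periodicity underlying \eqref{eq:transfer2}, and it is precisely this $(2\pi i)^{-1}$ that produces the $2\pi i$ appearing in the statement. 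Under the exponential parameterization $t\mapsto e^{2\pi it}$ of the Pontrjagin-dual circles of $\cB$, the function $\sigma(g_i,g_{i+1})-1$ restricts on the $e\pi$-component to $e^{2\pi it\,n_{e\pi}}-1$ with $n_{e\pi}$ the $e\pi$-coefficient of $\sigma(g_i,g_{i+1})$, whose pairing with $\tau_S$ returns exactly the integer $n_{e\pi}$.

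Finally, a direct comparison of the resulting chain in $\CC_{k-2}(\bC[\pi];e)$ with the Burghelea image under $\Phi_\pi$ of $\sigma\cap[g_1|\cdots|g_k]$, after accounting for signs and cyclic permutations, yields the asserted equality $\tau_\sigma\circ\Phi_\Gamma=\Phi_\pi\circ(2\pi i\,\sigma\cdot\,)$. The principal obstacle is the careful bookkeeping of signs and cyclic-permutation factors, together with pinning down the normalization of the suspension isomorphism in $\HP$, which is the source of the $2\pi i$ appearing in the lemma's conclusion.
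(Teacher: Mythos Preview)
Your strategy---a direct chain-level computation of the cyclic boundary via a linear section, followed by an explicit unwinding of suspension and Morita---is genuinely different from the paper's. The paper avoids any explicit formula for $\partial$ on cyclic chains: it introduces the twisted equivariant de Rham complex $(C_\bullet(\Gamma,\Omega^\bullet(\cB))[\![v^{\pm 1}]\!],\,\delta_\Gamma+vd_{\mathrm{dR}}+\Theta)$ with $\Theta(g,h)=\sigma(g,h)^{-1}d\sigma(g,h)$, observes that the boundary of its tautological short exact sequence is cap product with $[\Theta]=2\pi i\,\sigma$ (the constant coming from $\int z^{-1}dz$), and then appeals to Angel's JLO construction (\cref{section:Chern}) to produce compatible quasi-isomorphisms between the cyclic and the de Rham exact sequences. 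The comparison of boundary maps is thus structural, not computational. Your route trades that machinery for elementary algebra, which is appealing, but the execution has real gaps.

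Two points need attention. First, the attribution of (A2) is misplaced: that $\sigma(g,h)-1$ vanishes at the basepoint of $\cB$ is immediate from Pontrjagin duality (the basepoint is the trivial character). Where (A2) actually enters is the untwisting isomorphism of \cref{lem:untwist}, $fu_g\mapsto f\cdot b(g)|_\cB\cdot u_g$, which precedes the imprimitivity identification $C_0^\infty(\cB_0)\rtimes_\alg\Gamma\cong \sS\bC[\pi]\otimes\bK_\alg$ and multiplies the $e\pi$-circle factor by $z^{b(g)_{e\pi}}$; your suspension pairing $\tau_S$ will see these, and while they are coboundary terms on the group side, you must track their cancellation. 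Second, the ``standard Loday--Quillen formula'' you invoke does not give the periodic cyclic boundary as a single defect insertion: excision in $\HP$ (Wodzicki, Cuntz--Quillen) produces correction terms involving the $B$-operator, and restricting to the $e$-component does not automatically kill them. You acknowledge that the final sign and cyclic-permutation bookkeeping is unresolved; in this approach that bookkeeping \emph{is} the proof, and until it is carried out---including the two issues above---the argument is a plan rather than a demonstration.
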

\begin{proof}
First, we study the map $\tau_\sigma$ in twisted equivariant (co)homology, instead of cyclic homology.  
Let $(\Omega^\bullet (\cB), d)$ denote the de Rham complex on $\cB$, defined precisely as $\Omega^\bullet  (\cB) = \bC \cdot 1 \oplus \Omega_0(\cB) $, where $(\Omega_0(\cB_0), d)$ is the de Rham complex on the manifold $\cB_0$ vanishing at infinity and $d (1)=0$. 
We regard $\Omega^\bullet (\cB)$ as a complex of $\Gamma$-modules and define the chian and cochain double complexes $C_\bullet (\Gamma , \Omega^\bullet (\cB)) $ and  $C^\bullet (\Gamma, \Omega^\bullet (\cB))$ with coefficient in $\Omega^\bullet (\cB)$. 
The cohomology of the total complex of $C^\bullet (\Gamma, \Omega^\bullet (\cB))$ is isomorphic to the equivariant cohomology $H^*_\Gamma (\cB;\bC) $.

Let us define the $(2,1)$-cocycle $\Theta \in C^2(\Gamma ; \Omega^1(\cB))$ as
\[ \Theta (g,h) :=  \sigma(g,h)^{-1} d \sigma(g,h)  \in\Omega ^1(\cB). \]
Since $\int z^{-1}dz = 2\pi i$, this $\Theta$ represents the cohomology class $2 \pi i \sigma \in H^3_\Gamma (\cB; \bC)$ (cf. \cref{prp:Dixmier}). 

The twisted de Rham differential $\delta_\Theta := \delta_\Gamma + vd_{\mathrm{dR}} + \Theta$ on the double complex $C_\bullet (\Gamma, \Omega^\bullet (\cB))[\![v^{\pm 1}]\!]$, where $v$ is a formal symbol of degree $-2$, gives rise to an exact sequence
\begin{align}
    0 \to C_\bullet (\Gamma , \Omega^\bullet _0(\cB_0 ) )[\![v^{\pm 1}]\!] &\to C_\bullet (\Gamma , \Omega^\bullet (\cB ) )[\![v^{\pm 1}]\!] 
    \to C_\bullet (\Gamma , \bC)[\![v^{\pm 1}]\!]  \to 0. \label{eq:exact_diff}
\end{align}
Since the differential $\delta_\Theta$ is of the form $ \big( \begin{smallmatrix} \delta_\Gamma  & 0 \\ \Theta  & \delta _\Gamma + d_{\mathrm{dR}} \end{smallmatrix}\big) $ under the direct sum decomposition $C_\bullet (\Gamma  , \Omega ^\bullet ( \cB)) \cong C_\bullet (\Gamma , \bC) \oplus C_\bullet (\Gamma, \Omega_0^\bullet (\cB_0))$, 
the boundary map of the associated long exact sequence is the cap product with $\Theta$.
By \cref{lem:untwist}, the group $H_*(C_\bullet (\Gamma , \Omega^\bullet _0(\cB_0 ) )[\![v^{\pm 1}]\!] , \delta_\Theta)$ is isomorphic to the equivariant homology $H_{[*-1]}^\Gamma (\Gamma /\pi ; \bC)$ with degree shift by $1$. Under this isomorphism, the cap product with $[\Theta]$ is identified with the cap product in equivariant homology with $2\pi i \sigma \in H^2_\Gamma( \Gamma /\pi ; \bC)$.

The remaining task is to relate the exact sequence of group homologies in \eqref{eq:exact_diff} with that of cyclic chain complexes.  
This is essentially due to the work of Angel~\cite{angelCyclicCocyclesTwisted2013}, but a we need slightly modified version of it. 
The detailed discussion of this part is given in \cref{section:Chern}. 
\end{proof}
 
We also describe the delocalized part of the cyclic homology in terms of group homology. 
For $h \in \langle \pi \rangle$, let 
\[ \sigma^{\langle h \rangle }:=\psi_{\langle h\rangle} (\iota^* (\sigma)) \in H^2(N_\Gamma (h); \bZ[X_h]), \]
where $\iota \colon N_\Gamma (h) \to \Gamma $ denote the inclusion and $\psi_{\langle h\rangle} \colon \bZ[\Gamma /\pi] \to \bZ[X_h]$ denote the quotient. 
It corresponds to an extension
\[0 \to \bZ[X_h] \to \bZ[X_h] \rtimes_\sigma N_\Gamma (h) \to N_\Gamma (h) \to 0.  \]
Set $N_{\Gamma, g}:=N_\Gamma (g) /\langle g \rangle$ and $N_{\pi,h}:=N_\pi(h)/\langle h \rangle $. If $g \in \Gamma$ is a torsion element, then the homomorphism
\begin{align*} 
 \HC_*(c_c(X_h) \rtimes_\alg N_\Gamma (h) ; \langle h \rangle ) \to \HC_*(c_c(X_h) \rtimes N_{\Gamma , h} ; e)  
\end{align*}
is isomorphic since $h$ acts on $X_h$ trivially \cite{nistorGroupCohomologyCyclic1990}*{2.7}.

\begin{prp}\label{prp:cocycle}
If $g \in \Gamma$ is a torsion element, the diagram
\[
\xymatrix{
H_{[*]}(N_{\Gamma , g} ; \bC) \ar[r]^{\sum \sigma^{\langle h\rangle} _* \hspace{5ex}} \ar[d] & \bigoplus _{\langle h \rangle  \in \langle \pi ; g \rangle }  H_{[*]}(N_{\pi, h} ; \bC) \ar[d] \\ 
\HP_*(\bC [\Gamma ] ; \langle g \rangle ) \ar[r]^{\tau_\sigma^{\langle h \rangle } \hspace{4ex}} & \bigoplus_{\langle h \rangle  \in \langle \pi ; g \rangle} \HP_*(\bC[\pi] ; \langle h \rangle)
}
\]
commutes. 
\end{prp}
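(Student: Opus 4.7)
The plan is to reduce to the setting of \cref{lem:cyc_unit} by localizing the whole argument at the conjugacy class $\langle g\rangle$ and passing to normalizer subgroups, then invoking Burghelea's identification of the $\langle h\rangle$-localized summands with the group homology of the centralizers modulo the cyclic subgroup generated by $h$.

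First, I would replay the construction of \cref{lem:cyc_unit} with $\Gamma$ replaced by an arbitrary subgroup $H\subset\Gamma$ acting on $\cB$ via restriction of $\sigma$. Namely, the twisted de Rham differential $\delta_\Theta=\delta_H+vd_{\mathrm{dR}}+\iota^*\Theta$ on $C_\bullet(H,\Omega^\bullet(\cB))[\![v^{\pm 1}]\!]$ yields a short exact sequence whose connecting map is cap product with $2\pi i \iota^*\sigma\in H^2(H;\bC[\Gamma/\pi])$, and the arguments of \cref{section:Chern} compare this with the boundary map of the algebraic extension $0\to C^\infty_0(\cB_0)\rtimes_{\sigma,\alg}H\to C^\infty(\cB)\rtimes_{\sigma,\alg}H\to \bC[H]\to 0$. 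Applied to $H=N_\Gamma(g)$, after projecting onto the $\langle g\rangle$-isotypic direct summand (which makes sense in chain level by the Burghelea decomposition), this gives
\[
\tau_\sigma^{\langle g \rangle}\circ \Phi_{N_\Gamma(g)}^{\langle g\rangle}=\Phi_{\bigoplus \cdot}^{\langle h\rangle}\circ (2\pi i\,\iota^*\sigma\cap{\cdot}),
\]
where the target lives in $\bigoplus_{\langle h\rangle\in\langle\pi;g\rangle}\HP_{*-2}(\bC[N_\pi(h)];\langle h\rangle)$ via the decomposition of the coefficient module $\bC[\Gamma/\pi]\cong \bigoplus_{\langle h\rangle\in\langle\pi;g\rangle}\bC[X_h]\oplus\bC[Y]$ explained just before \eqref{eq:cyclic_diagram}.

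Second, for a torsion element $g$, I would invoke the isomorphism from \cite{nistorGroupCohomologyCyclic1990}*{2.7} which identifies the $\langle g\rangle$-localized cyclic homology $\HP_*(\bC[N_\Gamma(g)];\langle g\rangle)$ with $\HP_*(\bC[N_{\Gamma,g}];e)$ and likewise for each $\HP_*(\bC[N_\pi(h)];\langle h\rangle)\cong \HP_*(\bC[N_{\pi,h}];e)$. Under these identifications, the $\Phi^{\langle g\rangle}$-maps become the usual Burghelea isomorphisms for $N_{\Gamma,g}$ and $N_{\pi,h}$ in the localized degrees, and the cap product with $\iota^*\sigma\in H^2(N_\Gamma(g);\bC[\Gamma/\pi])$ descends to cap product with the image $\sigma^{\langle h\rangle}\in H^2(N_{\Gamma,g};\bC[X_h])$ on each factor, since the relevant $\langle g\rangle$-action on $X_h$ is trivial.

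Finally, assembling the resulting identifications over all $\langle h\rangle\in\langle\pi;g\rangle$ and absorbing the summand indexed by $Y$ (which contributes zero cyclic homology by the discussion following \eqref{eq:cyclic_diagram}) yields the desired commutativity. The main obstacle is bookkeeping: one must check that the chain-level decomposition by conjugacy classes is compatible with the twisted de Rham short exact sequence \eqref{eq:exact_diff} adapted to $N_\Gamma(g)$ and its subgroups, and that the Morita equivalence $c_c(X_h)\rtimes_\alg N_\Gamma(h)\sim \bC[N_\pi(h)]$ used to reach the right-hand side of \eqref{eq:cyclic_diagram} is compatible with the Burghelea isomorphism in the localized summands. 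Once this compatibility is in place, the proposition follows directly from \cref{lem:cyc_unit} applied fiberwise over $\langle g\rangle$.
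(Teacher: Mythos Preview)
Your proposal is correct and follows essentially the same route as the paper's proof. The paper organizes the argument as a single $3\times 3$ commutative diagram whose columns are the Nistor isomorphisms $\HC_*(\bC[N_{\Gamma,g}];e)\cong \HC_*(\bC[N_\Gamma(g)];\langle g\rangle)\cong \HC_*(\bC[\Gamma];\langle g\rangle)$ (and their $\pi$-side analogues) and whose rows are the boundary maps $\partial$; it then applies \cref{lem:cyc_unit} to the top row, which lives at the unit conjugacy class of the quotient groups $N_{\Gamma,g}$ and $N_{\pi,h}$, exactly as you propose in your second step. The bookkeeping you flag (compatibility of the Morita equivalences and of the conjugacy-class decomposition with the boundary maps) is precisely what the paper's phrase ``by the above arguments'' refers to, pointing back to the discussion around \eqref{eq:cyclic_diagram}.
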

\begin{proof}
By the above arguments, the diagram
\[\mathclap{
\xymatrix@C=1em{
\HC_*(\bC[N_{\Gamma,g}] ; e) \ar[r]^{\partial \hspace{4ex}} & 
\bigoplus_{\langle h \rangle } \HC_*(\bC[X_h] \rtimes N_{\Gamma , h}; e)   & 
\bigoplus_{\langle h \rangle } \HC_*(\bC[N_{\pi, h}] ; e) \ar[l]_\cong  \\
\HC_*(\bC[N_{\Gamma}(g)] ; \langle g \rangle) \ar[r]^{\partial \hspace{3.5ex}} \ar[u]_\cong \ar[d]^\cong &
\bigoplus_{\langle h \rangle } \HC_*(c_c(X_h) \rtimes N_\Gamma(h))  \ar[u]_\cong \ar[d]^\cong & 
\bigoplus_{\langle h \rangle } \HC_*(\bC[N_\pi(h)] ; \langle h \rangle ) \ar[d]^\cong \ar[u]_\cong \ar[l]_\cong \\
\HC_*(\bC[\Gamma] ; \langle g \rangle ) \ar[r]^{\partial \hspace{3ex}} & 
\HC_*(c_c(X) \rtimes \Gamma ; \langle g \rangle )& 
\bigoplus_{\langle h \rangle } \HC_*(\bC[\pi] ; \langle h \rangle ) \ar[l]_\cong 
}}
\]
commutes. After taking the projective limit by Connes' $S$-map, the upper horizontal maps is identified with $2 \pi i \sigma^{\langle h \rangle}$ by \cref{lem:cyc_unit}. This finishes the proof. 
\end{proof}
\begin{rmk}
As is shown in \cite{burgheleaCyclicHomologyGroup1985}, if $g $ is an infinite order element, then 
\[ \HP_*(\bC[\Gamma]; \langle g\rangle) \cong T_*^\Gamma (g; \bC ):= \varprojlim (H_{[*]}(N_{\Gamma , g} ; \bC), S),\]
where $S $ is the Gysin map of the fibration $S^1 \to BN_{\Gamma }(g) \to BN_{\Gamma , g}$. 
The same proof as \cref{prp:cocycle} identifies $\tau_\sigma^{\langle g \rangle}$ with the sum of the cap product with $\sigma^{\langle h \rangle}$ defined on $T_*^\Gamma (g;\bC)$.
\end{rmk}

The proof of \cref{prp:cocycle} also shows the dual statement in cohomology theories. 
Note that the cyclic cohomology group $\HC^*(\bC[\Gamma] )$ decomposes into the direct product of groups $\HC^*(\bC[\Gamma]; \langle g \rangle)$.  
Let $H^{[\ast]}(\cdot )$ stands for the direct sum $\bigoplus _{k\in \bZ}H^{* + 2k}({\cdot})$.
\begin{cor}\label{cor:cocycle_dual}
If $g \in \Gamma$ is a torsion element, the diagram
\[
\xymatrix@C=4em{
H^{[*]}(N_{\Gamma , g} ; \bC) \ar[r]^{\sum 2\pi i \sigma^{\langle h\rangle}  \hspace{5ex} } \ar[d] & \bigoplus _{\langle h \rangle  \in \langle \pi ; g \rangle }  H^{[*]}(N_{\pi, h} ; \bC) \ar[d] \\ 
\HP^*(\bC [\Gamma ] ; \langle g \rangle ) \ar[r]^{\tau_\sigma^{\langle h \rangle } \hspace{4ex}} & \bigoplus_{\langle h \rangle  \in \langle \pi ; g \rangle} \HP^*(\bC[\pi] ; \langle h \rangle)
}
\]
commutes. 
\end{cor}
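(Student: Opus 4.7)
The plan is to derive \cref{cor:cocycle_dual} from \cref{prp:cocycle} by dualizing. The key adjointness is that under the natural pairing between cyclic homology and cyclic cohomology of a unital $\bC$-algebra, the boundary map $\tau_\sigma^* = \delta$ in cohomology (\cref{def:codim2_cyclic}) is by construction the transpose of $\tau_\sigma = \partial$ in homology, since both are induced from the same excision triple \eqref{eq:alg_ext}. Similarly, at the level of group (co)homology with trivial $\bC$-coefficients, the cup product $2\pi i \sigma^{\langle h \rangle} \cup {\cdot}$ is by definition the transpose of the cap product $2\pi i \sigma^{\langle h \rangle} \cap {\cdot}$ appearing in \cref{prp:cocycle}.

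First I would verify that the Burghelea--Nistor identifications
\[ \HP^*(\bC[\Gamma]; \langle g\rangle) \cong H^{[*]}(N_{\Gamma, g}; \bC) \]
(for $g$ a torsion element) arise as the cohomological transposes of the analogous isomorphisms in \cref{prp:cocycle}. In fact, in the torsion case the entire argument of \cref{prp:cocycle} ---  the Morita identifications $(j_1)_*, (j_2)_*, (j_4)_*$, the decomposition over $\langle \pi; g \rangle$, and the chain-level excision --- is realized at the level of chain maps between explicit (co)chain complexes (namely $C_\bullet(\Gamma, \Omega^\bullet(\cB))$, the twisted equivariant de Rham complex, and the cyclic complex as in \cref{section:Chern}). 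Applying $\Hom_\bC(-, \bC)$ degreewise turns each chain quasi-isomorphism into a cochain quasi-isomorphism, and turns the homological boundary map of the short exact sequence \eqref{eq:exact_diff} into the cohomological boundary, which is cup product with $[\Theta] = 2\pi i \sigma$ by the same direct-sum decomposition $\delta_\Theta = \big(\begin{smallmatrix} \delta_\Gamma & \Theta \\ 0 & \delta_\Gamma + d_{\mathrm{dR}} \end{smallmatrix}\big)$ used there.

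Next I would transpose the commutative square of \cref{prp:cocycle} grade by grade. The horizontal maps come from chain maps that implement $\tau_\sigma$; their $\Hom(-,\bC)$-duals are precisely the cochain maps that implement $\tau_\sigma^*$ and $2\pi i \sigma^{\langle h\rangle} \cup {\cdot}$. The vertical identifications dualize to the vertical identifications of the corollary by Burghelea's theorem for torsion conjugacy classes. The transposed diagram therefore commutes, yielding the claim.

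The main obstacle will be the usual subtlety that the pairing between $\HC_*$ and $\HC^*$ is not a perfect pairing, since the periodic cyclic homology uses formal Laurent series in the periodicity variable $v$ while periodic cyclic cohomology uses $u$ and both are passed to limits along Connes' $S$-map. However, for a torsion conjugacy class $\langle g \rangle$ the corresponding summand $\HP_*(\bC[\Gamma]; \langle g \rangle) \cong H_{[*]}(N_{\Gamma,g}; \bC)$ is the direct sum of countable-dimensional $\bC$-vector spaces in each degree, with the $S$-map acting as the identity on the Burghelea model (no Gysin correction in the torsion case); hence $\Hom(-, \bC)$ faithfully converts the homological statement into the cohomological one, and the pairing between $\HP_*(\bC[\Gamma]; \langle g\rangle)$ and $\HP^*(\bC[\Gamma]; \langle g\rangle)$ restricts to the evaluation pairing between $H_{[*]}(N_{\Gamma, g}; \bC)$ and $H^{[*]}(N_{\Gamma, g}; \bC)$. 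Once this identification is made, the corollary is immediate from \cref{prp:cocycle}.
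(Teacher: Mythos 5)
Your plan is the one the paper's one-line proof intends: the argument of \cref{prp:cocycle} is carried out through a commutative diagram of chain-level quasi-isomorphisms (the excision triple, the Morita maps, and the $\Psi_A\circ\cT_{\mathrm{JLO}}$ map of \cref{section:Chern}), and applying $\Hom(-,\bC)$ degreewise turns this into the cohomological statement, so the proposal is correct in substance and matches the paper's approach. A few slips to repair. The ``perfect pairing'' worry in your last paragraph is better discharged by observing $\CC^\bullet(A)=\Hom(\CC_\bullet(A),\bC)$ at the cochain level, so the dual of a commuting diagram of complexes already computes $\HP^*$; one only has to check that the dualized maps preserve the periodic subcomplex $\CC^\bullet(A)[u^{\pm1}]\subset\Hom(\CC_\bullet(A)[\![v^{\pm1}]\!],\bC)$, which holds because all maps arise from algebra homomorphisms and the excision boundary. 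Invoking nondegeneracy of the $\HP_*\times\HP^*$ pairing is not the right mechanism. The assertion that the $S$-map ``acts as the identity on the Burghelea model'' is false: in $\HC_n(\bC[\Gamma];\langle g\rangle)\cong\bigoplus_{k\ge 0}H_{n-2k}(N_{\Gamma,g};\bC)$, the operator $S$ is the projection dropping the top summand; what holds for torsion $g$ is the absence of the Gysin correction, which is a different statement. Also, $H_{[*]}$ denotes the product $\prod_k H_{*+2k}$ in this paper, not a direct sum, so your ``direct sum of countable-dimensional $\bC$-vector spaces'' description should be a product of possibly large-dimensional spaces, and the right-hand column of the dualized diagram carries a product over $\langle\pi;g\rangle$ rather than a sum; correspondingly the transposed arrows run from the $\pi$-side to the $\Gamma$-side, so do keep track of arrow orientation against the printed statement.
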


\subsection{Mapping codimension $2$ transfer of analytic surgery to that of homology}
Next we relate the codimension $2$ transfer map with the `mapping analytic surgery to homology' formalism developed by \cite{piazzaMappingAnalyticSurgery2019}. For this sake, we firstly rebuild the codimension $2$ transfer maps in the unconditional crossed product Banach algebras. 
In contrast to \cref{section:Cstar,section:HigsonRoe}, hereafter $C^*(\widetilde{M})^\Gamma$ stands for the C*-algebra completion of $\bC[\widetilde{M}]^\Gamma$ with respect to the reduced norm, i.e., the operator norm on $L^2(\widetilde{M})$. Other coarse C*-algebras $D^*(\widetilde{M})^\Gamma$ and $Q^*(\widetilde{M})^\Gamma $ are also defined in a consistent way. 

Let $\| \cdot \|_{A\Gamma} $ be a submultiplicative norm on the group algebra $\bC[\Gamma]$. 
Assume this is unconditional, i.e., if $\sum a_g u_g, \sum b_g u_g \in \bC[\Gamma]$ satisfies $|a_g| \leq |b_g|$ for any $g \in \Gamma$, then $\| \sum a_g u_g \|_{A\Gamma } \leq \| \sum b_g u_g\|_{A\Gamma}$ holds. 
We write $A\Gamma $ for the completion of $\bC[\Gamma] $ with respect to this norm. We also assume that $A\Gamma$ is closed under holomorphic functional calculus. 
Let $\| \cdot \|_{A\pi}$  the restriction of $\| \cdot \| _{A\Gamma}$ to $\bC[\pi]$, which is also a submultiplicative unconditional norm on $\bC[\pi]$ and the completion $A\pi := \overline{\bC[\pi]}$ is closed under holomorphic functional calculus. 

For a $\Gamma$-C*-algebra $B$, we define the norm $\| \cdot \|_{A(B,\Gamma )}$ on the algebraic crossed product $B \rtimes_{\alg} \Gamma $ as
\begin{align}
    \Big\| \sum_g f_g u_g \Big\|_{A(B, \Gamma )} := \Big\| \sum \| f_g \| u_g \Big\|_{A\Gamma }. \label{eq:Banach}
\end{align}  
We write $A(B,\Gamma)$ for the completion of $B \rtimes _\alg \Gamma $ with respect to this norm. 
\begin{lem}\label{lem:ext_Ban}
Let $0 \to I \to B \xrightarrow{q} B/I \to 0$ be an extension of $\Gamma$-C*-algebras such that there is a (possibly non-equivariant) contractive section $\nu \colon B/I \to B$. Then
\[ 0 \to A(I,\Gamma) \to A(B, \Gamma ) \to A(B/I, \Gamma) \to 0 \]
is an exact sequence of Banach algebras. 
\end{lem}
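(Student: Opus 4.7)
The plan is to check the three pieces of exactness by exploiting that the unconditional norm \eqref{eq:Banach} depends only on the sequence of \emph{norms} of the coefficients $f_g \in B$, together with the contractive section $\nu \colon B/I \to B$.

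First I would verify that the two structural maps are already bounded at the algebraic level. For $\iota_* \colon I \rtimes_\alg \Gamma \hookrightarrow B \rtimes_\alg \Gamma$, the $C^*$-inclusion $I \hookrightarrow B$ is isometric, so $\iota_*$ is isometric for the norms \eqref{eq:Banach} by unconditionality; this extends to an isometric embedding $A(I,\Gamma) \hookrightarrow A(B,\Gamma)$, yielding left exactness. For $q_* \colon B \rtimes_\alg \Gamma \to (B/I) \rtimes_\alg \Gamma$, the quotient inequality $\|q(f_g)\|_{B/I} \leq \|f_g\|_B$ combined with unconditionality of $\|\cdot\|_{A\Gamma}$ gives $\|q_*(x)\|_{A(B/I,\Gamma)} \leq \|x\|_{A(B,\Gamma)}$, so $q_*$ extends contractively to $A(B,\Gamma) \to A(B/I,\Gamma)$.

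Next, using the contractive (set-theoretic) section $\nu$, I define a linear section on the algebraic crossed products by
\[
\nu_*\Big(\sum_{g} h_g u_g\Big) := \sum_{g} \nu(h_g) u_g.
\]
Since $\|\nu(h_g)\|_B \leq \|h_g\|_{B/I}$, unconditionality of $\|\cdot\|_{A\Gamma}$ yields $\|\nu_*(y)\|_{A(B,\Gamma)} \leq \|y\|_{A(B/I,\Gamma)}$, so $\nu_*$ extends to a contractive linear map $\nu_* \colon A(B/I,\Gamma) \to A(B,\Gamma)$ satisfying $q_* \circ \nu_* = \mathrm{id}$. In particular $q_*$ is surjective. (Note that $\nu_*$ is not multiplicative, but we only need a linear Banach-space section.)

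Finally, for exactness in the middle, the inclusion $\iota_*(A(I,\Gamma)) \subset \ker q_*$ is obvious. For the reverse inclusion, let $x \in \ker q_*$ and pick $x_n = \sum_g f_{n,g} u_g \in B \rtimes_\alg \Gamma$ with $x_n \to x$ in $A(B,\Gamma)$. Put
\[
y_n := x_n - \nu_* (q_*(x_n)) = \sum_g \bigl( f_{n,g} - \nu(q(f_{n,g})) \bigr) u_g.
\]
Each coefficient lies in $I = \ker q$, so $y_n \in I \rtimes_\alg \Gamma \subset A(I,\Gamma)$. Since $\nu_*$ and $q_*$ are continuous, $y_n \to x - \nu_*(q_*(x)) = x$ in $A(B,\Gamma)$; as $A(I,\Gamma) \hookrightarrow A(B,\Gamma)$ is isometric, its image is closed, so $x \in \iota_*(A(I,\Gamma))$. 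This completes the verification. The only mild subtlety is to trust that unconditionality alone (not any multiplicativity of $\nu$) suffices to transport the $C^*$-level inequalities $\|q(f)\|_{B/I} \le \|f\|_B$ and $\|\nu(h)\|_B \le \|h\|_{B/I}$ coefficient-wise to the crossed product norms, and this is immediate from \eqref{eq:Banach}.
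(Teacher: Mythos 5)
Your proof is correct and takes essentially the same approach as the paper: both use the contractive section $\nu$ combined with unconditionality of $\|\cdot\|_{A\Gamma}$ to produce elements of $I \rtimes_{\alg} \Gamma$ approximating any $x \in \ker q_*$. The paper packages this as the assertion that the induced map $A(B,\Gamma)/A(I,\Gamma) \to A(B/I,\Gamma)$ is an isomorphism (by bounding the quotient norm by $\|q_*(\cdot)\|_{A(B/I,\Gamma)}$), while you argue directly with approximating sequences; the content is the same.
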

\begin{proof}
The exactness at the left and the right are obvious. Hence it suffices to show that the map $\tilde{q} \colon A(B,\Gamma) / A(I,\Gamma)  \to A(B/I, \Gamma)$ induced from $q$ is an isomorphism. For any $x \in A(B, \Gamma )$ and an arbitrary $\varepsilon >0$, we take a finite sum $x':=\sum_{g } b_g u_g \in B \rtimes_{\mathrm{alg}} \Gamma $ such that $\| x - x' \|_{A(B,\Gamma)} < \varepsilon$. 
Let $y':= \sum_g (\nu \circ q)(b_g) \cdot u_g \in B \rtimes_\alg \Gamma$. 
Then 
\[ x' - y' = \sum_g (b_g - (\nu \circ q)(b_g)) \cdot u_g \in I \rtimes_\alg \Gamma \]
and hence $\tilde{q} (x') = \tilde{q} (y')$. Therefore, we obtain 
\begin{align*} 
 \| \tilde{q} (x) \| \leq  \| \tilde{q} (x')\| + \varepsilon =& \inf_{\tilde{q} (x') = \phi (y)} \| y \|_{A(B,\Gamma)} + \varepsilon \leq  \| y' \|_{A(B, \Gamma )} + \varepsilon  \\
=& \Big\| \sum_g \| (\nu \circ q)(b_g) \|_{B} u_g \Big\|_{A\Gamma } + \varepsilon \\
 \leq & \Big\| \sum_g \| q(b_g) \|_{B/I} u_g \Big\|_{A \Gamma } + \varepsilon = \| q(x') \|_{A(B/I, \Gamma)} + \varepsilon . 
\end{align*}
Here, for the last inequality we use the unconditionality condition of the norm $\| \cdot \|_{A\Gamma}$. This shows the lemma. 
\end{proof}

Let $\cK_\sigma$ be a $\Gamma$-equivariant compact operator algebra bundle over $\cB$ whose Dixmier--Douady class is $\sigma \in H^2_\Gamma (\cB; \bT)$ (a construction is given in \cref{rmk:PR}). 
Let $C(\cB , \cK_\sigma )$ denote the C*-algebra of continuous section of $\cK_\sigma$. Then the crossed product $C(\cB, \cK_\sigma) \rtimes \Gamma $ is Morita equivalent to $C(\cB) \rtimes_\sigma \Gamma$. 
Apply \cref{lem:ext_Ban} to the extension $0 \to C_0 (\cB_0,\cK_\sigma ) \to C(\cB,\cK_\sigma ) \to \bC \to 0$, we obtain an exact sequence
\[ 0 \to A (C_0 (\cB_0 , \cK_\sigma ), \Gamma) \to A (C(\cB, \cK_\sigma ), \Gamma) \to A (\bK, \Gamma)  \to 0  \]
of Banach algebras. This induces the boundary map
\[ \tau _\sigma := \partial  \colon \HP_*(A\Gamma ) \to \HP_{*-1}( A (C_0 (\cB_0 , \cK_\sigma ), \Gamma)) \cong \HP_{*-2}(A \pi ), \]
which is compatible with \cref{def:codim2_cyclic}. The last isomorphism follows from our assumption on the unconditional norm $\| \cdot \|_{A\pi}$, which implies that $S A\pi $ is identified with a full corner subalgebra of $A(C_0 (\cB_0, \cK) , \Gamma) = A(S c_c(\Gamma /\pi) \otimes \bK, \Gamma )$. 

The following lemma shows that the codimension $2$ transfer maps constructed in Banach algebraic and C*-algebraic settings are compatible.
\begin{lem}\label{lem:Banach_Cstar}
Assume that $\Gamma$ is exact (cf.~\cref{rmk:intermediate}). 
Then there is an inclusion of exact sequences 
\[\xymatrix{
0 \ar[r] & A(C_0(\cB_0 , \cK_\sigma ) , \Gamma  ) \ar[r] \ar[d]  & A(C(\cB , \cK_\sigma ) , \Gamma  ) \ar[r] \ar[d]  & A(\bK, \Gamma) \ar[r] \ar[d] & 0 \\
0 \ar[r] & C_0(\cB_0 , \cK_\sigma ) \rtimes_{\mathrm{red}} \Gamma   \ar[r]   & C(\cB, \cK_\sigma ) \rtimes_{\mathrm{red}} \Gamma   \ar[r]   & \bK \rtimes_{\mathrm{red}} \Gamma  \ar[r] & 0.
}\]
\end{lem}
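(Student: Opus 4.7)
The plan is to verify the three ingredients of the statement separately: exactness of each row, existence of the vertical maps as continuous injective $\ast$-homomorphisms, and commutativity of the squares.

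First, I would establish exactness of the top row by applying \cref{lem:ext_Ban} to the C*-algebra extension
\[ 0 \to C_0(\cB_0, \cK_\sigma) \to C(\cB, \cK_\sigma) \to \bK \to 0, \]
where the quotient is evaluation of a section at the basepoint of $\cB$. The required contractive section is obtained by locally trivializing $\cK_\sigma$ on a neighborhood of the basepoint and multiplying by a bump function; equivariance is unnecessary since \cref{lem:ext_Ban} only asks for a contractive linear section. Exactness of the bottom row follows from the hypothesis that $\Gamma$ is exact, since $-\rtimes_{\mathrm{red}} \Gamma$ then preserves short exact sequences of $\Gamma$-C*-algebras (\cite{brownAlgebrasFinitedimensionalApproximations2008}*{Theorem 5.1.10}).

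The substance of the proof is the construction of the vertical maps. The hypothesis that $A\Gamma$ is closed under holomorphic functional calculus implies that $A\Gamma$ embeds into a C*-completion of $\bC[\Gamma]$ dominating $C^*_{\mathrm{red}} \Gamma$, so $\| \cdot \|_{C^*_{\mathrm{red}} \Gamma} \leq \| \cdot \|_{A\Gamma}$ on $\bC[\Gamma]$. I would extend this to the crossed products by means of the regular covariant representation: realizing $B \subset \cB(H)$ faithfully, the standard Schur-type estimate for the regular representation of $B \rtimes_{\mathrm{red}} \Gamma$ on $H \otimes \ell^2(\Gamma)$ yields, for any $x = \sum_g b_g u_g \in B \rtimes_\alg \Gamma$,
\[ \| x \|_{B \rtimes_{\mathrm{red}} \Gamma} \leq \bigl\| \textstyle\sum_g \| b_g \|_B \, u_g \bigr\|_{C^*_{\mathrm{red}} \Gamma} \leq \bigl\| \sum_g \| b_g \|_B \, u_g \bigr\|_{A\Gamma} = \| x \|_{A(B, \Gamma)}, \]
by the definition \eqref{eq:Banach} and unconditionality. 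This produces the required continuous $\ast$-homomorphism $A(B, \Gamma) \to B \rtimes_{\mathrm{red}} \Gamma$ for $B \in \{\bK, C_0(\cB_0, \cK_\sigma), C(\cB, \cK_\sigma)\}$. Injectivity is inherited from that of $A\Gamma \hookrightarrow C^*_{\mathrm{red}} \Gamma$, since the composition remains faithful on the dense algebraic crossed product; commutativity of the squares is automatic because each vertical map restricts to the identity on $B \rtimes_\alg \Gamma$.

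The hard part will be verifying the Schur-type norm estimate in the regular representation cleanly. While the inequality is classical in the untwisted setting, when $B = C(\cB, \cK_\sigma)$ carries a nontrivial $\Gamma$-action covering the action on $\cB$, one must check that the estimate is unaffected by the twisting. Since the unconditional norm depends only on the pointwise fiber norms $\| b_g \|_B$, which are manifestly $\Gamma$-invariant, the argument should go through in the same form as in the scalar case; making this reduction precise is essentially the only non-routine step.
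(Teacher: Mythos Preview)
Your approach is correct and takes a genuinely different route from the paper. You prove the single inequality
\[
\Bigl\| \sum_g b_g u_g \Bigr\|_{B \rtimes_{\mathrm{red}}\Gamma} \;\le\; \Bigl\| \sum_g \|b_g\|\,\lambda_g \Bigr\|_{C^*_{\mathrm{red}}\Gamma}
\]
directly in the regular representation (your ``Schur-type estimate''), and this handles all three vertical maps at once, showing each is contractive. The verification is indeed routine: writing $\tilde\pi(\sum_h b_h u_h)$ as a $\Gamma\times\Gamma$ block matrix with $(k,g)$-entry $\alpha_{k^{-1}}(b_{kg^{-1}})$, the triangle inequality in $H$ and nonnegativity of the scalar majorant $N_{k,g}=\|b_{kg^{-1}}\|$ give $\|\tilde\pi(x)\xi\|\le \|N\,|\xi|\,\|$ for every $\xi\in\ell^2(\Gamma,H)$, and the twisting by $\alpha$ is invisible since $\|\alpha_{k^{-1}}(b)\|=\|b\|$; so your diagnosis of the ``hard part'' is accurate and the step goes through unchanged.

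The paper instead argues piecewise: it treats the right vertical map by the hypothesis $A\Gamma\subset C^*_{\mathrm{red}}\Gamma$, the left one by identifying both sides with suspensions of $A\pi\subset C^*_{\mathrm{red}}\pi$ via the explicit Morita equivalence from \cref{lem:untwist}, and then bootstraps the middle map from the outer two using the contractive section $\nu$, obtaining $\|x\|_{C^*}\le 3\|x\|_A$. Your argument is more uniform and yields the sharper (contractive) bound; the paper's detour through $SA\pi$ has the side benefit of making explicit how the ideal is related to $A\pi$, which is used again when defining $\tau_\sigma$ on cyclic homology. One small point: your sentence on injectivity (``faithful on the dense algebraic crossed product'') is not quite a proof --- injectivity on a dense subalgebra does not pass to the completion --- but it can be repaired by noting that both $A(B,\Gamma)$ and $B\rtimes_{\mathrm{red}}\Gamma$ admit compatible bounded Fourier-coefficient maps $x\mapsto b_g$ (for $A(B,\Gamma)$ this follows from unconditionality), so a nonzero element cannot map to zero.
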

\begin{proof}
By assumption of the norm $\| \cdot \|_{A\Gamma}$, the right vertical map is contractive. 
The left vertical map is also contractive since the algebras $A(C_0(\cB_0, \cK_\sigma) , \Gamma)$ and $C_0(\cB_0 , \cK_\sigma) \rtimes_{\mathrm{red}}\Gamma$ are Morita equivalent to $SA\pi$ and $SC^*_{\mathrm{red}}\pi$ respectively, and moreover $A \pi$ is a subalgebra of $C^*_{\mathrm{red}}\pi $ (note that the closure of $\bC[\pi]$ in $C^*_\mathrm{red}\Gamma $ coincides with $C^*_{\mathrm{red}}\pi$). 

To see that the middle map is bounded, let $x = \sum _g b_gu_g  \in C(\cB, \cK_\sigma) \rtimes_\alg \Gamma $. For simplicity, we write the unconditional norms as $\| \cdot \|_A$ and reduced crossed product norms as $\| \cdot \|_{C^*}$. 
In the same way as \cref{lem:ext_Ban}, let $y := \sum (\nu \circ q)(b_g) u_g$. 
Then the assumption of unconditionality on $A\Gamma$ shows that
\begin{itemize} 
    \item $\| y \|_{A} \leq \| x \| _{A}$, 
    \item $\| x-y\|_{C^*} \leq \| x - y\|_A \leq 2 \| x\|_A $, 
    \item $ \| y \|_A \leq \| q(y)\|_A = \| q(x)\|_A \leq \| x \|_A$,
\end{itemize}
which implies that $\| x\|_{C^* } \leq 3\| x\|_A$. This shows that the middle vertical map is bounded on a dense subalgebra of $A(C(\cB, \cK_\sigma) , \Gamma)$, and hence extends to a bounded algebra homomorphism. 
\end{proof}

\if0
We extend this $\tau_\sigma$ to the pseudodifferential operator algebra, which is a differential counterpart of the $D^*$ coarse C*-algebra. 
For $k=0, -\infty$, let $\Psi_{\alg }^k (\widetilde{M})^\Gamma $ denote the set of $\Gamma$-invariant $k$-th order pseudo-differential operators which is a bounded operator on $L^2(\widetilde{M})$ and is of finite propagation. We also define the pseudo-differential operator algebras $\Psi_{\alg}^{k}(\widetilde{M})^\pi$ and $\Psi_{\alg }^{-\infty}(\overline{\sM})^\Pi$ in the same way. 
We write the $0$-th order pseudodifferential operator algebras whose principal symbol vanishes at the boundary as $\Psi_{0,\alg}^0(\overline{\sM})^\Pi$.  
Note that the set of smoothing operators above are dense subalgebras of the Roe algebra of the same space. 

\begin{para}
We impose norms on the algebras of smoothing operators. 
Through the identifications given in \cref{rmk:algebraic_coarse}, these algebras are identified with subalgebras of twisted crossed products as
\[\mathclap{
\xymatrix{
\Psi^{-\infty}_\alg (\widetilde{M})^\Gamma \ar[r] \ar[d]   &  
\Psi^{-\infty}_{\alg } (\widetilde{M})^\pi  \ar[d] &   
\Psi^{-\infty}_{\alg } (\overline{\sM})^\Pi \ar[d] \ar[l] \\ 
\bC[\Gamma ] \otimes \bK  \ar[r]  & 
c_b(\Gamma /\pi) \rtimes_\alg \Gamma \otimes \bK  & 
(c_b(\Gamma /\pi) \otimes \bC[\bZ]) \rtimes _{\sigma , \alg} \Gamma \otimes \bK. \ar[l] 
}}
\]
Therefore, \eqref{eq:Banach} introduces norms on these algebras. 
The resulting Banach algebra is denoted by $\Psi^{-\infty}_{A} (\widetilde{M})^\Gamma $, $\Psi^{-\infty}_{A} (\widetilde{M})^\pi $ and  $\Psi^{-\infty}_{A }(\overline{\sM})^\Pi $ respectively.  

W assume that $\sM$ is equipped with a metric which is cylindrical near the boundary $N$. Let $\sM ':=\sM \sqcup_N (N \times [0,1])$. Let $\nu N$ denote the cylindrical neighborhood $N \times [0,1] \subset \sM'$. Let 
\[ \Psi_{A}^{-\infty} (\nu \overline{\sN} \subset \overline{\sM}{}')^\Pi := \Psi_A^{-\infty}(\overline{\sM}{}')^\Pi \cap C^*(\nu \overline{\sN} \subset \overline{\sM}{}')^\Pi.   \]
Then, again by \cref{rmk:algebraic_coarse} and \cref{lem:ext_Ban}, there is a commutative diagram of exact sequences
\[
\xymatrix{
0 \ar[r] & 
\Psi_A^{-\infty}(\nu \overline{\sN} \subset \overline{\sM}{}')^\Pi \ar[r] \ar[d] & 
\Psi_A^{-\infty}(\overline{\sM}{}')^\Pi \ar[r] \ar[d] & 
\frac{\Psi_A^{-\infty}(\overline{\sM}{}')^\Pi}{\Psi_A^{-\infty}(\nu \overline{\sN} \subset \overline{\sM}{}')^\Pi} \ar[r] \ar[d] & 0\\ 
0 \ar[r] & 
C^*(\nu \overline{\sN} \subset \overline{\sM}{}')^\Pi \ar[r]  & 
C^*(\overline{\sM}{}')^\Pi \ar[r]  & 
\frac{C^*(\overline{\sM}{}')^\Pi}{C^*(\nu \overline{\sN} \subset \overline{\sM}{}')^\Pi} \ar[r]  & 0
}
\]
and the vertical maps are all injective and has dense image. 

Moreover, by the construction of norms, the same proof as \cref{rmk:algebraic_coarse} shows that  \eqref{eq:split} restricts to a bounded $\ast$-homomorphism
\begin{align}
    s \colon \Psi_A^{-\infty}(\widetilde{M})^\Gamma \to \Psi_A^{-\infty}(\overline{\sM}{}') / \Psi_A^{-\infty}(\nu \overline{\sN} \subset \overline{\sM}{}'). \label{eq:lift_PDO}
\end{align} 
\end{para}

Next, we extend these constructions to $0$-th order pseudodifferential operators. 
\begin{lem}\label{lem:norm}
For $T \in \Psi^0_{\alg }(\widetilde{M})^\Gamma $, the norm 
\[\| T\|_{A\Gamma } := \sup_{S \in \Psi^{-\infty}_{A}(\widetilde{M})^\Gamma \setminus \{0\} } \| TS\|_{A\Pi } / \| S\|_{A\Gamma } \]
is finite. 
Moreover, it bounds the C*-norm $\| T \|_{D^*}$ in $D^*(\overline{M})^\Gamma $. 
\end{lem}

\begin{proof}
The finiteness of $\| T\|_{A\Gamma }$ is checked in the same way as \cite{oyono-oyonoTheoryMaximalRoe2009}*{Lemma 2.16}. 
Indeed, $T$ is decomposed into a finite sum $T = \sum u_g D_g$, where each $D_g$ is a bounded diagonal operator with respect to the second tensor component of $L^2(U) \otimes \ell^2(\Gamma)$. 
To see the inequality $\| T\|_{D^*} \leq \| T \|_{A\Gamma }$, let us choose a locally compact diagonal operator $a $ on $L^2(U) \otimes \ell^2(\Gamma )$ which attains the norm of $T$, i.e.,  $\|T\|_{D^*} \leq \| Ta \|_{C^*} +\varepsilon $. 
Since $a$ is diagonal, we have $\| a\|_{A\Gamma } =\| a\|_{C^*\Gamma } = \| a\|_{\bB } = 1 $. 
Therefore, the inequality
\[ \| T\|_{D^*} \leq \| Ta \|_{C^*} + \varepsilon  \leq \| Ta \|_{A\Gamma } +\varepsilon \leq \| T\|_{A\Gamma  } + \varepsilon  \]
holds. This finishes the proof. 
\end{proof}
This determines a norm on the algebra $\Psi^0_{\alg }(\widetilde{M})^\Gamma$.
 The same definition also determines norms $\| \cdot \|_{A\pi}$ and $\| \cdot \| _{A\Pi} $ on $\Psi^0_\alg(\widetilde{M})^\pi$ and $\Psi^0_{0,\alg }(\overline{\sM}{}')^\Pi$ respectively. 
 We write the resulting Banach algebra completions as $\Psi_{A}^0 (\widetilde{M})^\Gamma$, $\Psi_{A}^0 (\widetilde{M})^\pi$ and $\Psi^0_{A}(\overline{\sM})^\Pi $ respectively. 

 Let $\bS ^*M$ denote the unit sphere bundle of $T^*M$.
 The principal symbol map $\Sigma \colon \Psi_\alg^0(\widetilde{M})^\Gamma \to C(\bS ^*M)$  is a composition $\Psi_\alg^0(\widetilde{M})^\Gamma \to D^*(\widetilde{M})^\Gamma \to Q^*(\widetilde{M}^\Gamma)$ and hence extends to $\Psi_A^0(\widetilde{M})^\Gamma$. 
 Other principal symbol maps such as $\Sigma \colon \Psi_A^0 (\overline{\sM}{}')^\Pi \to C_{b,0}(\bS^* \sM)$ are also defined, where $C_{b,0}(\bS^*\sM) $ denotes the C*-algebra of bounded continuous functions on $\bS^*\sM'$ vanishing at the boundary. 

We define the ideal $\Psi_{0,A}^0 (\nu \overline{\sN} \subset \overline{\sM}{}')^\Pi $ of $\Psi_{0,A}^0(\overline{\sM}{}')^\Pi$ consisting of operators $T$ supported in an $R$-neighborhood of $\nu \overline{\sN}$ for some $R>0$ and its principal symbol lies in the C*-algebra $C_0(\bS^*\nu \sN)$ of continuous functions on $\bS^*\nu \sN$ vanishing at the boundary. 
 Then the following diagram commutes;
 \begin{align}
\begin{split}
\xymatrix@C=1.5em{
0 \ar[r] & 
\Psi_{0,A}^{0}(\nu \overline{\sN} \subset \overline{\sM}{}')^\Pi \ar[r] \ar[d] & 
\Psi_A^{0}(\overline{\sM}{}')^\Pi \ar[r] \ar[d] & 
\frac{\Psi_{0,A}^{0}(\overline{\sM}{}')^\Pi}{\Psi_{0,A}^{0}(\nu \overline{\sN} \subset \overline{\sM}{}')^\Pi} \ar[r] \ar[d] & 0\\ 
0 \ar[r] & 
D^*(\nu \overline{\sN} \subset \overline{\sM}{}')^\Pi \ar[r]  & 
D^*(\overline{\sM}{}')^\Pi \ar[r]  & 
\frac{D^*(\overline{\sM}{}')^\Pi}{D^*(\nu \overline{\sN} \subset \overline{\sM}{}')^\Pi} \ar[r]  & 0.
}
\end{split}
\label{eq:PDO_D}
\end{align}
Moreover, the same discussion as \cref{prp:splitD}, we can lift the $\ast$-homomorphism $s$ in \cref{eq:lift_PDO} to a bounded $\ast$-homomorphism
\[s \colon \Psi_A^{-\infty}(\widetilde{M})^\Gamma \to \Psi_A^{-\infty}(\overline{\sM}{}') / \Psi_A^{-\infty}(\nu \overline{\sN} \subset \overline{\sM}{}'). \]

As is investigated in \cite{piazzaMappingAnalyticSurgery2019}*{Section 6}, the coarse C*-algebras are identified with mapping cones of pseudo-differential operator algebras. 
Let 
\begin{align*}
\widetilde{\fm}_\Gamma & \colon C(M) \to \Psi^0_{A}(\widetilde{M})^\Gamma, & 
\widetilde{\fm}{}_\Gamma ^q  &\colon C(M) \to C(\bS^*M), \\
\overline{\fm}_\Pi & \colon C_{b, 0}(\sM') \to \Psi^0_{0,A}(\overline{\sM}{}')^\Pi , & 
\overline{\fm}{}_\Pi^q  & \colon C_b(\sM') \to C_{b,0}(\bS^*\sM'), \\
\overline{\fm}_{\partial, \Pi} &\colon C_0(\nu \sN ) \to \Psi^0_{0,A}(\nu \overline{\sN})^\Pi , &
\overline{\fm}{}_{\partial , \Pi}^q & \colon C_0(\nu \sN ) \to C_0(\bS^*\nu\sN),
\end{align*} 
denote the inclusion maps. 
Then there are exact sequences of mapping cones
\begin{align*}
0 \to C(\overline{\fm}_{\partial, \Pi}) \to C(\overline{\fm}{}_{\Pi}) \to C(\overline{\fm}_{\Pi})/C(\overline{\fm}_{\partial, \Pi}) \to 0, \\
0 \to C(\overline{\fm}{}_{\partial, \Pi}^q) \to C(\overline{\fm}{}_{\Pi}^q) \to C(\overline{\fm}{}^q_{\Pi})/C(\overline{\fm}{}_{\partial, \Pi}^q) \to 0.
\end{align*}

Any function in $C(M) \subset \Psi^0_A(\widetilde{M})^\Gamma$ is homotopic to the identity in $D^*(\widetilde{M})^\Gamma$ and the choice of homotopy is essentially unique. 
Hence the inclusion $C(\widetilde{\fm}_\Gamma) \to D^*(\widetilde{M})^\Gamma \otimes C_0([0,1))$ induces a homomorphism $\widetilde{\psi}_{\Gamma }  \colon  \K_*(C(\widetilde{\fm}{}_\Gamma) ) \to \K_*(SD^*(\widetilde{M})^\Gamma )$.  
There are maps of the same kind, which make the following diagrams commute;
\begin{align}
\begin{split}
\xymatrix@C=1.5em{
 \K_*(C(\widetilde{\fm}{}_\Gamma) ) \ar[r]^{s_*} \ar[d]^{ \widetilde{\psi}_{\Gamma }} & 
 \K_*\big(\frac{C(\overline{\fm}_{\Pi})}{C(\overline{\fm}_{\partial, \Pi})} \big) \ar[r]^{\partial } \ar[d]^{\overline{\psi}{}_{\Pi }} & 
 \K_*(C(\overline{\fm}{}_{\partial, \Pi}) ) \ar[d]^{\overline{\psi}_{\partial, \Pi}} \\
 \K_*(SD^*(\widetilde{M})^\Gamma ) \ar[r]^{s_*} & 
 \K_*\big(S\frac{D^*(\overline{\sM}{}')^\Pi}{D^*(\nu\overline{\sN} \subset \overline{\sM}{}')^\Pi} \big) \ar[r]^\partial  & 
 \K_*(SD^*(\nu \overline{\sN} \subset \overline{\sM}{}')^\Pi), 
} \\
\xymatrix@C=1.5em{
 \K_*(C(\widetilde{\fm}{}_\Gamma^q) ) \ar[r]^{s_*} \ar[d]^{\widetilde{\psi}{}_{\Gamma}^q} & 
 \K_*\big( \frac{C(\overline{\fm}{}^q_{\Pi})}{C(\overline{\fm}{}_{\partial, \Pi}^q)} \big) \ar[r]^{\partial } \ar[d]^{\overline{\psi}{}_{\Pi}^q} & 
 \K_*(C(\overline{\fm}{}_{\partial, \Pi}^q) ) \ar[d]^{\overline{\psi}{}^q_{\partial , \Pi }} \\
 \K_*(SQ^*(\widetilde{M})^\Gamma ) \ar[r]^{s_*} & 
 \K_*\big(S\frac{Q^*(\overline{\sM}{}')^\Pi}{Q^*(\nu\overline{\sN} \subset \overline{\sM}{}')^\Pi} \big) \ar[r]^\partial  & 
 \K_*(SQ^*(\nu \overline{\sN} \subset \overline{\sM}{}')^\Pi). 
}
\end{split} \label{eq:PDO_coarse}
\end{align}
They are compatible with maps of Higson--Roe exact sequences. 
Moreover, the Poincar\'{e} duality in K-theory shows that the vertical maps of the second diagram are isomorphic. Indeed, the K-groups arising in the upper and lower rows of the diagram are both the K-homology groups $\K_*^\Gamma (\overline{M})$, $\K_*^\Pi(\overline{\sM}{}', \nu \overline{\sN})$ and $\K_*( \nu \overline{\sN})^\Pi$ respectively. 
Therefore, the five lemma shows that the vertical maps of the first diagram are also isomorphic. 

In summary, we obtain the following. 
\begin{prp}
The maps $\tau_\sigma := \partial \circ s_*$ gives the commutative diagram
\[\mathclap{
\xymatrix@C=0.5cm{
\cdots  \ar[r] & \K_*(\Psi^{-\infty}_{A}(\widetilde{M})^\Gamma) \ar[r] \ar[d]^{\tau_\sigma} & \K_{*+1}(C(\widetilde{\fm}_\Gamma )) \ar[r] \ar[d]^{\tau_\sigma} & \K_{*+1}(C(\widetilde{\fm}{}_\Gamma ^q)) \ar[r] \ar[d]^{\tau_\sigma} &  \cdots \\
\cdots \ar[r] & \K_{*-2}(\Psi^{-\infty}_{0,A}(\nu \overline{\sN})^\Gamma ) \ar[r] & \K_{*-1}(C(\overline{\fm}_{\partial, \Pi}) ) \ar[r] &  \K_{*-1}(C(\overline{\fm}_{\partial, \Pi}) ) \ar[r] & \cdots ,
}}
\]
which is isomorphic to the one in \cref{thm:HigsonRoe} by the maps \eqref{eq:PDO_coarse}.
\end{prp}
\fi

The decomposition of periodic cyclic homology group $\HP(\bC [\Gamma])$ into the localized and delocalized parts extends to a long exact sequence
\[ \cdots \to \HP_*^e(A\Gamma ) \to \HP_*(A\Gamma ) \to \HP_*^\del(A\Gamma ) \xrightarrow{\delta_\Gamma} \HP_*(A\Gamma) \to \cdots . \]
In \cite{piazzaMappingAnalyticSurgery2019}*{Definition 6.36}, Piazza--Schick--Zenobi has constructed the Chern character maps taking value in the non-commutative de Rham homology groups
\begin{align}
\begin{split}
    \mathrm{Ch}_\Gamma  & \colon \K_{*}(C^*(\widetilde{M})^\Gamma ) \to \overline{H}_{[*]}(A \Gamma),\\
    \mathrm{Ch}_\Gamma^{\del}  & \colon \K_*( D^*(\widetilde{M})^\Gamma )  \to \overline{H}{}_{[*]}^{\del}(A \Gamma), \\
    \mathrm{Ch}_\Gamma^{e}  & \colon \K_*( Q^*(\widetilde{M})^\Gamma ) \to \overline{H}{}_{[*]}^{e}(A \Gamma), 
    \end{split}\label{eq:Chern}
\end{align}
by using the pseudodifferential operator algebras (whose K-theory is identified with those of coarse C*-algebras). 
Here, $\overline{H}{}_{[*]}(A\Gamma )$ denotes the periodic reduced noncommutative de Rham homology group \cite{karoubiHomologieCycliqueTheorie1987}, which embeds into the reduced periodic cyclic homology  $\overline{\HP}_*(A\Gamma ) := \mathop{\mathrm{coker}} (\HP_*(\bC) \to \HP_*(A\Gamma ))$. 
\begin{rmk}\label{rmk:Chern}
In \cite{piazzaMappingAnalyticSurgery2019}, the map $\Ch_\Gamma$ is defined to be the composition of the Chern character of noncommutative de Rham homology and Lott's trace map \cite{lottSuperconnectionsHigherIndex1992}. 
This map coincides with the Chern--Connes character in cyclic homology theory. 
Indeed, this is checked by comparing \cite{karoubiHomologieCycliqueTheorie1987}*{1.17, Example 1.15} and \cite{lodayCyclicHomology1998}*{Theorem 8.3.2, Corollary 8.3.5}, noting that the inclusion $\overline{H}_*(A \Gamma ) \subset \overline{\HC}_*(A \Gamma )$ is induced from the isomorphism $\overline{\mathrm{C}}{}_n^\lambda (A \Gamma )/ \Im  b \cong \overline{\Omega}_n(A\Gamma )_{\mathrm{ab}} / \Im d$ (see \cite{lodayCyclicHomology1998}*{Theorem 2.6.8}). 
 
\end{rmk}

In this paper we deal with a minor modification of \eqref{eq:Chern}. 
Let $V$ be a vector bundle over $M$ and let $X$ denote the sphere bundle $\bS (V \oplus \underline{\bR})$, which includes $M$ as a submanifold. We use the letter $\cV$ for the tubular neighborhood of $M$ in $X$, on which a Riemannian metric is imposed in the way that the projection $\cV \to M$ is a coarse equivalence. 
Let $\widetilde{X}$ denote the $\Gamma$-Galois covering of $X$.   
In \cite{piazzaMappingAnalyticSurgery2019}*{Definition 5.14}, the Banach algebra completions $\Psi^0_{A\Gamma }(\widetilde{X})$ and $\Psi^{-\infty}_{A\Gamma }(\widetilde{X})$ of the algebras of $\Gamma$-invariant pseudodifferential operators of order $0$ and $-\infty$ are defined. 
We write $\Psi^{k}_{A\Gamma }(\widetilde{\cV})$ ($k=0,-\infty$) for the closed subalgebras of $\Psi^{k}_{A\Gamma }(\widetilde{X})$ consisting of operators supported on $\widetilde{\cV}$. Then $\Psi^{-\infty }_{A\Gamma }(\widetilde{\cV})$ is a dense subalgebra of $C^*(\widetilde{\cV})^\Gamma \cong C^*_r \Gamma \otimes \bK$ and the quotient $\Psi^0_{A\Gamma }(\widetilde{\cV}) / \Psi^{-\infty}_{A\Gamma }(\widetilde{\cV})$ is isomorphic to the algebra of principal symbols, i.e., the continuous function algebra $C_0(\bS \cV)$ on the sphere bundle of $T\cV$.
Moreover, $\Psi^0_{A\Gamma }(\widetilde{\cV}) $ is a subalgebra of $D^*(\widetilde{\cV})^\Gamma$. 
As is shown in \cite{zenobiAdiabaticGroupoidHigsonRoe2019}*{Theorem 3.5}, there is an isomorphism of K-groups of mapping cones
\[ \K_*\big( C( C_0(\cV) \to \Psi_{A\Gamma}^0(\widetilde{\cV}))\big) \cong \K_*\big( C(0 \to D^*(\widetilde{\cV})^\Gamma )\big)  \]
such that the composition 
\[ \K_*\big( C(0 \to \Psi^0_{A\Gamma}(\widetilde{\cV}) \big) \to  \K_* \big( C(C_0(\cV) \to \Psi_{A\Gamma}^0(\widetilde{\cV})) \big) \cong \K_* \big( C(0 \to D^*(\widetilde{\cV})^\Gamma ) \big)\]
is the same map as the one induced from the inclusion $\Psi^0_{A\Gamma}(\widetilde{\cV}) \subset D^*(\widetilde{\cV})^\Gamma $. 

Note that a five lemma argument and the Thom isomorphism $\K_*(\cV) \cong \K_*(M)$ shows that $D^*(\widetilde{M})^\Gamma$ and $D^*(\widetilde{\cV})^\Gamma$ have the same K-theory.  
We write $\Ch_{\Gamma, V}$, $\Ch_{\Gamma, V}^e$ and $\Ch_{\Gamma, V}^\del $ for the compositions
\begin{align*}
    \Ch_{\Gamma, V}\colon &\K_*(C^*(\widetilde{M})^\Gamma ) \cong 
    \K_{*-1}(C(0 \to C^*(\widetilde{\cV})^\Gamma ) ) \xrightarrow{\Ch_\Gamma }\overline{\HP}{}_*(A \Gamma),  \\
    \Ch_{\Gamma, V}^\del \colon &\K_*(D^*(\widetilde{M})^\Gamma ) \cong 
    \K_{*-1}(C(C_0({\cV}) \to \Psi^0(\widetilde{\cV})^\Gamma )) \xrightarrow{\Ch_\Gamma ^\del  }\overline{\HP}{}_*(A \Gamma),  \\
    \Ch_{\Gamma, V}^e \colon &\K_*(Q^*(\widetilde{M})^\Gamma ) \cong 
    \K_*(C(C_0({\cV}) \to C_0(\bS {\cV})) ) \xrightarrow{\Ch_\Gamma ^e }\overline{\HP}{}_*(A \Gamma),  
\end{align*} 
where the last maps $\Ch_\Gamma$, $\Ch_\Gamma^e$, $\Ch_\Gamma^\del $ are the Chern character maps \eqref{eq:Chern} of Piazza--Schick--Zenobi (more precisely, the composition of Chern character maps on $\widetilde{X}$ and the inclusions induced from $\Psi^k_{A\Gamma } (\widetilde{\cV})\to \Psi^k_{A\Gamma} (\widetilde{X})$). 
Here we remark that one gets the same map if they uses another open embedding $\cV \subset Y$ inducing isomorphism of $\pi_1$ instead of $\cV \subset X$. It directly follows from the definition of Chern character maps. 
By \cite{piazzaMappingAnalyticSurgery2019}*{Theorem 6.38}, the diagram
\[ \mathclap{
\xymatrix{
\cdots \ar[r] & \K(Q^*(\widetilde{M})^\Gamma ) \ar[r] \ar[d]^{\Ch_{\Gamma, V}^e} & \K_*(C^*(\widetilde{M})^\Gamma ) \ar[r] \ar[d]^{\Ch_{\Gamma, V}} & \K_*(D^*(\widetilde{M}) ^\Gamma ) \ar[d]^{\Ch_{\Gamma, V}^\del } \ar[r] & \K_*(Q^*(\widetilde{M})^\Gamma ) \ar[d]^{\Ch_{\Gamma, V}^e} \ar[r] & \cdots  \\
\cdots \ar[r] & \overline{\HP}{}_*^e(A \Gamma ) \ar[r] & \overline{\HP}{}_*(A \Gamma ) \ar[r] & \overline{\HP}{}_*^\del (A \Gamma) \ar[r]^{\delta_\Gamma } & \overline{\HP}{}^e_*(A\Gamma ) \ar[r] &  \cdots 
}}
\]
commutes.

\begin{lem}\label{lem:Khomology2}
Let $W$ be a compact $\mathrm{spin}^c$ manifold with boundary $\partial W$. Assume that the Euler number of $\partial W$ is zero. 
Let $f \colon (W, \partial W) \to (B\Gamma, \cV)$ be a continuous map of pairs such that $f|_{\partial W}$ is a smooth embedding with trivial normal bundle. 
Let $E$ is a vector bundle over $W$. 
Then there is 
$x_{W,f,E} \in \K_*(\Psi^0_{A\Gamma}(\widetilde{\cV}))$ with the following properties; 
\begin{enumerate}
    \item the class $x_{W,f,E}$ is sent to $\Phi_{B\Gamma, M}([W,f,E]) \in \K_*(D^*(\widetilde{M})^\Gamma )$, 
    \item $\Ch^\del_{\Gamma, V} (x_{W,f,E}) =0$. 
\end{enumerate}
\end{lem}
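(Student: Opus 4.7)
The plan is to lift the APS-style description in \eqref{eq:geom_rel} of $\Phi_{B\Gamma,M}([W,f,E])$ to the pseudodifferential level and to verify the Chern-character vanishing via the higher Atiyah--Patodi--Singer index theorem.

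For the construction, the vanishing of the Euler number of $\partial W$ ensures the existence of a $\Gamma$-invariant smoothing invertible perturbation $A$ of $D_{\partial W}^E$ (by \cite{kubotaRelativeMishchenkoFomenkoHigher2020}*{Lemma 4.6}), so that $\chi(D_{\partial W}^E + A)$ is a self-adjoint invertible element of $\Psi^0_{A\Gamma}(\widetilde{\partial W})$ and determines a class in $\K_*(\Psi^0_{A\Gamma}(\widetilde{\partial W}))$. Because $f|_{\partial W}$ is a smooth embedding with trivial normal bundle, a Thom-type pushforward along a tubular embedding $\widetilde{\partial W} \times \bR^k \hookrightarrow \widetilde{\cV}$ (implemented by a covering isometry $V_f$) transports this class into $\K_*(\Psi^0_{A\Gamma}(\widetilde{\cV}))$. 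In parallel, $\ind_{\mathrm{APS}}(D_W^E, A)$ lies naturally in $\K_*(\Psi^{-\infty}_{A\Gamma}(\widetilde{\cV})) \cong \K_*(C^*(\widetilde{\cV})^\Gamma)$, since the smoothing algebra is closed under holomorphic functional calculus inside its $C^*$-closure, and I include it into $\K_*(\Psi^0_{A\Gamma}(\widetilde{\cV}))$ via $\Psi^{-\infty}_{A\Gamma} \hookrightarrow \Psi^0_{A\Gamma}$. Define $x_{W,f,E}$ as the difference of these two classes.

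Property (1) is then immediate by naturality: under $\Psi^0_{A\Gamma}(\widetilde{\cV}) \hookrightarrow D^*(\widetilde{\cV})^\Gamma \cong D^*(\widetilde{M})^\Gamma$, the two terms map to $(\Ad V_f)_*[\chi(D_{\partial W}^E + A)]$ and $j_*\ind_{\mathrm{APS}}(D_W^E, A)$ respectively, whose difference is $\Phi_{B\Gamma,M}([W,f,E])$ by \eqref{eq:geom_rel}. For property (2), apply $\Ch_{\Gamma,V}^\del$ to each summand. The first is identified with the delocalized higher eta form of $D_{\partial W}^E+A$ (the standard identification of the delocalized Chern character of a higher $\rho$-invariant with a higher eta form, underlying \cite{piazzaMappingAnalyticSurgery2019}*{Theorem 6.38}). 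By the higher Atiyah--Patodi--Singer index theorem, $\Ch_{\Gamma,V}(\ind_{\mathrm{APS}}(D_W^E, A))$ decomposes as a localized Chern--Weil integral over $W$ plus exactly the same delocalized higher eta form. Consequently the two delocalized contributions cancel, yielding $\Ch_{\Gamma,V}^\del(x_{W,f,E}) = 0$.

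The main obstacle will be in property (2): one must verify the higher APS formula with values in the reduced noncommutative de Rham homology of the specific unconditional completion $A\Gamma$ used here, and check that the Thom pushforward in the construction is compatible with the delocalized Chern character, so that the delocalized higher eta form of the pushed-forward class coincides with that of the original boundary operator. Both are expected to follow from the compatibilities established in \cite{piazzaMappingAnalyticSurgery2019} once the identifications of K-groups of the relevant mapping cones with structure groups are carefully unwound.
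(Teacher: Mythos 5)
Your proposal has the right broad outline (APS-style description plus a delocalized cancellation), but it misidentifies where the Euler number hypothesis is used, and that misidentification covers over the actual crux of the lemma.

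\textbf{The Euler number is not needed for the perturbation $A$.} The existence of a $\Gamma$-invariant smoothing perturbation $A_W$ with $(D_{\partial W}+A_W)^2 \geq \varepsilon^2$ and $\ind_{\mathrm{APS}}(D_W,A_W)=0$ is provided by \cite{kubotaRelativeMishchenkoFomenkoHigher2020}*{Lemma 4.6} with no topological hypothesis on $\partial W$ whatsoever -- indeed the paper already invoked exactly that lemma in the proof of the preceding Proposition \ref{lem:double}, where no Euler number assumption was imposed. If the Euler number hypothesis were only used to produce $A$, the lemma would state no such hypothesis.

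\textbf{The ``Thom-type pushforward'' is the real content, and it is not automatic.} You want to move $[\chi(D^E_{\partial W}+A)] \in \K_*(\Psi^0_{A\Gamma}(\widetilde{\partial W}))$ into $\K_*(\Psi^0_{A\Gamma}(\widetilde{\cV}))$ along the embedded tubular neighborhood $\partial W \times \bD^m_r \subset \cV$. Unlike the $C^*$- or $\Psi^{-\infty}$-level (where compactly supported operators on an open subset include into those on the ambient space), a $0$-th order pseudodifferential operator on $\partial W$ is not one on $\partial W \times \bD^m_r$, and an invertible on $\partial W \times \bD^m_r$ does not extend to one on $\widetilde{\cV}$ unless its principal symbol is controlled near the boundary of the tube. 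The construction you leave implicit is precisely what the paper spends most of the proof on: it tensors $\chi(D_{\partial W}+A_W)$ against a Bott-type operator $C=r^{-1}\sum c(e_i)x_i$ on the disk and forms $B:=(1-C^2)^{1/4}\chi(D_{\partial W}+A_W+D)(1-C^2)^{1/4}+C$, then takes the \emph{difference class} $[(B,c(\xi))]$ against the Clifford multiplication by a vector field $\xi$ extending the radial field. For this difference class to land in $\K_0(\Psi^0_{A\Gamma}(\widetilde{\cV}))$ one needs $\sigma(B)-c(\xi)\in C_0(\bS V,\End(S^E_{\partial W,m}))$, i.e.\ one needs $\xi$ to be nonvanishing on \emph{all} of $\partial W \times \bD^m_r$ and equal to the radial field outside a compact set -- and this is exactly where the vanishing of the Euler number of $\partial W$ is used.

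Your cancellation argument for property (2) is close in spirit to the paper's -- the paper also invokes \cite{piazzaMappingAnalyticSurgery2019}*{Proposition 6.71} relating $\Ch^\del_\Gamma$ of a $\rho$-class to the higher APS index -- but since the paper chooses $A_W$ with $\ind_{\mathrm{APS}}(D_W,A_W)=0$, it obtains the vanishing directly from multiplicativity of the APS index, without having to prove an on-the-nose matching of the ``localized + eta'' decomposition with the delocalized Chern character of a separately pushed-forward boundary class. So the gap in your proof is the construction of $x_{W,f,E}$ itself: make the $\Psi^0$-level Thom class explicit, recognize that its symbol must be globally invertible, and you will find that the Euler number hypothesis is required there, not for the boundary perturbation.
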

\begin{proof}
Firstly we remark that any pseudodifferential operator acting on sections of a vector bundle $\mathsf{E}$ over a manifold $\mathsf{M}$ is viewed as an element of $\bM_N (\Psi^0_{A\Gamma }(\widetilde{\mathsf{M}}))$ for some $N$ by identifying $L^2(\widetilde{\mathsf{M}} , \mathsf{E}) $ with $P L^2(\widetilde{\mathsf{M}}, \bC^N)$, where $P \in C(\mathsf{M}, \bM_N)$ is the support projection of $\mathsf{E}$ embedded into a trivial bundle $\underline{\bC}^N$.

Here we focus on the case that $\dim W $ is even, and just remark that the same proof also works for odd-dimensional case by considering Dirac operators with $\bC \ell_1$-symmetry. 
Let $D_{\partial W}$ and $D_W$ denote the $\mathrm{spin}^c$-Dirac operator on $\partial \widetilde{W} $ and $\widetilde{W}$ twisted by $E$ respectively. 
Let $A_W$ be a smoothing operator as \cref{lem:double}, i.e., we have $(D_{\partial W} + A_W)^2 \geq \varepsilon ^2$ and $\ind_{\mathrm{APS}} (D_W, A_W) =0$. 
By \eqref{eq:geom_rel}, we have 
\[ [\chi(D_{\partial W} +A_W)] = \Phi_{B\Gamma, M}([W,f,E]) \in \K_0(D^*(\partial \widetilde{W})^\Gamma ).\] 
Let $\bD_r^m$ denote the $m$-dimensional open disk of radius $r>0$. We define operators $C$ and $D$ on $L^2(\bD_r^m , \Delta_{m,m} )$, where $\Delta_{m,m}$ is the unique irreducible representation of the Clifford algebra $\Cl_{m,m}$, as
\[D:= \sum c(f_i) \partial_{x_i}, \ \ \ \  C := \frac{1}{r}\sum c(e_i)x_i, \]
where $c(e_i)$ and $c(f_i)$ are self-adjoint generators of $\Cl_{m,m}$ with $c(e_i)^2=1$ and $c(f_i)^2=-1$.  
Then $C$ is the Clifford multiplication with the radial vector field on $\bD_r^m$, and in particular $C^2-1 \in C_0(\bD_r^m)$. Set $S_{\partial W, m}^E:= E \otimes S_{\partial W} \hotimes \Delta_{m,m}$. By the assumption on the vanishing of the Euler number of $\partial W$, there is a non-vanishing vector field $\xi$ on $\partial W \times \bD_r^m$ which coincides with the radial vector field outside a compact subset, and hence its Clifford multiplication acting on the bundle $S_{\partial W , m}^E$ satisfies 
\[ C- c(\xi) \in C_0(W \times \bD_r^m , \End (S_{\partial W,m}^E)).\]
Now, the bounded self-adjoint operator 
\begin{align*}
     B:= & (1-C^2)^{1/4}\chi(D_{\partial W} +A_W + D )(1 - C^2)^{1/4} +  C
\end{align*}
on $\partial W \times \bD_r^m$ is a $0$-th order pseudo-differential operator whose principal symbol satisfies $\sigma (B) - c(\xi) \in C_0(\bS V , \End (S_{\partial W,m}^E))$. 
Moreover, it is invertible if $r$ is sufficiently large so that
\[ \| [(1-C^2)^{1/4}, \chi(D_{\partial W} + A_W +  D)] \| < \varepsilon /4 .\] 

Without loss of generality, we may assume that the Riemannian metric on $r$-neighborhood of $\partial W$ is the product metric $g_{\partial W} + g_{\bD_r^n}$. 
We define $x_{W,f,E}$ as the difference class 
\[ x_{W,f,E} := \iota_* [(B, c(\xi))] \in \K_0(\Psi^0_{A\Gamma} (\widetilde{\cV})), \]
where $\iota \colon \Psi^0_{A\Gamma } (\partial \widetilde{W} \times \bD_r^m) \to \Psi^0_{A\Gamma}(\widetilde{\cV})$ is induced from the isometric open embedding.  
 The inclusion $\Psi_{A\Gamma}^0({\cV}) \subset D^*({\cV})^\Gamma$ sends this $x_{W,f,E}$ to 
\[ [\chi (D_{\partial W} +A_W)] \boxtimes \beta \in  \K_*(D^*(\widetilde{\cV})^\Gamma ), \]
where $\boxtimes $ denotes the secondary exterior product in the sense of \cref{section:product}. 
Hence it is identified with $\Phi_{B\Gamma, M}([W,f,E])$ under the isomorphism $\K_*(D^*(\widetilde{M})^\Gamma) \cong \K_*(D^*(\widetilde{\cV})^\Gamma) $. 

Finally we show that $\Ch_{\Gamma, V}^\del (x_{W,f,E}) =0$. 
This follows from \cite{piazzaMappingAnalyticSurgery2019}*{Proposition 6.71}, in which the delocalized $\rho$-invariant is identified with the higher APS index. 
Set $\cW:= W \times S^m$. 
Note that both $B$ and $c(\xi)$ extends to a pseudo-differential operator on $\partial \cW$ twisted by $S_{\partial W} \otimes E \hotimes \cE$, where $\cE$ is the $\bZ_2$-graded vector bundle on $S^n$ obtained by clutching trivial bundles $\bD_r^m \times \Delta_{m,m}$ and $\bD_r^m \times (\Delta_{m,m}^{0})^{\oplus 2}$ by $\id \oplus C|_{\partial \bD_r^m}$. 
Therefore, the image of $x_{W,f,E}$ in the K-group of $\Psi_{A\Gamma}^0(\widetilde{\cW})$ is $[B] - [c(\xi)]$. 

Since $\Ch^\del_\Gamma$ factors through the mapping cone $C(C(\cW ) \to \Psi_{A\Gamma}^0(\widetilde{\cW}))$, we have $\Ch_{\Gamma}^\del ([c(\xi)]) =0$. 
Moreover, since $B$ is homotopic to $\chi(D_{\partial W} + A_W + D_{S^m}^\cE ) $ in $\Psi_{A\Gamma}^0(\widetilde{\cW}) $, we have
\begin{align*}
    \Ch_{\Gamma, V}^\del (x_{W,f,E}) =& \Ch_{\Gamma}^\del ([B])  = \Ch_{\Gamma}^\del ([\chi(D_{\partial W} + A_W + D_{S^m}^\cE)]  )\\
    =& \ind_{\mathrm{APS}} (D_W + D_{S^m}^\cE, A_W \hotimes 1 ) \\
    =& \ind_{\mathrm{APS}} (D_W, A_W) \cdot \ind (D_{S^m}^\cE) =0. 
\end{align*}  
Here the third equality is \cite{piazzaMappingAnalyticSurgery2019}*{Proposition 6.71} and the forth equality is the multiplicativity of the higher APS index. 
\end{proof}

\begin{thm}\label{lem:Chern}
Assume that $\Gamma$ is an exact group. Then, for any pair $(M, N)$ satisfying (1), (2), (3) of \cref{para:cocycle}, there is a vector bundle $V$ on $M$ such that the following diagrams commute;  
\[\mathclap{
\xymatrix@C=2em{
\K(C^* (\widetilde{M})^\Gamma ) \ar[r]^{\Ch_{\Gamma, V} } \ar[d]^{\tau_\sigma}  & \overline{\HP}{}_*(A \Gamma ) \ar[d]^{\tau_\sigma } \\
\K_{*}(C^* (\widetilde{N})^\pi) \ar[r]^{\Ch_{\pi, V} } & \overline{\HP}{}_{*}(A\pi ) ,
}
\xymatrix@C=2em{
\K(D^* (\widetilde{M})^\Gamma  ) \ar[r]^{\Ch ^\del_{\Gamma, V}  } \ar[d]^{\tau_\sigma }  & \overline{\HP}{}_*^\del (A \Gamma ) \ar[d]^{\tau_\sigma } \\
\K_{*}(D^* (\widetilde{N})^\pi ) \ar[r]^{\Ch ^\del_{\pi , V}  } & \overline{\HP}{}_{*}^\del (A \pi), 
}
\xymatrix@C=2em{
\K(Q^* (\widetilde{M})^\Gamma  ) \ar[r]^{\Ch ^e_{\Gamma , V}  } \ar[d]^{\tau_\sigma }  & \overline{\HP}{}_*^e (A \Gamma ) \ar[d]^{\tau_\sigma } \\
\K_{*}(Q^* (\widetilde{N})^\pi ) \ar[r]^{\Ch ^e_{\pi, V}  } & \overline{\HP}{}_{*}^e (A \pi).
}}
\]
\end{thm}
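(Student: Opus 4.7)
The plan is to reduce all three squares to a single naturality statement: both $\tau_\sigma$ on K-theory and $\tau_\sigma$ on periodic cyclic homology are boundary maps for \emph{the same} extension of unconditional Banach algebras, and the Chern character of Piazza--Schick--Zenobi is natural for boundary maps. The exactness hypothesis on $\Gamma$ enters only to make this reduction from the C*-algebraic to the Banach setting (via \cref{lem:Banach_Cstar}).

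First I would choose $V$. The correct choice is a vector bundle on $M$ whose total space admits a tubular neighborhood $\cV$ inside the sphere bundle $X = \bS(V \oplus \underline{\bR})$ such that (i) $V|_N$ has vanishing Euler class (so that the construction of \cref{lem:Khomology2} applies to geometric cycles with boundary on $N$), and (ii) the restriction $\cV|_N$ is coarsely equivalent to the tubular neighborhood of $N \subset M^\circ$ used in \cref{notn:mfds}, so that the $\ast$-homomorphism $s$ of \cref{prp:splitD} lifts to a bounded $\ast$-homomorphism
\[ s \colon \Psi^0_{A\Gamma}(\widetilde{\cV}) \to \Psi^0_{A\Pi}(\overline{\cV_\sM}) / \Psi^0_{A\Pi}(\overline{\cV_\sN} \subset \overline{\cV_\sM}) \]
by the same explicit formula as in \cref{prp:lift} (propagation and pseudolocality are preserved, and formula \eqref{eq:Banach} shows the Banach norms are respected). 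A convenient choice is $V = TM \oplus \underline{\bR}^k$ for $k$ large; then $V|_N$ is stably $TN \oplus \underline{\bR}^{k+2}$, which has trivial Euler class, and the normal bundle condition (3) of \cref{para:cocycle} ensures the coarse compatibility.

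Second, I would assemble the three extensions of Banach algebras that simultaneously compute the vertical arrows in K-theory and in $\overline{\HP}$. By \cref{lem:ext_Ban} applied to the ideal $\Psi^{-\infty}_{A\Gamma}(\widetilde{\cV})\subset \Psi^0_{A\Gamma}(\widetilde{\cV})$ with quotient $C_0(\bS\cV)$, and similarly for the mapping cones of the symbol morphisms, each of $\Ch_{\Gamma,V}$, $\Ch_{\Gamma,V}^e$, $\Ch_{\Gamma,V}^\del$ is realized as the Chern--Connes character of the corresponding Banach extension (cf.\ \cref{rmk:Chern}). The $\ast$-homomorphism $s$ upgraded as above furnishes a morphism from the $\Gamma$-extension to the $\Pi$-extension quotient by the ideal supported near $\overline{\sN}$, and by composing with $\epsilon_\bZ$ and $\partial_{\mathrm{MV}}$ (which factor through the Banach completions because these operations are algebraic and propagation-controlled), one obtains a morphism of Banach extensions whose K-theoretic boundary map is precisely $\tau_\sigma$ of \cref{defn:transfer} and whose cyclic-homological boundary map is precisely $\tau_\sigma$ of \cref{def:codim2_cyclic}.

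Third, I invoke naturality: the Chern--Connes character from $\K_*$ to $\overline{\HP}_*$ of Banach algebras commutes with boundary maps in long exact sequences associated to extensions admitting a bounded linear section; the symbol/mapping-cone bookkeeping used in defining $\Ch_{\Gamma,V}^e$ and $\Ch_{\Gamma,V}^\del$ respects these boundary maps because the constructions are functorial in morphisms of mapping cones. Applied to the three exact sequences coming from the three rows of the commutative diagram of \cref{thm:HigsonRoe}, this gives the three commutative squares.

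The main obstacle is the $D^*$ diagram: $\Ch_{\Gamma,V}^\del$ is not defined on a single Banach algebra but on a mapping cone of $C_0(\cV)\to \Psi^0_{A\Gamma}(\widetilde{\cV})$, and I need to verify that $s$ induces a morphism of such mapping cones that is compatible, under \cref{lem:Khomology2}, with the geometric K-classes $x_{W,f,E}$ coming from codimension-2 transversal cycles $(W,f,E)$. Concretely, the hard step is to check that $s_*(x_{W,f,E}) = x_{W', f|_{W'}, E|_{W'}}$ for $W' := f^{-1}(N\subset M)$ (viewed in the $\Psi^0$-model on the $\pi$-side), which reduces by naturality of \eqref{eq:geom_rel_MV} and \cref{lem:wrongway} to the pseudodifferential incarnation of the identities already established in \cref{para:Dirac,para:psc,para:sign,para:homotopy}. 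Once this compatibility is in place, the vanishing of $\Ch_{\pi,V}^\del$ on these lifts, combined with the naturality of the boundary maps, closes the $D^*$ square, and the other two squares follow from the five lemma applied to the Chern character morphism of Higson--Roe sequences.
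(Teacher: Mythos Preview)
Your strategy for the $C^*$ and $Q^*$ squares is essentially the paper's: both follow from the compatibility of the Chern--Connes character with boundary maps of Banach-algebra extensions (cf.\ \cref{rmk:Chern}), and the paper handles the $Q^*$ square by working at the algebraic (group-algebra) level where $\tau_\sigma$ is already defined via \cref{def:codim2_cyclic}.

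For the $D^*$ square, however, your approach diverges from the paper's and contains a gap. You propose to lift $s$ to a bounded $\ast$-homomorphism $\Psi^0_{A\Gamma}(\widetilde{\cV}) \to \Psi^0_{A\Pi}(\overline{\cV_\sM}) / \Psi^0_{A\Pi}(\overline{\cV_\sN} \subset \overline{\cV_\sM})$ and then invoke naturality. The problem is that formula~\eqref{eq:Banach} defines norms only on algebraic crossed products $B\rtimes_{\alg}\Gamma$, not on the $0$th-order pseudodifferential algebra; the norm on $\Psi^0_{A\Gamma}(\widetilde{\cV})$ (from \cite{piazzaMappingAnalyticSurgery2019}*{Definition 5.14}) is of multiplier type, and your map $s$ involves the compression $T\mapsto\overline{PTP}$ modulo the boundary ideal. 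Checking boundedness of this for the unconditional multiplier norm is not automatic: your justification (``propagation and pseudolocality are preserved'') addresses the C*-level statement of \cref{prp:splitD} but not the Banach-norm estimate.

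The paper sidesteps this entirely. Instead of lifting $s$ to $\Psi^0$, it argues in two parts. A diagram chase from the already-established $C^*$ square gives commutativity on the subspace $\cX = \mathrm{Im}(\K_*(C^*(\widetilde{M})^\Gamma)_\bC \to \K_*(D^*(\widetilde{M})^\Gamma)_\bC)$. Then, using exactness of $\Gamma$ (hence the strong Novikov property) to see that $\K_*(D^*(\widetilde{M})^\Gamma)_\bC$ is generated by $\cX$ together with the image of finitely many relative geometric cycles $x_i = [W_i,f_i,E_i] \in \K_*(B\Gamma, M)$, the paper shows that \emph{both} $\Ch^\del_{\Gamma,V}(x_i)=0$ and $\Ch^\del_{\pi,V}(\tau_\sigma(x_i))=0$ via \cref{lem:Khomology2} and \cref{thm:geom_transfer_rel}. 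Since both sides of the square vanish on these generators, commutativity follows without ever comparing $s_*(x_{W,f,E})$ with $x_{W',f|_{W'},E|_{W'}}$ at the $\Psi^0$ level.

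This also explains the role of $V$: it is chosen \emph{after} the finite list of cycles, large enough to admit mutually disjoint isometric embeddings $\partial W_i \times \bD_r^m \hookrightarrow V$, which is exactly the hypothesis of \cref{lem:Khomology2}. Your universal choice $V = TM\oplus\underline{\bR}^k$ does not obviously accommodate such embeddings, and the Euler-class condition you impose on $V|_N$ is not the relevant one---\cref{lem:Khomology2} requires the Euler number of the boundary manifold $\partial W_i$ to vanish (arranged by taking the $W_i$ to have trivialized tangent bundles on the boundary), not a condition on the auxiliary bundle $V$.
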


\begin{proof}
The commutativity of the left diagram is due to the functoriality, more precisely the compatibility with the boundary map, of the Chern-Connes character (cf. \cref{rmk:Chern}). 

We show the commutativity of the right diagram. Recall that $\Ch_\Gamma$ and $\Ch_\Gamma ^e$ are defined in algebraic  (\cite{piazzaMappingAnalyticSurgery2019}*{Section 6.4}). Since $\tau_\sigma $ is also defined in algebraic level (cf. \cref{def:codim2_cyclic}), we get a commutative diagram
\[\mathclap{
\xymatrix{
\K_*(C_0(T^* M)) \ar[r] \ar@/^2em/[rrr]^{ \Ch_{\Gamma,V}^e} \ar[d]^{\tau_\sigma} & 
\K_*(\Psi^{-\infty}_{\bC \Gamma }(\widetilde{M})) \ar[r]^{\Ch_{\Gamma,V} } \ar[d]^{\tau_\sigma} & 
\overline{\HP}_*(\bC [\Gamma])  \ar[d]^{\tau_\sigma} & 
\overline{\HP}{}_*^e(\bC [\Gamma]) \ar[l] \ar[d]^{\tau_\sigma} \\
\K_*(C_0(T^* N) ) \ar[r] \ar@/_2em/[rrr]^{\Ch_{\pi,V}^e}  & 
\K_*(\Psi^{-\infty}_{\bC \pi}(\widetilde{N}) ) \ar[r]^{\Ch_{\pi,V} }  & 
\overline{\HP}_*(\bC[\pi] )  & 
\overline{\HP}{}_*^e(\bC[\pi] ). \ar[l]  
}}
\]
Here we write the mapping cone $C(C(M) \to C(\bS M))$ as $C_0(T^*M)$ in short. This finishes the proof since $\overline{\HP}{}^e_*(\bC[\Gamma ]) \to \overline{\HP}{}_*(\bC[\Gamma])$ is injective. 

In the same way, a diagram chasing argument shows the commutativity of the middle diagram on the subgroup $\cX := \mathrm{Im}(\K_*(C^*(\widetilde{M})^\Gamma )_\bC \to \K_*(D^*(\widetilde{M})^\Gamma )_\bC)$ as 
\[\mathclap{
\xymatrix{
\K_*(C^*(\widetilde{M})^\Gamma) \ar[r] \ar@/^2em/[rrr]^{ \Ch_{\Gamma, V}} \ar[d]^{\tau_\sigma} & 
\K_*(D^*(\widetilde{M})^\Gamma) \ar[r]^{\Ch_{\Gamma,V}^\del  } \ar[d]^{\tau_\sigma} & 
\overline{\HP}{}_*^\del(A\Gamma)  \ar[d]^{\tau_\sigma} & 
\overline{\HP}{}_*(A\Gamma) \ar[l] \ar[d]^{\tau_\sigma} \\
\K_*(C^*(\widetilde{N})^\pi ) \ar[r] \ar@/_2em/[rrr]^{\Ch_{\pi , V}}  & 
\K_*(D^*(\widetilde{N})^\pi ) \ar[r]^{\Ch_{\pi,V}^\del  }  & 
\overline{\HP}{}_*^\del(A\pi )  & 
\overline{\HP}{}_*(A\pi ). \ar[l]  
}}
\]

The remaining task is to show the commutativity of the diagram on the `complement' of $\cX$.
By the assumption of the exactness, $\Gamma$ has the strong Novikov property \cites{ozawaAmenableActionsExactness2000,higsonAmenableGroupActions2000,yuCoarseBaumConnesConjecture2000}, and hence we have 
\begin{align*}
    \K_*(C^*(\widetilde{M})^\Gamma )_\bC  &\cong \K_*^\Gamma (\underline{E}\Gamma, E\Gamma)_\bC \oplus \K_*^\Gamma(E\Gamma )_\bC, \\
    \K_*(D^*(\widetilde{M})^\Gamma )_\bC  &\cong \K_*^\Gamma (\underline{E}\Gamma, E\Gamma)_\bC \oplus \K_*^\Gamma(E\Gamma, \widetilde{M})_\bC. 
\end{align*}
Let $x_1, \cdots, x_k \in \K_*(B\Gamma, M)$ be a finite number of elements such that their images generate the subgroup $\mathrm{Im} (\partial \colon \K_*(B\Gamma, M) \to \K_{*+1}(M))$. The, by the above isomorphism, $x_1, \cdots, x_k$ and $\cX$ generates $\K_*(D^*(\widetilde{M})^\Gamma) $.
Let $[W_i,f_i,E_i]$ be geometric cycles representing $x_i$. We may assume that the tangent bundles $T(\partial W_i)$ are trivial. 
We choose a vector bundle $V \to M$ and a Riemannian metric on it in the way that there are mutually disjoint isometric open embeddings $\partial W_i \times \bD_r^m \subset V$. 
Then, by \cref{thm:geom_transfer_rel} we have $\Ch_{\Gamma, V}^\del(x_i)=\Ch_{\Gamma, V}(x_{W_i,f_i,E_i}) =0$. 
Moreover, by \cref{lem:Khomology2}, we can apply the same argument to $\tau_\sigma (x_i) = [W_i \cap N, f_i |_{W_i \cap N}, E_i|_{W_i \cap N}]$, and get $\Ch_{\pi,V}^\del (\tau_\sigma(x_i)) =0$. 
This finishes the proof. 
\if0
Each $x_i$ is represented by a relative geometric K-cycle $[W_i,f_i,E_i]$. Here each $W_i$ is a spin manifold with boundary $\partial W_i$, $f_i \colon (W_i, \partial W_i) \to (B\Gamma, M) $ and $E_i$ is a vector bundle over $W_i$. 
By a vector bundle modification, we may assume that each $W_i$ and $\partial W_i$ has trivial tangent bundle. 
There is a homomorphism $\K_*(B\Gamma , M) \to \K_*(D^*(\widetilde{M})^\Gamma)$ sending a geometric K-homology cycle $[W,f,E]$ to $[D_{\partial M} + \cP]$.  
By the assumption of exactness, $\K_*(D^*(\widetilde{M})^\Gamma)$ is generated by $x_1, \cdots, x_k$ and the image $\Im (\K_*(C^*(\widetilde{M})^\Gamma  )\to \K_*(D^*(\widetilde{M})^\Gamma ) )$.

We employ a minor modification of the delocalized Chern character map in the sense of \cite{piazzaMappingAnalyticSurgery2019} as homomorphisms from $\K_*(D^*(\widetilde{M})^\Gamma )$ to $\overline{\HP}{}^\del_*(\cA \Gamma) $. 
We choose a large integer $N \in \bZ$ and an embedding $M \to \bR^N$ with the normal bundle $\nu M$ such that, for each $i$, there is an embedding $\varphi_i \colon M_i \to \nu M$ with $f_i = \pi \circ \varphi_i$, where $\pi \colon \nu M \to M$ denotes the projection. 
Let $W$ denote the unit sphere bundle of $\nu M \oplus \underline{\bR}$. Note that $\nu M$ has fundamental group $\Gamma$ and there is an open embedding $\nu M \to W$. Let $\Psi^0_0(\nu \widetilde{M})^\Gamma$ denote the C*-algebra completion of the $\ast$-algebra of $0$-th order pseudodifferential operators on $\nu M$ whose principal symbol has compact support. Then $\Psi^0_0(\nu \widetilde{M})^\Gamma $ is a C*-subalgebra of $\Psi^0(\widetilde{W})^\Gamma$. We write as $\Ch ^\del_\Gamma$ for the composition
\[\K_*(D^*(\widetilde{M})^\Gamma) \subset \K_*(D^*(\widetilde{W})) \leftarrow \K_*(\Psi^0(\widetilde{W})^\Gamma ) \xrightarrow{\Ch^\del_\Gamma} \overline{\HP}{}^\del _*(\cA \Gamma ).\]

First we observe that $\Ch^\del_\Gamma ([W_i,f_i,E_i])$ coincides with the difference of delocalized Chern characters $\Ch^\del_\Gamma (D_{\partial W_i}^{E_i \otimes S_+})$.

Let $x \in \K_*(B\Gamma, M)$. It is represented by a compact spin manifold with boundary $W$ and a map $F \colon (W, \partial W) \to (B\Gamma , M)$. The Dirac operator $\slashed{D}_{\partial W}$ has an invertible perturbation $\slashed{D}_{\partial W} + C$ such that the associated higher APS index $\mathrm{Ind}_{\Gamma, b}(\slashed{D}_W, C) $ is zero (see e.g.~\cite{kubotaRelativeMishchenkoFomenkoHigher2019}*{Lemma 4.6}). 
The invertible self-adjoint element $\chi(\slashed{D}_{\partial W}+C) \in D^*( \widetilde{M})^\Gamma$ represents an element of $\K_*(D^*(\partial \widetilde{W})^\Gamma)$ which is sent to $x \in \K_*(D^*(\widetilde{M})^\Gamma)$ by the map $F_*$.  
Due to the functoriality of the delocalized Chern character, we have $\Ch_\Gamma^{\del}(x) = \Ch_\Gamma ^{\del} ([\chi(\slashed{D}_{\partial W}+C)]) $. 
Finally, the delocalized APS index theorem \cite{piazzaMappingAnalyticSurgery2019}*{Theorem 6.65} implies that
\[ \Ch_\Gamma ^\del ([\slashed{D}_{\partial W} + C ]) = \Ch_\Gamma^\del (\mathrm{Ind}_{\Gamma, b}(\slashed{D}_W, C)) =  0.\]
This finishes the proof. 
\fi
\end{proof}

\begin{rmk}
It is natural to ask whether $\Ch_{\Gamma, V} ^\del $ coincides with $\Ch_{\Gamma }^\del $ in general, but here we do not deal with this problem. 
We only remark that $\Ch_\Gamma = \Ch_{\Gamma , V}$ is easily verified from the definition, and hence a diagram chasing argument similar to the above proof of \cref{lem:Chern} shows that $\Ch_\Gamma^\del = \Ch_{\Gamma ,V}^\del$ on the subgroup $\cX $.
\end{rmk}

\subsection{The higher index pairing}
Finally, we relate the primary and secondary higher index pairings with the codimension $2$ transfer map.

Let $\Gamma $ be a finitely presented hyperbolic group and let $\pi$ be its hyperbolic subgroup (note that any hyperbolic group is exact \cite{adamsBoundaryAmenabilityWord1994}). 
Let $S$ be a generating set of $\Gamma$ such that $S \cap \pi$ generates $\pi$. Let $\ell$ and $\ell_\pi$ denote the associated word-length function on $\Gamma$ and $\pi$ respectively. Note that $\ell(h) \leq \ell_\pi(h)$ holds for any $h \in \pi$.
Jolissaint \cite{jolissaintKtheoryReducedAlgebras1989} and Lafforgue \cites{lafforgueProofPropertyRD2000,lafforgueKtheorieBivariantePour2002}, there is an increasing sequence of unconditional norms 
\[ 
\big\| \sum_g a_g u_g \big\|_{A_n\Gamma }^2 := \sum_{g} |a_g|^2 (1 + \ell (g))^{2n}
\]
of the group algebra $\bC[\Gamma]$. 
It is shown in \cite{lafforgueProofPropertyRD2000}*{Proposition 1.2} that this norm is submultiplicative for sufficiently large $n$.
In \cite{puschniggNewHolomorphicallyClosed2010}, Puschnigg has introduced a 
new series of unconditional norms $\| \cdot \| _{A_n^k \Gamma }$ constructed from $\| \cdot \|_{A_n\Gamma}$ for $k \in \bN$. We write $A_n^k \Gamma $ for the Banach $\ast$-algebra completion of $\bC[\Gamma ] $ with respect to $\|\cdot  \|_{A_n^k \Gamma} $.

\if0
\begin{rmk}
Let $A_n\pi$ and $A_n' \pi$ denote the unconditional completions constructed as above from the restriction of $\ell$ and a length function on $\pi$ itself respectively.  
Note that they are in general not equivalent, but $\| \cdot \| _{A_n\pi} \leq \| \cdot \|_{A_n' \pi}$ holds. Moreover, since both $A_n\pi$ and $A'_n\pi$ are closed under holomorphic functional calculus, the inclusions $A'\pi \to A\pi \to C^*_{\mathrm{red}}\pi$ induce isomorphisms of K-theory.
\end{rmk}
\fi

\begin{para}\label{para:cyclic}
The family $\{ \| \cdot \|_{A_n^k \Gamma }\}_{n,k \in \bN}$ is an increasing sequence of submultiplicative norms on $B \rtimes_\alg \Gamma$.
We write the projective limit Fr\'{e}chet algebra as $\cA \Gamma$. 
It is identified with the intersection $\bigcap_{n,k \in \bN} A_n^k \Gamma  $ as $\ast$-subalgebras of $C^*_{\mathrm{red}}\Gamma$. 
They have the following properties:
\begin{enumerate}
    \item Each $A_n^k \Gamma $ is closed under holomorphic functional calculus.
    \item There are inclusions $\bC[ \Gamma ] \subset A_n^k \Gamma \subset A_m^l \Gamma \subset C^*_{\mathrm{red}} \Gamma $ for $n \geq m$ and $k \geq l$. 
    \item There is a splitting $\kappa_\Gamma \colon \HP^*(\bC[\Gamma]) \to \HP^*(\cA \Gamma )$ of the map $\HP^*(\cA \Gamma) \to \HP^*(\bC[\Gamma])$ induced from the inclusion. 
    \item Let $\HP^*(\cA \Gamma ; \langle g \rangle )$ denote the cohomology group of the closure of the subcomplex $\CC^\bullet (\bC[\Gamma];\langle g \rangle )$ in the cyclic cocomplex $\CC^\bullet (\cA \Gamma ) $. There is a splitting surjection $ c_{\langle g \rangle} \colon \HP^*(\cA \Gamma)  \to \HP^*(\cA \Gamma ; \langle g \rangle )$ of the map induced from the inclusion.  
\end{enumerate}
\end{para}
In the same way, for a $\Gamma$-C*-algebra $B$, we write the projective limit of $\{ A_n^k(B, \Gamma) \}_{n,k}$ as $\cA(B,\Gamma)$. By \cref{lem:Banach_Cstar}, we have $\cA(B,\Gamma) \subset B \rtimes_{\mathrm{red}}\Gamma$ when $B$ is one of the C*-algebras $C_0(\cB_0, \cK_\sigma)$, $C(\cB, \cK_\sigma)$ or $\bK$.

By \cref{para:coarse} (3), the pairing of $\HP_*(\cA\Gamma)$ and $\HP^*(\bC[\Gamma ] ; \langle g \rangle)$ makes sense. 
The invariants treated in this section are defined by using this pairing, as are introduced in \cites{connesCyclicCohomologyNovikov1990,lottHigherEtainvariants1992}. We also refer the reader to \cite{piazzaMappingAnalyticSurgery2019}*{Section 6}. 

\begin{defn}\label{defn:CM_Lott}
Let $\Gamma$ be a (subgroup of a) hyperbolic group, let $M$ be a closed manifold with $\pi_1(M) \cong \Gamma $ and let $N$ be its codimension $2$ submanifold satisfying (1), (2), (3) of \cref{para:cocycle}. 
Let $\phi \in \HC^*_e(\bC[\Gamma])$ and let $\psi \in \HC^*_{del}(\bC[\Gamma])$. 
We fix a vector bundle $V$ on $M$ as in \cref{lem:Chern}. 
\begin{enumerate}
\item If $M$ is spin, the higher index is defined as $\alpha_\phi(M):= \langle \Ch_{\Gamma, V} (\alpha_\Gamma (M)) , \phi \rangle$.
\item If $M$ is spin and equips a psc metric $g_M$, the higher $\rho$-number is defined as $\varrho_\psi(g_M):= \langle \Ch_{\Gamma, V}^{del}(\rho(g_M)) , \psi \rangle$. 
\item The higher signature is defined as $\Sgn_\phi (M):=\langle \Ch_{\Gamma , V} (\Sgn _\Gamma (M)) , \phi \rangle$. 
\item If $M$ equips an oriented homotopy equivalence $f_M \colon M' \to M$, the higher $\rho$-number is defined as $\varrho_{\psi }^\sgn (f_M) := \langle \Ch_{\Gamma, V}^\del (\rho_\sgn (f_M)) , \psi \rangle$. 
\end{enumerate}
\end{defn}
The Connes--Moscovici higher index theorem \cite{connesCyclicCohomologyNovikov1990}*{Theorem 5.4} states that, for any $\phi \in H^{2q}(\Gamma ; \bC)$, the equalities
\begin{align}
\begin{split}
    \alpha _\phi (M) &= \frac{(-1)^n}{(2\pi i )^q} \langle \hat{A}(M) \xi_M^*(\phi) , [M] \rangle,\\
    \Sgn_\phi (M) &= \frac{(-1)^n}{(2\pi i )^q} \langle L(M) \xi_M^*(\phi) , [M] \rangle 
    \end{split}
    \label{eq:ConnesMoscovici}
\end{align} 
hold, where $n:=\dim M$. Note that our convention of the pairing is different from the one in \cite{connesCyclicCohomologyNovikov1990} by the normalization factor $q!/(2q)!$. 

\begin{lem}\label{lem:poly}
The following diagram commutes;  
\[
\xymatrix{
\HP^*(\bC[\pi]) \ar[r]^{\kappa_\pi} \ar[d]^{\tau_\sigma^*} & 
\HP^*(\cA \pi) \ar[d]^{\tau_\sigma ^*} \\
\HP^*(\bC[\Gamma ]) \ar[r] ^{\kappa_\Gamma} & \HP^*(\cA\Gamma ).
}
\]
\end{lem}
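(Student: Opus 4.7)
The plan is to construct a Fréchet-algebra version of the defining exact sequence for $\tau_\sigma^*$, exploit naturality of the boundary map in periodic cyclic cohomology under the inclusion of the algebraic extension into the Fréchet one, and finally invoke the specific cochain-level form of Puschnigg's splittings $\kappa$ to conclude equality rather than only equality modulo the kernel of restriction.

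First, I would apply \cref{lem:ext_Ban} to each unconditional norm $\|\cdot\|_{A_n^k(\cdot,\Gamma)}$ and pass to the projective limit to obtain an exact sequence of Fréchet algebras
\[
0 \to \cA(C_0(\cB_0,\cK_\sigma),\Gamma) \to \cA(C(\cB,\cK_\sigma),\Gamma) \to \cA\Gamma \to 0,
\]
whose boundary map in $\HP^*$, composed with the Morita equivalence identifying $\HP^{*-1}(\cA(C_0(\cB_0,\cK_\sigma),\Gamma))$ with $\HP^{*-2}(\cA\pi)$, is by construction the Banach-algebra co-transfer $\tau_\sigma^*\colon\HP^*(\cA\pi)\to\HP^{*+2}(\cA\Gamma)$ appearing in the right column of the diagram. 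The algebraic extension \eqref{eq:alg_ext}, tensored with $\bK_\alg$, includes into this Fréchet extension as a dense sub-extension.

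Second, by contravariant functoriality of $\HP^*$ and naturality of boundary maps in cyclic cohomology applied to this commutative ladder of extensions, the restriction maps $r_\Gamma\colon\HP^*(\cA\Gamma)\to\HP^*(\bC[\Gamma])$ and $r_\pi\colon\HP^*(\cA\pi)\to\HP^*(\bC[\pi])$ satisfy $r_\Gamma\circ\tau_\sigma^* = \tau_\sigma^*\circ r_\pi$. Using the defining splitting property $r_\pi\circ\kappa_\pi=\id$ and $r_\Gamma\circ\kappa_\Gamma=\id$, this yields
\[
r_\Gamma\bigl(\tau_\sigma^*(\kappa_\pi(\phi)) - \kappa_\Gamma(\tau_\sigma^*(\phi))\bigr) \;=\; 0
\]
for every $\phi\in\HP^*(\bC[\pi])$, so the two sides of the desired identity agree modulo $\ker(r_\Gamma)$.

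To promote this to equality in $\HP^*(\cA\Gamma)$, I would use that Puschnigg's splitting $\kappa_\Gamma$ is realized at the cochain level as the unique continuous extension of a polynomially-bounded representative cocycle from $\bC[\Gamma]$ to $\cA\Gamma$. The Fréchet boundary map $\tau_\sigma^*(\kappa_\pi(\phi))$ is given by the same explicit boundary formula as its algebraic counterpart, involving only the group $2$-cocycle $\sigma(g,h)\in\bZ[\Gamma/\pi]$ — which is finitely supported for each pair $(g,h)$ — and the values of $\phi$. Since this formula preserves polynomial-growth bounds, $\tau_\sigma^*(\kappa_\pi(\phi))$ is manifestly a Puschnigg-type continuous extension of the polynomial cocycle representing $\tau_\sigma^*(\phi)\in\HP^{*+2}(\bC[\Gamma])$, and the uniqueness of such extensions within the Puschnigg framework then forces identification with $\kappa_\Gamma(\tau_\sigma^*(\phi))$.

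The main obstacle will be making this last uniqueness step precise. In practice, one fixes a chain model for $\kappa$ — for instance via the Jolissaint–Puschnigg bar complex with polynomial-growth coefficients — and verifies that the cochain formula for the boundary map of the Fréchet extension lands in this chain model. Once that is done, the proof reduces to a direct cochain-level identity, since the algebraic and Fréchet boundary maps are given by literally the same formula restricted to different domains.
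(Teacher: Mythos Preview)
Your proposal is correct and its third step is exactly the paper's argument: the paper defines polynomial-growth cyclic cochains on crossed products $B \rtimes_{\alg} \Gamma$, proves via an explicit estimate (using the projection $m_{\langle g\rangle}$ and Puschnigg's bound) that such cochains extend continuously to $\cA(B,\Gamma)$, and then builds the commutative ladder of polynomial-growth and $\cA$-completed cochain complexes for the extension $0 \to C_0(\cB_0,\cK_\sigma) \to C(\cB,\cK_\sigma) \to \bK \to 0$, which is precisely the chain-level model of $\kappa$ you describe in your final paragraph. Your steps 1--2 (naturality of restriction, equality modulo $\ker r_\Gamma$) are logically correct but unnecessary: once the polynomial-growth ladder is in place, the full commutativity follows directly without the detour through $\ker r_\Gamma$, so the paper omits them.
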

\begin{proof}
Let $\CC^\bullet_{\mathrm{pol}}(\bC[\Gamma])$ denotes the subcomplex of $\CC^\bullet _{\mathrm{pol}} (\bC [\Gamma])$ consisting of cyclic cochains of polynomial growth. 
Since $\Gamma $ is hyperbolic, any polynomial growth cyclic cochain extends to $\cA \Gamma$ and $\CC^\bullet_{\mathrm{pol}} (\bC[\Gamma]) \to \CC^\bullet (\bC[\Gamma ])$ is a quasi-isomorphism. The splitting $\kappa_\Gamma $ is induced from $\CC^\bullet _{\mathrm{pol}}(\bC[\Gamma]) \to \CC^\bullet (\cA \Gamma)$. 

We generalize the notion of polynomial growth for cyclic cohomology groups of crossed products. 
For a cyclic cocycle $\phi \in \CC^\bullet (B \rtimes _\alg \Gamma ; \langle  g \rangle ) $, we define $\phi_{g_0, g_1,\cdots, g_n} \in \Hom (B^{\otimes n+1}, \bC)$ and $\phi_{1,g_1, \cdots, g_n} \in \Hom (B^{\otimes n}, \bC)$ as 
\begin{align*}
    \phi_{g_0,g_1,\cdots, g_n} (b_0,b_1, \cdots, b_n)&: = \phi(b_0u_{g_0},b_1u_{g_1}, \cdots, b_nu_{g_n}), \\
    \phi_{1,g_1,\cdots, g_n} (b_1, \cdots, b_n)&: = \phi(1,b_1u_{g_1}, \cdots, b_nu_{g_n}).
\end{align*}
We say that $\phi$ is of polynomial growth if $\| \phi_{g_0, \cdots , g_n}\| \leq C(1+ \ell(g_0) \cdots \ell(g_n))^N$ and $\| \phi_{1,g_1, \cdots , g_n}\| \leq C(1+ \ell(g_1) \cdots \ell(g_n))^N$ hold for some $C>1$ and $N \in \bZ_{>0}$. Then, for any $x_0 = \sum b_{0,g}u_g, \cdots, x_n = \sum_{g}b_{n,g}u_g \in B \rtimes_\alg \Gamma$, we have a bound
\begin{align*}
     &| \phi (x_0, \cdots, x_n) | \\
     \leq & \sum_{g_0\cdots g_n \in \langle g \rangle } \| \phi_{g_0,\cdots, g_n} \| \cdot \| b_{0,g_0} \| \cdots \| b_{n,g_n} \| \\
     \leq & \sum_{g_0\cdots g_n \in \langle g \rangle } C^N(1 + \ell(g_0) \cdots \ell(g_n))^{N} \| b_{0,g_0} \| \cdots \| b_{n,g_n} \| \\ 
     \leq & \sum_{g_0\cdots g_n \in \langle g \rangle } (C(1 + \ell (g_0))^{N}\| b_{0,g_0}\|) \cdot \cdots \cdot (  C (1 + \ell (g_n))^{N} \| b_{0,g_n}\|) \\
     =& \| m_{\langle g \rangle } (z_0z_1 \cdots , z_n) \|_{\ell^1 \Gamma }, 
\end{align*}
where $z_i := \sum_g  C(1 + \ell (g))^N\| b_{i, g}\| \cdot u_g \in \bC[\Gamma ]$ for $i =0, \cdots, n$ and $m_{\langle g \rangle}$ is the projection from $\bC[\Gamma]$ to the subalgebra $\bC[\langle g \rangle]$ supported on $\langle g \rangle $. As is shown in \cite{puschniggNewHolomorphicallyClosed2010}*{Proposition 5.5}, the right hand side is bounded by a defining seminorm $\| x_0 \otimes \cdots \otimes x_n\|$ of $\cA (B, \Gamma)^{\otimes (n+1)}$. 
Hence $\phi$ extends to a cyclic cocycle on $\cA (B, \Gamma)$. Now, for $h \in \pi$, the diagram of exact sequences 
\[
\xymatrix{
\CC_{\mathrm{pol}}^\bullet (\bK \rtimes_\alg \Gamma ) \ar[r] \ar[d] & 
\CC_{\mathrm{pol}}^\bullet (C(\cB,\cK_\sigma ) \rtimes_\alg \Gamma ) \ar[r] \ar[d] & 
\CC_{\mathrm{pol}}^\bullet (C_0(\cB_0,\cK_\sigma ) \rtimes_\alg \Gamma )  \ar[d] \\
\CC^\bullet (\cA (\bK , \Gamma )) \ar[r] & 
\CC^\bullet (\cA (C(\cB,\cK_\sigma ) ,\Gamma )) \ar[r] & 
\CC^\bullet (\cA (C_0(\cB_0,\cK_\sigma ),  \Gamma ) )
}
\]
induces the commutative diagram
\[
\xymatrix{
\HP^*_{\mathrm{pol}}(\bK \rtimes_\alg \Gamma ; \langle h \rangle) \ar[r]^{\partial \hspace{3ex}} \ar[d]^{\kappa_\Gamma } & \HP^*_{\mathrm{pol}}(C_0(\cB_0, \cK) \rtimes_\alg \Gamma ; \langle h \rangle) \ar[d] \ar[r] & \HP^*_{\mathrm{pol}}(\bC[\pi]; \langle h \rangle) \ar[d]^{\kappa_\pi} \\
\HP^*(\cA (\bK ,\Gamma) ; \langle h \rangle) \ar[r]^{\partial \hspace{3ex}}  & 
\HP^*(\cA(C_0(\cB_0, \cK) , \Gamma) ; \langle h \rangle)  \ar[r] & 
\HP^*(\cA\pi; \langle h \rangle).
}
\]
Here we also use the fact that the restriction of a polynomial growth cocycle on $(C_0(\cB_0,\bK_\sigma) \rtimes_\alg \Gamma $ to $S\bC[\pi]$ is of polynomial growth with respect to $\ell_\pi$.
This finishes the proof of the lemma. 
\end{proof}

\begin{thm}\label{thm:pair_tranfer}
Let $\Gamma$ be a hyperbolic group and let $\pi$ be its finitely presented subgroup. 
Let $\phi \in \HC^{q-2}_{e}(\bC[\pi] )$ and $\psi \in \HC^{q-2}_{\del}(\bC[\pi] )$. Then, for $M$, $N$, $g_M$, $g_N$, $g_M$ and $f_N$ as in the statement of \cref{thm:second_tr},  the following equalities hold: 
\begin{enumerate}
    \item $2\pi i \alpha_{\sigma \cdot \phi } (M) = \alpha_\phi (N)$, 
    \item $2\pi i \varrho_{\sigma \psi} (g_M) = \varrho_\psi (g_N)$,
    \item $\pi i \Sgn_{\sigma \cdot \phi } (M) = \Sgn_\phi (N)$,
    \item $\pi i \varrho_{\sigma \psi}^{\sgn } (f_M) = \varrho_\psi^\sgn (f_N)$.
\end{enumerate}  
\end{thm}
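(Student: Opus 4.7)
The strategy is to translate each numerical invariant into a pairing of a K-theoretic class with a cyclic cocycle via the Chern character, move the codimension~$2$ transfer from the K-theory side to the cohomology side using adjointness, and then invoke \cref{cor:cocycle_dual} to identify $\tau_\sigma^*$ with cup product with $2\pi i \sigma$. All four assertions follow by the same template, with the factors of $2\pi i$ versus $\pi i$ dictated by whether $\tau_\sigma$ acts on the Dirac class (multiplicity $1$) or the signature class (multiplicity $2$) according to \cref{thm:second_tr}.

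Concretely, for part (1) I would write
\begin{align*}
\alpha_\phi(N)
&= \langle \Ch_{\pi,V}(\alpha_\pi(N)),\phi\rangle
= \langle \Ch_{\pi,V}(\tau_\sigma \alpha_\Gamma(M)),\phi\rangle \\
&= \langle \tau_\sigma \Ch_{\Gamma,V}(\alpha_\Gamma(M)),\phi\rangle
= \langle \Ch_{\Gamma,V}(\alpha_\Gamma(M)),\tau_\sigma^*\phi\rangle \\
&= \langle \Ch_{\Gamma,V}(\alpha_\Gamma(M)),2\pi i\,\sigma\!\cdot\!\phi\rangle
= 2\pi i\,\alpha_{\sigma\cdot\phi}(M),
\end{align*}
where the second equality uses \cref{thm:second_tr}(1) applied to the Rosenberg index, the third is the commutativity of the left square in \cref{lem:Chern}, the fourth is the pairing adjunction $\langle\tau_\sigma(\cdot),\cdot\rangle=\langle\cdot,\tau_\sigma^*(\cdot)\rangle$, and the fifth is \cref{cor:cocycle_dual} (noting that $\phi$ lies in the localized part $\HC^{q-2}_e$, so only the $\langle h\rangle=e$ summand contributes, giving exactly the factor $2\pi i$). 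Part~(3) follows verbatim with $[M]_\sgn,[N]_\sgn$ in place of the Dirac classes; the only difference is that \cref{thm:second_tr}(2) contributes a factor $2$, turning $2\pi i$ into $\pi i$.

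The secondary statements (2) and (4) proceed through the \emph{middle} diagram of \cref{lem:Chern}, which compares $\Ch_{\Gamma,V}^{\del}$ and $\Ch_{\pi,V}^{\del}$ via $\tau_\sigma$ on $\K_*(D^*(\widetilde M)^\Gamma)$ and $\overline{\HP}_*^{\del}(A\Gamma)$. Repeating the five-line computation with $\rho(g_M)\mapsto\rho(g_N)$ (resp.\ $\rho_\sgn(f_M)\mapsto 2\rho_\sgn(f_N)$) from \cref{thm:second_tr}(3),(4), and pairing with the delocalized cocycle $\psi\in\HC^{q-2}_{\del}(\bC[\pi])$, yields $2\pi i\,\varrho_{\sigma\cdot\psi}(g_M)=\varrho_\psi(g_N)$ and $\pi i\,\varrho_{\sigma\cdot\psi}^{\sgn}(f_M)=\varrho_\psi^{\sgn}(f_N)$.

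The main technical obstacle, which I expect will occupy most of the actual proof, is to make all of the above compatibilities well-defined on the unconditional Fr\'echet completion $\cA\Gamma$ rather than on $\bC[\Gamma]$. This requires showing: (i) the cocycles $\phi$ and $\sigma\cdot\phi$, which a priori live on $\bC[\pi]$ and $\bC[\Gamma]$, extend to $\cA\pi$ and $\cA\Gamma$ and commute with $\tau_\sigma^*$ after extension, which is exactly \cref{lem:poly}; (ii) the hyperbolicity of $\pi\subset\Gamma$ is used to ensure both $\kappa_\Gamma$ and $\kappa_\pi$ exist and split the forgetful maps on cyclic cohomology (\cref{para:cyclic}); and (iii) the auxiliary vector bundle $V$ can be chosen on $M$ so that both $\Ch_{\Gamma,V}$ and $\Ch_{\pi,V|_N}$ are simultaneously defined and related by $\tau_\sigma$ in the sense of \cref{lem:Chern}—this is where one must invoke the exactness of $\Gamma$ (automatic for hyperbolic groups). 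Once these three Banach-algebraic points are in place, the identities drop out of the chain of equalities displayed above.
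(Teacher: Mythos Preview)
Your proposal is correct and follows essentially the same approach as the paper, which proves the theorem in a single line by citing \cref{cor:cocycle_dual,lem:Chern,lem:poly}. You have simply unpacked that one-liner: the chain of equalities you display is exactly the implicit argument, and your identification of the three Banach-algebraic ingredients (i)--(iii) matches precisely the roles of \cref{lem:poly}, \cref{para:cyclic}, and \cref{lem:Chern}, with \cref{thm:second_tr} (which the paper uses implicitly through the definition of the invariants) supplying the behaviour of $\tau_\sigma$ on the specific K-theory classes.
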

\begin{proof}
This follows from \cref{cor:cocycle_dual,lem:Chern,lem:poly}.
\end{proof}

\begin{rmk}
\cref{thm:pair_tranfer} (1), (3) are also proved by the Connes--Moscovici higher index theorem \eqref{eq:ConnesMoscovici} as 
\begin{align*}
    \alpha_{\sigma \cdot \phi } (M) =& \frac{(-1)^n}{(2\pi i )^q}\langle f^*(\sigma \cdot \phi ) \hat{A}(M) , [M] \rangle = \frac{(-1)^n}{(2\pi i )^q}\langle \langle \mathrm{PD}[N] \cdot f^*(\phi)\cdot \hat{A}(M) , [M] \rangle\\
    =& \frac{1}{2\pi i } \cdot \frac{(-1)^{n-2}}{(2\pi i )^{q-1}}\langle \langle f^*(\phi)\cdot \hat{A}(N) , [N] \rangle =  \frac{1}{2\pi i}\alpha_\phi(N).
\end{align*}
The same is also true for the higher signature. 
\end{rmk}

\begin{exmp}
Let $M$, $N$, $g_M$, $g_N$, $f_M$ and $f_N$ be as in \cref{exmp:bundle2}. 
Assume that the delocalized $\eta$-invariants $\eta_{\langle h \rangle }(g_N)$ and $\eta_{\langle h \rangle }(f_N)$ do not vanish (recall that they are the same thing as the higher $\rho$-number of the cyclic cocycle $\mathrm{tr}_h \in \HC^0(\bC[\pi])$ corresponding to $\langle h \rangle \in \langle \pi \rangle $ as is shown in   \cite{xieDelocalizedEtaInvariants2019}*{Theorem 1.1}). 
Now the higher $\rho$-number of $g_M$ with respect to the delocalized cyclic cocycle $\sigma \cdot \mathrm{tr}_{\langle h \rangle}$ is calculated as
\begin{align*}
    \varrho_{\sigma \cdot \mathrm{tr}_{h }} (g_M) &= (2\pi i)^{-1} \varrho_{\mathrm{tr}_{h}} (g_N) = (2\pi i)^{-1}\eta_{\langle h \rangle }(g_N), \\
    \varrho_{\sigma \cdot \mathrm{tr}_{h }}^\sgn  (f_M) &= (\pi i)^{-1} \varrho_{\mathrm{tr}_{h}}^\sgn  (f_N) = (\pi i)^{-1}\eta_{ \langle h \rangle  }(f_N).
\end{align*} 
They are non-trivial examples in which the higher $\rho$-number of higher degree is determined.
\end{exmp}

\appendix
\section{Secondary external product via pseudo-local C*-algebra}\label{section:product}
In this appendix, we define the secondary external product in coarse C*-algebra K-theory, the product of the analytic structure set and the K-homology group, in terms of the pseudo-local coarse C*-algebra $D^*(\widetilde{M})^\Gamma$, instead of Yu's localization algebra studied in \cite{zeidlerPositiveScalarCurvature2016}. 
The construction is inspired from the definition of the Kasparov product \cite{kasparovOperatorFunctorExtensions1980}.

\begin{rmk}\label{rmk:odd_RealK}
We use the K-theory of $\bZ_2$-graded C*-algebras by Van Daele~\cite{vandaeleKtheoryGradedBanach1988} for coarse C*-algebras with Clifford algebra symmetry.  
In general, for a $\bZ_2$-graded (Real) C*-algebra $A$, its (Real) K-theory $\K_1(A)$ is defined to be the set of homotopy classes of (Real) odd self-adjoint unitaries on $A$. 
We define the coarse C*-algebras $C^*_{p,q}(\widetilde{M})^\Gamma$, $D^*_{p,q}(\widetilde{M})^\Gamma$, $Q^*_{p,q}(\widetilde{M})^\Gamma$ consisting of operators which is graded-commutative with the action of Clifford algebras. 
Then $\K_1(C^*_{p,q}(\widetilde{M})^\Gamma) \cong \K_{1-p+q}(C^*(\widetilde{M})^\Gamma)$ holds, and the same is also true for $D^*$ and $Q^*$. 

For a spin manifold $M$ with $n:=\dim M =8m - q$ (where $q = 0, \cdots, 7$), the Dirac operator on the spinor bundle of $\mathrm{Spin}(M) \times_{\mathrm{Spin}_n} \Delta_{8m}$ is equipped with an additional symmetry of $\Cl_{0,q}$, and hence determines a K-theory class $[\chi (\slashed{D}_M)] \in \K_{1}(Q^*_{0,q}(\widetilde{M})^\Gamma)$ (cf. \cref{para:Dirac}). Similarly, the signature operator on an odd-dimensional manifold determines a complex K-theory class $[\chi(D_M^\sgn)] \in \K_1(Q^*_{0,1}(\widetilde{M})^\Gamma)$ (we refer to \cite{rosenbergSignatureOperator2006}*{Definition and Notation 1.1}).
\end{rmk}

\begin{defn}
The external product
\[{\cdot} \boxtimes {\cdot} \colon  \K_1(D^*_{p_1,q_1}(M_1)^{\Gamma _1}) \otimes \K_1(Q^*_{p_2,q_2}(\widetilde{M_2})^{\Gamma_2}) \to \K_1(D^*_{p,q}(\widetilde{M}_1 \times \widetilde{M}_2)^{\Gamma _1 \times \Gamma _2}), \]
where $p=p_1+p_2$ and $q=q_1+q_2$, is defined as
\[[F_1] \boxtimes [F_2]:= [ f(F_1 \hotimes 1 + (1-F_1^2) \hotimes F_2) ],  \]
where $f(x):=x/|x|$. 
\end{defn}
This definition makes sense because the operator $F_1 \hotimes 1 + (1-F_1^2) \hotimes F_2$ is invertible in $D^*_{p+q}(\widetilde{M}_1 \times \widetilde{M}_2)^{\Gamma _1 \times \Gamma _2}$. Indeed, this is seen as
\[ (F_1 \hotimes 1 + (1-F_1^2) \hotimes F_2)^2 = F_1 ^2 \hotimes 1 + (1-F_1^2) \hotimes F_2^2 \geq F_1^2 \hotimes  1 >0. \]
This also shows that $[F_1] \boxtimes [F_2]$ is well-defined independent of the choice of representatives $F_1$ and $F_2$.

\begin{lem}\label{lem:external_operator}
For $i=1,2$, let $D_i$ be a $\Gamma_i$-invariant $\bZ_2$-graded elliptic first-order differential operator on $M_i$ anticommuting with a $\bZ_2$-graded representation of $\Cl_{p_i,q_i}$. Moreover, we assume that $D_1$ is invertible. Then we have
\[ [\chi(D_1)] \boxtimes [\chi(D_2) ] = [\chi (D_1 \hotimes 1 + 1 \hotimes D_2)]. \]
\end{lem}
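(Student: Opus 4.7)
The strategy is to reduce both sides to a common representative through two successive homotopies in the space of invertible odd self-adjoint elements of $D^*_{p,q}(\widetilde{M}_1 \times \widetilde{M}_2)^{\Gamma_1 \times \Gamma_2}$: the first replaces the cutoff $\chi$ on the $D_1$-slot by a genuine sign function on $\mathrm{spec}(D_1)$, and the second deforms $D_1 \hotimes 1 + 1 \hotimes D_2$ linearly to $D_1 \hotimes 1$. Well-definedness follows from the invertibility of $D_1$: its spectrum lies in $\{|x| \geq \varepsilon\}$ for some $\varepsilon > 0$, hence $D := D_1 \hotimes 1 + 1 \hotimes D_2$ satisfies $D^2 \geq \varepsilon^2$ and is itself invertible, so $\chi(D)$ is a pseudo-local, finite-propagation (via $\chi(D) = \int \hat\chi(s)\, e^{isD}\,ds$) invertible odd self-adjoint element of $D^*_{p,q}$. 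For the external product, $F_1 := \chi(D_1)$ is invertible since $\chi$ is non-vanishing on $\mathrm{spec}(D_1)$, and because $F_1 \hotimes 1$ and $(1-F_1^2) \hotimes F_2$ graded-anticommute one computes
\[
(F_1 \hotimes 1 + (1-F_1^2) \hotimes F_2)^2 = F_1^2 \hotimes 1 + (1-F_1^2)^2 \hotimes F_2^2 \geq F_1^2 \hotimes 1 \geq c
\]
for some $c>0$, so $f$ applies and produces an odd self-adjoint unitary.

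The first homotopy replaces $\chi$ on the $D_1$-slot by an odd normalizing function $\psi$ with $\psi(x) = \sgn(x)$ for $|x| \geq \varepsilon$. The affine interpolation $\chi_s := (1-s)\chi + s\psi$ is non-vanishing on $\mathrm{spec}(D_1)$ for every $s \in [0,1]$, and after approximating each $\chi_s$ by functions with compactly supported Fourier transform (uniform convergence on the spectrum translates into operator-norm convergence thanks to the spectral gap), one obtains a homotopy of invertibles in $D^*_{p_1,q_1}$, so $[\chi(D_1)] = [\psi(D_1)]$. With $F_1 = \psi(D_1)$ the identity $F_1^2 = 1$ collapses the external-product formula to
\[
f(F_1 \hotimes 1 + (1-F_1^2) \hotimes F_2) = f(F_1 \hotimes 1) = \psi(D_1) \hotimes 1 = \psi(D_1 \hotimes 1).
\]
The same replacement applied to $\chi(D)$ (whose spectrum also avoids the gap) yields $[\chi(D)] = [\psi(D)]$.

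The second homotopy is the straight-line family $D_t := D_1 \hotimes 1 + t(1 \hotimes D_2)$ for $t \in [0,1]$. Each $D_t$ satisfies $D_t^2 \geq D_1^2 \hotimes 1 \geq \varepsilon^2$ and so is uniformly invertible, and the resolvent-difference identity
\[
(D_t - i)^{-1} - (D_{t'} - i)^{-1} = (t-t')(D_t - i)^{-1}(1 \hotimes D_2)(D_{t'} - i)^{-1}
\]
yields norm-continuity of the resolvents, hence of $t \mapsto \psi(D_t)$ after a further Fourier-localized approximation. This path connects $\psi(D_1 \hotimes 1)$ at $t=0$ to $\psi(D)$ at $t=1$, giving $[\psi(D_1 \hotimes 1)] = [\psi(D)]$; chaining the two steps then yields $[\chi(D_1)] \boxtimes [\chi(D_2)] = [\chi(D)]$ in $\K_1(D^*_{p,q})$. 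The principal technical obstacle is the continuity and finite-propagation bookkeeping: $\psi$ itself has no compactly supported Fourier transform, so both homotopies must be realized as uniform operator-norm limits of Fourier-localized approximants whose propagation remains bounded and whose convergence is controlled by the uniform spectral gap of the relevant operators.
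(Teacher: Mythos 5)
Your first reduction (deforming $\chi$ to the genuine sign function $\psi$ through the invertibles $\chi_s(D_1)$, which stays in $D^*_{p_1,q_1}(\widetilde M_1)^{\Gamma_1}$ because of the spectral gap) and the algebraic collapse $\psi(D_1)^2=1\Rightarrow (1-F_1^2)\hotimes F_2=0$ are correct as far as they go. The problem is your second homotopy: the straight-line family $D_t := D_1\hotimes 1 + t(1\hotimes D_2)$ is not a norm-continuous path of invertibles in $D^*_{p,q}(\widetilde M_1\times\widetilde M_2)^{\Gamma_1\times\Gamma_2}$ on the \emph{closed} interval $[0,1]$, and no Fourier-localized bookkeeping can rescue it.

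Concretely, your resolvent estimate degenerates at the endpoint: from $1\hotimes D_2^2\le t^{-2}D_t^2$ one only gets $\|(1\hotimes D_2)(D_t-i)^{-1}\|\le t^{-1}$, and $(1\hotimes D_2)(D_0-i)^{-1}$ is a genuinely unbounded operator since $D_0=D_1\hotimes 1$ provides no elliptic regularity in the $\widetilde M_2$-direction. This reflects an actual discontinuity, not a technical nuisance: on an approximate joint spectral subspace with $D_1\approx\mu$ and $D_2\approx\lambda$ one has $D_t^2\approx\mu^2+t^2\lambda^2$, so the rotation of the graded sign is controlled by $t\lambda/\mu$, and taking $\lambda\sim t^{-1}$ shows $\|\psi(D_t)-\psi(D_1)\hotimes 1\|$ stays of order $1$ for every $t>0$; the convergence is only strong, not in norm. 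Worse, the would-be endpoint $\psi(D_1)\hotimes 1$ does not lie in $D^*_{p,q}(\widetilde M_1\times\widetilde M_2)^{\Gamma_1\times\Gamma_2}$ at all: for compactly supported $f_1\otimes f_2$ one has $[\psi(D_1)\hotimes 1, f_1\otimes f_2]=[\psi(D_1),f_1]\otimes f_2$, a nonzero compact operator in the first factor tensored with a non-compact multiplication operator in the second, hence not compact and not pseudo-local. So the representative that your collapse in step 2 lands on has already left the target algebra, which should have been a warning sign.

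The paper's own proof is a deferral to Baaj--Julg and to \cite{blackadarTheoryOperatorAlgebras1998}*{Proposition 18.10.1}, where the comparison is made via the Connes--Skandalis sufficient condition for the Kasparov product: one checks that the bounded transform of $D_1\hotimes 1 + 1\hotimes D_2$ and the Kasparov-formula representative have a positive graded anticommutator modulo $C^*$, and then runs a rotation homotopy in the \emph{bounded} slot (a path of the form $\cos(s)\,A + \sin(s)\,B$, normalized by $f$), never the naive linear homotopy in the unbounded slot. To repair your argument you would either invoke that criterion directly, keeping $\chi(D_1)$ (not $\psi(D_1)$) in the first slot throughout so that the Kasparov combination stays pseudo-local, or otherwise avoid the degenerate endpoint $t=0$ entirely.
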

\begin{proof}
This is a standard argument in Kasparov theory. We just refer the reader to \cite{baajTheorieBivarianteKasparov1983} or \cite{blackadarTheoryOperatorAlgebras1998}*{Proposition 18.10.1}. 
\end{proof}

\cref{lem:external_operator} shows that
\begin{align*}
    [\chi(\slashed{D}_{M_1})] \boxtimes [\chi (\slashed{D}_{M_2})] &= [\chi (\slashed{D}_{M_1 \times M_2})], \\
    [\chi(D_{M_1}^\sgn )] \boxtimes [\chi (D_{M_2}^\sgn )] &= 2^\epsilon [\chi (D^\sgn _{M_1 \times M_2})], 
\end{align*}
where $\epsilon = 1$ if both $\dim M_1$ and $\dim M_2$ are odd, and otherwise $\epsilon = 0$. In particular, when $M_2=\bR$, these equalities means that
\begin{align*}
    \rho(g_{N\times \bR}) &= \rho(g_N ) \boxtimes [\bR] ,\\
    \rho_\sgn(f_{N \times \bR}  ) &= 2^{\epsilon } \rho_\sgn (f_N ) \boxtimes [\bR]_\sgn , 
\end{align*}
where $\epsilon$ is $0$ or $1$ if $\dim N$ is even or odd. Therefore, the coarse Mayer--Vietoris boundary map
\begin{align*}
    \mathclap{
    \partial_{\mathrm{MV}} \colon \K_*(D^*_{p,q}(\widetilde{N} \times \bR )^\pi ) \to \K_*(D^*_{p,q}(\widetilde{N} \subset \widetilde{N} \times \bR)^\pi )}
\end{align*}
sends the higher $\rho$-invariants of $N \times \bR$ as
\begin{align*}
    \partial_{\mathrm{MV}} (\rho (g_{N \times \bR})) &= \partial_{\mathrm{MV}} (\rho(g_N ) \boxtimes [\bR]) = \rho (g_N) \boxtimes \partial_{\mathrm{MV}} [\bR] = \rho(g_N), \\
    \partial_{\mathrm{MV}} (\rho_\sgn (f_{N \times \bR})) &= \partial_{\mathrm{MV}} (2^\epsilon \rho(f_N ) \boxtimes [\bR]) = 2^\epsilon \rho (f_N) \boxtimes \partial_{\mathrm{MV}} [\bR] = 2^\epsilon \rho(g_N).
\end{align*}

This completes the proof of \cref{lem:boundary} (3), (4).

\section{Cyclic homology of a crossed product and group homology}\label{section:Chern}
In this appendix, we give a more detailed discussion on the proof of \cref{lem:cyc_unit}.
We show that the exact sequence of cyclic chain complexes 
\begin{align}
   0 \to \ker \theta  \to \CC_*(C(\cB, \cK_\sigma ) \rtimes_\alg  \Gamma )[\![v^{\pm 1}]\!] \xrightarrow{\theta } \CC_*(\bC[\Gamma])[\![v^{\pm 1}]\!] \to 0 \label{eq:cyclic_exact}
\end{align}
is quasi-isomorphic to the exact sequence of chain complexes
\begin{align}
    0 \to C_\bullet (\Gamma , \Omega^\bullet _0(\cB_0 ) )[\![v^{\pm 1}]\!] &\to C_\bullet (\Gamma , \Omega^\bullet (\cB ) )[\![v^{\pm 1}]\!] 
    \to C_\bullet (\Gamma , \bC)[\![v^{\pm 1}]\!]  \to 0 \label{eq:exact_diff2}
\end{align}
given in  \eqref{eq:exact_diff} (note that the excision property of cyclic homology states that the inclusion $\CC_*(C_0(\cB_0, \cK_\sigma ) \rtimes _\alg \Gamma )[\![v^{\pm 1}]\!] \to \ker \theta $ is quasi-isomorphic).
To this end, we construct maps from \eqref{eq:cyclic_exact} and \eqref{eq:exact_diff2} to another exact sequence of complexes
\begin{align}
    0 \to \underline{\Omega}{}_0(\cB_0 \rtimes \Gamma )[u^{\pm 1}]' \to \underline{\Omega}(\cB \rtimes \Gamma )[u^{\pm 1}]' \to \Omega(\pt \rtimes \Gamma )[u^{\pm 1}]' \to 0. \label{eq:simp_dR}
\end{align}
Here, for a vector space $V$,  $V'$ stands for the algebraic dual vector space $\Hom (V,\bC)$.

We start with the definition of \eqref{eq:simp_dR}. 
Following \cite{angelCyclicCocyclesTwisted2013}*{Subsection 2.3}, 
let $\underline{\Omega}{}_\bullet(\cB)$ denotes the dual cocomplex of $\Omega^\bullet (\cB)$, which is equipped with the degree $-1$ differential $d_{\mathrm{dR}}$. 
we define the $\bZ$-graded vector space $\underline{\Omega}^\bullet  (\cB \rtimes \Gamma  )[u^{\pm 1}]$, where $u$ is a degree $2$ formal symbol, as
\begin{align*}
    &\underline{\Omega}^{r,s} (\cB \rtimes \Gamma  ) \\
    :=& \Big\{ (\omega _{(n)}) \in \prod_{n \geq 0} \underline{\Omega}^r (\cB \times \Gamma ^n) \otimes \Omega^s(\Delta^n) \mid (\id \times \delta^i )^*\omega_{(n)} =  (\delta_i \times \id)^*  \omega_{(n-1)} \Big\}_{\textstyle ,}  
\end{align*}
where $\delta_i \colon \cB \times \Gamma^{n} \to \cB \times \Gamma^{n-1}$ and $\delta^i \colon \Delta^{n-1} \to \Delta^{n}$ denote the $i$-th face maps. 
We write $\omega (g_1, \cdots, g_n)$ for the restriction of $\omega $ to $\cB \times \{ g_1, \cdots, g_n \} \times \Delta^n$. 
The twisted simplicial de Rham differential is defined as $d_\Theta := ud_{\mathrm{dR}} + d_\Delta + \Theta $, where $d_\Delta$ is the de Rham differential on $\Omega^s(\Delta^n)$ and $\Theta \in \underline{\Omega}^{1,2}(\cB \rtimes \Gamma)$ is the differential form 
\[\Theta (g_1, , \cdots, g_n) := \sum_{1 \leq i \leq j \leq k} 2\alpha (g_1 \cdots g_i, g_{i+1} \cdots g_j)dt_idt_j , \]
where $\alpha (g,h) = \sigma (g,h)^{-1}d\sigma (g,h) \in \Omega ^1(\cB)$. 
The dual of the short exact sequence $0 \to \Omega_0^\bullet (\cB_0) \to \Omega ^\bullet (\cB) \to \bC \to 0$ gives rise to 
\[ 0 \to \underline{\Omega}^\bullet (\pt \rtimes \Gamma )[u^{\pm 1}] \to \underline{\Omega}(\cB \rtimes \Gamma )[u^{\pm 1}] \to \underline{\Omega}{}_0^\bullet (\cB_0 \rtimes \Gamma )[u^{\pm 1}] \to 0,  \]
which is dual to \eqref{eq:simp_dR}. 

\begin{rmk}\label{rmk:PR}
We review the Packer--Raeburn construction \cite{packerTwistedCrossedProducts1989}*{Theorem 3.4}.
Let $\cH_\sigma$ be the Hilbert bundle $\cB \times \ell^2(\Gamma)$, on which $\Gamma $ acts as 
\[u_g \xi := ((\gamma_g \otimes \lambda_g) \circ v(g)) \xi  \]
for any $\xi \in C(\cB , \cH_\sigma)$, where 
\[ v(g) :=  \mathop{\mathrm{diag}}(\sigma(g,h)^*)_{h \in \Gamma } \in C(\cB, \bB(\ell^2\Gamma )).\] 
Let $\cK_\sigma $ denote the associated compact operator algebra bundle on $\cB$. 
Then, the relation $u_gu_h = \sigma(g,h,x)^* u_{gh}$ holds, i.e., $\{ u_g\} $ is a $\sigma^*$-twisted unitary representation of the groupoid $\cB \rtimes \Gamma$. This implements a twisted $\Gamma$-equivariant Morita equivalence between $(C(\cB), \sigma)$ and the untwisted $\Gamma$-C*-algebra $C(\cB, \cK_\sigma)$. In particular, $C(\cB) \rtimes_\sigma \Gamma$ is Morita equivalent to $C(\cB, \cK_\sigma) \rtimes \Gamma$.
\end{rmk}

\begin{lem}\label{lem:pairing_homologies}
There are homomorphisms from the exact sequences \eqref{eq:cyclic_exact} and \eqref{eq:exact_diff2} to \eqref{eq:simp_dR}, which induces isomorphism of homology. 
\end{lem}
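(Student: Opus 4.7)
The strategy, following and slightly extending Angel~\cite{angelCyclicCocyclesTwisted2013}, is to produce both comparison maps explicitly and then verify they are quasi-isomorphisms by a spectral-sequence argument.

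First, I would construct the map from \eqref{eq:exact_diff2} to \eqref{eq:simp_dR}. A group chain $[g_1 | \cdots | g_n] \otimes \omega$ with $\omega \in \Omega^r(\cB)$ is sent to the functional on $\underline{\Omega}^{r, n}(\cB \rtimes \Gamma)$ whose value on $\alpha$ is
\[
\int_{\Delta^n} \langle \omega , \alpha(g_1, \ldots, g_n) \rangle ,
\]
with the $\Delta^n$-factor of $\alpha(g_1, \ldots, g_n) \in \underline{\Omega}^r(\cB) \otimes \Omega^n(\Delta^n)$ integrated fiberwise. Upon identifying $v$ with $u^{-1}$, the three components $\delta_\Gamma$, $v\, d_{\mathrm{dR}}$ and $\Theta$ of the twisted differential on group chains are dual, respectively, to $d_\Delta$, $u\, d_{\mathrm{dR}}$ and the wedge with $\Theta$ on simplicial forms. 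Since the inclusion $\Omega_0^\bullet(\cB_0) \hookrightarrow \Omega^\bullet(\cB)$ maps to forms vanishing on the basepoint of $\cB$, one obtains a map of short exact sequences.

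Second, using the Packer--Raeburn Morita equivalence (\cref{rmk:PR}) to replace $C(\cB) \rtimes_\sigma \Gamma$ by $C(\cB, \cK_\sigma) \rtimes \Gamma$, I would send a cyclic chain $b_0 u_{g_0} \otimes \cdots \otimes b_n u_{g_n}$ to the functional on $\underline{\Omega}^{r, s}(\cB \rtimes \Gamma)$ defined by a JLO-style formula in which $\alpha(h_1, \ldots, h_k)$ is contracted against the fiberwise trace form $\tau(b_0 \, d b_1 \cdots d b_n)$, with support restricted to those tuples $(h_i)$ arising from cyclic reorderings of $(g_0, \ldots, g_n)$. A direct calculation, mirroring Connes' formula for pairings of group cocycles with cyclic chains in the untwisted case, shows that $b + v B$ is intertwined with $d_\Theta = u\, d_{\mathrm{dR}} + d_\Delta + \Theta$, the $\Theta$-contribution arising precisely from the $\sigma$-cocycle entering the Packer--Raeburn unitaries. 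This map factors through the quotient $\bC[\Gamma]$ compatibly with the restriction $\underline{\Omega}(\cB \rtimes \Gamma) \to \underline{\Omega}(\mathrm{pt} \rtimes \Gamma)$, yielding a morphism of short exact sequences.

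The last step is to show both maps are quasi-isomorphisms. I would filter all three complexes by the simplicial (resp.\ bar) degree and compare the associated spectral sequences. For the group-homology side, the $E_1$-page is $H_\bullet(\Gamma , \Omega^\bullet(\cB))$, which Dupont's theorem identifies with the cohomology of the simplicial de Rham model. For the cyclic side, the $E_1$-page is the (continuous) Hochschild homology of $C^\infty(\cB, \cK_\sigma)$, which by the Hochschild--Kostant--Rosenberg theorem is isomorphic to $\Omega^\bullet(\cB)$; the Connes $B$-operator then passes to the de Rham differential, and the $\Theta$-correction coming from $\sigma$ matches by construction. I expect the main difficulty to be the explicit bookkeeping of the $\Theta$-twist: verifying term-by-term that the Packer--Raeburn cocycle appearing in the cyclic differential matches the simplicial expression for $\Theta$, and that the combinatorial identities remain compatible with the ideal $\Omega_0^\bullet(\cB_0)$. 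This is essentially Angel's calculation, but because his set-up deals with a compact base while our $\cB$ has the noncompact stratum $\cB_0$, the identities must be revisited to ensure that the kernels of the three complexes match under the comparison maps.
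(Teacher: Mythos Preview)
Your overall architecture---mapping both complexes into the simplicial de Rham model and invoking Angel's twisted JLO character---matches the paper. The first comparison map (group chains to the simplicial de Rham dual) is essentially what the paper writes down.

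There are two places where your proposal diverges from the paper, and in both the paper's route is cleaner.

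First, for the cyclic side, the paper does \emph{not} send a cyclic chain $b_0 u_{g_0}\otimes\cdots\otimes b_n u_{g_n}$ directly to a functional by a ``JLO-style formula.'' Instead it factors through Nistor's quasi-isomorphism
\[
\Psi_A \colon \big(C_\bullet(\Gamma,\CC_\bullet(A))[\![v^{\pm1}]\!],\, b+vB+\delta_\Gamma'\big)\;\xrightarrow{\ \sim\ }\;\big(\CC_\bullet(A\rtimes_\alg\Gamma)[\![v^{\pm1}]\!],\, b+vB\big)
\]
and then applies the genuine JLO character $\cT_{\mathrm{JLO}}$ with the explicit simplicial connection $\nabla_u^k$ and curvature $\vartheta_{(k)}$ built from $\alpha(g,h)=\sigma(g,h)^{-1}d\sigma(g,h)$. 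Your description omits the curvature exponentials $e^{s_i\vartheta}$, which are exactly what produce the $\Theta$-term; the formula you sketch (``contracted against $\tau(b_0\,db_1\cdots db_n)$'') is the naive HKR-type Chern character, not the JLO one, and will not intertwine $b+vB$ with $d_\Theta$ in the twisted case.

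Second, and more substantively, your verification step via spectral sequences and HKR is both harder than necessary and technically fragile: $\cB$ is a bouquet of circles, not a smooth manifold, so the Hochschild--Kostant--Rosenberg theorem does not apply to $C^\infty(\cB,\cK_\sigma)$ as stated. The paper avoids this entirely. Once you have a map of short exact sequences of complexes, it suffices to check that the \emph{outer} two vertical arrows are quasi-isomorphisms; the middle one then follows from the five lemma on the long exact sequences. On the quotient side both complexes compute $H_{[*]}(\Gamma;\bC)$, and on the ideal side both compute $H_{[*-1]}(\pi;\bC)$ (after untwisting via \cref{lem:untwist} and Morita invariance). No HKR, no analysis at the singular point of $\cB$. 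Your worry about ``revisiting identities for the noncompact stratum $\cB_0$'' evaporates with this argument.
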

\begin{proof}
The pairing $\langle \cdot, \cdot \rangle$ of $ C_\bullet (\Gamma, \Omega^\bullet (\cB))[\![v ^{\pm 1} ]\!]$ and $\underline{\Omega} (\cB \rtimes \Gamma )^\bullet [u^{\pm 1}] $ is defined by 
\[\Big\langle \sum_{g_1, \cdots, g_m} \xi _{g_1, \cdots, g_m} v^k ,  (\omega_{(n)}) u^l \Big\rangle := \delta_{k,-l} \sum_{g_1, \cdots, g_m} \int _{\cB \times   \Delta^n} \xi_{g_1, \cdots, g_m} \wedge \omega_{g_1, \cdots, g_m} \]
satisfies $\langle \xi u, d _{\mathrm{dR}}\omega \rangle = \langle d_{\mathrm{dR}} \xi, \omega \rangle$, $\langle \xi , d_{\Delta} \omega \rangle = \langle \delta_\Gamma \xi , \omega \rangle$ and $\langle \xi, \Theta \omega \rangle = \langle \Theta \xi, \omega \rangle$. Moreover, this pairing gives rise to a commutative diagram
\[
\xymatrix{
C_\bullet (\Gamma , \Omega^\bullet_0 (\cB_0))[\![v^{\pm 1}]\!] \ar[r] \ar[d] & 
C_\bullet (\Gamma , \Omega^\bullet (\cB))[\![v^{\pm 1}]\!] \ar[r] \ar[d] & 
C_\bullet (\Gamma , \bC )[\![v^{\pm 1}]\!] \ar[d] \\
\underline{\Omega}{}_0(\cB_0 \rtimes \Gamma )[u^{\pm 1}]' \ar[r] &
\underline{\Omega}(\cB \rtimes \Gamma )[u^{\pm 1}]' \ar[r] & 
\Omega(\pt \rtimes \Gamma )[u^{\pm 1}]' .
}
\]
Moreover the left and the right vertical maps induce the isomorphism of homology. 
Indeed, the homology groups of two complexes at the left (resp.~the right) are both isomorphic to the group homology $H_{[*-1]}(\pi; \bC)$ (resp.~$H_{[*]}(\Gamma, \bC)$).

In \cite{angelCyclicCocyclesTwisted2013}, Angel constructed a pairing of  \eqref{eq:cyclic_exact} and \eqref{eq:simp_dR} as a variation of the JLO character. Although the space $\cB$ is not a manifold, the same definition also works in our setting. Moreover, due to the $1$-dimensionality of $\cB$, the construction is partly simplified.  

The simplicial connection and curvature forms of $\cH_\sigma$ is defined by gluing the connections and curvature forms 
\begin{align*}
    \nabla_u^k(g_1, \cdots, g_k) &:= d_{\mathrm{dR}} + u^{-1}d_\Delta + t_1 A(g_1) + t_2 A(g_1g_2) + \cdots + t_k A(g_1 \cdots g_k),   \\
    \vartheta_{(k)}(g_1,\cdots ,g_k) &:= \sum_{1 \leq i \leq j \leq k} \alpha (g_1 \cdots g_i, g_{i+1} \cdots g_j)(t_idt_j -t_jdt_i).
\end{align*}
The JLO homomorphism 
\[ \cT_{\mathrm{JLO}} \colon (C_\bullet (\Gamma , \CC_\bullet (C^\infty (\cB, \cK_\sigma )))[\![v^{\pm 1}]\!], b + uB+\delta_\Gamma') \to (\underline{\Omega}^\bullet  (\cB \rtimes \Gamma )[u^{\pm 1}]', d_\Theta) \]
is defined as 
\begin{align*}
    &\langle (\omega_{(n)}) u^k, \cT_{\mathrm{JLO}}(((\tilde{a}_0 \otimes \cdots \otimes a_p),(g_1, \cdots, g_q))v^l) \rangle \\
    =& \delta_{k,l} \int _{\cB \times \Delta^q} \omega_{g_1, \cdots, g_q} \wedge \int_{\Delta ^p} \mathrm{Tr}(\tilde{a_0}e^{s_0 \vartheta_{(q)}}(\nabla_u^q a_1)e^{s_1 \vartheta_{(q)}} 
    \cdots e^{s_p \vartheta_{(q)} } (\nabla_u^q a_q)) ds_1 \cdots ds_q.
\end{align*} 
It is shown in the same way as \cite{angelCyclicCocyclesTwisted2013}*{Theorem 5.5} that it is a chain map. 

Moreover, in \cite{angelCyclicCocyclesTwisted2013}*{Subsection 5.3} (we also refer to \cite{nistorGroupCohomologyCyclic1990}*{2.5}), a quasi-isomorphism 
\[\Psi_A \colon (C_\bullet (\Gamma, \CC_\bullet (A))[u^{\pm 1}], b + uB + \delta_\Gamma' )  \to (\CC_\bullet (A \rtimes _\alg \Gamma )[u^{\pm 1}] , b + uB) \]
is constructed for any $\Gamma$-algebra $A$. By the construction, this $\Psi_A$ is functorial.  
Therefore, $\Psi_A \circ \cT_{\mathrm{JLO}}$ gives rise to a commutative diagram
\[\mathclap{
\xymatrix@C=2em{
\ker \theta  \ar[r] \ar[d] & 
\CC_\bullet (C^\infty (\cB, \cK_\sigma ) \rtimes_\alg \Gamma , e)[\![v^{\pm 1}]\!] \ar[r] \ar[d] & 
\CC_\bullet (\bK \rtimes \Gamma , e )[\![v^{\pm 1}]\!] \ar[d] \\
\underline{\Omega}{}_c(\cB_0 \rtimes \Gamma )[u^{\pm 1}]' \ar[r] &
\underline{\Omega}(\cB \rtimes \Gamma )[u^{\pm 1}]' \ar[r] & 
\Omega(\pt \rtimes \Gamma )[u^{\pm 1}]' .
}}
\]
Since the left and the right vertical maps are both isomorphic, this finishes the proof of the lemma.   
\end{proof}

\if0
\section{Wrong way functoriality of the delocalized Chern character}
In this appendix, we give a supplementary discussion on the proof of \cref{lem:Chern}. 

Let $\Psi^0(\widetilde{M})^\Gamma$ denote the C*-algebra closure of the algebra of $\Gamma$-invariant $0$-th order pseudodifferential operators on $\widetilde{M}$. It includes a dense subalgebra $\Psi_{\cA\Gamma }(\widetilde{M})$ which is closed under holomorphic functional calculus (and hence has the same K-theory with $\Psi^0(\widetilde{M})^\Gamma$). 
The delocalized Chern character
\[\Ch ^\del _\Gamma \colon \K_0(\mathrm{cone}(C(M) \to \Psi^0 (\widetilde{M})^\Gamma ) ) \to \overline{\HP}_0(\cA \Gamma), \]
where $\mathrm{cone}(\cdot)$ denotes the mapping cone  C*-algebra, is defined in \cite{piazzaMappingAnalyticSurgery2019} as the transgression character of 
\[\overline{\mathrm{TR}}{}^{\del} \circ \Ch \colon \K_*(\Psi )\]
and described in \cite{piazzaMappingAnalyticSurgery2019}*{Propostion 7.47} by an integral as 
\[\Ch ^\del _\Gamma (Q) = \sum \int_0^1dt \int_{\widetilde{M} \times \cdots \times \widetilde{M}} \mathrm{Tr}\big( \chi(x_0) Q_t () \cdots P dx. \]

Let $E \to M$ be a vector bundle and let 
\[\mathrm{Thom}_E \colon \K_*(\Psi^0 (\widetilde{M})^\Gamma ) \to \K_*(\Psi^0 (\widetilde{\bD E})^\Gamma ) \]
making the following diagram commutes; 
\[
\xymatrix{
\K_*(\Psi) \ar[r] \ar[d] & \K_*(\Psi) \ar[d] \\
\K_*(D^*) \ar[r] & \K_*(D^*)
}
\]

\begin{prp}
The equality $\Ch^\del _\Gamma \circ \mathrm{Thom}_E = \Ch^\del_\Gamma$ holds. 
\end{prp}
\begin{proof}
$\mathrm{Thom}_E ([P]) = P \otimes p + 1 \otimes (1-p)$, where $p$ is the projection onto the $1$-dimensional subspace of Gaussian. 
\end{proof}

Consequently, we obtain a homomorphism 
\[ \Phi \colon  \K_*^{\mathrm{geom}}(B\Gamma ,M) \to \K_*(\Psi^0(\widetilde{M})^\Gamma ) \]
sending $[M,f,E]$ to $f_![P_{\partial M}]$, is well-defined. 
Under the assumption of the existence of the $\gamma$-element of $\Gamma$, a five lemma argument shows that this map is rationally isomorphic to a complement subspace of the image of $\K_*(C^*\Gamma) \to \K_*(D^*_\Gamma )$. 
We also remark that the `geometric' delocalized Chern character $\Ch^\del_\Gamma \circ \Phi$ is the same as the one introduced in \cite{deeleyRealizingAnalyticSurgery2016a}

Consequently, the delocalized Chern character is well-define as a homomorphism from the inductive limit group
\[ \K_*(D^*_\Gamma ) := \varinjlim _{M} \K_*(D^*(\widetilde{M})^\Gamma) \cong \varinjlim _M \K_*(\Psi^0(\widetilde{M})^\Gamma ), \]
where $M$ runs over all closed manifolds equipped with $\pi_1(M) \to \Gamma$. This makes the map
\[\K_*^{\mathrm{geom}}(B\Gamma, M) \to \K_*(D^*_\Gamma ),\]
sending $(N,f,E)$ to $f_![D_{\partial N}]$, is well-defined. A five lemma argument shows that this is rationally isomorphic to a complement subspace of the image of $\K_*(C^*\Gamma) \to \K_*(D^*_\Gamma )$. 
\fi

\bibliographystyle{alpha}
\bibliography{ref.bib}
\end{document}